\documentclass[10pt]{article}
\usepackage{multicol}
\usepackage{indentfirst}
\usepackage{latexsym}
\usepackage{bm}
\usepackage[top=1in, bottom=1in, left=1.25in, right=1.25in]{geometry}

\usepackage{amssymb,amsmath,amsthm,amsfonts,amsbsy,mathrsfs}
\usepackage{hyperref}
\usepackage{array}
\usepackage{color}
\usepackage{booktabs}
\usepackage{dsfont}
\usepackage{enumitem}

\theoremstyle{plain}
\newtheorem{theorem}{Theorem}[section]
\newtheorem{lemma}{Lemma}[section]
\newtheorem{corollary}{Corollary}[section]
\newtheorem{proposition}{Proposition}[section]

\theoremstyle{definition}

\theoremstyle{remark}
\newtheorem{remark}{Remark}[section]

\newtheorem{step}{Step}
\newtheorem*{solution*}{Solution}

\makeatletter % changes the catcode of @ to 11
\@addtoreset{case}{lemma}
\@addtoreset{case}{theorem}
\@addtoreset{case}{proposition}
\@addtoreset{case}{corollary}
\makeatother % changes the catcode of @ back to 12

\begin{document}

\title{Regularized Stokes Immersed Boundary Problems in\\ Two Dimensions: Well-posedness, Singular Limit,\\ and Error Estimates}
\author{Jiajun Tong\\[10pt]University of California, Los Angeles}
\date{}
\maketitle

\begin{abstract}
Inspired by the numerical immersed boundary method,
we introduce regularized Stokes immersed boundary problems in two dimensions
to describe regularized motion of a 1-D closed elastic string in a 2-D Stokes flow,
in which a regularized $\delta$-function is used to mollify the flow field and singular forcing.
We establish global well-posedness of the regularized problems, and prove that as the regularization parameter diminishes,
string dynamics in the regularized problems converge to that in the Stokes immersed boundary problem with no regularization.
Viewing the un-regularized problem as a benchmark, we derive error estimates under various norms for the string dynamics.
Our rigorous analysis shows that the regularized problems achieve improved accuracy if the regularized $\delta$-function is suitably chosen.
This may imply potential improvement in the numerical method, which is worth further investigation.
\end{abstract}
{\small
\noindent\textbf{Keywords.}\;Immersed boundary problem, Stokes flow, regularized $\delta$-function, error estimate.

\noindent\textbf{AMS subject classifications.}\;35Q35, 35Q74, 35R37, 74F10, 76D07.
}

\section{Introduction}
The immersed boundary problem models elastic structures moving and interacting with a surrounding fluid:
the structures apply elastic force to the fluid and alter the flow field, while in turn the flow moves and deforms the structures \cite{peskin2002immersed,lin2017solvability}. %, and thus the elastic force exerted on the fluid changes correspondingly.
Mathematically, it features hydrodynamics equations with time-varying forcing supported on possibly lower-dimensional moving objects, whose motion is governed by the flow.
%It was first formulated by Charles Peskin in early 70s to investigate patterns of blood flow near heart valves \cite{}.
Numerical methods for solving the immersed boundary problem, known as the immersed boundary method \cite{peskin2002immersed,peskin1972flowPhD,peskin1972flow}, has proven to be a powerful computational tool to study such coupled motion in physics, biology, and medical sciences \cite{dillon1995microscale,bottino1998computational,mcqueen2000three,lim2004simulations,zhu2002simulation,miller2005computational}.

Inspired by the numerical method, in this paper, we propose a regularized version of the two-dimensional Stokes immersed boundary problem in the continuous setting \cite{lin2017solvability,tong2018thesis}.
It is a PDE system describing motion of a 1-D closed elastic string immersed in Stokes flow in $\mathbb{R}^2$, yet with forcing and string motion being mollified on a small spatial scale, which formally approximates the Stokes immersed boundary problem without regularization.
Its precise formulation will be provided in Section \ref{section: regularized Stokes IB problem}.
We shall study global well-posedness of the regularized problem, and 
rigorously justify its convergence to the un-regularized problem in the string dynamics as the regularization parameter diminishes.
Viewing their difference as error, we show that the error bounds depend not only on the regularization parameter and regularity of the solution, but also on the mollifier in a crucial way.
This work can be an important step towards the error estimates for the numerical immersed boundary method in a fully discrete case.
Beyond that, it also suggests possible improvements of the numerical method, which is worth further investigation.

\subsection{The 2-D Stokes immersed boundary problem}% in two dimensions and its regularized version}
Let us first introduce the two-dimensional Stokes immersed boundary problem without regularization, which we shall call the un-regularized or the original problem in the rest of the paper.
It describes a 1-D closed elastic string moving in 2-D Stokes flow \cite{lin2017solvability}.
We parameterize the moving string by $X(s,t)$, where $s\in \mathbb{T} \triangleq \mathbb{R}\backslash 2\pi \mathbb{Z} = [-\pi,\pi)$ is the Lagrangian coordinate and where $t$ is the time variable.
Note that $s$ is not the arc-length parameter.
Then the un-regularized problem is formally given by
\begin{align}
&\;-\Delta u+ \nabla p = f(x,t),\label{eqn: Stokes equation}\\
&\;\mathrm{div}\, u = 0,\quad |u|,|p|\rightarrow 0\mbox{ as }|x|\rightarrow \infty,\label{eqn: incompressible condition}\\
&\;f(x,t) = \int_{\mathbb{T}}F_X(s,t)\delta(x-X(s,t))\,ds,\label{eqn: elastic forcing}\\
&\;\frac{\partial X}{\partial t}(s,t) = u(X(s,t),t)\triangleq U_X(s,t),\quad%\label{eqn: motion of the membrane}\\
X(s,0) = X_0(s).\label{eqn: motion of the membrane and initial configuration}
\end{align}
\eqref{eqn: Stokes equation} and \eqref{eqn: incompressible condition} describe the Stokes flow in $\mathbb{R}^2$ with decay condition at infinity: $x$ is the spatial (Eulerian) coordinate; $u$ represents the divergence-free velocity field; and $p$ is the pressure.
Here we implicitly assumed that the fluid viscosity is normalized; indeed, we can always achieve this by properly redefining $u$, $p$ and $t$.
In physics, the stationary Stokes equation is suitable for describing the fluid motion that is far from being turbulent, i.e., the Reynolds number is close to zero, which is the case when spatial scale of the fluid motion is small, or when the fluid moves slowly, or when the fluid is highly viscous.
$f$ denotes the elastic force exerted on the fluid, defined by \eqref{eqn: elastic forcing}.
$F_X$ is the elastic force density in the Lagrangian coordinate associated with the string configuration $X$.
In general, it is given by \cite{peskin2002immersed}
\begin{equation}
F_X(s,t) = \partial_s\left(\mathcal{T}(|X'(s,t)|, s,t)\frac{X'(s,t)}{|X'(s,t)|}\right),
\label{eqn: Lagrangian representation of the elastic force}
\end{equation}
where $X'(s,t)$ denotes $\partial_s X(s,t)$.
$\mathcal{T}$ is the tension in the string. 
In the simple case of Hookean elasticity, for instance, $\mathcal{T}(|X'(s,t)|,s,t) = k_0 |X'(s,t)|$, with $k_0>0$ being the Hooke's constant, and thus $F_X(s,t) = k_0X_{ss}(s,t)$.
We may assume $k_0 = 1$ by properly redefining $u$, $p$, and $t$.
In \eqref{eqn: elastic forcing}, we formally use the Dirac $\delta$-measure to bridge the Lagrangian and Eulerian coordinates.
This implies that $f$ is a singular force only supported on the moving string.
Finally, \eqref{eqn: motion of the membrane and initial configuration} specifies the initial string configuration, and requires that the string to move with flow, where $U_X$ denotes the string velocity.
The equations \eqref{eqn: Stokes equation}-\eqref{eqn: motion of the membrane and initial configuration} readily admit an autonomous dynamics and there is no need to specify initial flow field, since $u$ is instantaneously determined by $f$. % via the stationary Stokes equation.

The 2-D Stokes immersed boundary problem has received increasing attention recently from the analysis community.
In recent works by Lin and the author \cite{lin2017solvability,tong2018thesis}, we study its well-posedness with $F_X = X_{ss}$.
By virtue of the stationary Stokes equation, $u$, $p$, and $U_X$ are completely determined by the present string configuration $X$.
Thus, the system \eqref{eqn: Stokes equation}-\eqref{eqn: motion of the membrane and initial configuration} can be reformulated into a \emph{contour dynamic equation}, %which is an integrodifferential equation solely depending on $X(s,t)$
\begin{equation}\label{eqn: contour dynamic equation for the singular problem}
\partial_t X(s,t) = U_X(s,t) = \frac{1}{4\pi}\mathrm{p.v.} \int_{\mathbb{T}} -\partial_{s'}[G(X(s,t)-X(s',t))]X'(s',t)\,ds',\quad X(s,0) = X_0(s),
\end{equation}
where
\begin{equation}
G(x) = \frac{1}{4\pi}\left(-\ln|x|Id + \frac{x\otimes x}{|x|^2}\right)
\label{eqn: stokeslet}
\end{equation}
is the fundamental solution of the velocity field for 2-D stationary Stokes equation \cite{pozrikidis1992boundary}.
Here $Id$ denotes the $2\times 2$-identity matrix.
Once \eqref{eqn: contour dynamic equation for the singular problem} is solved, $u$ and $p$ can be recovered by \eqref{eqn: Stokes equation}-\eqref{eqn: elastic forcing}.

We prove local well-posedness of \eqref{eqn: contour dynamic equation for the singular problem} by utilizing its intrinsic dissipation.

\begin{proposition}[\cite{lin2017solvability,tong2018thesis}]\label{prop: local well-posedness of the singular problem}
Suppose $X_0(s) \in H^{5/2}(\mathbb{T})$, and assume there exists $\lambda>0$, such that
\begin{equation}
|X_0(s_1)-X_0(s_2)|\geq \lambda|s_1-s_2|,\quad \forall\, s_1, s_2\in \mathbb{T}.
\label{eqn: well-stretched condition}
\end{equation}
Then there exists $T_0 = T_0(\lambda, \|X_0\|_{\dot{H}^{5/2}})\in(0,+\infty]$ and a unique solution $X(s,t)\in C_{[0,T_0]} H^{5/2}\cap L^2_{T_0} H^{3}(\mathbb{T})$ of \eqref{eqn: contour dynamic equation for the singular problem}, satisfying that
\begin{equation}
\|X\|_{C_{[0,T_0]} \dot{H}^{5/2}\cap L^2_{T_0} \dot{H}^{3}(\mathbb{T})}\leq 4\|X_0\|_{\dot{H}^{5/2}(\mathbb{T})},\quad \|X_t\|_{L^2_{T_0} \dot{H}^{2}(\mathbb{T})}\leq \|X_0\|_{\dot{H}^{5/2}(\mathbb{T})},
\label{eqn: a priori estimate for the local solution in the main theorem}
\end{equation}
and that for $\forall\,s_1,s_2\in\mathbb{T}$ and $t\in[0,T_0]$,
\begin{equation}
\left|X(s_1,t) - X(s_2,t)\right| \geq \frac{\lambda}{2}|s_1 - s_2|.
\label{eqn: uniform bi lipschitz constant of the local solution in the main theorem}
\end{equation}
Moreover, the solution depends continuously on the initial data.
\end{proposition}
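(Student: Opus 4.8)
The plan is to treat \eqref{eqn: contour dynamic equation for the singular problem} as a quasilinear nonlocal \emph{parabolic} equation and run an energy method, the key being to first expose the dissipative structure of the velocity operator $X\mapsto U_X$. Using $G$ from \eqref{eqn: stokeslet} and the integration by parts already performed in \eqref{eqn: contour dynamic equation for the singular problem}, one linearizes the kernel about the diagonal $s'=s$: the leading contribution to $U_X$ is, heuristically, the convolution of $X''$ against $-\ln|X(s,t)-X(s',t)|\approx -\ln|s-s'|$, whose one-dimensional Fourier symbol is comparable to $|\xi|^{-1}$. Composed with the two derivatives carried by $X''$, the principal part of $U_X$ behaves like $-c\,\Lambda X$, where $\Lambda=(-\partial_s^2)^{1/2}$ and $c=c(\lambda,\|X\|)>0$. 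Hence the equation is parabolic of order one, which is precisely what produces the gain of half a derivative recorded in \eqref{eqn: a priori estimate for the local solution in the main theorem}: the pairing $C_{[0,T_0]}\dot{H}^{5/2}\cap L^2_{T_0}\dot{H}^3$ together with $X_t\in L^2_{T_0}\dot{H}^2$. The well-stretched hypothesis \eqref{eqn: well-stretched condition} is exactly the condition that bounds the denominators $|X(s)-X(s')|$ below by a multiple of $|s-s'|$, so that the singular integral is a genuine order-$(-1)$ operator with the correct (dissipative) sign and the curve remains embedded.

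Next I would construct solutions by a regularized scheme: add an artificial viscosity $\nu\partial_s^2 X$ (or mollify the kernel and truncate high frequencies) to obtain a system of ODEs in a Hilbert space solvable by Picard iteration, and then derive estimates uniform in $\nu$. The heart of the argument is the top-order a priori estimate. Differentiating gives $\tfrac12\tfrac{d}{dt}\|X\|_{\dot{H}^{5/2}}^2=\langle \Lambda^{5/2}X,\Lambda^{5/2}U_X\rangle$, and I would split $U_X$ into its principal part plus commutator and lower-order remainders. The principal part supplies the good term $-c\|X\|_{\dot{H}^3}^2$, while each remaining term is estimated by $P(\|X\|_{\dot{H}^{5/2}})\,\|X\|_{\dot{H}^3}^{\theta}$ with $\theta<2$, via product and commutator (Kato--Ponce-type) estimates and the chord-arc bound \eqref{eqn: well-stretched condition}; Young's inequality then absorbs the $\dot{H}^3$ factor into the dissipation. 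This closes a differential inequality of the form $\tfrac{d}{dt}\|X\|_{\dot{H}^{5/2}}^2+c\|X\|_{\dot{H}^3}^2\le P(\|X\|_{\dot{H}^{5/2}})$, yielding a lifespan $T_0$ and the bounds in \eqref{eqn: a priori estimate for the local solution in the main theorem}; the estimate on $X_t$ follows from $X_t=U_X$ and the mapping properties of $U_X$.

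In parallel I would propagate the geometric condition \eqref{eqn: well-stretched condition}. Since $X_t=U_X\in L^2_{T_0}\dot{H}^2\hookrightarrow L^2_{T_0}C^1(\mathbb{T})$, for any $s_1,s_2$ one has $\big|\tfrac{d}{dt}\big(X(s_1,t)-X(s_2,t)\big)\big|\le \|\partial_s X_t(\cdot,t)\|_{L^\infty}\,|s_1-s_2|$, so integrating in time and using Cauchy--Schwarz gives $\tfrac{|X(s_1,t)-X(s_2,t)|}{|s_1-s_2|}\ge \lambda-\sqrt{t}\,\|X_t\|_{L^2_{T_0}\dot{H}^2}$. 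Shrinking $T_0$ if necessary keeps the right-hand side above $\lambda/2$, establishing \eqref{eqn: uniform bi lipschitz constant of the local solution in the main theorem}; this is bootstrapped with the energy estimate so that the constant $c(\lambda,\cdot)$ used above stays uniform on $[0,T_0]$. With the $\nu$-uniform bounds, I would pass to the limit $\nu\to 0$ by compactness (Aubin--Lions) to obtain a solution in the stated class.

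Finally, uniqueness and continuous dependence follow from a low-order difference estimate: for two solutions $X^{(1)},X^{(2)}$ I would control $\tfrac{d}{dt}\|X^{(1)}-X^{(2)}\|_{L^2}^2$ (or in $\dot{H}^{1/2}$), bounding the difference of the two velocity operators by $\|X^{(1)}-X^{(2)}\|$ in the same norm times a constant depending on the higher norms already controlled and on the bi-Lipschitz bounds, then closing by Gr\"onwall. The main obstacle is the top-order energy estimate: cleanly extracting the self-adjoint negative principal operator from the fully nonlinear singular integral and showing it dominates every commutator and remainder term. This is where the chord-arc condition \eqref{eqn: well-stretched condition} and the commutator estimates do the real work, and where the heuristic order-one parabolicity must be made rigorous.
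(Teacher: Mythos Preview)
This proposition is not proved in the present paper; it is quoted from the earlier works \cite{lin2017solvability,tong2018thesis}. What the paper does provide (Section~\ref{section: scheme of the proof}) is the skeleton of that proof: one writes $U_X = \mathcal{L}X + g_X$ with $\mathcal{L} = -\tfrac14(-\Delta)^{1/2}$ an \emph{explicit, constant-coefficient} operator, and shows that the nonlinear remainder $g_X$ is genuinely smoother than $\mathcal{L}X$; the energy argument then closes with $g_X$ treated as a source term rather than through commutator estimates.

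Your plan is correct in outline and would succeed, but it differs from the cited approach in one structural point worth noting. You describe the principal part as $-c(\lambda,\|X\|)\,\Lambda X$ with a solution-dependent coefficient, and propose to control the discrepancy via Kato--Ponce-type commutator bounds. In fact the linearization at the diagonal yields the \emph{universal} constant $c=\tfrac14$: in the Stokeslet, $-\ln|X(s)-X(s')|$ contributes $-\ln|s-s'|$ plus the bounded term $-\ln|L(s,s')|$ (lower order after differentiation), while the rank-one part $x\otimes x/|x|^2$ contributes only a smooth remainder. Consequently $g_X$ is not a commutator but a bona fide lower-order term; see the explicit formula for $g_Y$ and its derivatives in Lemma~\ref{lemma: formula for g_Y'} and the $\dot H^2$ bound in Lemma~\ref{lemma: improved H2 estimate for g_X} of Appendix~\ref{section: a priori estimate for g_Y}. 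This buys a cleaner argument---no variable-coefficient principal symbol, no commutator machinery---and delivers the sharp numerical constants in \eqref{eqn: a priori estimate for the local solution in the main theorem}. Your regularization by artificial viscosity, your propagation of the chord-arc bound via $X_t\in L^2_T\dot H^2\hookrightarrow L^2_T C^1$, and the low-norm Gr\"onwall argument for uniqueness and continuous dependence are all consistent with the approach in the cited references.
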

\noindent Here \eqref{eqn: well-stretched condition} is called \emph{the well-stretched condition}, and
\begin{equation}
C_{[0,T_0]}H^{5/2}(\mathbb{T}) = C([0,T_0]; H^{5/2}(\mathbb{T})),\quad L^2_{T_0}H^{3}(\mathbb{T}) = L^2([0,T_0]; H^{3}(\mathbb{T})).
\end{equation}
We also prove global well-posedness of \eqref{eqn: contour dynamic equation for the singular problem} when $X_0$ is sufficiently close to an equilibrium, which is an evenly parameterized circular configuration.
Moreover, such solution converges exponentially to an equilibrium.
Regularity of $u$ recovered from $X(s,t)$ is carefully studied in \cite{tong2018thesis}. %, and the proofs of the global well-posedness and asymptotics get improved.
In a parallel work, Mori et al.\;\cite{mori2017well} establish similar local and global well-posedness results for \eqref{eqn: contour dynamic equation for the singular problem} in $C^{1,\alpha}$-spaces.
They also show improved regularity of $X(s,t)$ for positive time and a blowup criterion.
When there is no blowup, they characterize global behavior of the solution.
Rodenberg \cite{rodenberg20182d} proves local well-posedness of \eqref{eqn: Stokes equation}-\eqref{eqn: motion of the membrane and initial configuration} with elastic force $F_X$ of general form.

In the general immersed boundary problem, the stationary Stokes equation needs to be replaced by the Navier-Stokes equations.
%However, the former allows the reduction to the contour dynamic equation \eqref{eqn: contour dynamic equation for the singular problem} and make its analysis simpler.
To the best of our knowledge, its well-posedness is still open.

\subsection{The $\varepsilon$-regularized 2-D Stokes immersed boundary problem}
\label{section: regularized Stokes IB problem}
% in two dimensions and its regularized version}
Now we introduce \emph{the $\varepsilon$-regularized 2-D Stokes immersed boundary problem} as follows:
\begin{align}
&\;-\Delta u^\varepsilon+ \nabla p^\varepsilon = f^\varepsilon(x,t),\label{eqn: Stokes equation in the regularized IB problem}\\
&\;\mathrm{div}\, u^\varepsilon = 0,\quad |u^\varepsilon|,|p^\varepsilon|\rightarrow 0\mbox{ as }|x|\rightarrow \infty,\label{eqn: incompressible condition in the regularized IB}\\
&\;f^\varepsilon(x,t) = \int_{\mathbb{T}}F_{X^\varepsilon}(s,t)\delta_\varepsilon(x-X^\varepsilon(s,t))\,ds,\label{eqn: regularization of forcing}\\
&\;\frac{\partial X^\varepsilon}{\partial t}(s,t) = \int_{\mathbb{R}^2} u^\varepsilon(x,t)\delta_\varepsilon(X^\varepsilon(s,t)-x)\,dx\triangleq U^\varepsilon_{X^\varepsilon}(s,t),\quad%\label{eqn: regularized motion of the membrane}\\
X^\varepsilon(s,0) = X_0(s).\label{eqn: regularized motion of the membrane and initial configuration}
\end{align}
Apart from the minor difference in notation, compared to \eqref{eqn: Stokes equation}-\eqref{eqn: motion of the membrane and initial configuration},
the singular $\delta$-measure is now replaced by $\delta_\varepsilon$, a regularized approximation of the Dirac $\delta$-measure defined by
\begin{equation}
\delta_\varepsilon(x) = \frac{1}{\varepsilon^2}\phi\left(\frac{x}{\varepsilon}\right).
\label{eqn: def of delta_eps}
\end{equation}
Here $\phi$ is sufficiently regular and compactly supported;
%with (most of) its weight on a ball of size $O(\varepsilon)$.
%%As $\varepsilon\rightarrow 0$, we need $\delta_\varepsilon\rightarrow \delta$ in the sense of distribution; other
more assumptions on $\phi$ will be specified later. % in order not to distract readers.
With $\delta_\varepsilon$, the elastic force along the string is mollified to become regular and it is spread to a small neighborhood of the string.
Besides that, the string velocity in \eqref{eqn: regularized motion of the membrane and initial configuration} is now determined by averaging the ambient flow field in a small neighborhood of the string using the same function $\delta_\varepsilon$, as opposed to setting the string velocity to be exactly equal to the fluid velocity at that point in \eqref{eqn: motion of the membrane and initial configuration}.
As in the un-regularized case, $u^\varepsilon$, $p^\varepsilon$, and $U_{X^\varepsilon}^\varepsilon$ are fully determined by $X^\varepsilon$ at the present time.
%Compared to the problem \eqref{eqn: Stokes equation}-\eqref{eqn: motion of the membrane and initial configuration} with no regularization, the $\varepsilon$-regularized system evolves with a slightly different dynamics, which may be viewed as a formal approximation of the former one.
Formally, the $\varepsilon$-regularized problem approximates the un-regularized one.

%As mentioned before, t
The $\varepsilon$-regularized problem is motivated by the numerical immersed boundary method.
The numerical method involves spatial discretization of the flow field using Eulerian grid, and parameterization of the immersed elastic structures using Lagrangian coordinates.
Flow field is solved on the Eulerian grid, while elastic force and velocity of the elastic object are evaluated only on the Lagrangian marker points.
However, these two sets of coordinates do not agree in general.
In order to let them communicate, before the flow field is computed in each time step, the elastic force needs to be spread from the Lagrangian marker points to adjacent Eulerian grid points in a suitable way, while after the flow field is solved on the Eulerian grid, motion of the immersed structure, or the velocity at the Lagrangian points, needs to be determined via interpolation.
In practice, such spreading and interpolation are realized by a smoothed approximation of the Dirac $\delta$-function.
See e.g.~\cite{peskin2002immersed,mittal2005immersed} for more details.
Therefore, it is natural to %mimic this in the continuous setting by
introduce a similar regularization to the PDE problem, which can be heuristically viewed as the continuous system discretized and computed by the numerical method.
It is would be interesting to rigorously study the regularized problem and figure out whether and how it approximates the original problem,
as this can shed light on the analysis and justification of the numerical immersed boundary method in the fully discrete setting.

Regularization of singular physical quantities or singular integral kernels is also seen in other numerical methods,
such as the vortex methods \cite{leonard1980vortex} and the method of regularized Stokeslet \cite{cortez2001method,cortez2005method}.

%smoothed particle hydrodynamics.

\subsection{Main results}\label{section: main results}
Unless otherwise stated (for example in Section \ref{section: static error estimates}), we shall focus on the case where the string has Hookean elasticity with normalized Hooke's constant, i.e., %the elastic force is determined by
$F_Y = Y_{ss}$.
We first prove global well-posedness of the $\varepsilon$-regularized problem \eqref{eqn: Stokes equation in the regularized IB problem}-\eqref{eqn: regularized motion of the membrane and initial configuration}.

\begin{theorem}[Global well-posedness]
\label{thm: global well-posedness of the regularized problem}

Assume $\phi$, the profile of the regularized $\delta$-function in \eqref{eqn: def of delta_eps}, is compactly supported in a ball $B_{c_0}(0)\subset\mathbb{R}^2$ centered at the origin with radius $c_0$, satisfying that $\phi(x) = \phi(-x)$ for all $x\in \mathbb{R}^2$.
Fix $\varepsilon>0$.
\begin{enumerate}
  \item If $X_0\in H^1(\mathbb{T})$ and $\phi\in W^{2,1}(\mathbb{R}^2)$,
  \eqref{eqn: Stokes equation in the regularized IB problem}-\eqref{eqn: regularized motion of the membrane and initial configuration} admits a unique global solution, such that $X^\varepsilon\in C_{[0,+\infty)}^1H^1(\mathbb{T})$, and $\nabla u^\varepsilon\in Lip([0,+\infty); L^2(\mathbb{R}^2))$.

\item For any $\beta>1$, if $X_0\in H^\beta(\mathbb{T})$ and $\phi\in C^{\lceil\beta\rceil,1}(\mathbb{R}^2)$, \eqref{eqn: Stokes equation in the regularized IB problem}-\eqref{eqn: regularized motion of the membrane and initial configuration} admits a unique global solution, such that $X^\varepsilon\in C_{[0,+\infty),loc}^1H^\beta(\mathbb{T})$, and $\nabla u^\varepsilon\in Lip([0,+\infty); L^2(\mathbb{R}^2))$.
\end{enumerate}
\end{theorem}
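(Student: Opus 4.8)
The plan is to recast the coupled system as a single non-local evolution equation for the string, exploiting the fact that for fixed $\varepsilon>0$ the mollification completely removes the singularity of the Stokeslet. First I would eliminate $u^\varepsilon$ and $p^\varepsilon$: solving \eqref{eqn: Stokes equation in the regularized IB problem}--\eqref{eqn: incompressible condition in the regularized IB} via the Stokeslet gives $u^\varepsilon = G * f^\varepsilon$, and substituting \eqref{eqn: regularization of forcing} into \eqref{eqn: regularized motion of the membrane and initial configuration}, using the evenness of $\phi$, yields the contour dynamics
\[
\partial_t X^\varepsilon(s,t) = \int_{\mathbb{T}} G_\varepsilon\big(X^\varepsilon(s,t)-X^\varepsilon(s',t)\big)\,F_{X^\varepsilon}(s',t)\,ds', \qquad G_\varepsilon := \delta_\varepsilon * G * \delta_\varepsilon.
\]
In the Hookean case $F_{X^\varepsilon}=X^\varepsilon_{ss}$ I would integrate by parts in $s'$ to obtain an expression involving only $\nabla G_\varepsilon$ and first derivatives of $X^\varepsilon$, which is already well defined for $X^\varepsilon\in H^1$.

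The decisive point is that $G_\varepsilon$ is smooth. I would record that, since $G\in L^1_{loc}$ and $\delta_\varepsilon$ is compactly supported with the stated regularity, $G_\varepsilon$ and its derivatives are globally bounded on $\mathbb{R}^2$, with bounds depending on $\varepsilon$ and on $\|\phi\|_{W^{2,1}}$ (resp.\ $\|\phi\|_{C^{\lceil\beta\rceil,1}}$); the logarithmic singularity at the origin is erased by the double convolution. In particular no well-stretched condition is needed, as $G_\varepsilon$ stays bounded even where the string self-intersects. This turns the right-hand side into a genuine locally Lipschitz vector field $\mathcal N$ on $H^\beta(\mathbb{T})$ (for $\beta=1$ or $\beta>1$), its Lipschitz constant controlled by $\|X^\varepsilon\|_{H^\beta}$ through composition and product (Moser-type) estimates. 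Local existence, uniqueness, and $C^1$-in-time regularity then follow from the contraction mapping principle (Picard iteration) in the appropriate space.

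To upgrade to global existence I would close an a priori estimate. The structural ingredient is dissipativity: writing $E(t)=\tfrac12\|X^\varepsilon_s(t)\|_{L^2}^2$ and differentiating,
\[
\frac{dE}{dt} = -\int_{\mathbb{T}}\!\!\int_{\mathbb{T}} X^\varepsilon_{ss}(s)^{\top} G_\varepsilon\big(X^\varepsilon(s)-X^\varepsilon(s')\big) X^\varepsilon_{ss}(s')\,ds\,ds' \le 0,
\]
where the sign comes from recognizing the right-hand side as the Stokes energy $\int_{\mathbb{R}^2}\!\int_{\mathbb{R}^2} g^{\top} G(x-y) g\,dx\,dy$ of the spread force $g=\int_{\mathbb{T}}\delta_\varepsilon(\cdot-X^\varepsilon(s))X^\varepsilon_{ss}(s)\,ds$, which is nonnegative since the Stokeslet has positive-semidefinite symbol $|\xi|^{-2}(Id-\xi\otimes\xi/|\xi|^2)$; the evenness of $\phi$ is used once more here. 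Together with a bound on the center of mass, this controls $\|X^\varepsilon\|_{H^1}$ globally, giving case~1. For case~2 I would propagate the higher norm by a Gr\"onwall argument: differentiating $\|X^\varepsilon\|_{\dot H^\beta}^2$, the top-order contribution reduces to $\int|\partial_s^\beta X^\varepsilon|^2$ paired against a bounded kernel factor, so that $\tfrac{d}{dt}\|X^\varepsilon\|_{\dot H^\beta}^2 \le C(\varepsilon,\|\phi\|,\|X^\varepsilon\|_{H^1})\|X^\varepsilon\|_{\dot H^\beta}^2$, and the already-established global $H^1$ bound keeps the coefficient finite, ruling out finite-time blowup. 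Finally, $u^\varepsilon=G*f^\varepsilon$ with $f^\varepsilon$ depending smoothly on the (now Lipschitz-in-time) $X^\varepsilon$ yields $\nabla u^\varepsilon\in Lip([0,\infty);L^2)$.

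I expect the main obstacle to be the higher-order estimate in case~2: carrying out the fractional product/composition estimates for the nonlinear term so that the highest derivative of $X^\varepsilon$ enters only linearly — commuting it off by integration by parts in $s'$ — while simultaneously tracking precisely how much smoothness of $\phi$, and hence of $G_\varepsilon$, is consumed, which is exactly what pins down the hypothesis $\phi\in C^{\lceil\beta\rceil,1}$.
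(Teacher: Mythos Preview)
Your strategy matches the paper's: treat the string evolution as an ODE in $H^\beta(\mathbb{T})$, obtain local well-posedness by Picard iteration via local Lipschitz estimates on the velocity map, use the energy dissipation $\tfrac{d}{dt}\|X^\varepsilon_s\|_{L^2}^2\le 0$ to get a global $H^1$ bound, and then propagate higher norms by Gr\"onwall. The packaging differs slightly: you collapse everything into a contour-dynamics equation with kernel $G_\varepsilon=\delta_\varepsilon*G*\delta_\varepsilon$, whereas the paper keeps the intermediate objects explicit and argues in three stages ($Y\mapsto f^\varepsilon\mapsto u^\varepsilon\mapsto U^\varepsilon_Y$), deriving the energy law in the form $\tfrac{1}{2}\tfrac{d}{dt}\|X^\varepsilon_s\|_{L^2}^2=-\|\nabla u^\varepsilon\|_{L^2(\mathbb{R}^2)}^2$, which makes the sign immediate without appealing to the symbol.

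One correction is needed: $G_\varepsilon$ is \emph{not} globally bounded on $\mathbb{R}^2$ --- it inherits the logarithmic growth at infinity from the 2-D Stokeslet, since convolution with a compactly supported mollifier only smooths the local singularity. This does not wreck your argument, because after your integration by parts only $\nabla G_\varepsilon$ and higher derivatives appear, and those \emph{are} globally bounded; but you should say so explicitly. The paper handles the same issue differently: it uses $H^1(\mathbb{T})\hookrightarrow C(\mathbb{T})$ to confine $X^\varepsilon(\mathbb{T})$ to a ball of radius $C\|X'\|_{L^2}$, notes that $f^\varepsilon$ has zero mean (being a divergence), and then bounds $\|u^\varepsilon\|_{L^\infty}$ on that ball via $\|G\|_{L^2_{loc}}\|f^\varepsilon\|_{L^2}$. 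Either fix works, but your sentence ``$G_\varepsilon$ and its derivatives are globally bounded'' should be amended.
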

\begin{remark}
Here the smoothness assumptions on $\phi$ may not be the sharpest.
Yet, it is noteworthy that the 4-point regularized $\delta$-function commonly used in the numerical immersed boundary method \cite{peskin2002immersed} admits $W^{2,1}$-regularity. %, %which is more regular than being $W^{2,1}(\mathbb{R}^2)$.

\end{remark}

In \cite{tong2018thesis}, we prove Theorem \ref{thm: global well-posedness of the regularized problem} in special cases $\beta = 1$ and $\beta = 5/2$. %, assuming $\delta_\varepsilon$ to be compactly supported.
In fact, we also show global well-posedness for the regularized problem with full Navier-Stokes equation.
The proof of Theorem \ref{thm: global well-posedness of the regularized problem} for other $\beta$ is a straightforward generalization.
We shall present the whole proof in Section \ref{section: well-posedness of regularized problem} for completeness.

Since $\delta_\varepsilon$ approximates the Dirac $\delta$-function in distribution, it is natural to believe that as $\varepsilon \rightarrow 0$, %the string dynamics
$X^\varepsilon(s,t)$ determined by \eqref{eqn: Stokes equation in the regularized IB problem}-\eqref{eqn: regularized motion of the membrane and initial configuration}
%of the $\varepsilon$-regularized problem
%\eqref{eqn: Stokes equation in the regularized IB problem}-\eqref{eqn: regularized motion of the membrane and initial configuration}
should converges in certain sense to the solution $X(s,t)$ of \eqref{eqn: contour dynamic equation for the singular problem}, provided that they start from identical (or converging) initial data. % $X_0$.
%In fact, the numerical method is proposed with this belief.
In fact, we can show that
\begin{theorem}[Convergence and error estimates of the $\varepsilon$-regularized problem]
\label{thm: error estimates of the regularized problem}

Assume %the profile of the regularized $\delta$-function
$\phi\in C_0^\infty(\mathbb{R}^2)$ satisfies that
\begin{itemize}
\item $\phi$ is radially symmetric; % i.e., for any $x,y\in\mathbb{R}^2$, $\phi(x) = \phi(y)$ whenever $|x| = |y|$;
\item $\phi$ is normalized, i.e., $\int_{\mathbb{R}^2} \phi(x)\,dx = 1$.
\end{itemize}
Define
\begin{equation}
m_1 %= \int_{\mathbb{R}^2} |x|\varphi(x)\,dx
= \int_{\mathbb{R}^2} |x|\cdot\phi*\phi(x)\,dx,
\label{eqn: def of M_1}
\end{equation}
and
\begin{equation}\label{eqn: def of M_2}
m_2 %= \int_{\mathbb{R}^2}|x|^2\varphi(x)\,dx
= \int_{\mathbb{R}^2}|x|^2\cdot \phi*\phi(x)\,dx.
\end{equation}

Fix $\theta\in [\frac{1}{4},1)$.
Suppose $X_0\in H^{2+\theta}(\mathbb{T})$ satisfies the well-stretched condition \eqref{eqn: well-stretched condition} with $\lambda>0$.
Suppose $X\in C_{[0,T]}H^{2+\theta}(\mathbb{T})$ is a (local) solution of %the string motion in the original Stokes immersed boundary problem \eqref{eqn: Stokes equation}-\eqref{eqn: motion of the membrane and initial configuration}, which is determined by
the contour dynamic equation \eqref{eqn: contour dynamic equation for the singular problem} of the original problem for some $T>0$.
Let $X^\varepsilon\in C_{[0,T]}H^{2+\theta}(\mathbb{T})$ be the unique solution of the string motion in the $\varepsilon$-regularized problem \eqref{eqn: Stokes equation in the regularized IB problem}-\eqref{eqn: regularized motion of the membrane and initial configuration}.
Assume that for all $\varepsilon\ll \lambda $ and $t\in [0,T]$,
\begin{enumerate}[label=(\roman*)]
  \item \label{assumption: uniform boundedness} $\|X^\varepsilon(\cdot,t)\|_{\dot{H}^{2+\theta}(\mathbb{T})}\leq M$, and $\|X(\cdot,t)\|_{\dot{H}^{2+\theta}(\mathbb{T})}\leq M$;
  \item \label{assumption: uniform stretching} $X^\varepsilon(\cdot,t)$ and $X(\cdot,t)$  satisfy the well-stretched condition \eqref{eqn: well-stretched condition} with constant $\lambda/2$.
\end{enumerate}
Then as $ \varepsilon\rightarrow 0$,
\begin{equation}\label{eqn: weak star convergence with assumption on the uniform bound}
X^\varepsilon\rightharpoonup X\quad \mbox{weak-* in } C_{[0,T]}H^{2+\theta}(\mathbb{T}),
\end{equation}
and
\begin{equation}\label{eqn: convergence with assumption on the uniform bound}
X^\varepsilon\rightarrow X\quad \mbox{ in } C_{[0,T]}H^{\gamma}(\mathbb{T}).
\end{equation}
for all $\gamma<2+\theta$.
More precisely, define $\tilde{\varepsilon} = \varepsilon/\lambda$ to be the normalized regularization parameter. % such that $\ln \left(\frac{\varepsilon}{\lambda}\right)\ll \ln\left(\frac{\lambda}{M}\right)$.
Then for $\tilde{\varepsilon}\ll 1$, with $C = C(\theta,\lambda^{-1}M,T)$,
\begin{align}
\|X^\varepsilon-X\|_{C_{[0,T]}H^{1/2}(\mathbb{T})}\leq &\;C\left(m_1 \tilde{\varepsilon} +\tilde{\varepsilon}^{1+\theta} |\ln\tilde{\varepsilon}|^\theta\right)M,
\label{eqn: error estimate H gamma greater than 1/2}\\
\|X^\varepsilon-X\|_{C_{[0,T]}\dot{H}^{1}(\mathbb{T})}\leq&\; C|\ln \tilde{\varepsilon}|^{\frac{1}{2}}\left(m_1 \tilde{\varepsilon} +\tilde{\varepsilon}^{1+\theta}|\ln\tilde{\varepsilon}|^\theta\right)M,
\label{eqn: error estimate H gamma between 1 and 2}\\
\|X^\varepsilon-X\|_{C_{[0,T]}\dot{H}^{2}(\mathbb{T})}\leq &\;C|\ln \tilde{\varepsilon}|^{\frac{1}{2}}\tilde{\varepsilon}^{\theta} M,
\label{eqn: error estimate H gamma greater than 2}
\end{align}
and $\|X^\varepsilon-X\|_{C_{[0,T]}\dot{H}^{2+\theta}(\mathbb{T})}\leq CM$.
Estimates in intermediate $H^\gamma$-spaces can be derived by interpolation.

If $m_2 = 0$, the logarithmic factors $|\ln\tilde{\varepsilon}|^\theta$ in \eqref{eqn: error estimate H gamma greater than 1/2} and \eqref{eqn: error estimate H gamma between 1 and 2} can be removed.
%\textcolor{red}{Error in $u$ and $p$}
\end{theorem}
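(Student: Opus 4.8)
\emph{Proof proposal.} The plan is to reduce both problems to contour equations with the \emph{same} structure but different kernels and contours, isolate a consistency (truncation) error from a stability term, estimate the consistency error in a scale of Sobolev norms, and close by an energy/Gr\"onwall argument exploiting the dissipative structure inherited from Proposition~\ref{prop: local well-posedness of the singular problem}.

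First I would reformulate the regularized dynamics. Solving \eqref{eqn: Stokes equation in the regularized IB problem}--\eqref{eqn: incompressible condition in the regularized IB} by the Stokeslet gives $u^\varepsilon=G*f^\varepsilon$, and inserting this into \eqref{eqn: regularized motion of the membrane and initial configuration} with $F_{X^\varepsilon}=X^\varepsilon_{ss}$ yields, for any contour $Y$,
\[
U^\varepsilon_Y(s,t)=\int_{\mathbb{T}}G_\varepsilon\big(Y(s,t)-Y(s',t)\big)\,Y_{s's'}(s',t)\,ds',\qquad G_\varepsilon:=(\delta_\varepsilon*\delta_\varepsilon)*G .
\]
Since $(\delta_\varepsilon*\delta_\varepsilon)(z)=\varepsilon^{-2}\Phi(z/\varepsilon)$ with $\Phi:=\phi*\phi$, the relevant mollifier is exactly $\Phi$, which is why the moments $m_1,m_2$ in \eqref{eqn: def of M_1}--\eqref{eqn: def of M_2} are taken against $\phi*\phi$. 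Setting $W:=X^\varepsilon-X$ (so $W|_{t=0}=0$) and subtracting the contour equation \eqref{eqn: contour dynamic equation for the singular problem} from its regularized counterpart, I split
\[
\partial_t W=\underbrace{\big(U^\varepsilon_{X^\varepsilon}-U^\varepsilon_X\big)}_{=:\,\mathcal S,\ \text{stability}}+\underbrace{\big(U^\varepsilon_X-U_X\big)}_{=:\,\mathcal E,\ \text{consistency error}} .
\]

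Next comes the core estimate on $\mathcal E$, which probes $G_\varepsilon-G=\Phi_\varepsilon*G-G$ integrated against $X_{s's'}$. Splitting the Stokeslet \eqref{eqn: stokeslet} into its logarithmic part and its degree-zero, direction-discontinuous part $z\otimes z/|z|^2$, one sees two distinct mechanisms. Mollifying the $z\otimes z/|z|^2$ term across the near-diagonal region $|s-s'|\lesssim\varepsilon/\lambda$ (localized via the well-stretched condition, assumption~\ref{assumption: uniform stretching}) produces the leading contribution of size $m_1\tilde\varepsilon$, with coefficient given precisely by the first absolute moment $m_1$; mollifying the $\ln|z|$ part is what generates the logarithmic factors $|\ln\tilde\varepsilon|$, whose strength is governed by the second moment $m_2$, so that $m_2=0$ removes them. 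The remainder, controlled by the $H^{2+\theta}$ (equivalently $C^{1,1+\theta}$-type) regularity of $X$ through assumption~\ref{assumption: uniform boundedness}, contributes $\tilde\varepsilon^{1+\theta}|\ln\tilde\varepsilon|^\theta$. Rescaling $z=\varepsilon\zeta$, Taylor-expanding $X$ about $s$, and carefully tracking the two length scales $\varepsilon$ and $\lambda$ (hence $\tilde\varepsilon=\varepsilon/\lambda$), I expect to obtain, uniformly in $t\in[0,T]$,
\[
\|\mathcal E\|_{L^2}\lesssim\big(m_1\tilde\varepsilon+\tilde\varepsilon^{1+\theta}|\ln\tilde\varepsilon|^\theta\big)M,\quad
\|\mathcal E\|_{\dot{H}^{1/2}}\lesssim|\ln\tilde\varepsilon|^{1/2}\big(m_1\tilde\varepsilon+\tilde\varepsilon^{1+\theta}|\ln\tilde\varepsilon|^\theta\big)M,\quad
\|\mathcal E\|_{\dot{H}^{3/2}}\lesssim|\ln\tilde\varepsilon|^{1/2}\tilde\varepsilon^{\theta}M,
\]
the pattern being that each additional half-derivative costs a factor $|\ln\tilde\varepsilon|^{1/2}$ and that the highest norm is dominated by the rough remainder, so only $\tilde\varepsilon^\theta$ survives there.

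To close, I would use that the linearization $Y\mapsto U^\varepsilon_Y$ about $X$ retains the order-one dissipative principal part $-c\Lambda$ (with $\Lambda:=(-\partial_s^2)^{1/2}$) of the un-regularized operator, uniformly for $\tilde\varepsilon\ll1$ by assumptions~\ref{assumption: uniform boundedness}--\ref{assumption: uniform stretching}; this is the half-derivative smoothing seen in the $C_{[0,T_0]}\dot H^{5/2}\cap L^2_{T_0}\dot H^3$ estimate of Proposition~\ref{prop: local well-posedness of the singular problem}. Then $\mathcal S$ supplies a coercive dissipation modulo lower-order terms, and testing $\partial_t W=\mathcal S+\mathcal E$ in $\dot H^\sigma$ gives
\[
\tfrac12\tfrac{d}{dt}\|W\|_{\dot{H}^\sigma}^2+c\,\|W\|_{\dot{H}^{\sigma+1/2}}^2\le\|\mathcal E\|_{\dot{H}^{\sigma-1/2}}\,\|W\|_{\dot{H}^{\sigma+1/2}}+C\|W\|_{\dot{H}^\sigma}^2 .
\]
Absorbing the first term by Young's inequality into the dissipation and applying Gr\"onwall on $[0,T]$ with $W|_{t=0}=0$ yields $\|W\|_{C_{[0,T]}\dot H^\sigma}\lesssim\|\mathcal E\|_{L^2_T\dot H^{\sigma-1/2}}$; taking $\sigma=\tfrac12,1,2$ and inserting the bounds above produces \eqref{eqn: error estimate H gamma greater than 1/2}--\eqref{eqn: error estimate H gamma greater than 2} (the $T$-dependence of $C$ entering through Gr\"onwall and the $T^{1/2}$ in time). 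The estimate $\|W\|_{C_{[0,T]}\dot H^{2+\theta}}\le CM$ is the triangle inequality applied to assumption~\ref{assumption: uniform boundedness}, and intermediate $\gamma$ follow by interpolation. Finally, \eqref{eqn: weak star convergence with assumption on the uniform bound}--\eqref{eqn: convergence with assumption on the uniform bound} follow from assumption~\ref{assumption: uniform boundedness} (Banach--Alaoglu gives weak-$*$ limit points in $C_{[0,T]}H^{2+\theta}$) together with the strong convergence just obtained, which pins every limit point to $X$. I expect the main obstacle to be the consistency analysis of paragraph three: the naive pointwise bounds on $G_\varepsilon-G$ are too lossy, so extracting the \emph{exact} leading coefficient $m_1\tilde\varepsilon$, the sharp remainder exponent $1+\theta$, the correct powers of $|\ln\tilde\varepsilon|$, and the cancellation of the logarithm when $m_2=0$ requires a careful multiscale decomposition of the mollified Stokeslet that exploits the evenness of $\Phi$ and the principal-value structure of the contour integral. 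A secondary difficulty is confirming that the regularized operator keeps its order-one coercivity uniformly as $\varepsilon\to0$, which is what converts $\mathcal E$-in-$\dot H^{\sigma-1/2}$ into $W$-in-$\dot H^{\sigma}$.
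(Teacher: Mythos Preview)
Your overall architecture (consistency error plus stability, closed by a parabolic energy estimate) is right, but the specific splitting you chose creates a genuine gap. You put the consistency error at the \emph{un-regularized} solution $X$ and ask the \emph{regularized} map $Y\mapsto U^\varepsilon_Y$ to supply the order-one dissipation $-c\Lambda$. That last claim is exactly what fails: once $|k|\gtrsim \tilde\varepsilon^{-1}$, the multiplier coming from $\hat\varphi(\varepsilon\,\cdot)$ collapses and the regularized operator loses its dissipative principal part at high frequencies. This is precisely the mechanism behind Remark~\ref{rmk: artificial bound on H2.5 norm}, and it is the reason assumptions~\ref{assumption: uniform boundedness}--\ref{assumption: uniform stretching} have to be \emph{imposed} rather than derived. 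So your displayed energy inequality with $+c\|W\|_{\dot H^{\sigma+1/2}}^2$ on the left is not available uniformly in $\varepsilon$, and the ``secondary difficulty'' you flag is in fact the obstruction.

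The paper circumvents this by reversing the roles: it writes $\partial_t X^\varepsilon=\mathcal L X^\varepsilon+g_{X^\varepsilon}+(U^\varepsilon_{X^\varepsilon}-U_{X^\varepsilon})$ with $\mathcal L=-\tfrac14(-\Delta)^{1/2}$ coming from the \emph{un-regularized} problem, so that the difference equation reads
\[
\partial_t W=\mathcal L W+(g_{X^\varepsilon}-g_X)+(U^\varepsilon_{X^\varepsilon}-U_{X^\varepsilon}).
\]
The dissipation is now the honest $\mathcal L$, the stability term is the Lipschitz estimate for the un-regularized remainder $g$ (Lemmas~\ref{lemma: improved L2 estimate for g_X1-g_X2}--\ref{lemma: improved H1 estimate for g_X1-g_X2}), and the consistency error $U^\varepsilon_{X^\varepsilon}-U_{X^\varepsilon}$ is evaluated at $X^\varepsilon$, which is fine because the static Proposition~\ref{prop: static regularization error estimate for Hookean case} applies to any well-stretched configuration. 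Two further points: the static estimate is proved only in $L^2$ and $\dot H^1$ (not the $\dot H^{1/2}$ and $\dot H^{3/2}$ you posited), and the extra $|\ln\tilde\varepsilon|^{1/2}$ in the final bounds does \emph{not} come from the consistency error itself but from a high/low frequency splitting $\mathcal P_N+\mathcal Q_N$ with $N\sim\tilde\varepsilon^{-1}$ in the time-integration step (Lemma~\ref{lemma: a priori estimate of nonlocal eqn}); the $\mathcal Q_N$ piece is handled directly by assumption~\ref{assumption: uniform boundedness}. Your attribution of the logarithms to the $\ln|z|$ part of the Stokeslet is also off: in the paper the $|\ln\tilde\varepsilon|^\theta$ in the static $L^2$ bound arises from an internal frequency cutoff $K\sim \lambda/(\varepsilon\ln(\lambda/\varepsilon))$ when isolating the leading $m_1\tilde\varepsilon$ term, and $m_2=0$ improves the decay of the auxiliary functions $f_2,f_3$ (Lemma~\ref{lemma: decay estimate of f_1 f_2 f_3}), which is what removes those factors.
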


\begin{remark}
The smoothness assumption on $\phi$ may be weaken.
The radial symmetry of $\phi$ is not essential, but it simplifies the analysis significantly (see Section \ref{section: contour dynamic formulation}).
Note that in the numerical immersed boundary method, the regularized $\delta$-functions are in the form of product of one-dimensional profiles \cite{peskin2002immersed}, which are not radially symmetric unless they are of Gaussian-type.
\end{remark}
\begin{remark}\label{rmk: artificial bound on H2.5 norm}
The assumptions \ref{assumption: uniform boundedness} and \ref{assumption: uniform stretching} are crucial and they can not be removed.
Unfortunately, in this work, we are not able to rigorously prove them or construct an example in which either of them fails.
%Yet, a possible scenario of failure of assumption \ref{assumption: uniform boundedness} will be sketched in Section \ref{section: discussion}.
We claim that this stems from an essential difference between the regularized and the un-regularized problems, which we heuristically explain as follows.
In the un-regularized problem, it has been shown that the string velocity can effectively damp high frequencies in the string configuration \cite{lin2017solvability}.
In the $\varepsilon$-regularized problem, however, the string velocity $U_{X^\varepsilon}^\varepsilon$  is obtained by first mollifying the flow field and then making restriction onto the string.
In this process, high-frequency information of $X^\varepsilon$ that is encoded in the flow field gets almost eliminated, especially for those frequencies higher than $O(\tilde{\varepsilon}^{-1})$.
As a result, although $U_{X^\varepsilon}^\varepsilon$ may approximate $U_{X^\varepsilon}$ pretty well over low frequencies with wave numbers up to $O(\tilde{\varepsilon}^{-1})$,
it is not clear if the former one can damp higher frequencies in $X^\varepsilon$ as the latter one does.
%which leads to
It is then possible that high frequencies in $X^\varepsilon$ may grow without being well-controlled.
In this work, instead of dealing with this subtle issue, we made assumptions \ref{assumption: uniform boundedness} and \ref{assumption: uniform stretching} to simplify our analysis.
\end{remark}

\begin{remark}
Theorem \ref{thm: error estimates of the regularized problem} indicates that in general, over the time interval $[0,T]$,
$\|X^\varepsilon-X\|_{H^1(\mathbb{T})}$ is of order $\tilde{\varepsilon}$ up to logarithmic factors. %, which roughly implies first-order accuracy.
This roughly agrees with the well-known fact that the original numerical immersed boundary method can only achieve first-order accuracy in the vicinity of %embedded boundaries when tracking motion of a
a truly lower-dimensional deformable object \cite{beyer1992analysis},
although higher accuracy may be attained in smoother problems \cite{griffith2005order,griffith2007adaptive} or via sophisticated extension techniques \cite{stein2016immersed,stein2017immersed}.
However, our analysis also shows that when $m_1 = 0$, the error bounds get improved:
in this case, for $\gamma\in [1,2+\theta]$, $\|X^\varepsilon-X\|_{H^{\gamma}(\mathbb{T})}$ is bounded by $\tilde{\varepsilon}^{2+\theta-\gamma}$ up to logarithmic factors over the time interval $[0,T]$.
The power of $\tilde{\varepsilon}$ seems very natural given the intuition that $X^\varepsilon$ should agree with $X$ very well over frequencies up to $O(\tilde{\varepsilon}^{-1})$ while they are both bounded in $\dot{H}^{2+\theta}$-semi-norm.
Similar improvement is also seen in Theorem \ref{thm: convergence and error estimates of the low-frequency regularized IB method} below.
Both of them arise from the key estimate in Proposition \ref{prop: static regularization error estimate for Hookean case} or Proposition \ref{prop: L^2 error estimate of the string velocity} which we will present later.
Since $m_1$ is a constant only depending on $\phi$, it suggests a possible way of improving accuracy of the $\varepsilon$-regularized problem by suitably choosing $\phi$. % approximating the original problem.
It potential numerical implication is worth further investigation.
See discussions on this in Section \ref{section: discussion}.
%See discussions in Section \ref{section: statements of static error estimates} and Section
\end{remark}

Our last result aims at establishing convergence and error estimates of a regularized problem as $\varepsilon\rightarrow 0$, without extra assumptions like \ref{assumption: uniform boundedness} and \ref{assumption: uniform stretching}. % as in Theorem \ref{thm: error estimates of the regularized problem}.
To state the result, we introduce a further adaptation of the $\varepsilon$-regularized problem.
%We still focus on the case of Hookean elasticity, i.e.,
With $F_X = X_{ss}$ and
%With
$N\in \mathbb{Z}_+$ to be chosen, we consider
\begin{align}
&\;-\Delta u^{\varepsilon,N}+ \nabla p^{\varepsilon,N} = f^{\varepsilon,N}(x,t),\label{eqn: Stokes equation in the low-frequency regularized IB problem}\\
&\;\mathrm{div}\, u^{\varepsilon,N} = 0,\quad |u^{\varepsilon,N}|,|p^{\varepsilon,N}|\rightarrow 0\mbox{ as }|x|\rightarrow \infty,\label{eqn: incompressible condition in the low-frequency regularized IB}\\
&\;f^{\varepsilon,N}(x,t) = \int_{\mathbb{T}}\mathcal{P}_N X_{ss}^{\varepsilon,N}(s,t)\delta_\varepsilon(x-X^{\varepsilon,N}(s,t))\,ds,\label{eqn: regularization of forcing in the low-frequency problem}\\
&\;\frac{\partial X^{\varepsilon,N}}{\partial t} = \mathcal{P}_N \left[\int_{\mathbb{R}^2} u^{\varepsilon,N}(x,t)\delta_\varepsilon(X^{\varepsilon,N}(s,t)-x)\,dx\right]\triangleq U^{\varepsilon,N}_{X^{\varepsilon,N}}(s,t),\label{eqn: low-frequency regularized motion of the membrane}\\%\label{eqn: regularized motion of the membrane}\\
&\;X^{\varepsilon,N}(s,0) = \mathcal{P}_N X_0(s).\label{eqn: low-frequency regularized initial configuration}
\end{align}
Here $\mathcal{P}_N$ is the linear projection operator to the space of functions containing Fourier modes with wave numbers no greater than $N$.
To be more precise, define Fourier transform in $L^2(\mathbb{T})$ and its inverse to be
%Similarly, the Fourier transform on $\mathbb{T}$ and its inverse are defined to be
\begin{equation}\label{eqn: Fourier transform on T}
\hat{g}_k = \int_{\mathbb{T}} g(s)e^{-iks}\,ds,\quad
g(s) = \frac{1}{2\pi}\sum_{k\in\mathbb{Z}}\hat{g}_ke^{iks}.
\end{equation}
Then for $g\in L^2(\mathbb{T})$, $\mathcal{P}_N$ is defined by
\begin{equation}
\mathcal{P}_N g(s)\triangleq\frac{1}{2\pi}\sum_{|k| \leq N} \hat{g}_k e^{iks},\quad s\in \mathbb{T}.
\label{eqn: projection operator}
\end{equation}
%where $\hat{f}_k$ is defined in \eqref{eqn: Fourier transform on T}.
We call \eqref{eqn: Stokes equation in the low-frequency regularized IB problem}-\eqref{eqn: low-frequency regularized initial configuration} \emph{$(\varepsilon,N)$-regularized 2-D Stokes immersed boundary problem}.
Compared with \eqref{eqn: Stokes equation in the regularized IB problem}-\eqref{eqn: regularized motion of the membrane and initial configuration}, the new system only allows the string configuration as well as the elastic force in the Lagrangian coordinate to have frequencies no higher than $N$.
%This is achieved by placing projection operators $\mathcal{P}_N$ in \eqref{eqn: regularization of forcing in the low-frequency problem}-\eqref{eqn: low-frequency regularized initial configuration}; i
In fact, the projection in \eqref{eqn: regularization of forcing in the low-frequency problem} may be omitted. % here.% in the Hookean case.

We remark that this adaptation mimics the numerical scenario where $N$ Lagrangian marker points are used to represent the string configuration,
although projection to low frequencies is not what gets implemented in the numerical method.
On the other hand, by properly choosing $N$, we can get rid of potential growth of Fourier coefficients in high frequencies,
%with wave numbers greater than $O(\varepsilon^{-1})$,
which is the main reason the extra assumptions are needed in Theorem \ref{thm: error estimates of the regularized problem}.
Indeed, for the $(\varepsilon,N)$-regularized problem, we can prove the following theorem.

\begin{theorem}[Well-posedness, convergence, and error estimates of the $(\varepsilon,N)$-regularized problem]
\label{thm: convergence and error estimates of the low-frequency regularized IB method}

Suppose $\phi$ satisfies the assumptions in Theorem \ref{thm: error estimates of the regularized problem}.
Let $\theta \in [\frac{1}{4},1)$.
Assume $X_0 \in H^{2+\theta}(\mathbb{T})$ satisfies the well-stretched condition \eqref{eqn: well-stretched condition} with $\lambda>0$, and
$\|X_0\|_{\dot{H}^{2+\theta}(\mathbb{T})}= M_0$.
Define $\tilde{\varepsilon} = \varepsilon/\lambda$ to be the normalized regularization parameter.
Then
\begin{enumerate}
  \item For all $\varepsilon,N>0$,
\eqref{eqn: Stokes equation in the low-frequency regularized IB problem}-\eqref{eqn: low-frequency regularized initial configuration}
admits a unique global solution $X^{\varepsilon,N}(s,t)$, such that $X^{\varepsilon,N}\in C^1_{[0,+\infty),loc}H^{2+\theta}(\mathbb{T})$.

\item Suppose that \eqref{eqn: contour dynamic equation for the singular problem} admits a solution $X(s,t)\in C_{[0,T_*]} H^{2+\theta}(\mathbb{T})$ for some $T_* >0$, such that for all $t\in [0,T_*]$,
\begin{equation}
\|X(\cdot,t)\|_{\dot{H}^{2+\theta}(\mathbb{T})}\leq C_*M_0,
\label{eqn: bound for benchmark solution in the maximal time interval}
\end{equation}
and
\begin{equation}\label{eqn: well-stretched condition of benchmark solution in the maximal time interval}
X(\cdot,t)\mbox{  satisfies the well-stretched condition with constant }\lambda/2.
\end{equation}

Then there exists $c_*>0$ and $N_*>0$, which depend on $\lambda$, $M_0$, and $X_0$, such that
\begin{enumerate}
  \item For all $\tilde{\varepsilon}\ll 1$ and $N\in [N_*, c_* \tilde{\varepsilon}^{-1}]$,
we have for all $t\in [0,T_*]$,
\begin{equation}
\|X^{\varepsilon,N}(\cdot,t)\|_{\dot{H}^{2+\theta}(\mathbb{T})}\leq 2C_*M_0,
\end{equation}
and
\begin{equation}
X^{\varepsilon,N}(\cdot,t)\mbox{  satisfies the well-stretched condition with constant }\lambda/4.
\end{equation}
\item For any sequence $(\varepsilon, N)\rightarrow (0,+\infty)$ such that $\tilde{\varepsilon} N\leq c_*$,
%Then as $ \varepsilon\rightarrow 0$,
\begin{equation}\label{eqn: weak star convergence with assumption on the uniform bound eps N problem}
X^{\varepsilon,N}\rightharpoonup X\quad \mbox{weak-* in } C_{[0,T_*]}H^{2+\theta}(\mathbb{T}),
\end{equation}
and for all $\gamma<2+\theta$,
\begin{equation}\label{eqn: convergence with assumption on the uniform bound eps N problem}
X^{\varepsilon,N}\rightarrow X\quad \mbox{ in } C_{[0,T_*]}H^{\gamma}(\mathbb{T}).
\end{equation}
More precisely, for all $t\in [0,T_*]$,
\begin{itemize}
\item For $\beta$ satisfying %\eqref{eqn: admissible range of beta},
\begin{equation}
0<\beta<\min\left\{\theta,\frac{1}{2}\right\},%, & \mbox{if } \theta\in [\frac{1}{4},\frac{1}{2}), \\
%
%  \begin{equation}\beta\in
%\begin{cases}
%  (0,\theta], & \mbox{if } \theta\in [\frac{1}{4},\frac{1}{2}), \\
%  (0,\frac{1}{2}), & \mbox{if } \theta = \frac{1}{2}, \\
%  (0,\frac{1}{2}], & \mbox{if }\theta\in(\frac{1}{2},1),
%\end{cases}
\label{eqn: admissible range of beta}
\end{equation}
we have
\begin{equation}
\begin{split}
&\;\|(X^{\varepsilon,N}-X)(t)\|_{H^{1/2}(\mathbb{T})}\\
\leq&\;C(e^{-tN/4}N^{-\frac{3}{2}-\theta}+N^{-{\frac{5}{2}}})M_0\\
&\;+C(N^{-\frac{3}{2}-\theta-\beta}+m_1\tilde{\varepsilon}+\tilde{\varepsilon}^{1+\theta}|\ln \tilde{\varepsilon}|^\theta)M_0,
\end{split}
\label{eqn: H 1/2 difference between the solution X eps N and X}
\end{equation}
and
\begin{equation}
\begin{split}
&\;\|(X^{\varepsilon,N}-X)(t)\|_{\dot{H}^1(\mathbb{T})}\\
\leq&\;C(e^{-tN/4}N^{-1-\theta}+N^{-2})M_0\\
&\;+C(\ln N)^{\frac{1}{2}}(N^{-\frac{3}{2}-\theta-\beta}+m_1\tilde{\varepsilon}+\tilde{\varepsilon}^{1+\theta}|\ln \tilde{\varepsilon}|^\theta)M_0,
\end{split}
\label{eqn: H 1 difference between the solution X eps N and X}
\end{equation}
where $C = C(\beta, \theta, \lambda^{-1}M, T_*)$.
If, in addition, $m_2 = 0$, the logarithmic factors $|\ln\tilde{\varepsilon}|^\theta$ in \eqref{eqn: H 1/2 difference between the solution X eps N and X} %-\eqref{eqn: simplified H1 bounds in the low frequency case}
and \eqref{eqn: H 1 difference between the solution X eps N and X}
can be removed.
\item
For $\beta$ satisfying
\begin{equation}
0<\beta\leq \min\left\{\theta,\frac{1}{2}\right\},\mbox{ and }\beta\not=\frac{1}{2}\mbox{ when }\theta =\frac{1}{2},%, & \mbox{if } \theta\in [\frac{1}{4},\frac{1}{2}), \\
%
%  \begin{equation}\beta\in
%\begin{cases}
%  (0,\theta], & \mbox{if } \theta\in [\frac{1}{4},\frac{1}{2}), \\
%  (0,\frac{1}{2}), & \mbox{if } \theta = \frac{1}{2}, \\
%  (0,\frac{1}{2}], & \mbox{if }\theta\in(\frac{1}{2},1),
%\end{cases}
\label{eqn: admissible range of beta larger}
\end{equation}
we have
\begin{equation}
\begin{split}
&\;\|(X^{\varepsilon,N}-X)(t)\|_{\dot{H}^2(\mathbb{T})}\\
\leq&\; C(e^{-tN/4}N^{-\theta}+N^{-1})M_0+C(\ln N)^{\frac{1}{2}}(N^{ -\frac{1}{2}-\theta-\beta}+\tilde{\varepsilon}^\theta)M_0,
\end{split}
\label{eqn: H 2 difference between the solution X eps N and X}
\end{equation}
where $C = C(\beta, \theta, \lambda^{-1}M, T_*)$, and
\begin{equation}
\|(X^{\varepsilon,N}-X)(t)\|_{\dot{H}^{2+\theta}(\mathbb{T})}
\leq C(e^{-tN/4}+N^{-1+\theta}%+N^{-\frac{1}{2}-\beta}
+(\tilde{\varepsilon} N)^{\theta})M_0,
\label{eqn: H 2 theta difference between the solution X eps N and X}
\end{equation}
where $C = C(\theta, \lambda^{-1}M, T_*)$.
\item Estimates in intermediate $H^\gamma$-spaces can be derived by interpolation.
\end{itemize}

\end{enumerate}
\end{enumerate}
\end{theorem}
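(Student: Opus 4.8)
The plan is to handle the two parts on very different footings: Part (1) is essentially a finite-dimensional ODE fact, once one notices that the projection $\mathcal{P}_N$ confines the entire dynamics to the span of modes $|k|\le N$, whereas the substance of the theorem is the uniform-in-$(\varepsilon,N)$ control in Part (2), which I would build from the static regularization error estimates already in hand together with the intrinsic order-one dissipation of the Stokeslet velocity operator. For Part (1), since $X^{\varepsilon,N}(\cdot,t)$ carries only Fourier modes with $|k|\le N$, the system \eqref{eqn: Stokes equation in the low-frequency regularized IB problem}--\eqref{eqn: low-frequency regularized initial configuration} becomes an ODE for the finitely many coefficients $\{\hat{X}^{\varepsilon,N}_k(t)\}_{|k|\le N}$. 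Because $\delta_\varepsilon$ is smooth and compactly supported, the map $X^{\varepsilon,N}\mapsto \mathcal{P}_N U^\varepsilon_{X^{\varepsilon,N}}$ is real-analytic in these coefficients and, crucially, never degenerates --- even at self-intersections --- so Cauchy--Lipschitz yields a unique local $C^1$ solution. Global existence follows from an energy identity: testing against $X^{\varepsilon,N}_{ss}$, using the self-adjointness of $\mathcal{P}_N$ and the evenness of $\phi$ (which makes spreading and interpolation formally adjoint), gives $\frac{d}{dt}\,\tfrac12\|X^{\varepsilon,N}_s\|_{L^2}^2 = -\|\nabla u^{\varepsilon,N}\|_{L^2}^2\le 0$. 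Since every active mode obeys $|k|\le N$, this $\dot H^1$-bound controls the full $H^{2+\theta}$-norm (with an $N$-dependent constant) and rules out finite-time blowup, yielding $X^{\varepsilon,N}\in C^1_{[0,+\infty),loc}H^{2+\theta}(\mathbb{T})$.

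For Part (2) the backbone is a continuation argument. I would let $T^\sharp\le T_*$ be the maximal time on which the bootstrap bounds $\|X^{\varepsilon,N}(t)\|_{\dot H^{2+\theta}}\le 2C_*M_0$ and the $\lambda/4$ well-stretched condition both hold, and show that for $N\in[N_*,c_*\tilde\varepsilon^{-1}]$ and $\tilde\varepsilon\ll1$ these bounds strictly improve on $[0,T^\sharp]$, forcing $T^\sharp=T_*$. The engine is an energy estimate for the low-frequency error $W=X^{\varepsilon,N}-\mathcal{P}_N X$, supplemented by the high-frequency tail $(I-\mathcal{P}_N)X$, the latter controlled directly by regularity via $\|(I-\mathcal{P}_N)X\|_{\dot H^\gamma}\lesssim N^{-(2+\theta-\gamma)}M_0$. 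Splitting
\[
\partial_t W=\mathcal{P}_N\bigl(U^\varepsilon_{X^{\varepsilon,N}}-U^\varepsilon_{\mathcal{P}_N X}\bigr)+\mathcal{P}_N\bigl(U^\varepsilon_{\mathcal{P}_N X}-U_{\mathcal{P}_N X}\bigr)+\mathcal{P}_N\bigl(U_{\mathcal{P}_N X}-U_X\bigr),
\]
the first term is dissipative and Lipschitz in $W$, the second is the static regularization error furnished by Proposition~\ref{prop: L^2 error estimate of the string velocity} (responsible for the $m_1\tilde\varepsilon+\tilde\varepsilon^{1+\theta}|\ln\tilde\varepsilon|^\theta$ contributions, with the logarithm dropping when $m_2=0$), and the third is the projection/commutator error carrying the $N^{-3/2-\theta-\beta}$-type contributions.

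To extract the stated rates I would test this identity in $\dot H^a$ for $a\in\{\tfrac12,1,2,2+\theta\}$, absorb the dissipative term (coercive in $\dot H^{a+1/2}$ on the retained band), and close by Gr\"onwall. The $e^{-tN/4}$ factors arise because the dissipation damps the top-mode truncation error at rate comparable to $N/4$ under the $\lambda/4$ well-stretched condition; the $(\ln N)^{1/2}$ factors come from borderline frequency summations of the form $\sum_{|k|\le N}|k|^{-1}\sim\ln N$, which the interpolation parameter $\beta$ is introduced to tame, its admissible ranges \eqref{eqn: admissible range of beta} and \eqref{eqn: admissible range of beta larger} reflecting precisely when those endpoint interpolations close. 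The convergence statements \eqref{eqn: weak star convergence with assumption on the uniform bound eps N problem} and \eqref{eqn: convergence with assumption on the uniform bound eps N problem} then follow formally: the uniform $\dot H^{2+\theta}$-bound of Part (2a) gives weak-* precompactness in $C_{[0,T_*]}H^{2+\theta}$, the quantitative estimates identify every limit as $X$, and interpolating the uniform top-order bound against the low-order rates gives strong convergence in $C_{[0,T_*]}H^\gamma$ for all $\gamma<2+\theta$.

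The hard part will be proving that the order-one dissipation of the Stokeslet operator \emph{survives mollification} on the entire active band $|k|\le N$ up to $N\sim c_*\tilde\varepsilon^{-1}$. As stressed in Remark~\ref{rmk: artificial bound on H2.5 norm}, the mollified velocity fails to damp frequencies above $\tilde\varepsilon^{-1}$; it is exactly the truncation $N\le c_*\tilde\varepsilon^{-1}$ with $c_*$ chosen small that keeps every retained mode inside the regime where the symbol correction $\sim\widehat{\phi}(\varepsilon\xi)^2$ stays close to $1$ and the coercivity holds uniformly. Quantifying this --- in particular the coercivity for the highest modes near $N$, and the control of the commutator between the mollification and the configuration-dependent kernel during the $\dot H^{2+\theta}$ bootstrap --- is the technical heart of the proof and is precisely where the constraint $\tilde\varepsilon N\le c_*$ is indispensable.
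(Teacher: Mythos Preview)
Your overall architecture (continuation/bootstrap for Part (2a), energy estimates on $W=X^{\varepsilon,N}-\mathcal{P}_N X$, and tail control of $(I-\mathcal{P}_N)X$) matches the paper. But the decomposition you have chosen for $\partial_t W$ puts you in front of exactly the obstacle the paper's decomposition is designed to avoid. You write
\[
\partial_t W=\mathcal{P}_N\bigl(U^\varepsilon_{X^{\varepsilon,N}}-U^\varepsilon_{\mathcal{P}_N X}\bigr)+\mathcal{P}_N\bigl(U^\varepsilon_{\mathcal{P}_N X}-U_{\mathcal{P}_N X}\bigr)+\mathcal{P}_N\bigl(U_{\mathcal{P}_N X}-U_X\bigr),
\]
and you plan to extract dissipation from the first term, which forces you to prove that the \emph{mollified} Stokeslet velocity is uniformly coercive on all modes $|k|\le N$ up to $N\sim c_*\tilde\varepsilon^{-1}$. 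You acknowledge this as ``the technical heart'', but the paper never proves (and explicitly declines to prove, cf.\ Remark~\ref{rmk: artificial bound on H2.5 norm}) any such mollified coercivity. Instead the paper uses the \emph{un}-regularized decomposition $U_Y=\mathcal{L}Y+g_Y$ with $\mathcal{L}=-\tfrac14(-\Delta)^{1/2}$ to obtain
\[
\partial_t W=\mathcal{L}W+\mathcal{P}_N(g_{X^{\varepsilon,N}}-g_X)+\mathcal{P}_N(U^\varepsilon_{X^{\varepsilon,N}}-U_{X^{\varepsilon,N}}).
\]
Here the dissipation is supplied by the explicit, configuration-independent operator $\mathcal{L}$, whose semigroup estimates (Lemmas~\ref{lemma: a priori estimate of nonlocal eqn} and~\ref{lemma: a priori estimate of nonlocal eqn high freq}) are elementary Fourier facts; the mollification appears only in the last term, which is bounded as a \emph{small source} by Proposition~\ref{prop: static regularization error estimate for Hookean case}, and the middle term is Lipschitz in lower norms by Lemmas~\ref{lemma: improved L2 estimate for g_X1-g_X2}--\ref{lemma: improved H1 estimate for g_X1-g_X2}. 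In other words, the role of the constraint $\tilde\varepsilon N\le c_*$ in the paper is not to rescue coercivity of a mollified symbol, but merely to make $(\tilde\varepsilon N)^\theta$ small enough in the $\dot H^{2+\theta}$ bound \eqref{eqn: difference between the solution X eps N and P_N X} to close the bootstrap. Your route, as written, would require a genuinely new ingredient that the paper neither supplies nor needs.

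Two smaller points. First, the factor $e^{-tN/4}$ is unrelated to the well-stretched constant $\lambda/4$; it comes from the eigenvalues of $\mathcal{L}=-\tfrac14(-\Delta)^{1/2}$ acting on $\mathcal{Q}_N X$ (Lemma~\ref{lemma: improved estimate for Q_N X} and Lemma~\ref{lemma: a priori estimate of nonlocal eqn high freq}). Second, your third term $U_{\mathcal{P}_N X}-U_X$ is a difference of un-regularized velocities, which costs a full derivative; the paper avoids this loss by comparing $g_{X^{\varepsilon,N}}$ directly with $g_X$ (not $g_{\mathcal{P}_N X}$), so that the entire tail $\mathcal{Q}_N X$ only enters through the auxiliary equation $\partial_t\mathcal{Q}_N X=\mathcal{L}\mathcal{Q}_N X+\mathcal{Q}_N g_X$, where Lemma~\ref{lemma: improved H2 estimate for g_X} gives the needed control.
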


\begin{remark}
Compared to Theorem \ref{thm: error estimates of the regularized problem}, extra terms involving $N$ show up in the error estimates due to the projection.
%One may choose $\beta$ as large as possible in Theorem \ref{thm: convergence and error estimates of the low-frequency regularized IB method} to achieve the best error bounds.
From a numerical point of view, the most natural choice of $N$ would be $N\sim O(\tilde{\varepsilon}^{-1})$, although $N$ does not exactly correspond to the number of Lagrangian markers in the discrete setting.
%If we take $N = c\tilde{\varepsilon}^{-1}$, the error estimates in Theorem \ref{thm: convergence and error estimates of the low-frequency regularized IB method} reduce to those in Theorem \ref{thm: error estimates of the regularized problem}.
If we do take $N = c\tilde{\varepsilon}^{-1}$, 
the error estimates here reduce to
\begin{align}
\|X^{\varepsilon,N}-X\|_{C_{[0,T_*]}H^{1/2}(\mathbb{T})}\leq &\;C\left(m_1 \tilde{\varepsilon} +\tilde{\varepsilon}^{1+\theta} |\ln\tilde{\varepsilon}|^\theta\right)M_0,\\
\|X^{\varepsilon,N}-X\|_{C_{[0,T_*]}\dot{H}^{1}(\mathbb{T})}\leq&\; C|\ln \tilde{\varepsilon}|^{\frac{1}{2}}\left(m_1 \tilde{\varepsilon} +\tilde{\varepsilon}^{1+\theta}|\ln\tilde{\varepsilon}|^\theta\right)M_0,\label{eqn: simplified H1 bounds in the low frequency case}\\
\|X^{\varepsilon,N}-X\|_{C_{[0,T_*]}\dot{H}^{2}(\mathbb{T})}\leq &\;C|\ln \tilde{\varepsilon}|^{\frac{1}{2}}\tilde{\varepsilon}^{\theta} M_0,\\
\|X^{\varepsilon,N}-X\|_{C_{[0,T_*]}\dot{H}^{2+\theta}(\mathbb{T})}\leq&\; CM_0,
\end{align}
with $C = C(\theta, \lambda^{-1}M, T_*)$, which coincide with those in Theorem \ref{thm: error estimates of the regularized problem}.
However, it is a bit surprising that, if we are allowed to ignore those exponentially-decaying terms in \eqref{eqn: H 1/2 difference between the solution X eps N and X},
\eqref{eqn: H 1 difference between the solution X eps N and X}, \eqref{eqn: H 2 difference between the solution X eps N and X}, and \eqref{eqn: H 2 theta difference between the solution X eps N and X},
which will become negligible even when $t$ is small,
we may take $N$ much smaller than $O(\tilde{\varepsilon}^{-1})$ without worsening the error bounds. %, which is a bit surprising.
This suggests that in order to track the string dynamics in the regularized problem as accurately as possible, we may need much fewer Fourier modes than $O(\tilde{\varepsilon}^{-1})$ to represent the string.
Numerical implication of this result is worth further investigation.
\end{remark}

\begin{remark}
The uniquenss of $X^{\varepsilon,N}$ together with \eqref{eqn: weak star convergence with assumption on the uniform bound eps N problem} implies that the solution $X$ assumed in Theorem \ref{thm: convergence and error estimates of the low-frequency regularized IB method} should be unique.
\end{remark}

\subsection{Scheme of the proofs and organization of the paper}
\label{section: scheme of the proof}
Let us take the $\varepsilon$-regularized problem as an example to sketch the idea of proving convergence and error estimates.

Recall that in the analysis of the original problem with the Hookean elasticity \cite{lin2017solvability,mori2017well},
\eqref{eqn: Stokes equation}-\eqref{eqn: motion of the membrane and initial configuration} is first reduced (under some assumptions) to the contour dynamic equation \eqref{eqn: contour dynamic equation for the singular problem},
and it is then rewritten as $\partial_t X = \mathcal{L}X +g_X$.
Here $\mathcal{L} X= -\frac{1}{4}(-\Delta)^{1/2}X$ captures the principal singular part in $U_X$ that is derived by linearizing the integrand of \eqref{eqn: contour dynamic equation for the singular problem} around $s'=s$, while $g_X$ is a nonlinear nonlocal term collecting all the remaining terms.
It is observed that $\mathcal{L}$ is a dissipative operator and $g_X$ turns out to be sufficiently regular, % characterizes intrinsic dissipation inherited from the Stokes flow,
which enables us to prove well-posedness of \eqref{eqn: contour dynamic equation for the singular problem}.

%Since  formally approximates the original problem, w
We shall take a similar path for the $\varepsilon$-regularized problem by focusing on the string motion and using the original problem as a benchmark.
By \eqref{eqn: motion of the membrane and initial configuration} and \eqref{eqn: regularized motion of the membrane and initial configuration}, we derive that
\begin{equation}
\partial_t X^\varepsilon = U_{X^\varepsilon}+ (U_{X^\varepsilon}^\varepsilon-U_{X^\varepsilon})=\mathcal{L}X^\varepsilon + g_{X^\varepsilon}+(U_{X^\varepsilon}^\varepsilon-U_{X^\varepsilon}),
\label{eqn: contour dynamic equation of X_eps using X as a benchmark}
\end{equation}
and
\begin{equation}
%\begin{split}
\partial_t (X^\varepsilon-X) = %&\;U_{X^\varepsilon}^\varepsilon-U_X\\
%=&\; (U_{X^\varepsilon}-U_X) + (U_{X^\varepsilon}^\varepsilon-U_{X^\varepsilon})\\
%=&\;
\mathcal{L}(X^\varepsilon-X) + (g_{X^\varepsilon}-g_X)+(U_{X^\varepsilon}^\varepsilon-U_{X^\varepsilon}).
%\end{split}
\label{eqn: difference between contour dynamic equation between X and X_eps}
\end{equation}
In order to bound $X^\varepsilon-X$, thanks to the dissipative nature of the operator $\partial_t -\mathcal{L}$,
it suffices to study $g_{X^\varepsilon}-g_X$ and $U^\varepsilon_{X^\varepsilon}-U_{X^\varepsilon}$. 
In particular, it would be ideal to show that the mapping $X\mapsto g_X$ is (locally) Lipschitz in suitable function spaces, and $U^\varepsilon_{X^\varepsilon}-U_{X^\varepsilon}$ can be treated as a small error.
We shall implement this idea to prove Theorem \ref{thm: error estimates of the regularized problem} in Section \ref{section: proof of dynamic error estimate assuming boundedness}.
The proof of Theorem \ref{thm: convergence and error estimates of the low-frequency regularized IB method} uses a similar approach, with the estimates for $U^\varepsilon_{X^\varepsilon}-U_{X^\varepsilon}$ handled more carefully.

Now it is clear that a key ingredient to prove convergence and error estimates is to establish estimates for $U^\varepsilon_Y-U_Y$ for a given string configuration $Y$. %, %, where $U_Y$ and $U^\varepsilon_Y$ are defined in \eqref{eqn: motion of the membrane and initial configuration} and \eqref{eqn: regularized motion of the membrane and initial configuration}, respectively.
%which is interesting on its own.
%See Section \ref{section: motivation of proving static estimate} for the motivation.
We call such estimates \emph{static estimates} for the regularization error in the string velocity, as $U^\varepsilon_Y-U_Y$ %accounts for error due to the regularization, which
only depends on $Y$ at a single time but not on its history or future evolution. %; indeed, its evolution in the problems with and without regularization will be totally different.
We can show that
\begin{proposition}[Static estimates for $U_Y^\varepsilon-U_Y$, Hookean case]\label{prop: static regularization error estimate for Hookean case}
Let $\phi$, % Assume $\phi$ satisfies the assumptions in Theorem \ref{thm: error estimates of the regularized problem}.
%Let
$m_1$, and $m_2$ be defined in Theorem \ref{thm: error estimates of the regularized problem}. %\eqref{eqn: def of M_1} and \eqref{eqn: def of M_2}, respectively.
Suppose $Y\in H^{2+\theta}(\mathbb{T})$ with $\theta\in[1/4,1)$, satisfying the well-stretched condition \eqref{eqn: well-stretched condition} with $\lambda>0$, and $F_Y = Y_{ss}$.
Given $\varepsilon>0$, let $U_Y$ and $U^\varepsilon_Y$ be the string velocities corresponding to $Y$ in the original and the $\varepsilon$-regularized problems,
defined in \eqref{eqn: contour dynamic equation for the singular problem} and
\eqref{eqn: regularized motion of the membrane and initial configuration}, respectively.
Provided that $\varepsilon\ll \lambda$,
\begin{equation}
\begin{split}
\|U^\varepsilon_Y-U_Y\|_{L^2(\mathbb{T})}
\leq &\;\frac{m_1 \varepsilon}{\pi\lambda}\left\| \frac{F_Y(s)\cdot Y'(s)}{|Y'(s)|}\right\|_{L^2(\mathbb{T})}+\frac{C\varepsilon^{1+\theta} \ln^\theta(\lambda /\varepsilon)}{\lambda^{1+\theta}}\|Y\|_{\dot{H}^{2+\theta}(\mathbb{T})}\\
&\; +\frac{C\varepsilon^2 \ln(\lambda/\varepsilon)}{\lambda^5}\|Y'\|_{L^\infty(\mathbb{T})}^2 \|Y''\|^2_{L^4(\mathbb{T})},
\end{split}
\label{eqn: L^2 error estimate final version Hookean case}
\end{equation}
where $C$'s are universal constants depending on $\theta$.
Moreover,
\begin{equation}
\|U^\varepsilon_Y-U_Y\|_{\dot{H}^1(\mathbb{T})}\leq \frac{C\varepsilon^\theta}{\lambda^\theta}\|Y\|_{\dot{H}^{2+\theta}(\mathbb{T})}+\frac{C \varepsilon}{\lambda^4}\|Y'\|_{L^\infty(\mathbb{T})}^2\|Y''\|_{L^4(\mathbb{T})}^2.
\label{eqn: H^1 error estimate of the string velocity Hookean case}
\end{equation}

If, in addition, $m_2 = 0$, the logarithmic factors in \eqref{eqn: L^2 error estimate final version Hookean case} can be removed.
\end{proposition}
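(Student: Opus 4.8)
The plan is to collapse both mollifications into a single fixed kernel and then reduce everything to a moment computation. Solving the Stokes system with the stokeslet $G$ gives $u^\varepsilon = G*f^\varepsilon$; substituting \eqref{eqn: regularization of forcing} and \eqref{eqn: regularized motion of the membrane and initial configuration} and using that $\phi$ (hence $\delta_\varepsilon$) is even, the two convolutions merge into one. Writing $\Phi\triangleq\phi*\phi$ and $\Phi_\varepsilon(x)\triangleq\varepsilon^{-2}\Phi(x/\varepsilon)=(\delta_\varepsilon*\delta_\varepsilon)(x)$, one obtains
\[
U_Y^\varepsilon(s)=\int_{\mathbb T}(G*\Phi_\varepsilon)(Y(s)-Y(s'))F_Y(s')\,ds',\qquad U_Y(s)=\int_{\mathbb T}G(Y(s)-Y(s'))F_Y(s')\,ds',
\]
so that, with $K\triangleq G*\Phi-G$ and $K_\varepsilon\triangleq G*\Phi_\varepsilon-G$,
\[
U_Y^\varepsilon(s)-U_Y(s)=\int_{\mathbb T}K_\varepsilon(Y(s)-Y(s'))F_Y(s')\,ds'.
\]
The decisive structural fact is that, because $\Phi$ is radial and $G$ splits into a logarithmic part and a $0$-homogeneous part $\tfrac{1}{4\pi}\hat x\otimes\hat x$, the kernel is scale-covariant: $K_\varepsilon(x)=K(x/\varepsilon)$, the additive $\ln\varepsilon$ produced by the logarithm cancelling since $\int\Phi=1$. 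Thus everything is governed by the single fixed kernel $K$, which is $\log$-singular (hence integrable) at the origin and decays like $m_2|\xi|^{-2}$ as $|\xi|\to\infty$, and faster (like $|\xi|^{-4}$) when $m_2=0$; this slow tail is the source of every logarithmic factor below.

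The heart of the matter is a moment identity for $K$. I would compute the line integral $\int_{-\infty}^{\infty}K(\sigma e)\,d\sigma$ for a unit vector $e$, which by symmetry equals $a\,Id+b\,e\otimes e$. The key cancellation is $a=0$: contracting with $e^\perp\otimes e^\perp$ and unwinding the convolution, the contribution of the logarithmic part of $G$ and that of its homogeneous part exactly cancel, each evaluating to $\mp\pi|y_2|$ after the $\sigma$-integration (using $\int(\sigma^2+c^2)^{-1}d\sigma=\pi/|c|$ and $\int[\ln(\sigma^2+a^2)-\ln(\sigma^2+b^2)]\,d\sigma=2\pi(|a|-|b|)$). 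This is precisely the incompressibility of the stokeslet surfacing at leading order, and it is what forces the leading error to be tangential. Contracting instead with $e\otimes e$ gives $b=-m_1/\pi$, via the identity $\int\Phi(y)\,|y_2|\,dy=\tfrac{2}{\pi}m_1$ valid for radial $\Phi$. I expect establishing $a=0$ together with the exact constant $-m_1/\pi$ to be the main obstacle, since this is exactly what ties the abstract kernel to the data $m_1$ and pins down the precise factor in \eqref{eqn: L^2 error estimate final version Hookean case}.

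With the moment identity in hand I would localize. Split $\int_{\mathbb T}$ into a near-diagonal piece $|s-s'|\lesssim\varepsilon/\lambda$ and a far piece. On the near piece, Taylor expand $Y(s)-Y(s')=Y'(s)(s-s')+O((s-s')^2)$ and $F_Y(s')=F_Y(s)+O(|s-s'|^\theta)$, rescale $\sigma=|Y'(s)|(s-s')/\varepsilon$, and apply the moment identity: the $a=0$ term drops out and the $b$-term produces the leading contribution $-\tfrac{m_1\varepsilon}{\pi|Y'(s)|^2}\,(Y'(s)\!\cdot\!F_Y(s))\,Y'(s)$, whose $L^2$-norm is at most $\tfrac{m_1\varepsilon}{\pi\lambda}\,\|F_Y\!\cdot\!Y'/|Y'|\|_{L^2}$ after bounding $|Y'|^{-1}\le\lambda^{-1}$ through the well-stretched condition. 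The quadratic (curvature) term and the far piece are remainders: the well-stretched condition turns the spatial decay of $K_\varepsilon$ into $|Y(s)-Y(s')|^{-1}\le\lambda^{-1}|s-s'|^{-1}$, so the $m_2|\xi|^{-2}$ tail paired with the oscillation $|F_Y(s')-F_Y(s)|\lesssim|s-s'|^\theta$ yields $\int_{\varepsilon/\lambda}^{1}|s-s'|^{\theta-2}\,ds'\sim(\varepsilon/\lambda)^{\theta-1}$, i.e.\ the $\varepsilon^{1+\theta}\ln^\theta(\lambda/\varepsilon)$ term, while the curvature contribution produces the $\varepsilon^2\ln(\lambda/\varepsilon)$ term with $\|Y'\|_{L^\infty}^2\|Y''\|_{L^4}^2$. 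Both logarithms stem from the borderline $|\xi|^{-2}$ tail and therefore vanish once $m_2=0$.

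For the $\dot H^1$ bound I would differentiate the closed form in $s$, bringing in $\nabla K_\varepsilon=\varepsilon^{-1}(\nabla K)(\cdot/\varepsilon)$. The gain is that $\nabla K$ decays one order faster (like $|\xi|^{-3}$), keeping the far integral absolutely convergent; rerunning the near/far splitting — now the regularity-limited piece dominates — trades the $\varepsilon^{1+\theta}$ of the $L^2$ estimate for $\varepsilon^\theta$ (one power lost to $\partial_s$) and the $\varepsilon^2$ curvature term for $\varepsilon$, giving \eqref{eqn: H^1 error estimate of the string velocity Hookean case}. Throughout, the $\dot H^\theta$-oscillation of $F_Y=Y''$ is quantified by the Gagliardo seminorm of $Y\in\dot H^{2+\theta}$, which is where the fractional power $\ln^\theta$, rather than an integer power of the logarithm, enters.
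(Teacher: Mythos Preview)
Your approach is correct in spirit and genuinely different from the paper's. The paper never works with the physical-space kernel $K=G*\Phi-G$; instead it passes to the Fourier side, writes $U_Y^\varepsilon-U_Y$ through the scalar auxiliary functions $f_1,\dots,f_5$ defined by one-dimensional oscillatory integrals (equations \eqref{eqn: def of f_1}--\eqref{eqn: def of f_3}, \eqref{eqn: def of f_4 and f_5}), and proves their decay and integrals as Lemmas~\ref{lemma: decay estimate of f_1 f_2 f_3}--\ref{lemma: integrals of f_2 and f_3}. Your line-integral moment identity $\int_{\mathbb{R}}K(\sigma e)\,d\sigma=-\tfrac{m_1}{\pi}\,e\otimes e$ (i.e.\ $a=0$, $b=-m_1/\pi$) is exactly the content of Lemma~\ref{lemma: integrals of f_2 and f_3} rephrased; the $a=0$ cancellation you highlight is what the paper encodes as $\int f_2=-\int f_3$. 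The paper then splits into pieces $E_1,E_2,E_3$ and further into $E_{2,1},\dots,E_{2,4}$ and $E_{3,1},\dots,E_{3,6}$, extracting the leading term in Step~5 via a frequency cutoff $\mathcal{P}_K$ with $K\sim\lambda/(\varepsilon\ln(\lambda/\varepsilon))$; this optimization is precisely what generates the $\ln^\theta$ factor. Your physical-space route is cleaner conceptually --- the scale-covariance $K_\varepsilon(x)=K(x/\varepsilon)$ and the tangential nature of the leading error are transparent --- and a direct Minkowski estimate on $\int K_\varepsilon(Y(s)-Y(s'))[F_Y(s')-F_Y(s)]\,ds'$ using $\|F_Y(\cdot+\tau)-F_Y\|_{L^2}\lesssim|\tau|^\theta\|F_Y\|_{\dot H^\theta}$ appears to give $(\varepsilon/\lambda)^{1+\theta}$ without the $\ln^\theta$, so your remark that the fractional logarithm ``enters through the Gagliardo seminorm'' is unmotivated and in fact unnecessary: your bound is at least as strong as the statement. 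One point you should be careful about is that the splitting you describe is not quite near/far but rather $F_Y(s)\int_{\mathbb T}K_\varepsilon+\int_{\mathbb T}K_\varepsilon[F_Y(s')-F_Y(s)]$, with the curve-linearization applied to the first term over all of $\mathbb T$ (not just the near zone); the curvature remainders then produce the $\varepsilon^2\ln(\lambda/\varepsilon)$ term via the $|\nabla K(\xi)|\sim|\xi|^{-3}$ tail, and the log there is genuine.
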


Similar static estimates for the regularization error have been derived in fully discrete settings for the velocity field $u$ and pressure $p$ in the Stokes immersed boundary problem \cite{liu2012properties,liu2014p,mori2008convergence}, and more generally, for solutions of differential equations with singular source terms (see e.g., \cite{tornberg2004numerical} and references therein).
Since static error estimates are of independent interest, we shall study them in Section \ref{section: static error estimates} with greater generality, by considering elastic force $F$ of general form.

The rest of the paper is organized as follows.
In Section \ref{section: well-posedness of regularized problem}, we prove Theorem \ref{thm: global well-posedness of the regularized problem}. %, we will prove  on the global well-posedness of the $\varepsilon$-regularized 2-D Stokes immersed boundary problem.
Section \ref{section: static error estimates} will be devoted to establishing static estimates for the regularization error in the string velocity. 
We will first %explain in Section \ref{section: motivation of proving static estimate} how static error estimates are motivated.
%, we
%set up
formulate the problem with greater generality %of bounding the static regularization error
in Sections \ref{section: elastic force of more general form}-\ref{section: contour dynamic formulation}.
For clarity, we collect statements of the static estimates in Section \ref{section: statements of static error estimates}, of which Proposition \ref{prop: static regularization error estimate for Hookean case} is a special case.
We will prepare some preliminary estimates for them in Section \ref{section: preliminary estimates}, and show detailed proofs in Sections \ref{section: L^2 static error}-\ref{section: static H^1 error estimates}.
First-time readers may skip these sections, so as to not get distracted by the technicality there.
We then prove Theorem \ref{thm: error estimates of the regularized problem} in Section \ref{section: proof of dynamic error estimate assuming boundedness} and Theorem \ref{thm: convergence and error estimates of the low-frequency regularized IB method} in Section \ref{section: proof of singular limit of low-frequency regularized problem}.
We conclude the paper with discussions in Section \ref{section: discussion} on the improvement of the regularized $\delta$-function as well as future problems.
In Appendix \ref{section: proofs of auxiliary lemmas}, we will prove some auxiliary lemmas from Section \ref{section: static error estimates}.
In Appendix \ref{section: a priori estimate for L}, we show a priori estimates involving the operator $\mathcal{L} = -\frac{1}{4}(-\Delta)^{1/2}$.
Finally, some estimates involving the nonlinear term $g_X$ in the contour dynamic equation %of the original immersed boundary problem
are proved in Appendix \ref{section: a priori estimate for g_Y}.

\section{Global Well-posedness of the $\varepsilon$-Regularized Problem}
\label{section: well-posedness of regularized problem}

\subsection{Well-posedness for $H^1$-initial data}
For completness, we first prove Theorem \ref{thm: global well-posedness of the regularized problem} with $\beta = 1$ by recasting the proof in \cite{tong2018thesis}.
The idea of establishing local well-posedness is to view \eqref{eqn: regularized motion of the membrane and initial configuration} as an ODE of $X$ in the Banach space $H^1(\mathbb{T})$ --- this is the case thanks to the regularization.
Then local well-posedness can be proved by applying the classic Picard Theorem in Banach spaces \cite[Theorem 3.1]{majda2002vorticity}.
Global well-posedness for $H^1$-initial data should follow from a continuation argument \cite[Theorem 3.3]{majda2002vorticity} combined with an energy estimate, which shows that $\|X\|_{\dot{H}^1(\mathbb{T})}$ is uniformly bounded for all time. 

\begin{proof}[Proof of Theorem \ref{thm: global well-posedness of the regularized problem} with $\beta = 1$]
With $M>1$ to be determined, we define
$$
O_{M} = \{Z\in H^1(\mathbb{T}):~\|Z\|_{\dot{H}^1(\mathbb{T})}< M\}.
$$
It is non-empty and open in $H^1(\mathbb{T})$.
We take $M$ suitably large such that $X_0\in O_M$.
It suffices to show that for all $Y\in O_M$, $U^\varepsilon_Y \in H^1(\mathbb{T})$, and the mapping $Y\mapsto U^\varepsilon_Y$ is (locally) Lipschitz in $O_M$.
Here $U_Y^\varepsilon$ is defined in \eqref{eqn: regularized motion of the membrane and initial configuration}.
With abuse of notations, we still use $f^\varepsilon$ and $u^\varepsilon$ to denote the quantities in \eqref{eqn: Stokes equation in the regularized IB problem}-\eqref{eqn: regularization of forcing} corresponding to $Y$.
We also take arbitrary $Y_i$ $(i = 1,2)$ in $O_{M}$, and let $f^\varepsilon_i$, $u^\varepsilon_i$ and $U^\varepsilon_{Y_i}$ be the quantities in \eqref{eqn: Stokes equation in the regularized IB problem}-\eqref{eqn: regularized motion of the membrane and initial configuration} corresponding to $Y_i$.

\begin{step}[From the string configuration to the forcing]\label{step: from the string configuration to the forcing on the fluid}

Since $Y\in H^1(\mathbb{T})$, $Y_{ss}\in H^{-1}(\mathbb{T})$ in \eqref{eqn: regularization of forcing}.
By a density argument,
%In order to work with $H^1(\mathbb{T})$-configuration of the string,
%we do integration by parts in $s$ , %and rewrite it as
\begin{equation}
\begin{split}
f^\varepsilon(x) =&\; \int_{\mathbb{T}}Y_{s}(s)\cdot \nabla \delta_\varepsilon(x-Y(s)) Y_{s}(s)\,ds\\
= &\; \mathrm{div}_x\left[\int_{\mathbb{T}} \delta_\varepsilon(x-Y(s)) Y_{s}(s)\otimes Y_s(s)\,ds\right].
\label{eqn: new formula for the regularized forcing}
\end{split}
\end{equation}
%, where $f_i$ denotes the $i$-th component of $f$, and where $X_{s,i}$ is the $i$-th component of $X_s$.
%The summation convention is used here.
It is then easy to show %that% $f(\cdot, t)\in C(\mathbb{R}^2)$. %, satisfying that
\begin{align}
\|f^\varepsilon\|_{L^2(\mathbb{R}^2)} \leq &\;\|\nabla \delta_\varepsilon\|_{L^2(\mathbb{R}^2)} M^2\leq C(M,\varepsilon),\label{eqn: L^2 estimates on the regularized forcing generated by Y in O_M}\\
\|f^\varepsilon\|_{H^{-1}(\mathbb{R}^2)} \leq &\;\|\delta_\varepsilon\|_{L^2(\mathbb{R}^2)} M^2\leq C(M,\varepsilon),\label{eqn: H-1 estimates on the regularized forcing generated by Y in O_M}
\end{align}
and
\begin{equation}
\begin{split}
&\;\|f^\varepsilon_1-f^\varepsilon_2\|_{H^{-1}(\mathbb{R}^2)}\\
\leq &\;\int_{\mathbb{T}}\|\delta_\varepsilon\|_{L^2(\mathbb{R}^2)}|Y_{1,s}-Y_{2,s}||Y_{1,s}|\,ds+\int_{\mathbb{T}}\|\delta_\varepsilon\|_{L^2(\mathbb{R}^2)}|Y_{2,s}||Y_{1,s}-Y_{2,s}|\,ds\\
&\;+\int_{\mathbb{T}}|Y_1-Y_2|\|\nabla \delta_\varepsilon\|_{L^2(\mathbb{R}^2)}|Y_{2,s}|^2\,ds\\
\leq &\; C(M,\varepsilon)\|Y_1-Y_2\|_{H^1(\mathbb{T})}.
\end{split}
\label{eqn: local lipschitz of f on X}
\end{equation}
%where $C(\varepsilon)$ in the last line comes from $$.
In the last line, we used Sobolev embedding $H^1(\mathbb{T})\hookrightarrow C(\mathbb{T})$.

By Sobolev embedding, $Y(\mathbb{T})$ is contained in the $B_R(\bar{Y})$ with radius $R = CM$, and
\begin{equation}
\bar{Y} = \frac{1}{2\pi}\int_\mathbb{T}Y(s)\,ds.
\end{equation}
Since $\delta_\varepsilon$ is supported on $B_{c_0\varepsilon}(0)$, $f^\varepsilon$ is supported in $B_{R_\varepsilon}(\bar{Y})$ where $R_\varepsilon = R+c_0\varepsilon$.
Moreover, $\int_{\mathbb{R}^2}f^\varepsilon(x)\,dx = 0$ thanks to \eqref{eqn: new formula for the regularized forcing}.

\end{step}

\begin{step}[From the forcing to the velocity field]\label{step: from the forcing on the fluid to the fluid velocity field}
%We claim that
By classic estimates of the stationary Stokes equation in $\mathbb{R}^2$ and the fact that $f^\varepsilon$ has integral zero on $\mathbb{R}^2$, the mapping $f^\varepsilon \mapsto \nabla u^\varepsilon$ is well-defined and Lipschitz continuous from $H^{-1}(\mathbb{R}^2)$ to $L^2(\mathbb{R}^2)$.
Namely,
\begin{align}
\|\nabla u^\varepsilon\|_{L^2(\mathbb{R}^2)}\leq &\;C\|f^\varepsilon\|_{H^{-1}(\mathbb{R}^2)},\label{eqn: bounding H1 norm of u by H-1 norm of f in the Stokes case}\\
\|\nabla u^\varepsilon_1-\nabla u^\varepsilon_2\|_{L^2(\mathbb{R}^2)}\leq &\;C\|f^\varepsilon_1-f^\varepsilon_2\|_{H^{-1}(\mathbb{R}^2)},\label{eqn: bounding H1 norm of u by H-1 norm of f in the Stokes case Lipschitz continuity}
\end{align}
for some universal constant $C$.

We also would like to derive an $L^\infty$-estimate of $u^\varepsilon$ for the next step.
Since $f^\varepsilon$ has integral zero, we may represent $u^\varepsilon$ by using fundamental solution of the stationary Stokes equation, i.e.
\begin{equation}
u^\varepsilon(x) = \int_{\mathbb{R}^2} G(x-y)f^\varepsilon(y)\,dy,
\end{equation}
where $G$ is the fundamental solution of the velocity field defined in \eqref{eqn: stokeslet}.
By Minkowski inequality, \eqref{eqn: L^2 estimates on the regularized forcing generated by Y in O_M}, and the fact that $f^\varepsilon$ is compactly supported,
\begin{equation}
\|u^\varepsilon\|_{L^\infty(B_{2R_\varepsilon}(\bar{Y}))}\leq C\|G\|_{L^2(B_{3R_\varepsilon}(0))}\|f^\varepsilon\|_{L^2(B_{R_\varepsilon}(\bar{Y}))}\leq C(R_\varepsilon)\|f^\varepsilon\|_{L^2(\mathbb{R}^2)}\leq C(M,\varepsilon).
\end{equation}
On the other hand,
\begin{equation}
\begin{split}
\|u^\varepsilon\|_{L^\infty(B^c_{2R_\varepsilon}(\bar{Y}))}\leq &\; C\|\nabla G\|_{L^\infty(B^c_{R_\varepsilon}(0))}\left\|\int_{\mathbb{T}} \delta_\varepsilon(x-Y(s)) Y_{s}(s)\otimes Y_s(s)\,ds\right\|_{L^1(B_{R_\varepsilon}(\bar{Y}))}\\
\leq &\; C(R_\varepsilon)M^2\leq C(M,\varepsilon).
\end{split}
\end{equation}
Combining the above two estimates, we obtain
\begin{equation}
\|u^\varepsilon\|_{L^\infty(\mathbb{R}^2)}\leq C(M,\varepsilon).
\label{eqn: L^infty estimate of regularized u}
\end{equation}
\end{step}

\begin{step}[From the velocity field to the string motion]
By \eqref{eqn: regularized motion of the membrane and initial configuration} and \eqref{eqn: L^infty estimate of regularized u},
\begin{equation}
\begin{split}
|U_Y^\varepsilon(s)|\leq &\; \int_{\mathbb{R}^2} |u^\varepsilon(x)||\delta_\varepsilon(Y(s)-x)|\,dx\leq C\|u^\varepsilon(x)\|_{L^\infty(\mathbb{R}^2)}\leq C(M,\varepsilon),
\end{split}
\label{eqn: L^2 estimate of membrane velocity}
\end{equation}
and
\begin{equation}
|\partial_s U_Y^\varepsilon(s)|= \left|Y_s(s)\int_{\mathbb{R}^2} u^\varepsilon(x)\nabla\delta_\varepsilon(Y(s)-x)\,dx\right|\leq C(M,\varepsilon)|Y_s(s)|.
\label{eqn: H^1 estimate of membrane velocity}
\end{equation}
Combining \eqref{eqn: H-1 estimates on the regularized forcing generated by Y in O_M}, \eqref{eqn: bounding H1 norm of u by H-1 norm of f in the Stokes case}, \eqref{eqn: L^2 estimate of membrane velocity} and \eqref{eqn: H^1 estimate of membrane velocity}, we find that
\begin{equation}
\|U_Y^\varepsilon\|_{H^1(\mathbb{T})}\leq C(M,\varepsilon).
\label{eqn: H^1 estimate of membrane velocity full norm stationary Stokes}
\end{equation}

Now we turn to show that the map $Y\mapsto U_Y^\varepsilon$ is Lipschitz continuous in $O_M$.
We shall derive an $H^1$-estimate for $U^\varepsilon_{Y_1}- U^\varepsilon_{Y_2}$.
First we assume $Y_1$ and $Y_2$ both have zero mean on $\mathbb{T}$; this implies $Y_i(\mathbb{T})\subset B_{R}(0)$ and thus $B_{R_\varepsilon}(0)$ covers the supports of $u^\varepsilon_1(\cdot)\delta_\varepsilon(Y_i(s)-\cdot)$.
Hence,
\begin{equation}
\begin{split}
&\;|U^\varepsilon_{Y_1}- U^\varepsilon_{Y_2}|\\
= &\;\left|\int_{B_{R_\varepsilon}(0)} u^\varepsilon_1(x)\delta_\varepsilon(Y_1(s)-x)\,dx - \int_{B_{R_\varepsilon}(0)} u^\varepsilon_2(x)\delta_\varepsilon(Y_2(s)-x)\,dx \right|\\
\leq &\; \int_{B_{R_\varepsilon}(0)} |u^\varepsilon_1-u^\varepsilon
_2||\delta_\varepsilon(Y_1(s)-x)|+|u^\varepsilon_2||\delta_\varepsilon(Y_1(s)-x)-\delta_\varepsilon(Y_2(s)-x)| \,dx \\
\leq &\; C(\varepsilon)\left(\|u^\varepsilon_1-u^\varepsilon_2\|_{L^2(B_{R_\varepsilon}(0))}+\|u^\varepsilon_2\|_{L^\infty(\mathbb{R}^2)}|Y_1-Y_2|\right),
\end{split}
\label{eqn: L^2 estimate of difference in membrane velocity}
\end{equation}
and
\begin{equation}
\begin{split}
&\;|\partial_s U^\varepsilon_{Y_1}- \partial_s U^\varepsilon_{Y_2}|\\
=&\;\left|\partial_s\int_{B_{R_\varepsilon}(0)} u^\varepsilon_1(x)\delta_\varepsilon(Y_1(s)-x)\,dx - \partial_s\int_{B_{R_\varepsilon}(0)} u^\varepsilon_2(x)\delta_\varepsilon(Y_2(s)-x)\,dx \right|\\
\leq &\; |Y_{1,s}(s)|\int_{B_{R_\varepsilon}(0)} |u^\varepsilon_1-u^\varepsilon_2||\nabla \delta_\varepsilon(Y_1(s)-x)|\,dx\\
&\;+|Y_{1,s}(s)-Y_{2,s}(s)|\int_{B_{R_\varepsilon}(0)} |u^\varepsilon_2||\nabla \delta_\varepsilon(Y_1(s)-x)|\,dx\\
&\;+|Y_{2,s}(s)|\left|\int_{B_{R_\varepsilon}(0)} u^\varepsilon_2(x)(\nabla \delta_\varepsilon(Y_1(s)-x) - \nabla \delta_\varepsilon(Y_2(s)-x))\,dx\right|\\
\leq &\; C(\varepsilon)\left(|Y_{1,s}|\|u^\varepsilon_1-u^\varepsilon_2\|_{L^2(B_{R_\varepsilon}(0))}+\|u^\varepsilon_2\|_{L^2(B_{R_\varepsilon}(0))}|Y_{1,s}-Y_{2,s}|\right.\\
&\;\qquad\left.+|Y_{2,s}|\|\nabla u^\varepsilon_2\|_{L^2(B_{R_\varepsilon}(0))}|Y_1-Y_2|\right).
\end{split}
\label{eqn: H^1 estimate of difference in membrane velocity}
\end{equation}
We need to bound $\|u^\varepsilon_1-u^\varepsilon_2\|_{L^2(B_{R_\varepsilon}(0))}$.
By Young's inequality,
\begin{equation}
\|u^\varepsilon_1-u^\varepsilon_2\|_{L^2(B_{R_\varepsilon}(0))}\leq \|G\|_{L^2(B_{2R_\varepsilon}(0))}\|f_1^\varepsilon -f_2^\varepsilon \|_{L^1(B_{R_\varepsilon}(0))}.
\end{equation}
We argue as in \eqref{eqn: local lipschitz of f on X} to obtain that
\begin{equation}
\|f^\varepsilon_1-f^\varepsilon_2\|_{L^1(\mathbb{R}^2)} \leq C(M,\varepsilon)\|Y_1-Y_2\|_{H^1(\mathbb{T})}.
\end{equation}
Here we need the assumption $\nabla^2 \phi \in L^1(\mathbb{R}^2)$; recall that $\phi$ is the profile of the regularized $\delta$-function in \eqref{eqn: def of delta_eps}.
Hence,
\begin{equation}
\|u^\varepsilon_1-u^\varepsilon_2\|_{L^2(B_{R_\varepsilon}(0))}\leq C(M,\varepsilon)\|Y_1-Y_2\|_{H^1(\mathbb{T})}.
\end{equation}
Combining this with \eqref{eqn: L^2 estimate of difference in membrane velocity} and \eqref{eqn: H^1 estimate of difference in membrane velocity}, we conclude that
\begin{equation}
\|U^\varepsilon_{Y_1}- U^\varepsilon_{Y_2}\|_{H^1(\mathbb{T})}\leq C(M,\varepsilon)\|Y_1-Y_2\|_{H^1(\mathbb{T})}.
\label{eqn: crude bound on difference of Y_t}
\end{equation}

For general $Y_1$ and $Y_2$, since $U^\varepsilon_{Y_i} = U^\varepsilon_{Y_i-\overline{Y_i}}$, the same estimate still holds.
Therefore, we prove that the map $Y\mapsto U_Y^\varepsilon$ is Lipschitz continuous in $O_M$.
By Picard Theorem in Banach space \cite[Theorem 3.1]{majda2002vorticity}, there exists $T>0$ and a unique local solution $X^\varepsilon\in  C^1_{[0,T]}(O_M)$ describing the string dynamics, which depends continuously on the initial data.
By the derivation above, $\nabla u^\varepsilon \in Lip([0,T];L^2(\mathbb{T}))$.

\end{step}

\begin{step}[Energy law and global well-posedness]
%We shall derive an energy estimate of the regularized system \eqref{eqn: Stokes equation in the regularized IB problem}-\eqref{eqn: regularized motion of the membrane and initial configuration} to prove global well-posedness.
We shall derive an energy estimate to prove the global well-posedness.
We take $s$-derivative of \eqref{eqn: regularized motion of the membrane and initial configuration}
\begin{equation}
\frac{\partial X_s^\varepsilon}{\partial t}(s,t) = \int_{\mathbb{R}^2}X_{s}^\varepsilon(s,t)\cdot \nabla \delta_\varepsilon(X^\varepsilon(s,t)-x) u^\varepsilon(x,t)\,dx.
\end{equation}
It is valid to do so since $X^\varepsilon\in C^1_{[0,T]}H^1(\mathbb{T})$.
Taking inner product with $X_s^\varepsilon$ on $\mathbb{T}$, we find by \eqref{eqn: new formula for the regularized forcing} and energy estimate of the stationary Stokes equation that,
\begin{equation}
\begin{split}
\frac{1}{2}\frac{d}{dt}\|X_s^\varepsilon\|_{L^2(\mathbb{T})}^2 = &\;-\int_\mathbb{T}ds\int_{\mathbb{R}^2}X_{s}^\varepsilon(s,t)\cdot \nabla\delta_\varepsilon(x-X^\varepsilon(s,t)) u^\varepsilon(x,t)\cdot X_{s}^\varepsilon(s,t)\,dx\\
=&\; -\int_{\mathbb{R}^2}f^\varepsilon(x,t)\cdot u^\varepsilon(x,t)\,dx\\
=&\; -\|\nabla u^\varepsilon\|_{L^2(\mathbb{R}^2)}^2.
\end{split}
\label{eqn: energy estimate of the Stokes case}
\end{equation}
In the first line, we used the assumption $\phi(x) = \phi(-x)$.
This implies that for $\forall\,t\in[0,T]$, $\|X^\varepsilon\|_{\dot{H}^1(\mathbb{T})}(t) \leq \|X_0\|_{\dot{H}^1(\mathbb{T})} < M$.
Then global well-posedness follows from a continuation argument \cite[Theorem 3.3]{majda2002vorticity}.
\end{step}

This proves well-posedness for $H^1$-initial data in Theorem \ref{thm: global well-posedness of the regularized problem}.
\end{proof}

\subsection{Well-posedness for smoother initial data}

\begin{proof}[Proof of Theorem \ref{thm: global well-posedness of the regularized problem} with $\beta>1$]
Again, we shall use the Picard Theorem in Banach spaces.
Let
\begin{equation}
O_{M,M'} = \{Z\in H^{\beta}(\mathbb{T}):~\|Z\|_{\dot{H}^1(\mathbb{T})}<M,~\|Z\|_{H^{\beta}(\mathbb{T})}<M'\},
\end{equation}
with $M'\geq M\geq 1$ suitably large such that $X_0\in O_{M,M'}$.
%We are going to show that
%Let $F(X)$ denote the right hand side of \eqref{eqn: regularized motion of the membrane}.
%We are going to check the conditions of Theorem \ref{thm: Picard theorem on a Banach space} with $O = B = H^{5/2}(\mathbb{T})$.
All the estimates derived in the previous part of the proof still hold for $\forall\, Y,Y_i\in O_{M,M'}$, with estimates only depending on $M$ and $\varepsilon$, but not on $M'$ or $\beta$.

By \eqref{eqn: regularized motion of the membrane and initial configuration}, thanks to $\phi$ being compactly supported and sufficiently smooth, by taking $[\beta]$-th derivative of $\delta_\varepsilon(Y(s)-x)$ with respective to $s$ and collecting all possible terms,
\begin{equation}
\begin{split}
&\;\|U_Y^\varepsilon\|_{H^{\beta}(\mathbb{T})} \\
\leq &\;
\int_{B_{R_\varepsilon}}|u^\varepsilon(x)|\left(\|\delta_\varepsilon(Y(s)-x)\|_{L^2(\mathbb{T})}+\|\delta_\varepsilon(Y(s)-x)\|_{\dot{H}^\beta(\mathbb{T})}\right)\,dx\\
\leq &\; C(M,\varepsilon)+ \int_{R_\varepsilon}dx\,|u^\varepsilon(x)|\cdot C(\beta)\sum_{\substack{1\leq k\leq [\beta]\\ 1\leq n_1\leq \cdots\leq n_k\\ n_1+\cdots +n_k = [\beta]}}
\|Y^{(n_1)}\otimes \cdots \otimes Y^{(n_k)}:\nabla ^{k}\delta_\varepsilon (Y(s)-x)\|_{\dot{H}^{\beta-[\beta]}(\mathbb{T})}.
\end{split}
\label{eqn: decomposing H beta bound of U_Y}
\end{equation}
%Here $B_{R+\varepsilon}\subset \mathbb{R}^2$ is a disk of radius $R+\varepsilon\leq CM+\varepsilon$ centered at the mass center of $X$, which covers the support of $u(x,t)\delta_\varepsilon(x-X(s,t))$.
%Here we simply
Here we abused the notation $\dot{H}^{0}(\mathbb{T})$ when $\beta$ is an integer; we simply understand it as $L^2(\mathbb{T})$.
When $\beta -[\beta]\in(0,1)$, since $n_i\geq 1$, %we derive that
\begin{equation}
\begin{split}
&\;\|Y^{(n_1)}\otimes \cdots \otimes Y^{(n_k)}:\nabla ^{k}\delta_\varepsilon (Y(s)-x)\|_{\dot{H}^{\beta-[\beta]}(\mathbb{T})}\\
\leq &\; C(\beta)\|Y^{(n_1)}\|_{H^1(\mathbb{T})}\cdots \|Y^{(n_{k-1})}\|_{H^1(\mathbb{T})} \|Y^{(n_k)}\|_{H^{\beta-[\beta]}(\mathbb{T})}\|\nabla^{k}\delta_\varepsilon (Y(s)-x)\|_{H^1(\mathbb{T})}\\
\leq &\; C(\beta)\|Y_s\|_{\dot{H}^{n_1}(\mathbb{T})}\cdots \|Y_s\|_{\dot{H}^{n_{k-1}}(\mathbb{T})} \|Y_s\|_{\dot{H}^{\beta-[\beta]+n_k-1}(\mathbb{T})}\\
&\;\quad \cdot (\|\nabla^{k}\delta_\varepsilon (Y(s)-x)\|_{L^2(\mathbb{T})}+\|Y_s\|_{L^2(\mathbb{T})}\|\nabla^{k+1}\delta_\varepsilon (Y(s)-x)\|_{L^\infty(\mathbb{T})})\\
\leq &\; C(\varepsilon,\beta)\|Y_s\|_{L^2(\mathbb{T})}^{k-1}\|Y_s\|_{\dot{H}^{\beta-1}(\mathbb{T})}(1+\|Y_s\|_{L^2(\mathbb{T})})\\
\leq &\; C(M,\varepsilon,\beta)M'.
\end{split}
\label{eqn: bounding H beta seminorm}
\end{equation}
In the second last line, we used interpolation inequality among $H^s$-seminorms, as well as the assumption that $\phi\in C^{[\beta]+1}(\mathbb{R}^2)$.
When $\beta -[\beta] = 0$, i.e., when $\beta$ is an integer, we may replace $\|\nabla^{k}\delta_\varepsilon (Y(s)-x)\|_{H^1(\mathbb{T})}$ in \eqref{eqn: bounding H beta seminorm} by $\|\nabla^{k}\delta_\varepsilon (Y(s)-x)\|_{L^\infty(\mathbb{T})}$.
In this way, we can derive an estimate of the same form, yet only assuming $\phi\in C^\beta(\mathbb{R}^2)$.
Combining \eqref{eqn: bounding H beta seminorm} with \eqref{eqn: decomposing H beta bound of U_Y}, we conclude that
\begin{equation}
\|U_Y^\varepsilon\|_{H^{\beta}(\mathbb{T})}\leq C(M,\varepsilon,\beta)M'.
\label{eqn: H beta bound of U_Y}
\end{equation}
Using a similar argument as in \eqref{eqn: H^1 estimate of difference in membrane velocity}, \eqref{eqn: decomposing H beta bound of U_Y}, and \eqref{eqn: bounding H beta seminorm}, but with more complicated derivation, we may also prove that the map $Y\mapsto U_Y^\varepsilon$ is Lipschitz continuous in $O_{M,M'}$,
\begin{equation}
\|U_{Y_1}^\varepsilon-U_{Y_2}^\varepsilon\|_{H^\beta(\mathbb{T})} \leq C(M',\varepsilon,\beta)\|Y_1-Y_2\|_{H^{\beta}(\mathbb{T})},\quad\forall\, Y_1,Y_2\in O_{M,M'}.
\end{equation}
Here we will need the assumption that $\phi\in C^{\lceil\beta\rceil,1}(\mathbb{R}^2)$.
We omit the details.
%To this end, we take $M = \|X_0\|_{\dot{H}^1(\mathbb{T})}+1$ and $M' = \|X_0\|_{H^\beta(\mathbb{T})}+1$.
Then the local well-posedness immediately follows from the Picard Theorem in $O_{M,M'}\subset H^{\beta}(\mathbb{T})$.

In order to show global well-posedness, we use the energy estimate \eqref{eqn: energy estimate of the Stokes case} as well as \eqref{eqn: H beta bound of U_Y} to derive that,
$$
\frac{d}{dt}\|X^\varepsilon\|_{H^\beta(\mathbb{T})}\leq C\left(\|X_0\|_{\dot{H}^1(\mathbb{T})},\varepsilon,\beta\right) \|X^\varepsilon\|_{H^\beta(\mathbb{T})}.
$$
This implies an a priori bound for the local solution
$$
\|X^\varepsilon\|_{C_{[0,T]}H^{\beta}(\mathbb{T})}\leq \exp\left[C\left(\|X_0\|_{\dot{H}^1(\mathbb{T})},\varepsilon,\beta\right)T\right]\|X_0\|_{H^{\beta}(\mathbb{T})}.
$$
Then the global well-posedness follows from a continuation argument.
\end{proof}

\begin{remark}\label{rmk: deteriorating bounds of the regularized solution}
In spite of the global well-posedness, $\|X(t)\|_{H^{\beta}(\mathbb{T})}$ only admits an exponentially growing bound.
As $\varepsilon\rightarrow 0^+$, its growth rate deteriorates (i.e., increases) very quickly and diverges to $+\infty$, which is not helpful for proving any convergence of $X^\varepsilon$.
\end{remark}

\begin{remark}\label{rmk: well-stretched condition for the regularized problem}
As opposed to Proposition \ref{prop: local well-posedness of the singular problem}, Theorem \ref{thm: global well-posedness of the regularized problem} does not assume the well-stretched condition \eqref{eqn: well-stretched condition} for $X_0$.
However, for given $\varepsilon$, if we do impose that with the stretching constant $\lambda$ for sufficiently smooth initial data, say $X_0\in H^\beta(\mathbb{T})$ with $\beta >3/2$, the solution $X^\varepsilon$ should satisfy the well-stretched condition with stretching constant $\lambda/2$ in short time, which may depend on $\varepsilon$.
%Indeed, by \eqref{eqn: decomposing H beta bound of U_Y}, the deviation from the initial data in $H^{5/2}(\mathbb{T})$-norm should be small in short time, which controls the change of the stretching constant.
In fact, this can be derived from the facts that $X^\varepsilon(t)$ is (locally) continuous in $H^\beta(\mathbb{T})$, and $H^\beta(\mathbb{T})\hookrightarrow Lip(\mathbb{T})$ for $\beta> 3/2$.
\end{remark}

\section{Static Error Estimates for the String Velocity}\label{section: static error estimates}
In this section, we shall establish static estimates for $U_Y^\varepsilon-U_Y$ for a given string configuration $Y$.
%This difference can be viewed as the regularization error in the string velocity induced by $\delta_\varepsilon$.
Its motivation has been briefly discussed in Section \ref{section: scheme of the proof}.
We will first %discuss the motivation of deriving such estimates in Section \ref{section: motivation of proving static estimate}, and then
set up the analysis in Sections \ref{section: elastic force of more general form}-\ref{section: contour dynamic formulation}.
Main results of this section, of which Proposition \ref{prop: static regularization error estimate for Hookean case} is a special case, are collected in Section \ref{section: statements of static error estimates}.
Their proofs are left to Sections \ref{section: preliminary estimates}-\ref{section: static H^1 error estimates}; first-time readers may skip them so as not to get distracted from the bigger picture of the paper.

\subsection{Assumptions on the elastic force of general form}\label{section: elastic force of more general form}
Static estimates for the regularization error are of independent interest, and we shall discuss them with greater generality.
In this section, we introduce assumptions on the elastic force of more general form that will be used throughout this section.

%Assume the elasticity law of the string material is spatially homogeneous.
%In other words, g
Given a string configuration $Y$, the elastic force in the Lagrangian coordinate is generally given by %with $t$-dependence omitted for conciseness,
\begin{equation}
F_Y(s) = \partial_s\left(\mathcal{T}(|Y'(s)|,s)\frac{Y'(s)}{|Y'(s)|}\right).
\label{eqn: Lagrangian representation of the elastic force revisited}
\end{equation}
Here $\mathcal{T}: \mathbb{R}_+\times \mathbb{T}\rightarrow [0,+\infty)$ is the tension in the string, which depends on the position and the way the string is locally stretched, characterized by $p = |Y'(s)|$ with abuse of notations.
In physics, $\mathcal{T} = \mathcal{T}(p,s)$ is determined by the local constitutive law of elasticity of the string material:
assuming the string material has a local elastic energy density $\mathcal{E}=\mathcal{E}(p,s)$,
which is allowed to be spatially inhomogeneous, then $\mathcal{T}(p,s) = \partial_p \mathcal{E}(p,s)$.
For instance, Hookean elasticity admits $\mathcal{E}(p) = k_0p^2/2$ and $\mathcal{T}(p) = k_0p$, with $k_0$ being the Hooke's constant.

Define
\begin{equation}
S(p,s) = \frac{\mathcal{T}(p,s)}{p}
\label{eqn: definition of the generalized stiffness coefficient}
\end{equation}
to be the generalized stiffness coefficient.
In the Hookean elasticity case, $S(p)\equiv k_0$, which exactly characterizes stiffness of the string.
With this notation,
\begin{equation}\label{eqn: rewriting F using S}
F_Y(s) = \partial_s[S(|Y'(s)|,s)Y'(s)].
\end{equation}
In the rest of this section, we will study elastic force in this general form, where $S = S(p,s)$ satisfies the following assumption.
\begin{enumerate}[label = (\alph*)]
\item \label{assumption: C 1,1 regularity assumption}$S = S(p,s)\in C^{1,1}_{loc}(\mathbb{R}_+\times \mathbb{T})$. %\textcolor{red}{To be checked.}%, and $\nabla S$ is locally Lipschitz in $p$.
To be more precise, for $\forall\,0<m<M$, $\exists\,\mu(m,M)>0$, such that for all $(p,s)\in [m,M]\times \mathbb{T}$,
\begin{equation}
|S(p,s)|+|\partial_s S(p,s)|+M|\partial_p S(p,s)|\leq \mu(m,M),
\label{eqn: C0 and C1 bound for S}
\end{equation}
and for all $p_1,p_2\in[m,M]$ and $s_1,s_2\in \mathbb{T}$,
\begin{equation}
\begin{split}
&\;|\partial_s S(p_1,s_1)-\partial_s S(p_2,s_2)| + M|\partial_p S(p_1,s_1)-\partial_p S(p_2,s_2)|\\
\leq &\;\mu(m,M)(|s_1-s_2|+M^{-1}|p_1-p_2|).
\end{split}
\label{eqn: C11 bound for S}
\end{equation}

\end{enumerate}
Having homogeneous polynomials of $p$ in mind as prototypical examples of $S$, we purposefully add extra powers of $M$ above in order not to break homogeneity of the estimates.

Although the assumption is not intended to be the weakest or the most comprehensive, it is general enough to include a broad family of elasticity models.
For instance, any spatially homogeneous elasticity model with $\mathcal{T} = \mathcal{T}(p)$ for $\mathcal{T}\in C^3_{loc}(0,\infty)$ satisfies the assumptions.
In particular, linear elasticity model, with $\mathcal{T}(p) = k(p-p_0)$ and $S(p) = k-kp_0/p$, is admissible. % satisfies the assumption.
Here $p_0\geq 0$ is the natural length of fully relaxed string material; when $p_0 = 0$, it characterizes the Hookean elasticity. % with zero resting length, which admits $\mathcal{T}(p) = k_0p$ and $S(p) \equiv k_0$, and
%This example motivates us to put positive lower bounds of $p$ in the assumptions.
An example that does not fulfill the assumption is the finitely extensible nonlinear elastic (FENE) model \cite{warner1972kinetic}, given by, e.g.,
\begin{equation}
\mathcal{T}(p) = \frac{kp}{1-(p/p_{\mathrm{max}})^2},\quad S(p) = \frac{k}{1-(p/p_{\mathrm{max}})^2}, \quad p\in[0,p_{\mathrm{max}}),
\end{equation}
and $\mathcal{T}(p)=\mathcal{S}(p)=\infty$ otherwise.
Even though the arguments in this section may also work for this case up to some adaptation, we simply avoid that technicality.

In practice, elasticity laws may be time-varying.
For example, for a parametrically-forced string that models active biological tissues, $S(p,t) = a + b\sin (\omega t)$ with $|b|<a$ \cite{cortez2004parametric}.
%Although our assumption does not involve time dependence, but it does rule out such cases.
For such models, it suffices to freeze the elasticity law and validate the assumption for each time slice.
We may additionally require the bounds to be uniform in time, so that error estimates apply to all time.
%In addition, elasticity laws may be spatially inhomogeneous, i.e., $\mathcal{T}$ and $S$ may depend on both $p$ and $s$.
%Such case can be handled by suitably generalizing the analysis in this section, which yet involves more technicality.
We leave the technical discussion to interested readers. % since this section is devoted to static error estimates.

To this end, we state a useful lemma that roughly claims that $F_Y$ behaves like $Y_{ss}$ in regularity.
This can be viewed as a generalization of the obvious fact in the Hookean elasticity case where $F_Y = k_0 Y_{ss}$.

\begin{lemma}[Estimates for $F$]\label{lemma: H^1/2 estimate of F}
Assume $Y\in H^{2+\theta}(\mathbb{T})$ satisfies the well-stretched condition \eqref{eqn: well-stretched condition} with constant $\lambda$.
Let $F_Y(s)$ be defined by \eqref{eqn: rewriting F using S}. %Let $F_Y(s) = \partial_s(S(|Y'(s)|,s)Y'(s))$. % as is defined in \eqref{eqn: Lagrangian representation of the elastic force} and \eqref{eqn: definition of the generalized stiffness coefficient}.
For $\theta\in[0,1)$, under the Assumption \ref{assumption: C 1,1 regularity assumption} on $S$, %for $\forall\, Y\in H^{2+ \theta}(\mathbb{T})$,
%\begin{equation}\label{eqn: L^2 estimate of F}
%\|F_Y\|_{L^2(\mathbb{T})}\leq C\mu\|Y\|_{\dot{H}^{2}(\mathbb{T})},
%\end{equation}
%Moreover, for $\forall\,\theta \in(0,1)$ and $Y\in H^{2+\theta}(\mathbb{T})$,
\begin{align}
|F_Y(s)|\leq &\;C\mu(|Y'(s)|+|Y''(s)|),\label{eqn: pointwise bound of F}\\
\|F_Y\|_{\dot{H}^{\theta}(\mathbb{T})}\leq &\;C_\theta\mu\|Y\|_{\dot{H}^{2+\theta}(\mathbb{T})},\label{eqn: H^theta estimate of F}
\end{align}
where
\begin{equation}\label{eqn: choice of mu in the static estimate}
\mu = \mu(\lambda, c\|Y''\|_{L^2(\mathbb{T})})
\end{equation}
is defined by \eqref{eqn: C0 and C1 bound for S} and \eqref{eqn: C11 bound for S}.
Here $c\geq 1$ is a universal constant such that $\|Y'\|_{L^\infty}\leq c\|Y''\|_{L^2}$.
With abuse of notations, we understand $\dot{H}^0(\mathbb{T})$ as $L^2(\mathbb{T})$.

\end{lemma}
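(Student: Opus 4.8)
The plan is to differentiate \eqref{eqn: rewriting F using S} by the chain rule and then estimate the resulting pieces. Abbreviating $\sigma(s)=S(|Y'(s)|,s)$ and using $\partial_s|Y'|=(Y'\cdot Y'')/|Y'|$, one obtains
\[
F_Y=\sigma'\,Y'+\sigma\,Y'',\qquad
\sigma'=\partial_pS(|Y'|,s)\,\frac{Y'\cdot Y''}{|Y'|}+\partial_sS(|Y'|,s).
\]
The well-stretched condition \eqref{eqn: well-stretched condition} forces $|Y'(s)|\ge\lambda$ pointwise (divide by $|s_1-s_2|$ and let $s_1\to s_2$, using $H^{2+\theta}\hookrightarrow C^1$), while $|Y'(s)|\le\|Y'\|_{L^\infty}\le c\|Y''\|_{L^2}=:M$. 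Hence the argument $p=|Y'(s)|$ ranges in $[\lambda,M]$, so the bounds \eqref{eqn: C0 and C1 bound for S}--\eqref{eqn: C11 bound for S} apply with $m=\lambda$ and $M=c\|Y''\|_{L^2}$, producing precisely the constant $\mu=\mu(\lambda,c\|Y''\|_{L^2})$ of \eqref{eqn: choice of mu in the static estimate}.

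The pointwise bound \eqref{eqn: pointwise bound of F} follows at once. From \eqref{eqn: C0 and C1 bound for S} we have $|S|,|\partial_sS|\le\mu$ and $M|\partial_pS|\le\mu$; combined with $|Y'|\le M$ and $|Y'\cdot Y''|\le|Y'||Y''|$, this controls $|\sigma'Y'|$ by $|\partial_pS||Y'||Y''|+|\partial_sS||Y'|\le\mu|Y''|+\mu|Y'|$ and $|\sigma Y''|$ by $\mu|Y''|$, which is \eqref{eqn: pointwise bound of F}. Squaring, integrating, and using $\|Y'\|_{L^2}\le\|Y''\|_{L^2}$ on $\mathbb{T}$ (the lowest nonzero frequency being $1$) already settles the case $\theta=0$ of \eqref{eqn: H^theta estimate of F}.

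For $\theta\in(0,1)$ the strategy is to apply a fractional product (Leibniz) estimate to each summand of $F_Y$, always treating the highest-order factor $Y''\in\dot H^\theta$ as the rough one and the $S$-built quantities as regular coefficients. The preliminary step is a composition estimate: since $S\in C^{1,1}$ renders $\partial_pS,\partial_sS$ Lipschitz on $[\lambda,M]\times\mathbb{T}$ by \eqref{eqn: C11 bound for S}, and $|Y'|\in H^{1+\theta}$ with $|Y'|\ge\lambda$, every composite coefficient $\Phi(|Y'(\cdot)|,\cdot)$ arising here --- namely $\sigma$ and $\partial_pS\,(Y'\otimes Y')/|Y'|$ --- lies in $H^1(\mathbb{T})\hookrightarrow L^\infty$ with $L^\infty$- and $\dot H^1$-norms $\le C\mu$; indeed its $s$-derivative is a sum of a bounded function times $Y''$ and a bounded function, whose $L^2$-norm is $\le(\mu/M)\|Y''\|_{L^2}+C\mu\le C\mu$ precisely because $M=c\|Y''\|_{L^2}$. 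For each piece I would then invoke $\|fg\|_{\dot H^\theta}\lesssim\|f\|_{L^\infty}\|g\|_{\dot H^\theta}+\|f\|_{\dot W^{\theta,p_1}}\|g\|_{L^{p_2}}$ with $p_1^{-1}+p_2^{-1}=\tfrac12$ and $g=Y''$: the first term yields $\mu\|Y\|_{\dot H^{2+\theta}}$, and in the second I would pick $p_2$ so that $\dot H^\theta\hookrightarrow L^{p_2}$ holds (whence $\|Y''\|_{L^{p_2}}\le C\|Y''\|_{\dot H^\theta}=C\|Y\|_{\dot H^{2+\theta}}$, using that $Y''$ has zero mean) while $H^1\hookrightarrow\dot W^{\theta,p_1}$ controls the coefficient. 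The remaining summand $\partial_sS(|Y'|,\cdot)\,Y'$ is a product of two $H^1$-factors and is treated identically. Intermediate semi-norms are converted to $\|Y\|_{\dot H^{2+\theta}}$ throughout via $\|Y\|_{\dot H^a}\le\|Y\|_{\dot H^{2+\theta}}$ for $a\le 2+\theta$ on $\mathbb{T}$.

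The principal obstacle is the Hölder-exponent bookkeeping in the product estimate, sharpest in the borderline regime $\theta=\tfrac12$, where $Y''\notin L^\infty$ and the embedding $\dot H^{1/2}\hookrightarrow L^{p_2}$ degenerates; this is exactly where the $\theta$-dependent constant $C_\theta$ enters, and one must check it stays finite for each fixed $\theta$ (by taking $p_2<\infty$ large and $p_1$ near $2$). The only other delicate point is the composition regularity of the $S$-coefficients, but --- since the product estimate needs no more than their $L^\infty$ and $H^1$ bounds --- it reduces to the Lipschitz control of $\partial_pS,\partial_sS$ furnished by \eqref{eqn: C11 bound for S}.
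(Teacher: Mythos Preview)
Your argument is correct, but it takes a different route from the paper. The paper proves \eqref{eqn: H^theta estimate of F} by the difference-quotient characterization of $\dot H^\theta$: it expands $F_Y(s+\tau)-F_Y(s)$ term by term using the $C^{1,1}$ bounds \eqref{eqn: C0 and C1 bound for S}--\eqref{eqn: C11 bound for S}, arrives at the clean $L^2$-in-$s$ bound
\[
\|F_Y(\cdot+\tau)-F_Y(\cdot)\|_{L^2}\le C\mu\bigl(|\tau|\,\|Y''\|_{L^2}+\|Y''(\cdot+\tau)-Y''(\cdot)\|_{L^2}\bigr),
\]
and then integrates against $|\tau|^{-1-2\theta}$. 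No product or interpolation inequality is invoked; everything is pointwise arithmetic with the Lipschitz constants of $S$.

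Your approach---showing that the matrix coefficient $\sigma\,Id+\partial_pS\,(Y'\otimes Y')/|Y'|$ and the scalar $\partial_sS(|Y'|,\cdot)$ lie in $H^1\cap L^\infty$ with norms $\lesssim\mu$, then invoking a fractional Leibniz rule---is equally valid, and in fact could be streamlined: in one dimension $H^1(\mathbb{T})$ is an algebra contained in $L^\infty$, and by complex interpolation between $\theta=0$ and $\theta=1$ multiplication by an $H^1$ function maps $H^\theta\to H^\theta$ for every $\theta\in[0,1]$, so the H\"older-exponent bookkeeping with $(p_1,p_2)$ and the borderline worry at $\theta=\tfrac12$ can be bypassed entirely. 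What your route buys is modularity (the coefficient regularity is isolated once and for all); what the paper's route buys is that it is completely self-contained and makes the $\theta$-dependence of $C_\theta$ explicit through the single integral $\int_{-1}^1|\tau|^{1-2\theta}\,d\tau$.
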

Its proof is a straightforward calculation.
We leave it to Appendix \ref{section: proof of lemma on regularity of F}.

\subsection{The Contour Dynamic Formulations}\label{section: contour dynamic formulation}
Assume $\phi$, the profile of the regularized $\delta$-function in \eqref{eqn: def of delta_eps}, satisfies the conditions in Theorem \ref{thm: error estimates of the regularized problem}.
We also assume that $Y$ at least has $H^2$-regularity, and satisfies the well-stretched condition \eqref{eqn: well-stretched condition} with constant $\lambda$.

We first recall the contour dynamic formulation of the original Stokes immersed boundary problem \eqref{eqn: Stokes equation}-\eqref{eqn: motion of the membrane and initial configuration} \cite{lin2017solvability}.
Given a string configuration $Y$, thanks to the stationary Stokes equation, the flow field is instantaneously determined by the force exerted on the fluid.
Combining \eqref{eqn: Stokes equation}-\eqref{eqn: elastic forcing}, we formally derive that
\begin{equation}
\begin{split}
u(x) = &\;\int_{\mathbb{R}^2}G(x-y)f(y)\,dy \\
=&\; \int_{\mathbb{R}^2}G(x-y)\int_{\mathbb{T}}F_Y(s)\delta(y-Y(s'))\,ds'\\
=&\; \int_{\mathbb{T}}G(x-Y(s'))F_Y(s')\,ds',
\end{split}
\label{eqn: formal derivation of velocity field in the un-regularized case}
\end{equation}
where $G$ is the fundamental solution defined in \eqref{eqn: stokeslet}.
Take $x = Y(s)$.
By \eqref{eqn: rewriting F using S} and integration by parts, %we obtain that
\begin{equation}
U_Y(s) =u(Y(s)) = \mathrm{p.v.} \int_{\mathbb{T}} -\partial_{s'}[G(Y(s)-Y(s'))]SY'(s')\,ds'.
\label{eqn: contour dynamic representation for string velocity the singular problem}
\end{equation}
Here $S=S(|Y'(s)|,s)$.

It is shown in \cite{lin2017solvability} that if $S \equiv 1$ and $Y\in H^2(\mathbb{T})$ satisfying the well-stretched condition \eqref{eqn: well-stretched condition}, the above derivation can be made rigourous.
We claim that, following an almost identical argument, \eqref{eqn: contour dynamic representation for string velocity the singular problem} can be justified for general $S$ that satisfies the Assumptions \ref{assumption: C 1,1 regularity assumption} %and \ref{assumption: Lipschitz continuous as a map}
and for all $Y\in H^2(\mathbb{T})$ satisfying \eqref{eqn: well-stretched condition}.
See \cite[Section 2]{lin2017solvability} for more details.
\begin{remark}
In \eqref{eqn: contour dynamic representation for string velocity the singular problem}, we treat the restriction of $u$ on $Y(\mathbb{T})$ as the string velocity, which is not obviously valid.
In fact, it is noted in \cite[Proposition 1.1 and Lemma 2.2]{tong2018thesis} that higher space-time regularity of $Y$ is needed in order to uniquely define transport of a lower dimensional object $Y(\mathbb{T})$ as well as the restriction of $u$ on it.
%It turns out that such regularity can be achieved by the solution $X$ of \eqref{eqn: contour dynamic equation for the singular problem} constructed in \cite{lin2017solvability,tong2018thesis}.
Since in this section we only focus on static error estimates, we shall avoid this subtlety, but simply assume \eqref{eqn: contour dynamic representation for string velocity the singular problem} is a valid formula for the string velocity. % for general $S$ and $Y$ we will be discussing.
\end{remark}

In the $\varepsilon$-regularized problem, for $Y\in H^2(\mathbb{T})$, it is not difficult to show that $f^\varepsilon$ defined as in \eqref{eqn: regularization of forcing} is as regular as $\delta_\varepsilon$; $u^\varepsilon$ is even more regular locally.
Hence, we can rigorously perform a derivation similar to \eqref{eqn: formal derivation of velocity field in the un-regularized case} and \eqref{eqn: contour dynamic representation for string velocity the singular problem}, and justify that \eqref{eqn: Stokes equation in the regularized IB problem}-\eqref{eqn: regularized motion of the membrane and initial configuration} gives % the regularized string velocity generated by an $H^{2}$-string $Y$ is given by
\begin{equation}
U^\varepsilon_{Y}(s) = \int_{\mathbb{T}}G^\varepsilon(Y(s)-Y(s'))F_Y(s')\,ds' = \int_{\mathbb{T}}-\partial_{s'}[G^\varepsilon(Y(s)-Y(s'))]SY'(s')\,ds'.
\label{eqn: contour dynamic formulation of the regularized problem crude form}
\end{equation}
Here %$F_Y$ is defined in \eqref{eqn: rewriting F using S}; ; and
\begin{equation}
G^\varepsilon(x) = \int_{\mathbb{R}^2} \int_{\mathbb{R}^2} \delta_\varepsilon(x-y)G(y-z)\delta_\varepsilon(z)\,dydz = [(\delta_\varepsilon\ast\delta_\varepsilon)\ast G](x)
\end{equation}
is the regularized fundamental solution.
Define
\begin{equation}
\label{eqn: def of varphi}
\varphi(x) = \phi\ast \phi(x),\quad \varphi_\varepsilon(x) \triangleq \frac{1}{\varepsilon^2}\varphi\left(\frac{x}{\varepsilon}\right).
\end{equation}
Obviously, $\varphi$ is compactly supported, radially symmetric, and smooth; in addition, $\varphi_\varepsilon = \delta_\varepsilon\ast \delta_\varepsilon$.
Hence, we may write $G^\varepsilon = \varphi_\varepsilon\ast G$, and \eqref{eqn: contour dynamic formulation of the regularized problem crude form} becomes
\begin{equation}
\begin{split}
(U^\varepsilon_Y)_j(s) =&\;\int_\mathbb{T} -\partial_{s'}[G^\varepsilon_{jk}(Y(s)-Y(s'))]\cdot SY'_k(s')\,ds'\\
%=&\;\int_\mathbb{T} Y'_i(s')\partial_{i}[\varphi_\varepsilon\ast  G_{jk}](Y(s)-Y(s'))Y'_k(s')\,ds'\\
=&\; \int_\mathbb{T}SY_l'(s')Y_k'(s')[ \partial_l\varphi_\varepsilon\ast G_{jk}](Y(s)-Y(s'))\,ds'.
\end{split}
\label{eqn: contour dynamic formulation of the regularized problem crude form taking derivative}
\end{equation}
Here $G_{jk}$ is the $(j,k)$-entry of $G$.
Define the Fourier transform in $\mathbb{R}^d$ and its inverse as follows
\begin{equation}
\hat{f}(\xi) = \int_{\mathbb{R}^d} f(x)e^{-ix\cdot \xi}\,dx,\quad f(x) = \frac{1}{(2\pi)^d}\int_{\mathbb{R}^d} \hat{f}(\xi)e^{ix\cdot \xi}\,d\xi.
\end{equation}
With this definition,
\begin{equation}
\widehat{f\ast g} = \hat{f}\hat{g}.
\label{eqn: Fourier transform makes convolution into product}
\end{equation}
%
%By integration by parts in \eqref{eqn: contour dynamic formulation of the regularized problem crude form}, with $S = S(|Y'(s')|,s')$,
%, and $Y'$ is short for $Y_s$ or $\partial_s Y$. %; for conciseness, we also omit the $s$- and $t$-dependence of $S$ whenever it is convenient.
Since $\partial_i\varphi_\varepsilon\ast G$ is sufficiently regular, we rewrite it in \eqref{eqn: contour dynamic formulation of the regularized problem crude form taking derivative} using its Fourier transform
\begin{equation}
\begin{split}
(U^\varepsilon_Y)_j(s)&=\frac{1}{(2\pi)^2}\int_\mathbb{T}ds'\,SY_l'(s')Y_k'(s')\int_{\mathbb{R}^2} e^{i(Y(s)-Y(s'))\cdot\xi} (\partial_l\varphi_\varepsilon\ast G_{jk})\hat{\,}(\xi)\,d\xi\\
&=\frac{1}{4\pi^2}\int_\mathbb{T}ds'\,S\int_{\mathbb{R}^2} e^{i(Y(s)-Y(s'))\cdot\xi} \cdot i\xi\cdot Y'(s')
\frac{\hat{\varphi}_\varepsilon(\xi)}{|\xi|^2}\left(Y_j'(s')-\frac{Y'(s')\cdot \xi}{|\xi|^2}\xi_j\right)\,d\xi.
\end{split}
\label{eqn: crude fourier representation of the velocity}
\end{equation}
In the last line, we used the fact that %\textcolor{red}{The Fourier transform needs explanation.}
\begin{equation}
[\partial_l \varphi_\varepsilon *G]\hat{\,}(\xi) = \frac{i\xi_l \hat{\varphi}_\varepsilon(\xi)}{|\xi|^2}\left(Id-\frac{\xi\otimes \xi}{|\xi|^2}\right).
\end{equation}
%Indeed, this can be justified since $\partial_l \varphi_\varepsilon *G$ is sufficiently regular and decays like $|x|^{-1}$ in space.

In order to simplify \eqref{eqn: crude fourier representation of the velocity}, we introduce a new variable $\eta \triangleq (\eta_1,\eta_2)\in\mathbb{R}^2$ to replace $\xi$, such that
\begin{equation}
\xi = \frac{Y(s)-Y(s')}{|Y(s)-Y(s')|}\cdot \eta_1+\left(\frac{Y(s)-Y(s')}{|Y(s)-Y(s')|}\right)^\perp \cdot \eta_2.
\end{equation}
Note that when $s'\not = s$, $|Y(s')-Y(s)|\not = 0$ due to \eqref{eqn: well-stretched condition}.
Here $\perp$ means rotating a vector in $\mathbb{R}^2$ counter-clockwise by $\pi/2$.
We also denote
\begin{align}
& A(s,s') = Y'(s')\cdot (Y(s')-Y(s)),\label{eqn: def of A(s,s')}\\
& B(s,s') = Y'(s')\cdot (Y(s')-Y(s))^\perp,\label{eqn: def of B(s,s')}\\
& D(s,s') = |Y(s')-Y(s)|.\label{eqn: def of D(s,s')}
\end{align}
Hence,
\begin{equation}
Y'(s') = \frac{1}{D^2}\left[A(Y(s')-Y(s))+B(Y(s')-Y(s))^\perp\right],
\label{eqn: representation of Y'(s') using secants}
\end{equation}
and
\begin{equation}
Y'(s')\cdot \xi = -\frac{A\eta_1 +B\eta_2}{D}.
\end{equation}
Then \eqref{eqn: crude fourier representation of the velocity} becomes
\begin{equation}
\begin{split}
U^\varepsilon_Y(s)
&=\frac{1}{4\pi^2}\int_\mathbb{T}ds'\,S\int_{\mathbb{R}^2} d\eta\,e^{iD\eta_1} \cdot \frac{-i(A\eta_1+B\eta_2)}{D}\cdot
\frac{\hat{\varphi}_\varepsilon(\eta)}{\eta_1^2+\eta_2^2}\\
&\qquad \cdot\left(\frac{1}{D^2}\left[A(Y(s')-Y(s))+B(Y(s')-Y(s))^\perp\right] \right.\\
&\qquad \quad \left.-\frac{A\eta_1+B\eta_2}{(\eta_1^2+\eta_2^2)D^2}[(Y(s')-Y(s))\eta_1+(Y(s')-Y(s))^\perp\eta_2]\right).
\end{split}
\end{equation}
Here we used  $\hat{\varphi}_\varepsilon(\xi) = \hat{\varphi}_\varepsilon(\eta)$ thanks to the radial symmetry of $\hat{\varphi}_\varepsilon$.
We further simplify this using the fact that $\hat{\varphi}_\varepsilon(\eta_1,\eta_2)$ is even in $\eta_2$.
\begin{equation}
\begin{split}
&\;U^\varepsilon_Y(s)\\
=&\;-\frac{1}{4\pi^2}\int_\mathbb{T}ds'\,\frac{S}{D^3}(Y(s')-Y(s))\int_{\mathbb{R}^2} d\eta\,e^{iD\eta_1} \cdot i(A\eta_1+B\eta_2)(A\eta_2 - B\eta_1)\eta_2
\frac{\hat{\varphi}_\varepsilon(\eta)}{(\eta_1^2+\eta_2^2)^2}\\
&\;+\frac{1}{4\pi^2}\int_\mathbb{T}ds'\,\frac{S}{D^3}(Y(s')-Y(s))^\perp \int_{\mathbb{R}^2} d\eta\,e^{iD\eta_1} \cdot i(A\eta_1+B\eta_2)(A\eta_2 - B\eta_1)\eta_1
\frac{\hat{\varphi}_\varepsilon(\eta)}{(\eta_1^2+\eta_2^2)^2}\\
=&\;\frac{1}{4\pi}\int_\mathbb{T}ds'\,\frac{S(A^2 - B^2)}{D^3}(Y(s')-Y(s))\cdot \frac{1}{\pi}\int_{\mathbb{R}^2} d\eta\,e^{iD\eta_1}
\frac{-i\eta_1\eta_2^2}{(\eta_1^2+\eta_2^2)^2}\hat{\varphi}_\varepsilon(\eta)\\
&\;+\frac{1}{4\pi}\int_\mathbb{T}ds'\,\frac{SAB}{D^3}(Y(s')-Y(s))^\perp \cdot \frac{1}{\pi}\int_{\mathbb{R}^2} d\eta\,e^{iD\eta_1}
\frac{-i(\eta_1^2-\eta_2^2)\eta_1}{(\eta_1^2+\eta_2^2)^2}\hat{\varphi}_\varepsilon(\eta).
\end{split}
\label{eqn: fourier representation of the velocity}
\end{equation}
Here the inner integral is absolutely integrable so that Fubini's theorem can be applied to omit terms that are odd in $\eta_2$.
%To this end, we define $f_1(x)$ and $f_2(x)$ on $\mathbb{R}$ as follows:
For $x\in \mathbb{R}$, define
\begin{align}
f_1(x) &= \frac{x}{\pi}\int_{\mathbb{R}^2} e^{ix\eta_1} \frac{-i\eta_1\eta_2^2}{(\eta_1^2+\eta_2^2)^2}\hat{\varphi}(\eta)\,d\eta,\label{eqn: def of f_1}\\
f_2(x) &= \frac{x}{\pi}\int_{\mathbb{R}^2} e^{ix\eta_1} \frac{-i\eta_1(\eta_1^2-\eta_2^2)}{(\eta_1^2+\eta_2^2)^2}\hat{\varphi}(\eta)\,d\eta.\label{eqn: def of f_2}
\end{align}
Since $\hat{\varphi}_{\varepsilon}(\eta) = \hat{\varphi}(\varepsilon\eta)$, \eqref{eqn: fourier representation of the velocity} becomes
\begin{equation}
\begin{split}
U^\varepsilon_Y(s)
=&\;\frac{1}{4\pi}\int_\mathbb{T}ds'\,\frac{S(A^2 - B^2)}{D^4}(Y(s')-Y(s))\cdot f_1\left(\frac{D}{\varepsilon}\right)\\
&\;+\frac{1}{4\pi}\int_\mathbb{T}ds'\,\frac{SAB}{D^4}(Y(s')-Y(s))^\perp \cdot f_2\left(\frac{D}{\varepsilon}\right).
\end{split}
\label{eqn: string motion in the regularized case}
\end{equation}

On the other hand, we rewrite $U_Y$ in \eqref{eqn: contour dynamic representation for string velocity the singular problem} as
%the Stokes immersed boundary problem without regularization, the string velocity is given by
\begin{equation}
\begin{split}
U_Y(s) & = \mathrm{p.v.} \int_{\mathbb{T}} -\partial_{s'}[G(Y(s)-Y(s'))]SY'(s')\,ds'\\
& = \frac{1}{4\pi}\mathrm{p.v.} \int_{\mathbb{T}} S\cdot\left[-\frac{|Y'(s')|^2}{|Y(s')-Y(s)|^2} + \frac{2[(Y(s')-Y(s))\cdot Y'(s')]^2}{|Y(s')-Y(s)|^4}\right](Y(s')-Y(s))\,ds'\\
%& = \frac{1}{4\pi}\mathrm{p.v.} \int_{\mathbb{T}} \left[\frac{|Y'(s')|^2}{D^2} - \frac{2|Y'(s')|^2A^2}{D^4}\right](Y(s)-Y(s'))\,ds'\\
& = \frac{1}{4\pi}\mathrm{p.v.} \int_{\mathbb{T}} \frac{S(A^2-B^2)}{D^4}(Y(s')-Y(s))\,ds'.
\end{split}
\label{eqn: string motion in the singular case}
\end{equation}
Combining \eqref{eqn: string motion in the regularized case} and \eqref{eqn: string motion in the singular case}, we obtain a representation of the regularization error in the string velocity
\begin{equation}
\begin{split}
U^\varepsilon_Y(s)-U_Y(s)
=&\;\frac{1}{4\pi}\mathrm{p.v.}\int_\mathbb{T}\frac{S(A^2 - B^2)}{D^4}(Y(s')-Y(s))\cdot f_3\left(\frac{D}{\varepsilon}\right)\,ds'\\
&\;+\frac{1}{4\pi}\int_\mathbb{T}\frac{SAB}{D^4}(Y(s')-Y(s))^\perp \cdot f_2\left(\frac{D}{\varepsilon}\right)\,ds',
\end{split}
\label{eqn: difference between regularized and singular string motion}
\end{equation}
where
\begin{equation}
f_3(x) \triangleq f_1(x)-1.
\label{eqn: def of f_3}
\end{equation}

Concerning $f_2$ and $f_3$, we can show that
\begin{lemma}[Estimates for $f_2$ and $f_3$]\label{lemma: decay estimate of f_1 f_2 f_3}
Let $f_2$ and $f_3$ be defined by \eqref{eqn: def of f_1}, \eqref{eqn: def of f_2}, and \eqref{eqn: def of f_3}.
For $k = 0,1,2$,
\begin{equation}\label{eqn: estimate for f_2 f_3}
|f_2^{(k)}(x)|+|f_3^{(k)}(x)| \leq \frac{C}{1+|x|^{k+2}}.
\end{equation}
Here $f_2^{(k)}$ and $f_3^{(k)}$ denote $k$-th derivatives of $f_2$ and $f_3$, respectively.
Let $m_2$ be defined in \eqref{eqn: def of M_2}.
If in addition $m_2 = 0$, then for $k = 0,1,2$,
\begin{equation}\label{eqn: improved estimate for f_2 f_3}
|f_2^{(k)}(x)|+|f_3^{(k)}(x)| \leq \frac{C}{1+x^4}.
\end{equation}
\end{lemma}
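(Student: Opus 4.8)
The plan is to integrate out $\eta_2$ in \eqref{eqn: def of f_1} and \eqref{eqn: def of f_2}, turning each $f_j$ into a one-dimensional Fourier integral whose decay is dictated by the regularity of a scalar profile. Writing $f_j(x)=\frac{x}{\pi}\int_{\mathbb{R}}e^{ix\eta_1}h_j(\eta_1)\,d\eta_1$, I would first record that, since $\phi$ is radially symmetric and normalized, $\hat{\varphi}=\hat{\phi}^2$ is a radial Schwartz function with $\hat{\varphi}(\eta)=\psi(|\eta|^2)$ for a smooth $\psi$ satisfying $\psi(0)=\int_{\mathbb{R}^2}\varphi=1$. Substituting $\eta_2=|\eta_1|u$ and using radial symmetry collapses the inner integral to $h_j(\eta_1)=-i\,\mathrm{sgn}(\eta_1)\,G_j(\eta_1^2)$, where $G_1(a)=\int_{\mathbb{R}}\frac{u^2}{(1+u^2)^2}\psi(a(1+u^2))\,du$ and $G_2(a)=\int_{\mathbb{R}}\frac{1-u^2}{(1+u^2)^2}\psi(a(1+u^2))\,du$. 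Both $G_j$ are smooth on $(0,\infty)$ and rapidly decaying as $a\to\infty$ (because $\psi$ is Schwartz), with $G_1(0^+)=\frac{\pi}{2}$ and $G_2(0^+)=0$. The jump of $h_1$ at the origin, of total size $i\pi\,\mathrm{sgn}(\eta_1)$, produces through $\frac{x}{\pi}\widehat{h_1}$ exactly the constant $1$; hence $f_3=f_1-1=\frac{x}{\pi}\widehat{h_3}$ with $h_3$ continuous, while $h_2$ is already continuous since $G_2(0^+)=0$.

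The core is the behaviour of $G_j$ near $a=0$. The change of variables $v=a(1+u^2)$ rewrites $G_j(a)$ as an Abel-type integral of the form $\sqrt{a}\int_a^\infty v^{-2}\sqrt{v-a}\,(\cdots)(v)\,dv$; peeling off $\psi(0)$ and expanding $\sqrt{v-a}=\sqrt{v}(1-a/v)^{1/2}$ together with the Taylor coefficients of $\psi$ at $v=0$, I would obtain an expansion $G_j(a)=G_j(0^+)+\sum_{k\ge 1}c_k^{(j)}a^{k/2}$ in powers of $\sqrt{a}=|\eta_1|$. The decisive point is that \emph{no logarithm arises}: a logarithmic term would require one of the auxiliary integrals to contain a $v^{-1}$ factor, but the substitution only generates half-integer exponents $v^{-3/2-n}$ while the Taylor data of $\psi$ sit at integer powers, so the critical exponent $-1$ is never met. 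Feeding this back gives $h_j(\eta_1)=-i\sum_{k\ge1}c_k^{(j)}\mathrm{sgn}(\eta_1)|\eta_1|^k$ (after discarding the jump for $j=1$); the odd-$k$ terms are smooth, and the leading \emph{singular} contribution is $-i\,c_2^{(j)}\eta_1|\eta_1|$. A short computation identifies $c_2^{(j)}$ as a nonzero universal multiple of $\psi'(0)$, and since $\Delta\hat{\varphi}(0)=-m_2$ gives $\psi'(0)=-m_2/4$, the coefficient $c_2^{(j)}$ vanishes precisely when $m_2=0$.

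Finally I would convert singularity type into Fourier decay. Away from the origin $h_j$ is smooth and rapidly decaying, so only the origin contributes polynomially; the leading singular term $\eta_1|\eta_1|$ has a distributional third derivative equal to a point mass at $0$ plus an $L^1$ function, whence $|\widehat{h_j}(x)|\le C(1+|x|)^{-3}$ and $|f_j(x)|=\frac{|x|}{\pi}|\widehat{h_j}(x)|\le C(1+|x|)^{-2}$. For derivatives I use $f_j^{(k)}(x)=\frac{1}{\pi}\big(x\,g_j^{(k)}(x)+k\,g_j^{(k-1)}(x)\big)$ with $g_j=\widehat{h_j}$ and $g_j^{(m)}=\widehat{(i\eta_1)^m h_j}$; multiplying $h_j$ by $\eta_1^m$ raises the singularity order by $m$, improving the transform decay to $|g_j^{(m)}(x)|\le C(1+|x|)^{-m-3}$ and yielding $|f_j^{(k)}(x)|\le C(1+|x|)^{-k-2}$ for $k=0,1,2$; boundedness near $x=0$ follows from $\eta_1^m h_j\in L^1$, giving \eqref{eqn: estimate for f_2 f_3}. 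When $m_2=0$ the term $c_2^{(j)}\eta_1|\eta_1|$ drops out and the leading singularity becomes $\eta_1^3|\eta_1|$, two orders smoother; repeating the bookkeeping improves every bound to $|f_j^{(k)}(x)|\le C(1+x^4)^{-1}$ for $k=0,1,2$, which is \eqref{eqn: improved estimate for f_2 f_3}. The main obstacle is the no-logarithm claim for the expansion of $G_j$: it is exactly the radial symmetry of $\phi$ that forces the half-integer/integer mismatch, and without it logarithmic corrections could spoil the clean power bounds.
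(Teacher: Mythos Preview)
Your strategy is sound and genuinely different from the paper's. The paper never reduces to one dimension: it works directly with the two–dimensional integrals, performing repeated integrations by parts in $\eta_2$ (via the antiderivatives $\arctan(\eta_2/\eta_1)$ and $\ln(\eta_1^2+\eta_2^2)$) and then in $\eta_1$, carefully tracking the boundary contributions produced by the jump of $\arctan(\eta_2/\eta_1)$ across $\{\eta_1=0\}$. Each round of this procedure gains one power of $x^{-1}$ and eventually yields the explicit expansion $f_3(x)=-m_2x^{-2}+o(x^{-2})$ (and $f_2(x)=2m_2x^{-2}+o(x^{-2})$), with the derivative bounds obtained by differentiating the resulting formulas and iterating. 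Your route---integrating out $\eta_2$ via the radial substitution $\hat\varphi(\eta)=\psi(|\eta|^2)$, writing $h_j(\eta_1)=-i\,\mathrm{sgn}(\eta_1)G_j(\eta_1^2)$, and reading off Fourier decay from the singularity of $h_j$ at the origin---is more conceptual: it exposes transparently why $m_2=\,-4\psi'(0)$ governs the leading $|x|^{-2}$ coefficient (the $\eta_1|\eta_1|$ term in $h_j$ comes precisely from the $a^1$ coefficient of $G_j$, which is a fixed multiple of $\psi'(0)$), and it packages the derivative estimates cleanly via $f_j^{(k)}=\pi^{-1}(xg_j^{(k)}+kg_j^{(k-1)})$ with $g_j^{(m)}=\widehat{(i\eta)^m h_j}$. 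The paper's method, by contrast, trades conceptual clarity for computational explicitness and avoids any asymptotic–expansion machinery.

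Two points deserve tightening if you turn this into a full proof. First, the ``Abel form'' $\sqrt{a}\int_a^\infty v^{-2}\sqrt{v-a}\,(\cdots)\,dv$ is correct for $G_1$ but not for $G_2$: after $v=a(1+u^2)$ one finds $G_2(a)=\sqrt{a}\int_a^\infty (2a-v)v^{-2}(v-a)^{-1/2}\psi(v)\,dv$, with $\sqrt{v-a}$ in the denominator; your half-integer/integer mismatch heuristic still applies (the exponents appearing are $m-n-\tfrac32$ or $m-n-\tfrac52$, never $-1$), but the formula needs adjusting. Second, the ``expand $\sqrt{v-a}$ and Taylor $\psi$'' argument is only formal; to use it you must split the integral at a fixed scale, expand only on the near piece, and control the remainder uniformly in $a$---this is routine but should be stated, since the clean decay bounds require a quantitative remainder after subtracting finitely many terms of the $\sqrt{a}$–expansion.
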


\begin{lemma}[Integrals of $f_2$ and $f_3$]\label{lemma: integrals of f_2 and f_3}
Let $m_1$ be defined in \eqref{eqn: def of M_1}.
Then
\begin{equation}
\int_\mathbb{R} f_2(x)\,dx =  4 m_1,\quad \int_\mathbb{R} f_3(x)\,dx = -4 m_1.
\label{eqn: integrals of f_2 and f_3}
\end{equation}
\end{lemma}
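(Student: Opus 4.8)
The plan is to compute each integral as the value at the origin of a Fourier transform, exploiting that $f_2$ and $f_3$ are integrable by Lemma \ref{lemma: decay estimate of f_1 f_2 f_3}. Writing the paper's convention $\hat{g}(\xi) = \int_{\mathbb R} g(x) e^{-i\xi x}\,dx$, one has $\int_{\mathbb R} f_2 = \hat{f_2}(0)$ and $\int_{\mathbb R} f_3 = \hat{f_3}(0)$. In the definitions \eqref{eqn: def of f_2} and \eqref{eqn: def of f_1}, I would first integrate out the $\eta_2$ variable, introducing the one-dimensional functions
\[
q(\eta_1) = \int_{\mathbb R} \frac{\eta_1(\eta_1^2-\eta_2^2)}{(\eta_1^2+\eta_2^2)^2}\,\hat{\varphi}(\eta_1,\eta_2)\,d\eta_2, \qquad q_1(\eta_1) = \int_{\mathbb R} \frac{\eta_1\eta_2^2}{(\eta_1^2+\eta_2^2)^2}\,\hat{\varphi}(\eta_1,\eta_2)\,d\eta_2,
\]
so that $f_2(x) = \frac{x}{\pi}\int_{\mathbb R} e^{ix\eta_1}(-i q(\eta_1))\,d\eta_1$ and $f_1(x) = \frac{x}{\pi}\int_{\mathbb R} e^{ix\eta_1}(-i q_1(\eta_1))\,d\eta_1$. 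Using $\widehat{x g} = i\,(\hat g)'$ together with Fourier inversion (the inner integrals are inverse transforms up to the factor $2\pi$), this gives $\hat{f_2}(\xi) = 2 q'(\xi)$ and $\hat{f_1}(\xi) = 2 q_1'(\xi)$, hence $\int_{\mathbb R} f_2 = 2q'(0)$.

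Second, I would evaluate $q'(0)$. Since $\hat\varphi$ is even in each variable (it is radial), $q$ is odd and $q(0)=0$, so $q'(0) = \lim_{\eta_1\to 0}q(\eta_1)/\eta_1$. The key algebraic identity is $\frac{\eta_1^2-\eta_2^2}{(\eta_1^2+\eta_2^2)^2} = \partial_{\eta_2}\frac{\eta_2}{\eta_1^2+\eta_2^2}$; integrating by parts in $\eta_2$ turns $q(\eta_1)/\eta_1$ into $-\int_{\mathbb R}\frac{\eta_2}{\eta_1^2+\eta_2^2}\partial_{\eta_2}\hat\varphi(\eta_1,\eta_2)\,d\eta_2$. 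Because $\partial_{\eta_2}\hat\varphi$ is odd in $\eta_2$, I can factor one power of $\eta_2$ out of it, which converts the singular kernel into the uniformly bounded $\frac{\eta_2^2}{\eta_1^2+\eta_2^2}$; dominated convergence then lets me pass $\eta_1\to 0$ and yields $q'(0) = -\int_{\mathbb R}\frac{\partial_{\eta_2}\hat\varphi(0,\eta_2)}{\eta_2}\,d\eta_2$.

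Third, I would convert this into $m_1$ by radial symmetry. Writing $\hat\varphi(\eta)=\Psi(|\eta|)$ and $\varphi(x)=V(|x|)$, the expression becomes $2\int_0^\infty \Psi'(\rho)/\rho\,d\rho$, while $\Psi(\rho)=2\pi\int_0^\infty V(r)J_0(r\rho)r\,dr$ is the Hankel transform of $V$. Differentiating under the integral (using $J_0'=-J_1$) and invoking the classical identity $\int_0^\infty J_1(u)/u\,du = 1$ produces $\int_0^\infty \Psi'(\rho)/\rho\,d\rho = -2\pi\int_0^\infty V(r)r^2\,dr = -m_1$, where the last equality is just \eqref{eqn: def of M_1} in polar coordinates. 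Tracking constants gives $q'(0)=2m_1$ and hence $\int_{\mathbb R} f_2 = 4m_1$.

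Finally, for $f_3 = f_1 - 1$ the same reduction applies to $q_1$, with one essential difference: a short computation (substituting $\eta_2 = \eta_1 t$) shows $q_1(0^\pm) = \pm\pi/2$, so $q_1$ has a jump of size $\pi$ at the origin. Its distributional derivative therefore contains $\pi\delta$, which produces exactly the $2\pi\delta(\xi)$ in $\hat{f_1}$ that cancels $\widehat{1} = 2\pi\delta$; this is the analytic manifestation of subtracting the constant $1$ in \eqref{eqn: def of f_3}. The integrable remainder gives $\int_{\mathbb R} f_3 = 2c$ with $c = \int_{\mathbb R}\frac{\hat\varphi(0,\eta_2)-1}{\eta_2^2}\,d\eta_2$ (convergent since $\hat\varphi - 1 = O(|\eta|^2)$ near the origin), and the companion Bessel identity $\int_0^\infty (1-J_0(u))/u^2\,du = 1$ turns $c$ into $-2m_1$, whence $\int_{\mathbb R} f_3 = -4m_1$. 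I expect the main obstacle to be rigor in the limiting arguments: the factor $x$ renders the $\eta$-integrals only conditionally convergent, and naive differentiation under the integral sign gives $0$ because $\nabla\hat\varphi(0)=0$; the real content is that the kernels concentrate at $\eta_2=0$ as $\eta_1\to 0$, which must be captured by the integration-by-parts/dominated-convergence device for $f_2$ and by the jump of $q_1$ for $f_3$, together with careful justification of the Hankel-transform manipulations and the two Bessel integral evaluations.
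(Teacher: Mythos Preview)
Your proposal is correct and lands on the same answers, but it takes a noticeably different computational route from the paper. Both arguments pass to the Fourier side and reduce $\int_{\mathbb R} f_2$ to the quantity $-\int_{\mathbb R}\eta_2^{-1}\partial_{\eta_2}\hat\varphi(0,\eta_2)\,d\eta_2$; the paper reaches this via the integrated-by-parts formula \eqref{eqn: f_2 first integration by parts} established while proving Lemma~\ref{lemma: decay estimate of f_1 f_2 f_3} and then evaluates it by writing $\partial_{\eta_2}\hat\varphi$ back as a Fourier integral and using $\mathrm{p.v.}\int e^{-ix_2\eta_2}/\eta_2\,d\eta_2 = -i\pi\,\mathrm{sgn}(x_2)$, followed by the radial-symmetry identity \eqref{eqn: a symmetrization trick}. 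You instead compute $q'(0)$ directly from the difference quotient (the factoring $\partial_{\eta_2}\hat\varphi=\eta_2\cdot(\text{smooth})$ is a clean substitute for the paper's $\delta'$-cutoff and principal-value bookkeeping) and then convert to $m_1$ through the Hankel transform and the Bessel identity $\int_0^\infty J_1(u)/u\,du=1$. For $f_3$ the paper again recycles a formula from the proof of Lemma~\ref{lemma: decay estimate of f_1 f_2 f_3} (namely \eqref{eqn: g_1 is of order 1/x}) and gets the same principal-value integral with the opposite sign, whereas your jump-of-$q_1$ argument is more self-contained and leads to the equivalent expression $\int_{\mathbb R}(\hat\varphi(0,\eta_2)-1)/\eta_2^2\,d\eta_2$, evaluated via $\int_0^\infty(1-J_0)/u^2\,du=1$. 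What the paper's route buys is that it reuses machinery already set up and avoids special-function identities; what yours buys is a cleaner conceptual picture (the $\pi$-jump of $q_1$ visibly cancels $\widehat{1}=2\pi\delta$) and independence from the earlier lemma's proof. One small point to tighten: the step $\hat f_2(0)=2q'(0)$ is a priori a distributional identity, and you should note explicitly that $f_2\in L^1$ (Lemma~\ref{lemma: decay estimate of f_1 f_2 f_3}) makes $\hat f_2$ continuous, so the distributional derivative $q'$ has a continuous representative whose value at $0$ coincides with your difference-quotient limit.
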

Their proofs involve repeated integration by parts; we leave them to Appendix \ref{section: study of the auxiliary functions f_i}.

For future use, define
\begin{equation}\label{eqn: def of f_4 and f_5}
f_4 \triangleq f_2-f_3,\quad f_5 \triangleq f_2-2f_3.
\end{equation}
Estimates for $f_4$ and $f_5$ follow from Lemma \ref{lemma: decay estimate of f_1 f_2 f_3}.

\subsection{Statements of the static error estimates}\label{section: statements of static error estimates}

For clarity, we present static error estimates for the string velocities as the main results of this section.
Their proofs are left to Sections \ref{section: preliminary estimates}-\ref{section: static H^1 error estimates}.

On the $L^2$-estimates for the regularization error, we have
\begin{proposition}[Static $L^2$-error estimate]\label{prop: L^2 error estimate of the string velocity}
Assume $\phi$, the profile of the regularized $\delta$-function, satisfies the assumption in Theorem \ref{thm: error estimates of the regularized problem}.
Let $m_1$ and $m_2$ be defined in \eqref{eqn: def of M_1} and \eqref{eqn: def of M_2}, respectively.

Suppose $Y\in H^{2+\theta}(\mathbb{T})$ with $\theta\in[1/4,1)$, satisfying the well-stretched condition \eqref{eqn: well-stretched condition} with $\lambda>0$.
Let $F_Y(s)$ be defined by \eqref{eqn: rewriting F using S}, % = \partial_s(S(|Y'(s)|,s)Y'(s))$ be the corresponding elastic force in the Lagrangian coordinate.
with $S$ satisfying the Assumption \ref{assumption: C 1,1 regularity assumption}. %; in the case $\theta = 1$, $S$ additionally satisfies the Assumption \ref{assumption: C 2 regularity assumption}.
Given $\varepsilon>0$, let $U^\varepsilon_Y$ and $U_Y$ be the string velocities corresponding to $Y$ in the $\varepsilon$-regularized problem and the original problem, defined by \eqref{eqn: string motion in the regularized case} and
\eqref{eqn: string motion in the singular case}, respectively.
%Define $\tilde{\varepsilon} = \varepsilon/\lambda$ to be the normalized regularization parameter.
Provided that $\varepsilon\ll \lambda$,
\begin{equation}
\begin{split}
\|U^\varepsilon_Y-U_Y\|_{L^2(\mathbb{T})}
\leq &\;\frac{m_1 \varepsilon}{\pi}\left\| \frac{F_Y(s)\cdot Y'(s)}{|Y'(s)|^2}\right\|_{L^2(\mathbb{T})}+\frac{C\mu\varepsilon^{1+\theta} \ln^\theta(\lambda /\varepsilon)}{\lambda^{1+\theta}}\|Y\|_{\dot{H}^{2+\theta}(\mathbb{T})}\\
&\; +\frac{C\mu\varepsilon^2 \ln(\lambda/\varepsilon)}{\lambda^5}\|Y'\|_{L^\infty(\mathbb{T})}^2 \|Y''\|^2_{L^4(\mathbb{T})}.
\end{split}
\label{eqn: L^2 error estimate final version}
\end{equation}
Here $C$ is a universal constant depending on $\theta$, %; $m_1$ is defined in \eqref{eqn: def of M_1}; %In what follows, we shall write
and $\mu$ is defined in \eqref{eqn: choice of mu in the static estimate}.

If, in addition, $m_2 = 0$, the logarithmic factors above can be removed.
\end{proposition}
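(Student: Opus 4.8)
The plan is to start from the exact representation \eqref{eqn: difference between regularized and singular string motion} of $U^\varepsilon_Y-U_Y$ in terms of the scalar kernels $f_2,f_3$, and to extract its leading $O(\varepsilon)$ behaviour by a localized Taylor analysis around the diagonal $s'=s$. Setting $\tau=s'-s$ and abbreviating $a=Y'(s)$, $b=Y''(s)$, I would expand the geometric quantities $A,B,D$ from \eqref{eqn: def of A(s,s')}--\eqref{eqn: def of D(s,s')}, the secant $Y(s')-Y(s)$, and the stiffness $S=S(|Y'(s')|,s')$ in powers of $\tau$. The well-stretched condition \eqref{eqn: well-stretched condition} guarantees $D\asymp|a|\,|\tau|$ near the diagonal, so that $f_2(D/\varepsilon)$ and $f_3(D/\varepsilon)$ may be expanded around the even reference profiles $f_j(|a|\,|\tau|/\varepsilon)$. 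The most singular part of the integrand behaves like $S_0\,a/\tau$ times an even kernel, hence is odd in $\tau$ and drops out under the principal value.

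The heart of the proof is the identification of the surviving $O(\varepsilon)$ term, and the subtle point is that two even-in-$\tau$ contributions remain. The first is the constant-in-$\tau$ coefficient of the geometric factor paired with $f_j(|a|\,|\tau|/\varepsilon)$; the second — easy to overlook — is the odd $1/\tau$ singular part paired with the first-order odd correction $\tfrac{q}{2}\tau\cdot w f_j'(w)$ arising from $D/\varepsilon=w(1+\tfrac{q}{2}\tau+\cdots)$, where $w=|a|\,|\tau|/\varepsilon$ and $q=(a\cdot b)/|a|^2$. Rescaling in $\tau$, I would evaluate the resulting one-dimensional integrals using Lemma \ref{lemma: integrals of f_2 and f_3} ($\int_\mathbb{R} f_3=-4m_1$, $\int_\mathbb{R} f_2=4m_1$) together with the integration-by-parts identity $\int_0^\infty w f_3'(w)\,dw=2m_1$. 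After decomposing $b$ in the orthonormal frame $\{a/|a|,a^\perp/|a|\}$, the $a^\perp$-components cancel and the $a$-components combine, collapsing everything into
\[
-\frac{m_1\varepsilon}{\pi}\,\frac{F_Y(s)\cdot Y'(s)}{|Y'(s)|^2}\,\frac{Y'(s)}{|Y'(s)|},
\]
where I have used $F_Y=\partial_s(SY')=S'a+S_0 b$, so that $S_0 q+S'=(F_Y\cdot Y')/|Y'|^2$. This vector has length exactly $\tfrac{m_1\varepsilon}{\pi}\,|F_Y\cdot Y'|/|Y'|^2$, and taking its $L^2$-norm yields the first term on the right-hand side of \eqref{eqn: L^2 error estimate final version}.

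It then remains to control the error committed in the localization and expansions. I would split $\mathbb{T}$ into a near-diagonal zone and a far zone, bound $1/D$ and $1/|Y'|$ by $\lambda^{-1}$ via \eqref{eqn: well-stretched condition}, and use the decay $|f_2^{(k)}|+|f_3^{(k)}|\le C(1+|x|)^{-k-2}$ from Lemma \ref{lemma: decay estimate of f_1 f_2 f_3}. The first-order Taylor remainders of the geometric factor carry a fractional modulus of continuity measured by $\|Y\|_{\dot{H}^{2+\theta}}$; combined with the borderline $|x|^{-2}$ decay of the kernels integrated over the range $\varepsilon\lesssim D\lesssim\lambda$, they produce the $\varepsilon^{1+\theta}\ln^\theta(\lambda/\varepsilon)$ term, with the constant $\mu$ entering through Lemma \ref{lemma: H^1/2 estimate of F}. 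The genuinely quadratic-in-curvature remainders are controlled by $\|Y'\|_{L^\infty}^2\|Y''\|_{L^4}^2$ and give the final $\varepsilon^2\ln(\lambda/\varepsilon)$ term, while the far zone is harmless thanks to the decay of $f_2,f_3$. When $m_2=0$, the improved estimate $|f_2^{(k)}|+|f_3^{(k)}|\le C(1+x^4)^{-1}$ in Lemma \ref{lemma: decay estimate of f_1 f_2 f_3} renders all the borderline integrals convergent, removing the logarithmic factors.

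I expect the main obstacle to be this last, remainder stage: keeping the exact powers of $\varepsilon$ and the precise logarithmic factors demands careful bookkeeping of fractional differences of $Y\in\dot{H}^{2+\theta}$ paired against the slowly decaying kernels, and the degeneracy as $D\to0$ must be uniformly tamed by the well-stretched condition. The main-term extraction, though delicate because it hinges on the $f_j'$-correction interacting with the principal-value singularity, is a finite and self-contained computation once the expansions are in place.
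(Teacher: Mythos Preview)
Your strategy is correct and reaches the same leading term, but it differs from the paper's proof in two essential respects. For the main-term extraction, the paper does not Taylor-expand $f_j(D/\varepsilon)$; instead it rewrites $4\pi(U^\varepsilon_Y-U_Y)=E_1+E_2+E_3$ (see \eqref{eqn: splitting the string velocity error into E_k}) and exploits the \emph{exact} vanishing $E_{2,1}=SY'(s)\cdot\mathrm{p.v.}\int\frac{A}{D^2}f_3(D/\varepsilon)\,d\tau=0$, obtained by the change of variable $\tau\mapsto D$ using $\partial_\tau D=A/D$. Your identity $\int_0^\infty w f_3'(w)\,dw=2m_1$, coming from the first-order Taylor correction of $f_3(D/\varepsilon)$ paired with the $1/\tau$ singularity, is precisely the infinitesimal shadow of that exact cancellation; both routes feed the same contribution into the final combination. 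For the remainders, the paper works pointwise in $s$ via maximal functions and then introduces the Littlewood--Paley cutoff $\mathcal P_K$ (estimates \eqref{eqn: estimate of E_24} and \eqref{eqn: estimate of E_36}), optimizing $K\sim\lambda/(\varepsilon\ln(\lambda/\varepsilon))$ to produce the $\varepsilon^{1+\theta}\ln^\theta(\lambda/\varepsilon)$ term. Your plan of taking $L^2_s$ first and invoking $\|F_Y(\cdot+\tau)-F_Y(\cdot)\|_{L^2}\le C|\tau|^\theta\|F_Y\|_{\dot H^\theta}$ is more direct and in fact yields $\int_\mathbb{T}|\tau|^\theta(1+(\lambda\tau/\varepsilon)^2)^{-1}\,d\tau\sim(\varepsilon/\lambda)^{1+\theta}$ \emph{without} the $\ln^\theta$ factor, so you would prove slightly more than stated. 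The paper's route, on the other hand, gives pointwise control and recycles the same $E_1,E_2,E_3$ decomposition verbatim for the $\dot H^1$ estimate in Proposition~\ref{prop: H^1 error estimate of the string velocity}.
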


Note that we require $\theta\geq \frac{1}{4}$ in Proposition \ref{prop: L^2 error estimate of the string velocity} because the estimate involves $\|Y''\|_{L^4}$ and $H^{2+\frac{1}{4}}(\mathbb{T})\hookrightarrow W^{2,4}(\mathbb{T})$.

\begin{corollary}[Improved static $L^2$-error estimate for the normal velocity]\label{coro: L^2 error estimate of the string normal velocity}
Under the assumptions of Proposition \ref{prop: L^2 error estimate of the string velocity}, the regularization error of the string normal velocity satisfies %enjoys the following improved estimate,
\begin{equation}
\begin{split}
&\;\left\|(U^\varepsilon_Y(s)-U_Y(s))\cdot \frac{Y'(s)^\perp}{|Y'(s)|}\right\|_{L^2(\mathbb{T})}\\
\leq &\;\frac{C\mu\varepsilon^{1+\theta} \ln^\theta(\lambda /\varepsilon)}{\lambda^{1+\theta}}\|Y\|_{\dot{H}^{2+\theta}(\mathbb{T})}+\frac{C\mu\varepsilon^2 \ln(\lambda/\varepsilon)}{\lambda^5}\|Y'\|_{L^\infty(\mathbb{T})}^2 \|Y''\|^2_{L^4(\mathbb{T})}.
\end{split}
\label{eqn: L^2 error estimate normal direction}
\end{equation}
If, in addition, $m_2 = 0$, the logarithmic factors above can be removed.
\end{corollary}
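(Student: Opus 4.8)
The plan is to show that the entire $O(m_1\varepsilon)$ contribution dominating the bound in Proposition \ref{prop: L^2 error estimate of the string velocity} is directed along the unit tangent $Y'(s)/|Y'(s)|$, so that it is annihilated upon projection onto the unit normal $Y'(s)^\perp/|Y'(s)|$, leaving precisely the higher-order contributions on the right-hand side of \eqref{eqn: L^2 error estimate normal direction}. Concretely, I would start from the error representation \eqref{eqn: difference between regularized and singular string motion}, which splits $U_Y^\varepsilon-U_Y$ into a piece $I_1$ carried by the secant vector $Y(s')-Y(s)$ and weighted by $f_3(D/\varepsilon)$, and a piece $I_2$ carried by $(Y(s')-Y(s))^\perp$ and weighted by $f_2(D/\varepsilon)$.

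Next I would dot \eqref{eqn: difference between regularized and singular string motion} with $Y'(s)^\perp/|Y'(s)|$ and extract the leading near-diagonal asymptotics. Because $f_2$ and $f_3$ concentrate on the scale $|s'-s|\sim \varepsilon/\lambda$ (by the decay in Lemma \ref{lemma: decay estimate of f_1 f_2 f_3}), the $O(\varepsilon)$ part of each integral is governed by the Taylor expansion of $A$, $B$, $D$, $S$, and the secant vectors at $s'=s$, combined with the mass identities $\int_{\mathbb{R}} f_2\,dx=4m_1$ and $\int_{\mathbb{R}} f_3\,dx=-4m_1$ of Lemma \ref{lemma: integrals of f_2 and f_3}. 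The crux is that the two normal contributions at order $m_1\varepsilon$ -- the one from $I_1$, entering through $(Y(s')-Y(s))\cdot Y'(s)^\perp=O(|s'-s|^2)$, and the one from $I_2$, entering through $AB=O(|s'-s|^3)$ -- carry equal magnitude and opposite sign and hence cancel. Equivalently, the leading term already isolated in the proof of Proposition \ref{prop: L^2 error estimate of the string velocity} is exactly $\tfrac{m_1\varepsilon}{\pi}\tfrac{F_Y(s)\cdot Y'(s)}{|Y'(s)|^2}\cdot\tfrac{Y'(s)}{|Y'(s)|}$, whose $L^2$-norm is $\tfrac{m_1\varepsilon}{\pi}\bigl\|\tfrac{F_Y\cdot Y'}{|Y'|^2}\bigr\|_{L^2}$ and whose inner product with $Y'(s)^\perp/|Y'(s)|$ vanishes.

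Once the $O(m_1\varepsilon)$ term is removed, the remainder would be estimated exactly as in Proposition \ref{prop: L^2 error estimate of the string velocity}: the difference between $f_2$, $f_3$ and their concentrated leading parts, paired against the $\dot{H}^{2+\theta}$-regular part of the kernel, yields the $\varepsilon^{1+\theta}\ln^\theta(\lambda/\varepsilon)$ term, while the quadratic curvature remainder yields the $\varepsilon^2\ln(\lambda/\varepsilon)$ term, with the constants $\mu$ entering through Lemma \ref{lemma: H^1/2 estimate of F}. When $m_2=0$, the improved decay \eqref{eqn: improved estimate for f_2 f_3} of Lemma \ref{lemma: decay estimate of f_1 f_2 f_3} removes the logarithmic factors, just as it does in Proposition \ref{prop: L^2 error estimate of the string velocity}.

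The main obstacle is the precise near-diagonal expansion needed to certify the cancellation of the normal components of the two $O(m_1\varepsilon)$ pieces: one must track the expansions of $A$, $B$, $D$ and $S(|Y'(s')|,s')$ through the order at which the $Y'(s)^\perp$-component first appears, and pair them with the exact mass identities of Lemma \ref{lemma: integrals of f_2 and f_3}, since any slip in the bookkeeping would leave a spurious $O(\varepsilon)$ normal term. In practice the cleanest route is not to recompute from scratch but to reuse the decomposition from the proof of Proposition \ref{prop: L^2 error estimate of the string velocity}, where the dominant term is already exhibited in the tangential direction $Y'(s)/|Y'(s)|$; observing its orthogonality to the normal then makes the corollary essentially immediate.
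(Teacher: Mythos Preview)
Your proposal is correct and matches the paper's approach: the proof of Proposition \ref{prop: L^2 error estimate of the string velocity} culminates in the estimate \eqref{eqn: L^2 error estimate with tangent force on the left hand side}, which shows that $U_Y^\varepsilon-U_Y$ differs from the purely tangential vector $-\tfrac{m_1\varepsilon}{\pi|Y'(s)|}\tfrac{F_Y(s)\cdot Y'(s)}{|Y'(s)|^2}Y'(s)$ by exactly the higher-order terms on the right-hand side of \eqref{eqn: L^2 error estimate normal direction}, and the paper explicitly notes that this single inequality proves both the proposition and the corollary. Your closing observation---that reusing this decomposition and noting the orthogonality of the tangential leading term to $Y'(s)^\perp$ makes the corollary immediate---is precisely how the paper proceeds.
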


We will prove Proposition \ref{prop: L^2 error estimate of the string velocity} and Corollary \ref{coro: L^2 error estimate of the string normal velocity} in Section \ref{section: L^2 static error}.

%\begin{remark}\label{rmk: mollifier such that M_1 = 0}
Proposition \ref{prop: L^2 error estimate of the string velocity} implies that in general, $\|U_Y^\varepsilon-U_Y\|_{L^2}$ is of order $O(\varepsilon)$,
while Corollary \ref{coro: L^2 error estimate of the string normal velocity} shows that such $O(\varepsilon)$-error arises only from the tangential component of the string velocity.
Improved error bounds may be achieved when $(F_Y\cdot Y')/|Y'|\equiv 0$ or $m_1=0$.
We should highlight that this has a clear physical interpretation, which will be discussed in Section \ref{section: discussion}.
At this point, we remark that the former condition %$(F_Y\cdot Y')/|Y'|\equiv 0$ implies that the elastic force has zero tangential component.
is true only when $S(p,s) = kp^{-1}$, with $k\geq 0$ being a constant.
In such case, the normal force in the Eulerian coordinate is proportional to the local curvature of the string,
which is the situation when describing moving interfaces with surface tension between two fluid domains  \cite{solonnikov1986solvability,tanaka1993global,solonnikov2014theory}.
%It is worthwhile to remark that it has been shown numerically that better accuracy in the string position may be attained if the elastic force is purely in the normal direction.
%\textcolor{red}{Refs.}
%Although there are considerable differences between our continuous setting and
%See the corresponding statements in Theorem \ref{thm: error estimates of the regularized problem} and Theorem \ref{thm: convergence and error estimates of the low-frequency regularized IB method}.
The latter condition $m_1 = 0$ may be achieved by suitably choosing the regularized $\delta$-function. %, such that $M_1 = 0$, we may improve that to $O(\varepsilon^{3/2}|\ln \varepsilon|)$.
%As is remarked before, the power $3/2$ is due to the fact that $Y\in H^{5/2}(\mathbb{T})$.
Find more discussions on this in Section \ref{section: discussion} as well.

Our analysis also indicates that another moment-type condition $m_2 = 0$ may only benefit the error bound by a logarithmic factor in higher-order terms.

The next result is concerned with $H^1$-estimates of $U_Y^\varepsilon-U_Y$, whose proof will be provided in Section \ref{section: static H^1 error estimates}.
\begin{proposition}[Static $H^1$-error estimate]\label{prop: H^1 error estimate of the string velocity}
Under the assumptions of Proposition \ref{prop: L^2 error estimate of the string velocity},
\begin{equation}
\|U^\varepsilon_Y-U_Y\|_{\dot{H}^1(\mathbb{T})}\leq \frac{C\mu\varepsilon^\theta}{\lambda^\theta}\|Y\|_{\dot{H}^{2+\theta}(\mathbb{T})}+\frac{C\mu \varepsilon}{\lambda^4}\|Y'\|_{L^\infty(\mathbb{T})}^2\|Y''\|_{L^4(\mathbb{T})}^2.
\label{eqn: H^1 error estimate of the string velocity}
\end{equation}
%where $C$ is a universal constant, and $\mu = \mu(\lambda, \|Y'\|_{L^\infty(\mathbb{T})}+\|Y\|_{\dot{H}^2(\mathbb{T})})$.
\end{proposition}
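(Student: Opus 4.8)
The plan is to bound $\|U^\varepsilon_Y-U_Y\|_{\dot{H}^1}$ by differentiating the explicit error representation \eqref{eqn: difference between regularized and singular string motion} in the Lagrangian variable $s$ and estimating $\partial_s(U^\varepsilon_Y-U_Y)$ in $L^2(\mathbb{T})$. Denote by $E_1$ and $E_2$ the two integrals in \eqref{eqn: difference between regularized and singular string motion}, the tangential piece weighted by $f_3(D/\varepsilon)$ and the normal piece weighted by $f_2(D/\varepsilon)$, with $A,B,D$ as in \eqref{eqn: def of A(s,s')}--\eqref{eqn: def of D(s,s')} and $S=S(|Y'(s)|,s)$. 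First I would justify differentiation under the principal-value integral: since $Y\in H^{2+\theta}(\mathbb{T})$ with $\theta\ge 1/4$, the well-stretched condition \eqref{eqn: well-stretched condition} gives $D\ge\lambda|s-s'|$ on $\mathbb{T}$, and $f_2,f_3$ together with their derivatives decay like $(1+|x|^{k+2})^{-1}$ by Lemma \ref{lemma: decay estimate of f_1 f_2 f_3}, so every kernel produced is integrable and the p.v.\ survives differentiation. Applying $\partial_s$ splits each of $\partial_s E_1,\partial_s E_2$ into three families, according to whether the derivative lands on (i) the arguments $f_j(D/\varepsilon)$, (ii) the secant vectors $Y(s')-Y(s)$ and $(Y(s')-Y(s))^\perp$, or (iii) the scalar coefficients $S(A^2-B^2)/D^4$ and $SAB/D^4$.

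The leading contribution, which fixes the $\varepsilon^\theta$ rate, comes from family (i): here $\partial_s f_j(D/\varepsilon)=\varepsilon^{-1}f_j'(D/\varepsilon)\,\partial_s D$ produces the most singular kernel, concentrated on the diagonal at scale $|s-s'|\sim\varepsilon/\lambda$. The mechanism I would exploit is to freeze the accompanying smooth coefficient at its diagonal value $s'=s$ and subtract. The frozen part reduces to a one-dimensional integral of a combination of $f_2,f_3$ and their derivatives, which is evaluated and cancelled at order $\varepsilon^0$ using the moment identities of Lemma \ref{lemma: integrals of f_2 and f_3} together with the decay of Lemma \ref{lemma: decay estimate of f_1 f_2 f_3}; this is precisely where the auxiliary combinations $f_4=f_2-f_3$ and $f_5=f_2-2f_3$ from \eqref{eqn: def of f_4 and f_5} enter, packaging the geometric cancellation between the tangential and normal pieces so that no spurious $\varepsilon^0$ term remains. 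The surviving contribution is the variation of the coefficient, which I would control by a second-order Taylor expansion of $Y$ about $s'=s$ whose fractional remainder is governed by the modulus of continuity of $Y''$, i.e.\ by $\|Y\|_{\dot{H}^{2+\theta}}$; multiplying by the $\varepsilon^\theta$ gained from the diagonal scale yields the term $C\mu\lambda^{-\theta}\varepsilon^\theta\|Y\|_{\dot{H}^{2+\theta}}$, where $\mu=\mu(\lambda,c\|Y''\|_{L^2})$ comes from Lemma \ref{lemma: H^1/2 estimate of F} and Assumption \ref{assumption: C 1,1 regularity assumption}.

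Families (ii) and (iii) are lower order and account for the remaining term of \eqref{eqn: H^1 error estimate of the string velocity}. When $\partial_s$ hits a secant it returns $-Y'(s)$ and leaves a kernel of the same order as in the $L^2$ analysis, so these terms can be treated exactly as in Proposition \ref{prop: L^2 error estimate of the string velocity}. When it hits a coefficient I would use Assumption \ref{assumption: C 1,1 regularity assumption} to bound $\partial_s S$ and $\partial_p S$ and Lemma \ref{lemma: H^1/2 estimate of F} to control $F_Y$ and its $s$-derivative. The quadratic remainder $C\mu\lambda^{-4}\varepsilon\|Y'\|_{L^\infty}^2\|Y''\|_{L^4}^2$ arises from the terms in which the derivatives of $D$ and of the coefficients combine; the $\|Y''\|_{L^4}^2$ factor, and hence the restriction $\theta\ge 1/4$ via $H^{2+1/4}(\mathbb{T})\hookrightarrow W^{2,4}(\mathbb{T})$, already appears in the $L^2$ estimate and is tracked identically. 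Collecting the three families and taking $L^2$ in $s$ gives \eqref{eqn: H^1 error estimate of the string velocity}.

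The hard part will be the near-diagonal analysis of family (i). The kernel $\varepsilon^{-1}f_j'(D/\varepsilon)\,\partial_s D$ is borderline, because $f_j'$ only decays like $(1+|x|^3)^{-1}$, so the integral is barely controllable and the correct power $\varepsilon^\theta$, rather than a larger one, is recovered only through the cancellation of the frozen $\varepsilon^0$ contribution combined with the fractional regularity of $Y''$. Keeping both effects sharp while preserving the stated homogeneity in $\lambda$ and the precise dependence on $\mu$ is the delicate point; the routine off-diagonal decay and Taylor-remainder estimates needed here will be assembled in the preliminary estimates of Section \ref{section: preliminary estimates}.
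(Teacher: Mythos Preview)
Your overall strategy---differentiate the error representation in $s$ and bound $\partial_s(U^\varepsilon_Y-U_Y)$ in $L^2$---matches the paper, but the decomposition and the mechanism you describe are different from what the paper actually does, and your sketch has a gap in family (iii).

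The paper does \emph{not} work with the raw two-term form \eqref{eqn: difference between regularized and singular string motion}. It reuses the three-term split $E_1,E_2,E_3$ from \eqref{eqn: splitting the string velocity error into E_k} (with $f_5,f_3,f_4$ respectively), and for $E_2$ it first invokes the exact cancellation $E_{2,1}=0$ established in the $L^2$ proof to rewrite
\[
E_2(s)=\int_{\mathbb{T}}\frac{SY'(s+\tau)-SY'(s)}{\tau}\cdot\frac{\tau A}{D^2}\,f_3\!\left(\frac{D}{\varepsilon}\right)d\tau,
\]
so that the integrand is already bounded and no p.v.\ is needed. The leading $\varepsilon^\theta$ contribution then comes \emph{not} from $\partial_s$ landing on $f_j(D/\varepsilon)$, but from $\partial_s$ hitting the difference quotient above, producing $\tau^{-1}(F(s+\tau)-F(s))$, which against the kernel $(1+(\lambda\tau/\varepsilon)^2)^{-1}$ gives $C\varepsilon^\theta\lambda^{-\theta}\|F\|_{\dot{H}^\theta}$ via Cauchy--Schwarz. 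For $E_3$ the $\varepsilon^\theta$ piece comes from the refined bound \eqref{eqn: estimate of B' involving H2.5 norm} on $\partial_s B$. No diagonal freezing and no moment identities from Lemma~\ref{lemma: integrals of f_2 and f_3} are used in the $H^1$ proof; the combinations $f_4,f_5$ enter only through the decomposition \eqref{eqn: splitting the string velocity error into E_k}, not through any new cancellation at the $H^1$ level.

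The gap in your plan is the claim that families (ii)--(iii) are lower order and ``can be treated exactly as in Proposition~\ref{prop: L^2 error estimate of the string velocity}.'' In the raw two-term form, the scalar coefficient $S(A^2-B^2)/D^4$ behaves like $|\tau|^{-2}$ near the diagonal, so $\partial_s$ on it leaves a $|\tau|^{-1}$ factor against $f_3(D/\varepsilon)$, and $\int_{\mathbb{T}}|\tau|^{-1}(1+(\lambda\tau/\varepsilon)^2)^{-1}\,d\tau$ is not even finite without further cancellation. The paper avoids this precisely by building the cancellation (via $E_{2,1}=0$ and the $B$-based rewriting of $E_3$) into the integrand \emph{before} differentiating, rather than by freezing afterwards.
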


Proposition \ref{prop: static regularization error estimate for Hookean case} follows from Proposition \ref{prop: L^2 error estimate of the string velocity} and Proposition \ref{prop: H^1 error estimate of the string velocity} by taking $F_Y = Y_{ss}$ and $\mu = 1$.

\subsection{Preliminary estimates}\label{section: preliminary estimates}
In what follows, we shall write $s' = s+\tau$.
Let $\mathcal{M}$ be the Hardy-Littlewood maximal operator on $\mathbb{T}$.
Define
\begin{equation}\label{eqn: def of Q and R}
Q = \|Y'\|_{L^\infty(\mathbb{T})}, \quad R(s,\tau) =|\mathcal{M}Y''(s+\tau)|+ |Y''(s+\tau)|.
\end{equation}
We shall omit the arguments of $R$ whenever it is convenient.
As before, $SY'(s)\triangleq S(|Y'(s)|)Y'(s)$.

\begin{lemma}\label{lemma: preliminary estimates as building blocks}
Suppose $Y\in H^2(\mathbb{T})$ and $\tau \in [-\pi,\pi]$.
Let $A$, $B$, $D$ and $S$ be defined in \eqref{eqn: def of A(s,s')}, \eqref{eqn: def of B(s,s')}, \eqref{eqn: def of D(s,s')} and \eqref{eqn: definition of the generalized stiffness coefficient}, respectively.
Then
\begin{enumerate}
\item We have
\begin{align}
|Y(s+\tau)-Y(s)-\tau Y'(s)|\leq &\;C\tau^2R,\label{eqn: estimate of Y(s')-Y(s)-tau Y'(s)}\\
|Y(s+\tau)-Y(s)-\tau Y'(s+\tau)|\leq &\;C\tau^2R.\label{eqn: estimate of Y(s')-Y(s)-tau Y'(s')}
\end{align}
Hence,
\begin{align}
|D(s,s+\tau) -|\tau||Y'(s)||\leq&\; C\tau^2R,\label{eqn: approximation of D version 1}\\
|D(s,s+\tau) -|\tau||Y'(s+\tau)||\leq&\; C\tau^2R.\label{eqn: approximation of D version 2}
\end{align}

\item
\begin{equation}
|B(s,s+\tau)| \leq C \tau^2 QR.\label{eqn: improved estimate for B}
\end{equation}

\item Under the Assumption \ref{assumption: C 1,1 regularity assumption} on $S$,
\begin{equation}
|SY'(s+\tau)-SY'(s)|\leq C\mu |\tau|(Q+R),
\label{eqn: estimate of difference between SY'}
\end{equation}
with $\mu$ defined in \eqref{eqn: choice of mu in the static estimate}. % = \mu(\lambda, \|Y'\|_{L^\infty(\mathbb{T})})$.
%\textcolor{red}{Need more attention.}
\end{enumerate}

\begin{proof}
We simply calculate that
\begin{equation}
\begin{split}
|Y(s+\tau)-Y(s)-\tau Y'(s)|=&\;\left|\int_0^\tau Y'(s+\eta)-Y'(s)\,d\eta\right|\\
\leq &\;\int_0^{|\tau|} |Y'(s+\eta)-Y'(s+\tau)|+|Y'(s)-Y'(s+\tau)|\,d\eta\\
\leq &\;C\tau^2|\mathcal{M}Y''(s+\tau)|.
\end{split}
\end{equation}
\eqref{eqn: estimate of Y(s')-Y(s)-tau Y'(s')} can be shown in a similar manner; then \eqref{eqn: approximation of D version 1} and \eqref{eqn: approximation of D version 2} follow immediately.

\eqref{eqn: improved estimate for B} can be proved by observing
\begin{equation}\label{eqn: improved formula for B}
B(s,s+\tau) = (Y(s+\tau)-Y(s)-\tau Y'(s+\tau))\cdot Y'(s+\tau)^\perp.
\end{equation}

%and
%\begin{equation}
%|D(s,s+\tau) -|\tau||Y'(s)||\leq |Y(s+\tau)-Y(s)-\tau Y'(s)|.
%\end{equation}
Finally, by Assumption \ref{assumption: C 1,1 regularity assumption}, %\eqref{eqn: C0 and C1 bound for S}, % and Lemma \ref{lemma: bound Lips norm using H^3/2 norm},
\begin{equation}
\begin{split}
&\;|SY'(s+\tau)-SY'(s)|\\
\leq &\;|S(|Y'(s+\tau)|,s+\tau)|\cdot|Y'(s+\tau)-Y'(s)|\\
&\;+|S(|Y'(s+\tau)|,s+\tau)-S(|Y'(s)|,s)|\cdot |Y'(s)|\\
%&\;+|S(|Y'(s)|,s+\tau)-S(|Y'(s)|,s)|\cdot |Y'(s)|\\
\leq &\;|Y'(s+\tau)-Y'(s)|\cdot \left(\mu+\frac{\mu|Y'(s)|}{\|Y'\|_{L^\infty(\mathbb{T})}}\right)+\mu|\tau||Y'(s)|\\
\leq &\;C\mu |\tau|(|\mathcal{M}Y''(s+\tau)|+|Y'(s)|).
\end{split}
\end{equation}
This proves \eqref{eqn: estimate of difference between SY'}.
\end{proof}
\end{lemma}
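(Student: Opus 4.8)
The plan is to reduce all three parts to a single increment estimate for $Y'$ and then close (2) and (3) by elementary algebra. The one analytic ingredient I would establish first is that for $Y\in H^2(\mathbb{T})$ the increments of $Y'$ are controlled by the Hardy--Littlewood maximal function of $Y''$: for any subinterval $I\subset\mathbb{T}$ of length $\le|\tau|$ with one endpoint at $s+\tau$, one has $\int_I|Y''|\le C|\tau|\,\mathcal{M}Y''(s+\tau)$, and hence $|Y'(s+\eta)-Y'(s+\tau)|\le C|\tau|\,\mathcal{M}Y''(s+\tau)$ whenever $\eta$ lies between $0$ and $\tau$. I would always pivot at the single point $s+\tau$, so that only $\mathcal{M}Y''(s+\tau)$ and $Y''(s+\tau)$ --- i.e. $R(s,\tau)$ from \eqref{eqn: def of Q and R} --- appear, which keeps the estimates homogeneous.

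To prove \eqref{eqn: estimate of Y(s')-Y(s)-tau Y'(s)} I would write $Y(s+\tau)-Y(s)-\tau Y'(s)=\int_0^\tau\big(Y'(s+\eta)-Y'(s)\big)\,d\eta$, insert the pivot $Y'(s+\tau)$ through the triangle inequality, bound each of the two resulting increments by $C|\tau|\,\mathcal{M}Y''(s+\tau)$ as above, and integrate in $\eta$ to harvest the extra factor $|\tau|$; this yields $C\tau^2R$. The companion bound \eqref{eqn: estimate of Y(s')-Y(s)-tau Y'(s')} is even more direct, since $Y(s+\tau)-Y(s)-\tau Y'(s+\tau)=\int_0^\tau\big(Y'(s+\eta)-Y'(s+\tau)\big)\,d\eta$ already pivots at $s+\tau$. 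The two $D$-estimates \eqref{eqn: approximation of D version 1}--\eqref{eqn: approximation of D version 2} then follow immediately from $\big||a|-|b|\big|\le|a-b|$ with $a=Y(s+\tau)-Y(s)$ and $b=\tau Y'(s)$ or $\tau Y'(s+\tau)$, since $|b|=|\tau|\,|Y'(\cdot)|$.

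For \eqref{eqn: improved estimate for B} the key observation is the orthogonality $Y'(s+\tau)\cdot Y'(s+\tau)^\perp=0$, which lets me subtract the tangential part of the secant inside the rotation: $B(s,s+\tau)=Y'(s+\tau)\cdot\big(Y(s+\tau)-Y(s)-\tau Y'(s+\tau)\big)^\perp$. Since $(\cdot)^\perp$ is an isometry, Cauchy--Schwarz together with \eqref{eqn: estimate of Y(s')-Y(s)-tau Y'(s')} gives $|B|\le Q\cdot C\tau^2R=C\tau^2QR$. For \eqref{eqn: estimate of difference between SY'} I would split $SY'(s+\tau)-SY'(s)$ into $S(|Y'(s+\tau)|,s+\tau)\big(Y'(s+\tau)-Y'(s)\big)$ plus $\big(S(|Y'(s+\tau)|,s+\tau)-S(|Y'(s)|,s)\big)Y'(s)$; the first term is at most $\mu\cdot C|\tau|R$ by the $C^0$ bound in \eqref{eqn: C0 and C1 bound for S} and the increment estimate, while for the second I would use $|\partial_sS|\,|\tau|+|\partial_pS|\,\big||Y'(s+\tau)|-|Y'(s)|\big|$ with the bounds $|\partial_sS|\le\mu$ and $M|\partial_pS|\le\mu$ from \eqref{eqn: C0 and C1 bound for S}. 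Here the choice $\mu=\mu(\lambda,c\|Y''\|_{L^2})$ in \eqref{eqn: choice of mu in the static estimate} makes $M=c\|Y''\|_{L^2}\ge\|Y'\|_{L^\infty}=Q$, so the factor $|\partial_pS|\,|Y'(s)|\le(\mu/M)Q\le\mu$ is absorbed, leaving $C\mu|\tau|(Q+R)$.

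I expect no serious obstacle here: once the increment estimate is in hand, the entire lemma is definitional. The only point requiring genuine care is the consistent pivoting at $s+\tau$, which is exactly what allows $R$ to be evaluated at the single argument $s+\tau$ rather than at both endpoints, thereby preserving the scaling on which the later $L^2$- and $H^1$-error estimates rely.
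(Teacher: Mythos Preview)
Your proposal is correct and follows essentially the same route as the paper: the same integral representation with pivot at $s+\tau$ and maximal-function bound for part (1), the same orthogonality identity $B=(Y(s+\tau)-Y(s)-\tau Y'(s+\tau))\cdot Y'(s+\tau)^\perp$ for part (2), and the same two-term splitting with Assumption~\ref{assumption: C 1,1 regularity assumption} for part (3). Your explicit remark about pivoting at $s+\tau$ to keep $R$ evaluated at a single argument is exactly the point the paper relies on implicitly.
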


\begin{lemma}\label{lemma: preliminary estimates of derivatives as building blocks}
Assume $Y\in H^{2+\theta}(\mathbb{T})$ with $\theta\in(0,1)$, and $\tau \in [-\pi,\pi]$.
Let $A$, $B$, and $D$ be defined in \eqref{eqn: def of A(s,s')}-%, \eqref{eqn: def of B(s,s')}, and
\eqref{eqn: def of D(s,s')}, respectively.
Then
\begin{align}
|\partial_s A(s,s+\tau)|\leq &\; C|\tau|QR,\label{eqn: estimate of A'}\\
|\partial_s B(s,s+\tau)|\leq &\; C|\tau|QR,\label{eqn: estimate of B'}\\
|\partial_s D(s,s+\tau)|\leq &\; C|\tau|R.\label{eqn: estimate of D'}
\end{align}
Moreover, %if $\theta \in (0,1)$,
for $\forall\,\beta\in (0,\theta]$,
\begin{equation}\label{eqn: estimate of B' involving H2.5 norm}
\begin{split}
|\partial_sB(s,s+\tau)|\leq &\;C\tau^2R^2\\
&\;+C|\tau|^{1+\beta}|Y'(s+\tau)|\left(\int_0^{|\tau|} \frac{|Y''(s+\tau-\eta)-Y''(s+\tau)|^2}{|\eta|^{1+2\beta}}\,d\eta\right)^{1/2}.
\end{split}
\end{equation}
%If $\theta = 1$, \eqref{eqn: estimate of B' involving H2.5 norm} holds for $\forall\,\beta\in(0,1)$, but we also have
%\begin{equation}\label{eqn: estimate of B' involving H3 norm}
%|\partial_sB(s,s+\tau)|\leq C\tau^2(R^2+|Y'(s+\tau)||\mathcal{M}Y'''(s+\tau)|).
%\end{equation}
\begin{proof}
We calculate that
\begin{align}
\partial_s A(s,s+\tau) = &\;Y''(s+\tau)\cdot (Y(s+\tau)-Y(s))+Y'(s+\tau)\cdot (Y'(s+\tau)-Y'(s)),\label{eqn: formula for derivative of A}\\
\partial_s B(s,s+\tau) = &\;Y''(s+\tau)\cdot (Y(s+\tau)-Y(s))^\perp+Y'(s+\tau)\cdot (Y'(s+\tau)-Y'(s))^\perp,\label{eqn: formula for derivative of B}\\
\partial_s D(s,s+\tau) = &\;\frac{(Y'(s+\tau)-Y'(s))\cdot (Y(s+\tau)-Y(s))}{|Y(s+\tau)-Y(s)|}.\label{eqn: formula for derivative of D}
\end{align}
Then \eqref{eqn: estimate of A'}-\eqref{eqn: estimate of D'} follow immediately from $|Y'(s+\tau)-Y'(s)|\leq |\tau||\mathcal{M}Y''(s+\tau)|$.
To prove \eqref{eqn: estimate of B' involving H2.5 norm}, we rewrite
\begin{equation}\label{eqn: calculation of B'}
\begin{split}
\partial_sB(s,s+\tau)
= &\;Y''(s+\tau)\cdot  (Y(s+\tau)-Y(s)-\tau Y'(s+\tau))^\perp \\
&\;+Y'(s+\tau)\cdot(Y'(s+\tau)-Y'(s)-\tau Y''(s+\tau))^\perp.
\end{split}
\end{equation}
Hence, by Lemma \ref{lemma: preliminary estimates as building blocks},
\begin{equation}
\begin{split}
&\;|\partial_sB(s,s+\tau)|\\
\leq &\;C|Y''(s+\tau)|\cdot \tau^2R+|Y'(s+\tau)|\left|\int_{0}^\tau Y''(s+\eta)-Y''(s+\tau)\,d\eta\right|.
\end{split}
\label{eqn: preliminary bound for derivative of B}
\end{equation}
%In the case $\theta \in (0,1)$,
With $\beta\in (0,\theta]$,
\begin{equation}
\begin{split}
&\;|\partial_sB(s,s+\tau)|\\
\leq &\;C\tau^2R^2\\
&\;+|Y'(s+\tau)|\left|\int_{0}^{|\tau|} |\eta-\tau|^{1+2\beta}\,d\eta\right|^{1/2}\left(\int_0^{|\tau| } \frac{|Y''(s+\eta)-Y''(s+\tau)|^2}{|\eta-\tau|^{1+2\beta}}\,d\eta\right)^{1/2}\\
\leq &\;C\tau^2R^2\\
&\;+C|\tau|^{1+\beta}|Y'(s+\tau)|\left(\int_0^{|\tau|} \frac{|Y''(s+\tau-\eta)-Y''(s+\tau)|^2}{|\eta|^{1+2\beta}}\,d\eta\right)^{1/2}.
\end{split}
\end{equation}
\end{proof}
\end{lemma}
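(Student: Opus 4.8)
The plan is to obtain the three pointwise bounds \eqref{eqn: estimate of A'}--\eqref{eqn: estimate of D'} by direct differentiation followed by the elementary estimates of Lemma~\ref{lemma: preliminary estimates as building blocks}, and then to upgrade the bound on $\partial_s B$ to \eqref{eqn: estimate of B' involving H2.5 norm} via an exact cancellation together with a fractional Cauchy--Schwarz estimate. First I would differentiate $A$, $B$, and $D$ in the first slot $s$, keeping $\tau$ fixed so that $s'=s+\tau$ slides along with $s$. For $A$ and $B$ the product rule produces two terms, each a product of a second-derivative factor, a secant difference $Y(s+\tau)-Y(s)$ or $Y'(s+\tau)-Y'(s)$, and a first-derivative factor; for $D=|Y(s+\tau)-Y(s)|$ the chain rule gives a single quotient whose denominator cancels one factor of the secant.

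Next I would insert two elementary controls: the Lipschitz bound $|Y(s+\tau)-Y(s)|\le |\tau| Q$, and the maximal-function bound $|Y'(s+\tau)-Y'(s)|\le |\tau|\,|\mathcal{M}Y''(s+\tau)|\le |\tau| R$, the latter following from the fundamental theorem of calculus and the pointwise domination of an average of $Y''$ by its Hardy--Littlewood maximal function. Combined with $|Y''|\le R$ and $|Y'|\le Q$, these immediately yield $|\partial_s A|,|\partial_s B|\le C|\tau| QR$, and, by Cauchy--Schwarz applied to the quotient for $\partial_s D$, the bound $|\partial_s D|\le C|\tau| R$.

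For the refined estimate \eqref{eqn: estimate of B' involving H2.5 norm} the essential idea is to rewrite the formula for $\partial_s B$ by inserting and subtracting the first-order terms $\tau Y'(s+\tau)$ and $\tau Y''(s+\tau)$ inside the two brackets. The cross terms cancel exactly, because in two dimensions $Y''\cdot(Y')^\perp+Y'\cdot(Y'')^\perp=0$; this exposes that both surviving factors are genuinely second order. The first is controlled by \eqref{eqn: estimate of Y(s')-Y(s)-tau Y'(s')} as $C\tau^2 R$, contributing at most $C\tau^2 R^2$. The second factor equals $\int_0^\tau [Y''(s+\eta)-Y''(s+\tau)]\,d\eta$, and the hard part will be converting this into the Gagliardo-type seminorm on the right-hand side of \eqref{eqn: estimate of B' involving H2.5 norm}. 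I would apply Cauchy--Schwarz with the weight $|\eta-\tau|^{1+2\beta}$, which splits off the explicit factor $\bigl(\int_0^{|\tau|}|\eta-\tau|^{1+2\beta}\,d\eta\bigr)^{1/2}$, equal to $|\tau|^{1+\beta}$ up to a constant, and leaves the weighted $L^2$ integral of the second difference of $Y''$; a change of variable $\eta\mapsto\tau-\eta$ then matches the stated form, and the prefactor $|Y'(s+\tau)|\le Q$ completes the bound.

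The only genuine subtlety, and where I would focus the verification, is this fractional step: one must confirm that the weight exponent $1+2\beta$ is admissible for every $\beta\in(0,\theta]$ with $\theta<1$, so that the polynomial factor is integrable and scales precisely as $|\tau|^{1+\beta}$, and that the surviving weight $|\eta|^{1+2\beta}$ in the remaining integral matches \eqref{eqn: estimate of B' involving H2.5 norm} exactly. The algebraic identity $Y''\cdot(Y')^\perp+Y'\cdot(Y'')^\perp=0$ is elementary but indispensable, since without it the naive estimate only recovers $C|\tau| QR$ rather than the higher-order gain encoding the extra $\theta$ derivatives of $Y$.
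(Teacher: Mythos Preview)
Your proposal is correct and follows essentially the same route as the paper: direct differentiation of $A$, $B$, $D$ in $s$, the maximal-function bound on $|Y'(s+\tau)-Y'(s)|$ for the basic estimates, and then for the refined bound on $\partial_s B$ the same insertion of $\tau Y'(s+\tau)$ and $\tau Y''(s+\tau)$ (the paper performs this rewriting without explicitly naming the identity $Y''\cdot(Y')^\perp+Y'\cdot(Y'')^\perp=0$, but it is exactly what is used), followed by Cauchy--Schwarz with weight $|\eta-\tau|^{1+2\beta}$ and the change of variable $\eta\mapsto\tau-\eta$. The only cosmetic difference is that the paper reaches $|\partial_s D|\le C|\tau|R$ by simply cancelling $|Y(s+\tau)-Y(s)|$ in numerator and denominator rather than invoking Cauchy--Schwarz, but this is immaterial.
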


\subsection{Proof of the static $L^2$-error estimate}\label{section: L^2 static error}

\begin{proof}[Proof of Proposition \ref{prop: L^2 error estimate of the string velocity}]

\setcounter{step}{0}
\begin{step}[Splitting the regularization error]
%In order to estimate the error in the string velocity due to the regularization, w
We start from rewriting \eqref{eqn: difference between regularized and singular string motion}.
By \eqref{eqn: representation of Y'(s') using secants} and \eqref{eqn: def of f_4 and f_5},
\begin{equation}
\begin{split}
&\;4\pi[U^\varepsilon_Y(s)-U_Y(s)]\\
%=&\;\mathrm{p.v.}\int_\mathbb{T}\frac{S(A^2 - B^2)}{D^4}(Y(s')-Y(s))\cdot f_3\left(\frac{D}{\varepsilon}\right)\,ds'+\int_\mathbb{T}\frac{SAB}{D^4}(Y(s')-Y(s))^\perp \cdot f_2\left(\frac{D}{\varepsilon}\right)\,ds'\\
=&\;\mathrm{p.v.}\int_{\mathbb{T}}S\cdot \frac{B^2}{D^2}\cdot \frac{Y(s+\tau)-Y(s)}{D^2} \cdot \left[f_2\left(\frac{D}{\varepsilon}\right)-2f_3\left(\frac{D}{\varepsilon}\right)\right]\,d\tau\\
&\;+\mathrm{p.v.}\int_{\mathbb{T}}S \left[\frac{A^2}{D^4}(Y(s+\tau)-Y(s))+\frac{AB}{D^4}(Y(s+\tau)-Y(s))^\perp \right]\cdot f_3\left(\frac{D}{\varepsilon}\right)\,d\tau\\
&\;+\mathrm{p.v.}\int_{\mathbb{T}}S \left[\frac{AB}{D^4}(Y(s+\tau)-Y(s))^\perp - \frac{B^2}{D^4}(Y(s+\tau)-Y(s))\right] \cdot \left[f_2\left(\frac{D}{\varepsilon}\right)-f_3\left(\frac{D}{\varepsilon}\right)\right]\,d\tau\\
=&\;\mathrm{p.v.}\int_{\mathbb{T}}S\cdot \frac{B^2}{D^2}\cdot \frac{Y(s+\tau)-Y(s)}{D^2} \cdot f_5\left(\frac{D}{\varepsilon}\right)\,d\tau\\
&\;+\mathrm{p.v.}\int_{\mathbb{T}} \frac{A}{D^2}\cdot SY'(s+\tau)\cdot f_3\left(\frac{D}{\varepsilon}\right)\,d\tau\\
&\;+\mathrm{p.v.}\int_{\mathbb{T}}\frac{SB}{D^2}\cdot Y'(s+\tau)^\perp\cdot  f_4\left(\frac{D}{\varepsilon}\right)\,d\tau\\
\triangleq &\;E_1(s)+E_2(s)+E_3(s).
\end{split}
\label{eqn: splitting the string velocity error into E_k}
\end{equation}
\end{step}

\begin{step}[Estimate for $E_1$]
By \eqref{eqn: C0 and C1 bound for S}, Lemma \ref{lemma: decay estimate of f_1 f_2 f_3} and Lemma \ref{lemma: preliminary estimates as building blocks},
\begin{equation}\label{eqn: pointwise bound for E_1}
\begin{split}
|E_1(s)|\leq &\; C\int_{\mathbb{T}}\mu\cdot \left(\frac{\tau^2 QR}{\lambda|\tau|}\right)^2\cdot \frac{1}{\lambda|\tau|}\cdot \frac{1}{1+\left(\frac{\lambda\tau}{\varepsilon}\right)^2}\,d\tau\\
\leq &\; \frac{C\mu Q^2}{\lambda^3}\int_{\mathbb{T}}R^2\cdot \frac{|\tau|}{1+\left(\frac{\lambda\tau}{\varepsilon}\right)^2}\,d\tau.
\end{split}
\end{equation}
%where $C>0$ is a universal constant.
By Minkowski inequality,
\begin{equation}\label{eqn: L^2 estimate of E_1}
\begin{split}
\|E_1\|_{L^2(\mathbb{T})}
\leq &\; \frac{C\mu Q^2}{\lambda^3}\int_{\mathbb{T}}\|R^2\|_{L^2_s(\mathbb{T})}\cdot \frac{|\tau|}{1+\left(\frac{\lambda\tau}{\varepsilon}\right)^2}\,d\tau\\
\leq &\;\frac{C\mu\varepsilon^2 \ln(\lambda/\varepsilon)}{\lambda^5}\|Y'\|_{L^\infty(\mathbb{T})}^2\|Y''\|^2_{L^4(\mathbb{T})}.
\end{split}
\end{equation}
\end{step}

\begin{step}[Partial estimate for $E_2$]
We further split $E_2(s)$.
%The goal is to linearize the integrand at $\tau =0$.
\begin{equation}
\begin{split}
E_2(s)
%= &\;\mathrm{p.v.}\int_{\mathbb{T}} \frac{A}{D^2}\cdot S(|Y'(s+\tau)|)Y'(s+\tau)\cdot f_3\left(\frac{D}{\varepsilon}\right)\,d\tau\\
= &\;SY'(s)\cdot \mathrm{p.v.}\int_{\mathbb{T}} \frac{A}{D^2}\cdot f_3\left(\frac{D}{\varepsilon}\right)\,d\tau\\
&\;+\mathrm{p.v.}\int_{\mathbb{T}}\left[\frac{A}{D^2}-\frac{1}{\tau}\right]\cdot [SY'(s+\tau)-SY'(s)]\cdot f_3\left(\frac{D}{\varepsilon}\right)\,d\tau\\
&\;+\mathrm{p.v.}\int_{\mathbb{T}} \frac{1}{\tau}\cdot [SY'(s+\tau)-SY'(s)]\cdot \left[f_3\left(\frac{D}{\varepsilon}\right)-f_3\left(\frac{|Y'(s)||\tau|}{\varepsilon}\right)\right]\,d\tau\\
&\;+\mathrm{p.v.}\int_{\mathbb{T}} \frac{SY'(s+\tau)-SY'(s)}{\tau}\cdot f_3\left(\frac{|Y'(s)||\tau|}{\varepsilon}\right)\,d\tau\\
\triangleq &\; E_{2,1}(s)+E_{2,2}(s)+E_{2,3}(s)+E_{2,4}(s).
\end{split}
\label{eqn: splitting E_2}
\end{equation}
It would be clear later that $E_{2,4}$ accounts for the most singular part in $E_2$.

We claim that $E_{2,1} = 0$.
Indeed, since $\frac{dD}{d\tau} = \frac{A}{D}$,
\begin{equation}
\begin{split}
E_{2,1}(s) = &\;SY'(s)\cdot \left[\lim_{\eta \rightarrow 0^+} \int_{\eta}^{\pi} \frac{A}{D^2}\cdot f_3\left(\frac{D}{\varepsilon}\right) \,d\tau +\int_{-\pi}^{-\eta} \frac{A}{D^2}\cdot f_3\left(\frac{D}{\varepsilon}\right) \,d\tau\right]\\
= &\;SY'(s)\cdot \lim_{\eta \rightarrow 0^+}\left[ \int_{D(s,s+\eta)}^{D(s,s+\pi)} \frac{1}{D}\cdot f_3\left(\frac{D}{\varepsilon}\right) \,dD -\int_{D(s, s-\eta)}^{D(s,s-\pi)} \frac{1}{D}\cdot f_3\left(\frac{D}{\varepsilon}\right) \,dD\right]\\
= &\;SY'(s)\cdot \left[ \int_{D(s,s-\pi)}^{D(s,s+\pi)} \frac{1}{D}\cdot f_3\left(\frac{D}{\varepsilon}\right) \,dD -\lim_{\eta \rightarrow 0^+}\int_{D(s, s-\eta)}^{D(s,s+\eta)} \frac{1}{D}\cdot f_3\left(\frac{D}{\varepsilon}\right) \,dD\right].
\end{split}
\end{equation}
The first term is zero since $D(s,s+\pi)=D(s,s-\pi)$.
For the second term, %by Lemma \ref{lemma: preliminary estimates as building blocks},
\begin{equation*}
\begin{split}
&\;|D(s,s+\eta)-D(s,s-\eta)|\\
\leq &\; |(Y(s+\eta)-Y(s))-(Y(s)-Y(s-\eta))|\\
\leq &\; \left|\int_0^\eta Y'(s+\omega)-Y'(s+\omega-\eta)\,d\omega\right|\\
\leq &\; C|\eta|^{3/2}\|Y'\|_{C^{1/2}(\mathbb{T})}\leq C|\eta|^{3/2}\|Y\|_{\dot{H}^2(\mathbb{T})}.
\end{split}
\end{equation*}
%enjoys the same bound as in \eqref{eqn: approximation of D version 1}.
Since $D(s,s\pm\eta)\geq \lambda |\eta|$ by \eqref{eqn: well-stretched condition}, and $|f_3|\leq C$ by Lemma \ref{lemma: decay estimate of f_1 f_2 f_3}, we have that%.
%Hence, by \eqref{eqn: C0 and C1 bound for S} and Lemma \ref{lemma: decay estimate of f_1 f_2 f_3},
\begin{equation}
|E_{2,1}(s)| \leq \mu|Y'(s)|\cdot \lim_{\eta \rightarrow 0^+}C|\eta|^{3/2}\|Y\|_{\dot{H}^{2}(\mathbb{T})}\cdot \frac{1}{\lambda |\eta|}\cdot 1 = 0.
\label{eqn: estimate of E_21}
\end{equation}

For $E_{2,2}$, by Lemma \ref{lemma: decay estimate of f_1 f_2 f_3} and Lemma \ref{lemma: preliminary estimates as building blocks},
\begin{equation}
\begin{split}
|E_{2,2}(s)|\leq &\;C\int_{\mathbb{T}} \frac{|D^2 - \tau A|}{D^2 }\cdot \frac{|SY'(s+\tau)-SY'(s)|}{|\tau|} \cdot\frac{1}{1+\left(\frac{\lambda \tau}{\varepsilon}\right)^2 }\,d\tau\\
\leq &\;C\int_{\mathbb{T}} \frac{|D||Y(s+\tau)-Y(s) - \tau Y'(s+\tau)|}{D^2}\cdot \mu (Q+R) \cdot\frac{1}{1+\left(\frac{\lambda \tau}{\varepsilon}\right)^2 }\,d\tau\\
%\leq &\;C\mu\int_{\mathbb{T}} \frac{\tau^2 R}{\lambda |\tau| } \cdot(Q+R)\cdot\frac{1}{1+\left(\frac{\lambda \tau}{\varepsilon}\right)^2 }\,d\tau\\
\leq &\;\frac{C\mu}{\lambda}\int_{\mathbb{T}} \frac{|\tau|R(Q+R)}{1+\left(\frac{\lambda \tau}{\varepsilon}\right)^2 }\,d\tau.
%\leq &\;\frac{C\mu\varepsilon^2\ln(\lambda/\varepsilon)}{\lambda^3 }|\mathcal{M}Y''(s)|(|\mathcal{M}Y''(s)|+|Y'(s)|).
\end{split}
\label{eqn: estimate of E_22}
\end{equation}

For $E_{2,3}$, by Lemma \ref{lemma: preliminary estimates as building blocks} and the mean value theorem,
\begin{equation}
\begin{split}
|E_{2,3}(s)|\leq &\;C\int_{\mathbb{T}}\mu (Q+R)\cdot \left|f_3\left(\frac{D(s,s+\tau)}{\varepsilon}\right)-f_3\left(\frac{|Y'(s)||\tau|}{\varepsilon}\right)\right|\,d\tau\\
\leq &\;C\mu\int_{\mathbb{T}}(Q+R)\cdot\frac{|D(s,s+\tau)-|\tau||Y'(s)||}{\varepsilon}\left|f_3'\left(\frac{\xi(s,\tau)}{\varepsilon}\right)\right|\,d\tau,
\end{split}
\label{eqn: estimate of E_23 mean value theorem}
\end{equation}
where $\xi(s,\tau)\in \mathbb{R}$ is between $D(s,s+\tau)$ and $|\tau||Y'(s)|$.
It is clear that $|\xi(s,\tau)|\geq \lambda|\tau|$.
%By \eqref{eqn: approximation of D} %the estimate
%\begin{equation}
%|D(s,s+\tau)-|\tau||Y'(s)|\leq |Y(s+\eta)-Y(s)-\eta Y'(s)|\leq C\eta^2|\mathcal{M}Y''(s)|,
%\label{eqn: approximation of D}
%\end{equation}
%and
Again by Lemma \ref{lemma: decay estimate of f_1 f_2 f_3} and Lemma \ref{lemma: preliminary estimates as building blocks},
\begin{equation}
|E_{2,3}(s)|%\leq C\mu|\mathcal{M}Y''(s)|(|\mathcal{M}Y''(s)|+|Y'(s)|)\int_{\mathbb{T}}\frac{\tau^2}{\varepsilon}\cdot \frac{1}{1+\left(\frac{\lambda |\tau|}{\varepsilon}\right)^3}\,d\tau
%\leq \frac{C\mu\varepsilon^2\ln(\lambda/\varepsilon)}{\lambda^3}|\mathcal{M}Y''(s)|(|\mathcal{M}Y''(s)|+|Y'(s)|).
\leq C\mu\int_{\mathbb{T}}(Q+R)\cdot\frac{\tau^2R}{\varepsilon}\cdot \frac{1}{1+\left(\frac{\lambda|\tau|}{\varepsilon}\right)^3}\,d\tau\leq \frac{C\mu}{\lambda}\int_{\mathbb{T}} \frac{|\tau|R^2}{1+\left(\frac{\lambda \tau}{\varepsilon}\right)^2 }\,d\tau.
\label{eqn: estimate of E_23}
\end{equation}
Here we used the fact that
\begin{equation*}
\frac{|x|^n}{1+|x|^{n+2}}\leq \frac{C}{1+x^2},\quad \forall\, n\in\mathbb{N}.
\end{equation*}

In order to bound $E_{2,4}$, we recall that $\mathcal{P}_K$ is defined in \eqref{eqn: projection operator}.
For convenience, define $\mathcal{Q}_K = Id-\mathcal{P}_K$; both $\mathcal{P}_K$ and $\mathcal{Q}_K$ commute with differentiation.
With $K\in\mathbb{N}_+$ to be chosen,
\begin{equation}
\begin{split}
&\;\left|E_{2,4}(s)-\mathcal{P}_K[(SY')'](s) \int_{\mathbb{T}} f_3\left(\frac{|Y'(s)||\tau|}{\varepsilon}\right)\,d\tau\right|\\
\leq &\;\left|\mathrm{p.v.}\int_{\mathbb{T}}\left(\frac{\mathcal{P}_K [SY'](s+\tau)-\mathcal{P}_K [SY'](s)}{\tau}-\mathcal{P}_K [(SY')'](s)\right) \cdot f_3\left(\frac{|Y'(s)||\tau|}{\varepsilon}\right)\,d\tau\right|\\
&\;+\left|\mathrm{p.v.}\int_{\mathbb{T}}\frac{\mathcal{Q}_K[SY'](s+\tau)-\mathcal{Q}_K [SY'](s)}{\tau}\cdot f_3\left(\frac{|Y'(s)||\tau|}{\varepsilon}\right)\,d\tau\right|\\
\leq &\;C\int_{\mathbb{T}}[|\tau||\mathcal{M}(\mathcal{P}_K [SY']'')(s)|+|\mathcal{M}(\mathcal{Q}_K [SY']')(s)|]\cdot \frac{1}{1+\left(\frac{\lambda\tau}{\varepsilon}\right)^2}\,d\tau\\
\leq &\;\frac{C\varepsilon^2 \ln(\lambda/\varepsilon)}{\lambda^2}|\mathcal{M}(\mathcal{P}_K F'_Y)(s)|
+\frac{C\varepsilon}{\lambda}|\mathcal{M}(\mathcal{Q}_K F_Y)(s)|.
\end{split}
\label{eqn: estimate of E_24}
\end{equation}

Combining \eqref{eqn: estimate of E_21}, \eqref{eqn: estimate of E_22}, \eqref{eqn: estimate of E_23} and \eqref{eqn: estimate of E_24}, we apply Minkowski and Sobolev inequalities to find
\begin{equation}
\begin{split}
&\;\left\|E_2
-\mathcal{P}_K F_Y(s) \int_{\mathbb{T}} f_3\left(\frac{|Y'(s)||\tau|}{\varepsilon}\right)\,d\tau
\right\|_{L^2(\mathbb{T})}\\
\leq &\;\frac{C\mu}{\lambda}\int_{\mathbb{T}} \|R(Q+R)\|_{L_s^2(\mathbb{T})}\cdot\frac{|\tau|}{1+\left(\frac{\lambda \tau}{\varepsilon}\right)^2 }\,d\tau
+\frac{C\varepsilon^2 \ln(\lambda/\varepsilon)}{\lambda^2}\|\mathcal{P}_K F'_Y\|_{L^2(\mathbb{T})}+\frac{C\varepsilon}{\lambda}\|\mathcal{Q}_K F_Y\|_{L^2(\mathbb{T})}\\
%\leq &\;\frac{C\mu\varepsilon^2\ln(\lambda/\varepsilon)}{\lambda^3 }\|Y''\|_{L^4(\mathbb{T})}^2 +\frac{C\varepsilon^2 \ln(\lambda/\varepsilon)}{\lambda^2}\cdot K^{1-\theta}\|\mathcal{P}_K F_Y\|_{\dot{H}^{\theta}(\mathbb{T})}+\frac{C\varepsilon}{\lambda}\cdot K^{-\theta}\|\mathcal{Q}_K F_Y\|_{\dot{H}^{\theta}(\mathbb{T})}\\
\leq &\;\frac{C\mu\varepsilon^2\ln(\lambda/\varepsilon)}{\lambda^3 }\|Y''\|_{L^4(\mathbb{T})}^2+\left(\frac{C\varepsilon^2 \ln(\lambda/\varepsilon)}{\lambda^2}\cdot K^{1-\theta}+\frac{C\varepsilon}{\lambda}\cdot K^{-\theta}\right)\|F_Y\|_{\dot{H}^{\theta}(\mathbb{T})}.
\end{split}
\label{eqn: a partial estimate of E_2 involving K}
\end{equation}
%In the second inequality, we used Parseval's identity.
By Lemma \ref{lemma: H^1/2 estimate of F}, and taking $K \sim \frac{\lambda}{\varepsilon\ln(\frac{\lambda}{\varepsilon})}$, %we find that
\begin{equation}
\begin{split}
&\;\left\|E_2
-\mathcal{P}_K F_Y \int_{\mathbb{T}} f_3\left(\frac{|Y'(s)||\tau|}{\varepsilon}\right)\,d\tau
\right\|_{L^2(\mathbb{T})}\\
\leq &\;\frac{C\mu\varepsilon^2\ln(\lambda/\varepsilon)}{\lambda^3 }\|Y''\|_{L^4(\mathbb{T})}^2+\frac{C\mu\varepsilon^{1+\theta} \ln^\theta(\lambda/\varepsilon)}{\lambda^{1+\theta}}\|Y\|_{\dot{H}^{2+\theta}(\mathbb{T})}.
\end{split}
\label{eqn: L^2 estimate of E_2 minus an additional part}
\end{equation}
The extra term on the left hand side of \eqref{eqn: L^2 estimate of E_2 minus an additional part} will be handled after we bound $E_3$.

\end{step}

\begin{step}[Partial estimate for $E_3$]
%We split $E_3$ as in the previous step.
%By \eqref{eqn: representation of Y'(s') using secants}, %we find that

Again we further split $E_3(s)$.
%Since $B(s,s+\tau) = (Y(s')-Y(s)-\tau Y'(s'))^\perp \cdot Y'(s')$ as in
Thanks to \eqref{eqn: improved formula for B}, %and
%Recall that $f_4 = f_2 -f_3$.
\begin{equation}
\begin{split}
E_3(s) %= &\;\mathrm{p.v.}\int_{\mathbb{T}}S\cdot \left[\frac{AB}{D^4}(Y(s+\tau)-Y(s))^\perp - \frac{B^2}{D^4}(Y(s+\tau)-Y(s))\right] \cdot \left[f_2\left(\frac{D}{\varepsilon}\right)-f_3\left(\frac{D}{\varepsilon}\right)\right]\,d\tau\\
=&\;%\mathrm{p.v.}\int_{\mathbb{T}}S\cdot \left(\frac{A}{D^2}-\frac{1}{\tau}\right)\cdot \frac{B}{D^2}\cdot (Y(s+\tau)-Y(s))^\perp \cdot \left[f_2\left(\frac{D}{\varepsilon}\right)-f_3\left(\frac{D}{\varepsilon}\right)\right]\,d\tau\\
\mathrm{p.v.}\int_{\mathbb{T}}\frac{SB}{D^2}\cdot (Y'(s+\tau)-Y'(s))^\perp\cdot f_4\left(\frac{D}{\varepsilon}\right)\,d\tau\\
%&\;+\mathrm{p.v.}\int_{\mathbb{T}}\frac{SB}{D^2}\cdot \frac{Y(s+\tau)-Y(s)-\tau Y'(s))^\perp}{\tau} \cdot \left[f_2\left(\frac{D}{\varepsilon}\right)-f_3\left(\frac{D}{\varepsilon}\right)\right]\,d\tau\\
&\;+\mathrm{p.v.}\int_{\mathbb{T}}\frac{SB}{D^2}\cdot Y'(s)^\perp  \left[f_4\left(\frac{D}{\varepsilon}\right)-f_4\left(\frac{|Y'(s)||\tau|}{\varepsilon}\right)\right]\,d\tau\\
&\;+\mathrm{p.v.}\int_{\mathbb{T}}SB\cdot \left(\frac{1}{D^2} -\frac{1}{\tau^2 |Y'(s)|^2}\right)\cdot Y'(s)^\perp  \cdot  f_4\left(\frac{|Y'(s)||\tau|}{\varepsilon}\right)\,d\tau\\
&\;+\mathrm{p.v.}\int_{\mathbb{T}}S(|Y'(s')|,s')\cdot (Y(s')-Y(s)-\tau Y'(s'))^\perp\cdot (Y'(s')-Y'(s)) \\
&\;\qquad\qquad \cdot \frac{Y'(s)^\perp}{\tau^2 |Y'(s)|^2} f_4\left(\frac{|Y'(s)||\tau|}{\varepsilon}\right)\,d\tau\\
&\;+\mathrm{p.v.}\int_{\mathbb{T}} \left[\left(\int_{0}^\tau S(|Y'(s+\tau)|,s+\tau)Y'(s+\eta)-SY'(s+\eta)\,d\eta\right)^\perp\cdot Y'(s)\right]\\
&\;\qquad\qquad \cdot \frac{Y'(s)^\perp}{\tau^2 |Y'(s)|^2} f_4\left(\frac{|Y'(s)||\tau|}{\varepsilon}\right)\,d\tau\\
&\;+\mathrm{p.v.}\int_{\mathbb{T}} \left[\left(\int_{0}^\tau SY'(s+\eta)-S Y'(s+\tau)\,d\eta\right)^\perp\cdot Y'(s)\right]\\
&\;\qquad\qquad \cdot \frac{Y'(s)^\perp}{\tau^2 |Y'(s)|^2} f_4\left(\frac{|Y'(s)||\tau|}{\varepsilon}\right)\,d\tau\\
\triangleq &\; \sum_{i = 1}^6 E_{3,i}(s).
\end{split}
\label{eqn: splitting E_3}
\end{equation}
In $E_{3,5}$ and $E_{3,6}$, we used the identity that
\begin{equation}
\begin{split}
&\;S(|Y'(s')|,s')\cdot [Y(s')-Y(s)-\tau Y'(s'))]\\
%= &\; \int_{0}^\tau S(|Y'(s')|,s')Y'(s+\eta)\,d\eta - \tau SY'(s')\\
= &\; \int_{0}^\tau [S(|Y'(s')|,s')-S(|Y'(s+\eta)|,s+\eta)]Y'(s+\eta)\,d\eta \\
&\;+ \int_0^\tau  SY'(s+\eta)- SY'(s')\,d\eta.
\end{split}
\end{equation}
%It turns out $E_{3,6}$ accounts for the most singular part in $E_3$.

We bound $E_{3,i}$ one by one.
By \eqref{eqn: C0 and C1 bound for S}, Lemma \ref{lemma: decay estimate of f_1 f_2 f_3} and Lemma \ref{lemma: preliminary estimates as building blocks}, $E_{3,1}$, $E_{3,2}$, $E_{3,3}$ and $E_{3,4}$ can be bounded as follows.
\begin{equation}
\begin{split}
|E_{3,1}(s)|\leq &\;C\mu\int_{\mathbb{T}} \frac{\tau^2QR}{\lambda^2\tau^2}\cdot|\tau|R\cdot \frac{1}{1+\left(\frac{\lambda \tau}{\varepsilon}\right)^2}\,d\tau\\
\leq &\;\frac{C\mu}{\lambda^2}\int_{\mathbb{T}} QR^2\cdot \frac{|\tau|}{1+\left(\frac{\lambda \tau}{\varepsilon}\right)^2}\,d\tau.
\end{split}
\label{eqn: estimate of E_31}
\end{equation}
%For $E_{3,2}$, we proceed as in \eqref{eqn: estimate of E_23 mean value theorem} and \eqref{eqn: estimate of E_23}, by Lemma \ref{lemma: decay estimate of f_1 f_2 f_3} and Lemma \ref{lemma: preliminary estimates as building blocks},
\begin{equation}
\begin{split}
|E_{3,2}(s)|\leq &\;C\mu\int_{\mathbb{T}} \frac{\tau^2 QR}{\lambda^2\tau^2}\cdot Q\cdot \frac{\tau^2 R}{\varepsilon}\cdot \frac{1}{1+\left(\frac{\lambda |\tau|}{\varepsilon}\right)^3}\,d\tau\\
\leq &\;\frac{C\mu}{\lambda^3}\int_{\mathbb{T}} Q^2R^2\cdot \frac{|\tau|}{1+\left(\frac{\lambda \tau}{\varepsilon}\right)^2}\,d\tau. %\\
%\leq &\;\frac{C\mu\varepsilon^2 \ln(\lambda/\varepsilon)}{\lambda^5}\|Y'\|_{L^\infty(\mathbb{T})}^2 |\mathcal{M}Y''(s)|^2.
\end{split}
\label{eqn: estimate of E_32}
\end{equation}
%To bound $E_{3,3}$, we apply Lemma \ref{lemma: decay estimate of f_1 f_2 f_3} and Lemma \ref{lemma: preliminary estimates as building blocks} to find that
\begin{equation}
\begin{split}
|E_{3,3}(s)|\leq &\;C\mu\int_{\mathbb{T}}\tau^2 QR\cdot
\frac{(|D|+|\tau||Y'(s)|)|D-|\tau|| Y'(s)||}{|D|^2 \tau^2 |Y'(s)|}\cdot \frac{1}{1+\left(\frac{\lambda \tau}{\varepsilon}\right)^2}\,d\tau\\
\leq &\;C\mu\int_{\mathbb{T}}\tau^2 QR\cdot
\frac{\tau^2 R}{\lambda^2|\tau|^3}\cdot \frac{1}{1+\left(\frac{\lambda \tau}{\varepsilon}\right)^2}\,d\tau\\
\leq &\;\frac{C\mu}{\lambda^2}\int_{\mathbb{T}}QR^2\cdot \frac{|\tau|}{1+\left(\frac{\lambda \tau}{\varepsilon}\right)^2}\,d\tau.
\end{split}
\label{eqn: estimate of E_33}
\end{equation}
%$E_{3,4}$ can be estimated easily.
\begin{equation}
|E_{3,4}(s)|\leq C\mu\int_{\mathbb{T}}|\tau|^3 R^2\cdot \frac{1}{\tau^2 \lambda}\cdot \frac{1}{1+\left(\frac{\lambda \tau}{\varepsilon}\right)^2}\,d\tau\leq \frac{C\mu}{\lambda}\int_{\mathbb{T}}R^2\cdot \frac{|\tau|}{1+\left(\frac{\lambda \tau}{\varepsilon}\right)^2}\,d\tau.
%\leq \frac{C\mu\varepsilon^2 \ln(\lambda/\varepsilon)}{\lambda^3}|\mathcal{M}Y''(s)|^2.
\label{eqn: estimate of E_34}
\end{equation}
Note that for $E_{3,2}$, we applied the mean value theorem and proceeded as in \eqref{eqn: estimate of E_23 mean value theorem} and \eqref{eqn: estimate of E_23}. %, by Lemma \ref{lemma: decay estimate of f_1 f_2 f_3} and Lemma \ref{lemma: preliminary estimates as building blocks},

To handle $E_{3,5}$, we purposefully put an extra $Y'(s)$ into the integral without changing its value, i.e.,
\begin{equation}
\begin{split}
&\;E_{3,5}(s)\\
= &\;\mathrm{p.v.}\int_{\mathbb{T}} \left(\int_{0}^\tau [S(|Y'(s+\tau)|,s+\tau)-S(|Y'(s+\eta)|,s+\eta)]\cdot (Y'(s+\eta)-Y'(s))\,d\eta\right)^\perp\\
&\;\qquad\qquad\cdot Y'(s)\cdot \frac{Y'(s)^\perp}{\tau^2 |Y'(s)|^2}\cdot f_4\left(\frac{|Y'(s)||\tau|}{\varepsilon}\right)\,d\tau.
\end{split}
\end{equation}
Since
\begin{equation}
|Y'(s+\eta)-Y'(s)|\leq |Y'(s+\eta)-Y'(s+\tau)| + |Y'(s+\tau)-Y'(s)|\leq C|\tau||\mathcal{M}Y''(s+\tau)|,
\end{equation}
by Lemma \ref{lemma: decay estimate of f_1 f_2 f_3} and Assumption \ref{assumption: C 1,1 regularity assumption}, % and Lemma \ref{lemma: preliminary estimates as building blocks},
\begin{equation}
\begin{split}
&\;|E_{3,5}(s)| \\
%\leq  %&\;C\int_{\mathbb{T}}\left( \int_{0}^\tau |S(|Y'(s+\tau)|)-S(|Y'(s+\eta)|)|\cdot |Y'(s+\eta)-Y'(s)|\,d\eta\right)\\
%&\;\qquad\qquad\cdot \frac{1}{\tau^2} \left|f_4\left(\frac{|Y'(s)||\tau|}{\varepsilon}\right)\right|\,d\tau\\
\leq
&\;C\int_{\mathbb{T}}\left[\int_{0}^{|\tau|} \left(\mu |\tau|+\frac{\mu|\tau||\mathcal{M}Y''(s+\tau)|}{\|Y'\|_{L^{\infty}(\mathbb{T})}}\right)\cdot|\tau||\mathcal{M}Y''(s+\tau)|\,d\eta\right]\cdot \frac{1}{\tau^2}\cdot \frac{1}{1+\left(\frac{\lambda \tau}{\varepsilon}\right)^2}\,d\tau\\
\leq &\;\frac{C\mu}{\lambda}\int_{\mathbb{T}} (Q+R)R\cdot\frac{|\tau|}{1+\left(\frac{\lambda \tau}{\varepsilon}\right)^2}\,d\tau. %\\
%\leq &\;\frac{C\mu\varepsilon^2 \ln(\lambda/\varepsilon)}{\lambda^2} |\mathcal{M}Y''(s)|\left(\frac{1}{\lambda }|\mathcal{M}Y''(s)|+1\right).
\end{split}
\label{eqn: estimate of E_35}
\end{equation}
Finally, for $E_{3,6}$, we take the same strategy as in estimating $E_{2,4}$,
choosing the same $K$ as before. %to be determined,
Indeed,
\begin{equation}
\begin{split}
E_{3,6}(s) = &\;\mathrm{p.v.}\int_{\mathbb{T}} \left[\left(\int_{0}^\tau \mathcal{P}_K [SY'](s+\eta)-\mathcal{P}_K [SY'](s+\tau)\,d\eta +\frac{\tau^2}{2}\mathcal{P}_K[(SY')'](s)\right)^\perp\cdot Y'(s)\right]\\
&\;\qquad\qquad\cdot \frac{Y'(s)^\perp}{\tau^2 |Y'(s)|^2}f_4\left(\frac{|Y'(s)||\tau|}{\varepsilon}\right)\,d\tau\\
&\;+\frac{1}{2}\mathcal{P}_K[(SY')'](s)\cdot Y'(s)^\perp \frac{Y'(s)^\perp}{|Y'(s)|^2} \int_{\mathbb{T}} f_4\left(\frac{|Y'(s)||\tau|}{\varepsilon}\right)\,d\tau\\
&\;+\mathrm{p.v.}\int_{\mathbb{T}} \left[\left(\int_{0}^\tau \mathcal{Q}_K [SY'](s+\eta)-\mathcal{Q}_K [SY'](s+\tau)\,d\eta\right)^\perp\cdot Y'(s)\right]\\
&\;\qquad\qquad\cdot \frac{Y'(s)^\perp}{\tau^2 |Y'(s)|^2}f_4\left(\frac{|Y'(s)||\tau|}{\varepsilon}\right)\,d\tau.
\end{split}
\label{eqn: splitting E_36}
\end{equation}
We derive that
\begin{equation}
\begin{split}
&\;\left|\int_{0}^\tau \mathcal{P}_K [SY'](s+\eta)-\mathcal{P}_K [SY'](s+\tau)\,d\eta +\frac{\tau^2}{2}\mathcal{P}_K[(SY')'](s)\right|\\
=&\;\left|\int_{0}^\tau \int_\eta ^\tau \mathcal{P}_K[(SY')'](s) - \mathcal{P}_K [(SY')'](s+\zeta)\,d\zeta d\eta\right|\\
\leq &\;
C|\tau|^3|\mathcal{M}(\mathcal{P}_K F'_Y)(s)|.
\end{split}
\end{equation}
Combine this with \eqref{eqn: splitting E_36} and we find that
\begin{equation}
\begin{split}
&\;\left|E_{3,6}(s)-\frac{1}{2}\mathcal{P}_KF_Y(s)\cdot Y'(s)^\perp \frac{Y'(s)^\perp}{|Y'(s)|^2} \int_{\mathbb{T}} f_4\left(\frac{|Y'(s)||\tau|}{\varepsilon}\right)\,d\tau\right|\\
\leq &\;C\int_{\mathbb{T}}|\tau|^3|\mathcal{M}(\mathcal{P}_KF'_Y)(s)|
\cdot\frac{1}{\tau^2}\cdot \frac{1}{1+\left(\frac{\lambda \tau}{\varepsilon}\right)^2}\,d\tau+C\int_{\mathbb{T}}|\tau|^2|\mathcal{M}(\mathcal{Q}_K F_Y)(s)|
\cdot \frac{1}{\tau^2}\cdot \frac{1}{1+\left(\frac{\lambda \tau}{\varepsilon}\right)^2}\,d\tau\\
\leq &\;\frac{C\varepsilon^2 \ln(\lambda/\varepsilon)}{\lambda^2}|\mathcal{M}(\mathcal{P}_KF'_Y)(s)|+\frac{C\varepsilon }{\lambda}|\mathcal{M}(\mathcal{Q}_KF_Y)(s)|.
\end{split}
\label{eqn: estimate of E_36}
\end{equation}
Combining \eqref{eqn: estimate of E_31}-\eqref{eqn: estimate of E_34}, \eqref{eqn: estimate of E_35}, and \eqref{eqn: estimate of E_36}, we argue as in \eqref{eqn: a partial estimate of E_2 involving K} and \eqref{eqn: L^2 estimate of E_2 minus an additional part} to obtain that
\begin{equation}\label{eqn: L^2 estimate of E_3 minus an additional part}
\begin{split}
&\;\left\|E_3(s)-\frac{1}{2}\mathcal{P}_K F_Y(s)\cdot Y'(s)^\perp\cdot \frac{Y'(s)^\perp}{|Y'(s)|^2}\int_{\mathbb{T}} f_4\left(\frac{|Y'(s)||\tau|}{\varepsilon}\right)\,d\tau\right\|_{L^2(\mathbb{T})}\\
\leq &\;\frac{C\mu\varepsilon^2 \ln(\lambda/\varepsilon)}{\lambda^5}\|Y'\|_{L^\infty(\mathbb{T})}^2 \|Y''\|^2_{L^4(\mathbb{T})}+\frac{C\varepsilon^2 \ln(\lambda/\varepsilon)}{\lambda^2}K^{1-\theta}\|\mathcal{P}_KF_Y\|_{\dot{H}^{\theta}(\mathbb{T})}\\
&\;+\frac{C\varepsilon }{\lambda}K^{-\theta}\|\mathcal{Q}_KF_Y\|_{\dot{H}^{\theta}(\mathbb{T})}\\
\leq &\;\frac{C\varepsilon^2 \ln(\lambda/\varepsilon)}{\lambda^5}\|Y'\|_{L^\infty(\mathbb{T})}^2 \|Y''\|^2_{L^4(\mathbb{T})}+\frac{C\mu\varepsilon^{1+\theta} \ln^\theta(\lambda/\varepsilon)}{\lambda^{1+\theta}}\|Y\|_{\dot{H}^{2+\theta}(\mathbb{T})}.
\end{split}
\end{equation}
%Here we still took $K\sim \frac{\lambda}{\varepsilon\ln(\frac{\lambda}{\varepsilon})}$.
\end{step}

\begin{step}[Estimates of the extra terms]
Combining \eqref{eqn: L^2 estimate of E_1}, \eqref{eqn: L^2 estimate of E_2 minus an additional part} and \eqref{eqn: L^2 estimate of E_3 minus an additional part},
\begin{equation}
\begin{split}
&\;\left\|4\pi[U^\varepsilon_Y(s)-U_Y(s)]-\mathcal{P}_K F_Y(s)\cdot \int_{\mathbb{T}} f_3\left(\frac{|Y'(s)||\tau|}{\varepsilon}\right)\,d\tau\right.\\
&\;\quad\left.-\frac{1}{2}\mathcal{P}_K F_Y(s)\cdot Y'(s)^\perp\cdot \frac{Y'(s)^\perp}{|Y'(s)|^2}\int_{\mathbb{T}} \left[f_2\left(\frac{|Y'(s)||\tau|}{\varepsilon}\right)-f_3\left(\frac{|Y'(s)||\tau|}{\varepsilon}\right)\right]\,d\tau\right\|_{L^2(\mathbb{T})} \\
\leq &\;\frac{C\mu\varepsilon^2 \ln(\lambda/\varepsilon)}{\lambda^5}\|Y'\|_{L^\infty(\mathbb{T})}^2 \|Y''\|^2_{L^4(\mathbb{T})}+\frac{C\mu\varepsilon^{1+\theta} \ln^\theta(\lambda/\varepsilon)}{\lambda^{1+\theta}}\|Y\|_{\dot{H}^{2+\theta}(\mathbb{T})}
\end{split}
\label{eqn: L^2 error estimates with the extra terms}
\end{equation}

To this end, we shall handle the extra terms on the left hand side.
By Lemma \ref{lemma: integrals of f_2 and f_3},
\begin{equation}
\begin{split}
\int_{\mathbb{T}}f_3\left(\frac{|Y'(s)||\tau|}{\varepsilon}\right)\,d\tau %=&\; \frac{\varepsilon}{|Y'(s)|}\int_{-|Y'(s)|\pi/\varepsilon}^{|Y'(s)|\pi/\varepsilon}f_3(\omega)\,d\omega\\
=&\; \frac{\varepsilon}{|Y'(s)|}\int_{\mathbb{R}}f_3(\omega)\,d\omega - \frac{\varepsilon}{|Y'(s)|}\int_{|\omega|>|Y'(s)|\pi/\varepsilon}f_3(\omega)\,d\omega\\
=&\; -\frac{4 m_1\varepsilon}{|Y'(s)|}  - \frac{\varepsilon}{|Y'(s)|}\int_{|\omega|>|Y'(s)|\pi/\varepsilon}f_3(\omega)\,d\omega,
\end{split}
\end{equation}
where
\begin{equation}
\left|\frac{\varepsilon}{|Y'(s)|}\int_{|\omega|>|Y'(s)|\pi/\varepsilon}f_3(\omega)\,d\omega\right| \leq \frac{C\varepsilon}{|Y'(s)|}\int_{|Y'(s)|\pi/\varepsilon}^\infty\frac{1}{1+\omega^2}\,d\omega
%\leq \frac{C\varepsilon}{|Y'(s)|}\cdot \frac{\varepsilon}{|Y'(s)|}
\leq \frac{C\varepsilon^2 }{\lambda ^2}.
\label{eqn: bound of the tail in the O(epsilon) term}
\end{equation}
Similarly, %by Lemma \ref{lemma: integrals of f_2 and f_3},
\begin{equation}
\int_{\mathbb{T}} f_4\left(\frac{|Y'(s)||\tau|}{\varepsilon}\right)\,d\tau= \frac{8 m_1\varepsilon}{|Y'(s)|}  - \frac{\varepsilon}{|Y'(s)|}\int_{|\omega|>|Y'(s)|\pi/\varepsilon}[f_2(\omega )- f_3(\omega)]\,d\omega.
\label{eqn: calculation of f_4 eps integral on torus}
\end{equation}
where the second term can be bounded as in \eqref{eqn: bound of the tail in the O(epsilon) term}.
Hence, we combine \eqref{eqn: L^2 error estimates with the extra terms}-\eqref{eqn: calculation of f_4 eps integral on torus} to obtain that
\begin{equation}
\begin{split}
&\;\left\|4\pi[U^\varepsilon_Y(s)-U_Y(s)]+\left(\mathcal{P}_K F_Y(s)\cdot \frac{4 m_1 \varepsilon}{|Y'(s)|}-\frac{1}{2}\mathcal{P}_K F_Y(s)\cdot Y'(s)^\perp\cdot \frac{Y'(s)^\perp}{|Y'(s)|^2}\cdot \frac{8 m_1 \varepsilon}{|Y'(s)|}\right)\right\|_{L^2(\mathbb{T})} \\
\leq &\;\frac{C\mu\varepsilon^2 \ln(\lambda/\varepsilon)}{\lambda^5}\|Y'\|_{L^\infty(\mathbb{T})}^2 \|Y''\|^2_{L^4(\mathbb{T})}+\frac{C\mu\varepsilon^{1+\theta} \ln^\theta(\lambda/\varepsilon)}{\lambda^{1+\theta}}\|Y\|_{\dot{H}^{2+\theta}(\mathbb{T})}.
\end{split}
\label{eqn: L^2 error estimate with P_K}
\end{equation}
Note that %on the left hand side, we have
\begin{equation}
\begin{split}
&\;\mathcal{P}_K F_Y(s)\cdot \frac{4 m_1 \varepsilon}{|Y'(s)|}-\frac{1}{2}\mathcal{P}_K F_Y(s)\cdot Y'(s)^\perp\cdot \frac{Y'(s)^\perp}{|Y'(s)|^2}\cdot \frac{8 m_1 \varepsilon}{|Y'(s)|}\\
=&\;\frac{4 m_1 \varepsilon}{|Y'(s)|}\cdot \frac{\mathcal{P}_K F_Y(s)\cdot Y'(s)}{|Y'(s)|^2}Y'(s)\\
=&\;\frac{4 m_1 \varepsilon}{|Y'(s)|}\cdot \frac{F_Y(s)\cdot Y'(s)}{|Y'(s)|^2}Y'(s)-\frac{4 m_1 \varepsilon}{|Y'(s)|}\cdot \frac{\mathcal{Q}_K F_Y(s)\cdot Y'(s)}{|Y'(s)|^2}Y'(s).
\end{split}
\end{equation}
By Lemma \ref{lemma: H^1/2 estimate of F},
\begin{equation}
\left\|\frac{4 m_1 \varepsilon}{|Y'(s)|}\cdot \frac{\mathcal{Q}_K F_Y(s)\cdot Y'(s)}{|Y'(s)|^2}Y'(s)\right\|_{L^2(\mathbb{T})}\\
\leq \frac{C\varepsilon}{\lambda}\cdot K^{-\theta}\|F_Y\|_{\dot{H}^{\theta}(\mathbb{T})}\leq \frac{C\mu\varepsilon^{1+\theta}\ln^\theta(\lambda/\varepsilon)}{\lambda^{1+\theta}}\|Y\|_{\dot{H}^{2+\theta}(\mathbb{T})}.
\end{equation}
Therefore, \eqref{eqn: L^2 error estimate with P_K} implies that
\begin{equation}
\begin{split}
&\;\left\|[U^\varepsilon_Y(s)-U_Y(s)]+\frac{m_1 \varepsilon}{\pi|Y'(s)|}\cdot \frac{F_Y(s)\cdot Y'(s)}{|Y'(s)|^2}Y'(s)\right\|_{L^2(\mathbb{T})} \\
\leq &\;\frac{C\mu\varepsilon^2 \ln(\lambda/\varepsilon)}{\lambda^5}\|Y'\|_{L^\infty(\mathbb{T})}^2 \|Y''\|^2_{L^4(\mathbb{T})}+\frac{C\mu\varepsilon^{1+\theta} \ln^\theta(\lambda/\varepsilon)}{\lambda^{1+\theta}}\|Y\|_{\dot{H}^{2+\theta}(\mathbb{T})},
\end{split}
\label{eqn: L^2 error estimate with tangent force on the left hand side}
\end{equation}
which proves the desired estimate in both Proposition \ref{prop: L^2 error estimate of the string velocity} and Corollary \ref{coro: L^2 error estimate of the string normal velocity}.

Thanks to Lemma \ref{lemma: decay estimate of f_1 f_2 f_3}, if $m_2 = 0$, $f_3$, $f_4$, $f_5$, and their first derivatives will enjoy improved decay at $\infty$.
In this case, it is not difficult to verify that all the logarithmic factors in this proof can be removed.
\end{step}
\end{proof}
%\end{proposition}

\begin{remark}\label{rmk: get rid of pv integral in the components of the error}
All the principal value integrals in the proof, except those for $E_{2,1}$, %in $E_1$, $E_{2,2}$, $E_{2,3}$, $E_{2,4}$, and $E_{3,i}$ with $i = 1,\cdots,6$,
may be replaced by the usual integrals.
\end{remark}

\subsection{Proof of the static $H^1$-error estimate}\label{section: static H^1 error estimates}

\begin{proof}[Proof of Proposition \ref{prop: H^1 error estimate of the string velocity}]
Let $E_1(s)$, $E_2(s)$ and $E_3(s)$ be defined as in \eqref{eqn: splitting the string velocity error into E_k}.
\setcounter{step}{0}

\begin{step}[Estimate for $E_1'$]
%We shall first find out the (weak) derivative of $E_1$.
Recall that %$f_5 = f_2 -2f_3$, and
\begin{equation}
\label{eqn: E_1 and definition of e_1}
E_1(s) = \int_{\mathbb{T}}SB^2\cdot \frac{Y(s+\tau)-Y(s)}{D^4} \cdot f_5\left(\frac{D}{\varepsilon}\right)\,d\tau.
\end{equation}
As mentioned in Remark \ref{rmk: get rid of pv integral in the components of the error}, we can get rid of the principal value integral.
Denote its integrand to be $e_1(s,\tau)$.

We first give an estimate of $\int_\mathbb{T}|\partial_s e_1(s,\tau)|\,d\tau$.
For clarity, we start from some simpler estimates.
By Lemma \ref{lemma: decay estimate of f_1 f_2 f_3} and Lemma \ref{lemma: preliminary estimates as building blocks}, %it is not difficult to show that
\begin{equation}
|SB^2|\leq C\mu |\tau|^2 QR\cdot |D|Q,
\label{eqn: bound for e_1 part 1}
\end{equation}
and
\begin{equation}
\left|\frac{Y(s+\tau)-Y(s)}{D^4} \cdot f_5\left(\frac{D}{\varepsilon}\right)\right|\leq \frac{C}{|D|^3}\cdot \frac{1}{1+\left(\frac{\lambda\tau}{\varepsilon}\right)^2}.
\label{eqn: bound for e_1 part 2}
\end{equation}
Also by Lemma \ref{lemma: preliminary estimates of derivatives as building blocks},
\begin{equation}
\begin{split}
|\partial_s (SB^2)|\leq &\; |\partial_sS(|Y'(s+\tau)|,s+\tau)||B^2|\\
&\; +\left|\partial_pS(|Y'(s+\tau)|,s+\tau)\cdot \frac{Y''\cdot Y'(s+\tau)}{|Y'(s+\tau)|}\right||B^2|\\
&\; + 2|S||B||\partial_s B|\\
\leq &\; C\mu\cdot (\tau^2QR)^2+ C\frac{\mu R}{\|Y'\|_{L^\infty(\mathbb{T})}}\cdot\tau^2 QR\cdot |\tau|Q^2 + C\mu\cdot \tau^2QR\cdot|\tau|QR\\
\leq &\; C\mu|\tau|^3Q^2R^2.
\end{split}
\label{eqn: bound for e_1 part 1 derivative}
\end{equation}
Lastly,
\begin{equation}
\begin{split}
&\;\partial_s \left(\frac{Y(s+\tau)-Y(s)}{D^4} \cdot f_5\left(\frac{D}{\varepsilon}\right)\right)\\
\leq &\;\frac{Y'(s+\tau)-Y'(s)}{D^4} \cdot f_5\left(\frac{D}{\varepsilon}\right)+\frac{Y(s+\tau)-Y(s)}{D^4}\cdot \left(\frac{1}{\varepsilon}f_5'\left(\frac{D}{\varepsilon}\right)-\frac{4}{D}f_5\left(\frac{D}{\varepsilon}\right)\right)\partial_s D.
\end{split}
\label{eqn: calculate e_1 part 2 derivative}
\end{equation}
Hence,
\begin{equation}
\begin{split}
&\;\left|\partial_s \left(\frac{Y(s+\tau)-Y(s)}{D^4} \cdot f_5\left(\frac{D}{\varepsilon}\right)\right)\right|\\
%\leq &\;\left|\frac{Y'(s+\tau)-Y'(s)}{D^4} \cdot f_5\left(\frac{D}{\varepsilon}\right)\right|\\
%&\;+\left|\frac{Y(s+\tau)-Y(s)}{D^4} \right|\cdot \left(\frac{1}{\varepsilon}\left|f_5'\left(\frac{D}{\varepsilon}\right)\right|+\frac{4}{|D|}\left|f_5\left(\frac{D}{\varepsilon}\right)\right|\right)|\partial_sD|\\
\leq &\; \frac{C|\tau|R}{|D|^4}\cdot \frac{1}{1+\left(\frac{\lambda\tau}{\varepsilon}\right)^2}+\frac{C}{|D|^3} \left(\frac{1}{\varepsilon}\cdot \frac{1}{1+\left(\frac{\lambda |\tau|}{\varepsilon}\right)^3}+\frac{1}{|D|}\cdot\frac{1}{1+\left(\frac{\lambda\tau}{\varepsilon}\right)^2}\right)|\tau|R\\
\leq &\; \frac{CR}{\lambda|D|^3}\cdot \frac{1}{1+\left(\frac{\lambda\tau}{\varepsilon}\right)^2}.
\end{split}
\label{eqn: bound for e_1 part 2 derivative}
\end{equation}
Combining \eqref{eqn: bound for e_1 part 1}-\eqref{eqn: bound for e_1 part 2 derivative}, we find that
\begin{equation}
\begin{split}
&\;\int_{\mathbb{T}}|\partial_s e_1(s,\tau)|\,d\tau \\
\leq &\;\int_{\mathbb{T}} |\partial_s (SB^2)|\left|\frac{Y(s+\tau)-Y(s)}{D^4} \cdot f_5\left(\frac{D}{\varepsilon}\right)\right|+|SB^2|\left|\partial_s \left(\frac{Y(s+\tau)-Y(s)}{D^4} \cdot f_5\left(\frac{D}{\varepsilon}\right)\right)\right|\,d\tau\\
\leq &\;\frac{C\mu}{\lambda^3} Q^2\int_{\mathbb{T}} \frac{R^2}{1+\left(\frac{\lambda\tau}{\varepsilon}\right)^2}\,d\tau. %\\
%\leq
%&\;\frac{C\mu }{\lambda^2} \|Y'\|_{L^\infty(\mathbb{T})}^2\int_{\mathbb{T}} |\mathcal{M}Y''(s)|\left(\frac{|Y''(s+\tau)|+|\mathcal{M}Y''(s)|}{\lambda}+1\right)\cdot \frac{1}{1+\left(\frac{\lambda \tau}{\varepsilon}\right)^2}\,d\tau.
\end{split}
\end{equation}
By Minkowski inequality and Sobolev inequality,
\begin{equation}
\left\|\int_{\mathbb{T}}|\partial_s e_1(s,\tau)|\,d\tau \right\|_{L^2(\mathbb{T})}\leq \frac{C\mu\varepsilon}{\lambda^4}\|Y'\|_{L^\infty(\mathbb{T})}^2 \|Y''\|_{L^4(\mathbb{T})}^2.
\label{eqn: L^2 estimate of integral of e_1'}
\end{equation}

To this end, we claim that $E_1'(s) = \int_\mathbb{T}\partial_s e_1(s,\tau)\,d\tau$.
Indeed, it is clear from the estimate \eqref{eqn: pointwise bound for E_1} of $E_1(s)$ that $e_1\in L^1(\mathbb{T}\times \mathbb{T})$; so is $\partial_s e_1$ by \eqref{eqn: L^2 estimate of integral of e_1'}.
Hence, if we take an arbitrary $\psi(s)\in C^\infty(\mathbb{T})$, by Fubini's Theorem and integration by parts,
\begin{equation}
\begin{split}
\int_{\mathbb{T}}ds\,\psi'(s)E_1(s) = \int_{\mathbb{T}}d\tau\,\int_{\mathbb{T}}-\psi(s)\partial_s e_1(s,\tau)\,ds = \int_{\mathbb{T}}-\psi(s)\int_{\mathbb{T}}\partial_s e_1(s,\tau)\,d\tau ds.
\end{split}
\label{eqn: integral of e_1' is the derivative of E_1}
\end{equation}
This proves the claim, and hence \eqref{eqn: L^2 estimate of integral of e_1'} is also a bound for $\|E_1'(s)\|_{L^2(\mathbb{T})}$.
\end{step}

\begin{step}[Estimate for $E_2'$]
By the proof in Section \ref{section: L^2 static error},
%It is known by the proof of Proposition \ref{prop: L^2 error estimate of the string velocity} that
\begin{equation}
\label{eqn: E_2 and definition of e_2}
E_2(s) = \int_{\mathbb{T}} \frac{SY'(s+\tau)-SY'(s)}{\tau} \cdot \frac{\tau A}{D^2}\cdot f_3\left(\frac{D}{\varepsilon}\right)\,d\tau.
\end{equation}
Denote its integrand to be $e_2(s,\tau)$.
We shall bound $\int_{\mathbb{T}}|\partial_s e_{2}|\,d\tau$ first.
Again, we derive some simple estimates.

Aiming at a sharper estimate of the leading term (see \eqref{eqn: bound of e_2'} below), we purposefully split the term
\begin{equation}
\frac{\tau A}{D^2}\cdot f_3\left(\frac{D}{\varepsilon}\right) = \left[\frac{\tau A}{D^2}\cdot f_3\left(\frac{D}{\varepsilon}\right)-f_3\left(\frac{|Y'(s)|\tau}{\varepsilon}\right)\right]+f_3\left(\frac{|Y'(s)|\tau}{\varepsilon}\right),
\end{equation}
where
\begin{equation}
\begin{split}
&\;\left|\frac{\tau A}{D^2}\cdot f_3\left(\frac{D}{\varepsilon}\right)-f_3\left(\frac{|Y'(s)|\tau}{\varepsilon}\right)\right|\\
\leq
&\;\left|\frac{\tau A-D^2}{D^2}\cdot f_3\left(\frac{D}{\varepsilon}\right)\right| +\left|f_3\left(\frac{D}{\varepsilon}\right)-f_3\left(\frac{|Y'(s)|\tau}{\varepsilon}\right)\right|\\
\leq &\;\frac{CD |Y(s+\tau)-Y(s)-\tau Y'(s+\tau)|}{D^2}\cdot \frac{1}{1+\left(\frac{\lambda\tau}{\varepsilon}\right)^2}\\
&\;+\frac{C|Y(s+\tau)-Y(s)-\tau Y'(s)|}{\varepsilon}\cdot \frac{1}{1+\left(\frac{\lambda|\tau|}{\varepsilon}\right)^3}\\
\leq &\;\frac{C\tau^2 R}{\lambda |\tau|}\cdot \frac{1}{1+\left(\frac{\lambda\tau}{\varepsilon}\right)^2}+\frac{C\tau^2 R}{\varepsilon}\cdot \frac{1}{1+\left(\frac{\lambda|\tau|}{\varepsilon}\right)^3}\\
\leq &\;\frac{C|\tau| R}{\lambda}\cdot \frac{1}{1+\left(\frac{\lambda\tau}{\varepsilon}\right)^2}.
\end{split}
\label{eqn: bound for e_2 part 2}
\end{equation}
By Lemma \ref{lemma: H^1/2 estimate of F},
\begin{equation}
\left|\partial_s\left(\frac{SY'(s+\tau)-SY'(s)}{\tau} \right)\right| \leq \frac{|F(s+\tau)|+|F(s)|}{\tau}\leq\frac{C\mu}{|\tau|}(Q+R+|Y''(s)|).
\label{eqn: bound for e_2 part 1 derivative}
\end{equation}
%In the case of $\theta = 1$, we also use the following bound
%\begin{equation}
%\left|\partial_s\left(\frac{SY'(s+\tau)-SY'(s)}{\tau} \right)\right|\leq C|\mathcal{M}F'(s+\tau)|.
%\end{equation}
Lastly,
\begin{equation}\label{eqn: calculate e_2 part 2 derivative}
\begin{split}
&\;\partial_s\left(\frac{\tau A}{D^2}\cdot f_3\left(\frac{D}{\varepsilon}\right)\right)\\
= &\;\frac{\tau \partial_s A}{D^2}\cdot f_3\left(\frac{D}{\varepsilon}\right) +\frac{\tau A}{D^2} \left(\frac{1}{\varepsilon}f_3'\left(\frac{D}{\varepsilon}\right)-\frac{2}{D}f_3\left(\frac{D}{\varepsilon}\right)\right)\partial_s D.
\end{split}
\end{equation}
By Lemma \ref{lemma: decay estimate of f_1 f_2 f_3}, Lemma \ref{lemma: preliminary estimates as building blocks}, and Lemma \ref{lemma: preliminary estimates of derivatives as building blocks},
\begin{equation}\label{eqn: bound for e_2 part 2 derivative}
\begin{split}
&\;\left|\partial_s\left(\frac{\tau A}{D^2}\cdot f_3\left(\frac{D}{\varepsilon}\right)\right)\right|\\
%\leq &\;\left|\frac{\tau \partial_s A}{D^2}\cdot f_3\left(\frac{D}{\varepsilon}\right)\right|+\frac{|\tau||A|}{|D|^2}\left(\frac{1}{\varepsilon}\left|f_3'\left(\frac{D}{\varepsilon}\right)\right|+\frac{2}{|D|}\left|f_3\left(\frac{D}{\varepsilon}\right)\right|\right)|\partial_s D|\\
\leq &\;\frac{C|\tau|^2 QR}{\lambda^2\tau^2}\cdot \frac{1}{1+\left(\frac{\lambda\tau}{\varepsilon}\right)^2}+
\frac{ C|D|\cdot |\tau|Q}{|D|^2}\cdot \left(\frac{1}{\varepsilon}\cdot\frac{1}{1+\left(\frac{\lambda|\tau|}{\varepsilon}\right)^3}+\frac{1}{\lambda|\tau|}\cdot\frac{1}{1+\left(\frac{\lambda \tau}{\varepsilon}\right)^2}\right)|\tau|R\\
\leq &\;\frac{CQR}{\lambda^2}\cdot \frac{1}{1+\left(\frac{\lambda\tau}{\varepsilon}\right)^2}.
\end{split}
\end{equation}

Combining \eqref{eqn: estimate of difference between SY'} and \eqref{eqn: E_2 and definition of e_2}-\eqref{eqn: bound for e_2 part 2 derivative}, we find that
\begin{equation}
\begin{split}
&\;\int_{\mathbb{T}} |\partial_s e_2|\,d\tau \\
\leq &\; C\int_{\mathbb{T}} \left|\partial_s\left(\frac{SY'(s+\tau)-SY'(s)}{\tau} \right)\right|\left|f_3\left(\frac{|Y'(s)|\tau}{\varepsilon}\right)\right|\,d\tau\\
&\;+C\int_{\mathbb{T}} \left|\partial_s\left(\frac{SY'(s+\tau)-SY'(s)}{\tau} \right)\right|\left|\frac{\tau A}{D^2}\cdot f_3\left(\frac{D}{\varepsilon}\right)-f_3\left(\frac{|Y'(s)|\tau}{\varepsilon}\right)\right|\,d\tau\\
&\;+C\int_{\mathbb{T}}\left|\frac{SY'(s+\tau)-SY'(s)}{\tau} \right|\left|\partial_s\left(\frac{\tau A}{D^2}\cdot f_3\left(\frac{D}{\varepsilon}\right)\right)\right|\,d\tau\\
\leq &\; C\int_{\mathbb{T}} \frac{|F(s+\tau)-F(s)|}{|\tau|}\cdot \frac{1}{1+\left(\frac{\lambda\tau}{\varepsilon}\right)^2}\,d\tau \\
%&\;+ C\int_{\mathbb{T}} \frac{\mu}{|\tau|}(|Y''(s+\tau)|+|Y''(s)|+|Y'(s+\tau)|+|Y'(s)|)\cdot\frac{|\tau||\mathcal{M}Y''(s)|}{\lambda}\cdot \frac{1}{1+\left(\frac{\lambda\tau}{\varepsilon}\right)^2} \,d\tau\\
&\;+ \frac{C\mu}{\lambda^2}\int_{\mathbb{T}}(Q+R+|Y''(s)|)QR\cdot \frac{1}{1+\left(\frac{\lambda\tau}{\varepsilon}\right)^2}\,d\tau.
\end{split}
\label{eqn: bound of e_2'}
\end{equation}
By Cauchy-Schwarz inequality,
\begin{equation*}
\begin{split}
&\;\int_{\mathbb{T}} \frac{|F(s+\tau)-F(s)|}{|\tau|}\cdot \frac{1}{1+\left(\frac{\lambda\tau}{\varepsilon}\right)^2}\,d\tau\\
\leq &\;\left(\int_{\mathbb{T}} \frac{|F(s+\tau)-F(s)|^2}{|\tau|^{1+2\theta}}\,d\tau\right)^{1/2}
\left(\int_{\mathbb{T}} \tau^{2\theta-1}\left(1+\left(\frac{\lambda\tau}{\varepsilon}\right)^2\right)^{-2}\,d\tau\right)^{1/2}\\
\leq &\;\frac{C\varepsilon^\theta}{\lambda^\theta}\left(\int_{\mathbb{T}} \frac{|F(s+\tau)-F(s)|^2}{|\tau|^{1+2\theta}}\,d\tau\right)^{1/2}.
\end{split}
\end{equation*}
Therefore, by Minkowski inequality, Sobolev inequality, and Lemma \ref{lemma: H^1/2 estimate of F},
\begin{equation}
\begin{split}
\left\|\int_{\mathbb{T}} |\partial_s e_2|\,d\tau \right\|_{L^2(\mathbb{T})}\leq &\; \frac{C\mu\varepsilon}{\lambda^3}\|Y'\|_{L^\infty(\mathbb{T})}\|Y''\|_{L^4(\mathbb{T})}^2
+\frac{C\varepsilon^{\theta}}{\lambda^{\theta}}\|F\|_{\dot{H}^{\theta}(\mathbb{T})}\\
\leq &\; \frac{C\mu\varepsilon}{\lambda^3}\|Y'\|_{L^\infty(\mathbb{T})}\|Y''\|_{L^4(\mathbb{T})}^2 +\frac{C\mu\varepsilon^\theta}{\lambda^\theta}\|Y\|_{\dot{H}^{2+\theta}(\mathbb{T})}.
\end{split}
\label{eqn: L^2 estimate of integral of e_2'}
\end{equation}
Then we argue as in \eqref{eqn: integral of e_1' is the derivative of E_1} to show that $E_{2}'(s) = \int_{\mathbb{T}}\partial_s e_{2}\,d\tau$; thus
$\|E_{2}'(s)\|_{L^2(\mathbb{T})}$ enjoys the bound in \eqref{eqn: L^2 estimate of integral of e_2'}.
\end{step}

\begin{step}[Estimate for $E_3'$]
Recall %that
\begin{equation}
\label{eqn: E_3 and definition of e_3}
E_3(s) = \int_{\mathbb{T}} SY'(s+\tau)^\perp \cdot \frac{B}{D^2} \cdot f_4\left(\frac{D}{\varepsilon}\right)\,d\tau.
\end{equation}
%As before, w
We denote its integrand to be $e_3(s,\tau)$.
By Lemma \ref{lemma: decay estimate of f_1 f_2 f_3}, Lemma \ref{lemma: preliminary estimates as building blocks} and Lemma \ref{lemma: preliminary estimates of derivatives as building blocks}, we calculate that
\begin{equation}
\left|\frac{B}{D^2} \cdot f_4\left(\frac{D}{\varepsilon}\right)\right|
\leq \frac{CQR}{\lambda^2} \cdot \frac{1}{1+\left(\frac{\lambda\tau}{\varepsilon}\right)^2},
\label{eqn: bound for e_3 part 2}
\end{equation}
and %similar to \eqref{eqn: bound for e_2 part 2 derivative},
\begin{equation}
\begin{split}
&\;\left|\partial_s\left(\frac{B}{D^2} \cdot f_4\left(\frac{D}{\varepsilon}\right)\right)\right|\\
\leq &\; \frac{|\partial_s B|}{D^2}\cdot \left|f_4\left(\frac{D}{\varepsilon}\right)\right|+\frac{|B|}{D^2}\cdot \left(\frac{1}{\varepsilon}\left|f_4'\left(\frac{D}{\varepsilon}\right)\right|+\frac{2}{|D|}\left|f_4\left(\frac{D}{\varepsilon}\right)\right|\right)|\partial_s D|\\
\leq &\; \frac{|\partial_s B|}{\tau^2|Y'(s+\tau)|^2}\cdot \left|f_4\left(\frac{D}{\varepsilon}\right)\right|+\left|\frac{1}{D^2}-\frac{1}{\tau^2|Y'(s+\tau)|^2}\right|\cdot |\partial_s B|\cdot\left| f_4\left(\frac{D}{\varepsilon}\right)\right|\\
&\;+\frac{C\tau^2 QR}{\lambda^2\tau^2}\cdot \left(\frac{1}{\varepsilon}\cdot \frac{1}{1+\left(\frac{\lambda|\tau|}{\varepsilon}\right)^3}+\frac{2}{\lambda |\tau|}\cdot\frac{1}{1+\left(\frac{\lambda\tau}{\varepsilon}\right)^2}\right)|\tau|R.
\end{split}
\label{eqn: bound for e_3 part 2 derivative}
\end{equation}
%In the case $\theta\in[1/4,1)$,
We apply \eqref{eqn: estimate of B' involving H2.5 norm} to the first term to bound $\partial_s B$ while apply \eqref{eqn: estimate of B'} to the second term %to derive that
\begin{equation}\label{eqn: bound for e_3 part 2 derivative fractional theta}
\begin{split}
&\;\left|\partial_s\left(\frac{B}{D^2} \cdot f_4\left(\frac{D}{\varepsilon}\right)\right)\right|\\
\leq &\; \frac{C\tau^2R^2}{\tau^2\lambda^2}\cdot \frac{1}{1+\left(\frac{\lambda\tau}{\varepsilon}\right)^2}\\
&\;+\frac{C|\tau|^{1+\theta}|Y'(s+\tau)|}{\tau^2 |Y'(s+\tau)|^2}\left(\int_0^\pi \frac{|Y''(s+\tau-\eta)-Y''(s+\tau)|^2}{|\eta|^{1+2\theta}}\,d\eta\right)^{1/2}
\cdot \frac{1}{1+\left(\frac{\lambda\tau}{\varepsilon}\right)^2}\\
&\;+\frac{C(D+|\tau||Y'(s+\tau)|)|Y(s+\tau)-Y(s)-\tau Y'(s+\tau)|}{D^2 \tau^2|Y'(s+\tau)|^2}\cdot|\tau|QR
\cdot \frac{1}{1+\left(\frac{\lambda\tau}{\varepsilon}\right)^2}\\
&\;+\frac{CQR^2}{\lambda^3}\cdot \frac{1}{1+\left(\frac{\lambda\tau}{\varepsilon}\right)^2}\\
\leq &\;C|Y'(s+\tau)|^{-1}\left(\int_0^\pi \frac{|Y''(s+\tau-\eta)-Y''(s+\tau)|^2}{|\eta|^{1+2\theta}}\,d\eta\right)^{1/2}
\cdot \frac{|\tau|^{\theta-1}}{1+\left(\frac{\lambda\tau}{\varepsilon}\right)^2}\\
&\;+\frac{CQR^2}{\lambda^3}\cdot \frac{1}{1+\left(\frac{\lambda\tau}{\varepsilon}\right)^2}.
\end{split}
\end{equation}
Hence, by Lemma \ref{lemma: H^1/2 estimate of F},
\begin{equation}
\begin{split}
&\;\int_{\mathbb{T}}|\partial_s e_3|\,d\tau \\
\leq &\;\int_{\mathbb{T}} |SY'(s+\tau)|\left|\partial_s\left(\frac{B}{D^2} \cdot f_4\left(\frac{D}{\varepsilon}\right)\right)\right|+|F(s+\tau)|\left|\frac{B}{D^2} \cdot f_4\left(\frac{D}{\varepsilon}\right)\right|\,d\tau\\
\leq &\;C\mu\int_{\mathbb{T}} \left(\int_0^\pi \frac{|Y''(s+\tau-\eta)-Y''(s+\tau)|^2}{|\eta|^{1+2\theta}}\,d\eta\right)^{1/2}
\cdot \frac{|\tau|^{\theta-1}}{1+\left(\frac{\lambda\tau}{\varepsilon}\right)^2}\,d\tau\\
&\;+\frac{C\mu}{\lambda^3}\int_{\mathbb{T}}Q^2R^2
\cdot \frac{1}{1+\left(\frac{\lambda\tau}{\varepsilon}\right)^2}\,d\tau+C\mu\int_{\mathbb{T}} (Q+R)\cdot \frac{QR}{\lambda^2}\cdot \frac{1}{1+\left(\frac{\lambda\tau}{\varepsilon}\right)^2}\,d\tau.%\\
%\leq &\;C\mu\int_{\mathbb{T}}\left(\int_0^\pi \frac{|Y''(s+\tau-\eta)-Y''(s+\tau)|^2}{|\eta|^{1+2\theta}}\,d\eta\right)^{1/2}\cdot\frac{|\tau|^{\theta-1}}{1+\left(\frac{\lambda \tau}{\varepsilon}\right)^2}\,d\tau\\
%&\;+\frac{C\mu}{\lambda^3} \int_{\mathbb{T}}Q^2R^2\cdot \frac{1}{1+\left(\frac{\lambda \tau}{\varepsilon}\right)^2}\,d\tau.
\end{split}
\label{eqn: bound of e_3'}
\end{equation}
By Minkowski inequality and Sobolev inequality,
\begin{equation}
\left\|\int_{\mathbb{T}}|\partial_s e_3|\,d\tau \right\|_{L^2(\mathbb{T})}\leq \frac{C\mu \varepsilon}{\lambda^4}\|Y'\|_{L^\infty(\mathbb{T})}^2\|Y''\|_{L^4(\mathbb{T})}^2+\frac{C\mu\varepsilon^\theta}{\lambda^\theta}\|Y\|_{\dot{H}^{2+\theta}(\mathbb{T})}.
\label{eqn: L^2 estimate of integral of e_3'}
\end{equation}
Then we argue as in \eqref{eqn: integral of e_1' is the derivative of E_1} that $E_{3}'(s) = \int_{\mathbb{T}}\partial_s e_{3}\,d\tau$; and
$\|E_{3}'(s)\|_{L^2(\mathbb{T})}$ enjoys the bound in \eqref{eqn: L^2 estimate of integral of e_3'}.
\end{step}

Combining \eqref{eqn: L^2 estimate of integral of e_1'}, \eqref{eqn: L^2 estimate of integral of e_2'} and \eqref{eqn: L^2 estimate of integral of e_3'}, we complete the proof of Proposition \ref{prop: H^1 error estimate of the string velocity}.
\end{proof}

\section{Singular Limit and Dynamic Error Estimates}\label{section: proof of convergence and error estimates}

In this section, we come back to the case of Hookean elasticity, and prove Theorem \ref{thm: error estimates of the regularized problem} and Theorem \ref{thm: convergence and error estimates of the low-frequency regularized IB method}.

\subsection{Proof of Theorem \ref{thm: error estimates of the regularized problem}}\label{section: proof of dynamic error estimate assuming boundedness}
We will follow the blueprint sketched in Section \ref{section: main results} to prove Theorem \ref{thm: error estimates of the regularized problem}.
Recall that $X^\varepsilon-X$ satisfies the equation
\begin{equation}
\partial_t (X^\varepsilon-X) = \mathcal{L}(X^\varepsilon-X) + (g_{X^\varepsilon}-g_X)+(U_{X^\varepsilon}^\varepsilon-U_{X^\varepsilon}),\quad (X^\varepsilon-X)(0) = 0,
\label{eqn: difference between contour dynamic equation between X and X_eps recap}
\end{equation}
where
\begin{equation}\label{eqn: def of g_Y}
g_Y =U_Y -\mathcal{L}Y= U_Y +\frac{1}{4}(-\Delta)^{1/2}Y.
\end{equation}
In order to bound $X^\varepsilon-X$, we will use estimates for $g_{X^\varepsilon}-g_X$ and $U_{X^\varepsilon}^\varepsilon - U_{X^\varepsilon}$.

Although many estimates for $g_Y$ have been established in \cite[Section 3]{lin2017solvability},
%However,
we need improved ones in the proofs of Theorem \ref{thm: error estimates of the regularized problem} and Theorem \ref{thm: convergence and error estimates of the low-frequency regularized IB method}. %, which is an improvement of Corollary 3.7 in \cite{lin2017solvability}.

\begin{lemma}\label{lemma: improved H2 estimate for g_X}
Let $Y\in H^{2+\theta}(\mathbb{T})$ satisfy \eqref{eqn: well-stretched condition}, with $\theta\in[\frac{1}{4},1)$.
Then
$\|g_{Y}\|_{\dot{H}^2(\mathbb{T})}\leq C\lambda^{-3}\|Y\|_{\dot{H}^{9/4}(\mathbb{T})}^4$.% \label{eqn: improved H2 estimate for g_X}
%\end{equation}
\end{lemma}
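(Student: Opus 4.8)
The plan is to work directly from the singular-integral representation of $U_Y$ and peel off its local linearization $\mathcal{L}Y$, so that $g_Y$ becomes a single integral whose integrand is regular enough to be differentiated twice in $s$ and estimated in $L^2(\mathbb{T})$. In the Hookean case ($S\equiv 1$), formula \eqref{eqn: string motion in the singular case} reads
\[
U_Y(s) = \frac{1}{4\pi}\,\mathrm{p.v.}\int_{\mathbb{T}}\frac{A^2-B^2}{D^4}\,(Y(s+\tau)-Y(s))\,d\tau,
\]
with $A,B,D$ as in \eqref{eqn: def of A(s,s')}--\eqref{eqn: def of D(s,s')} and $\tau=s'-s$. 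Recalling from \eqref{eqn: def of g_Y} that $g_Y=U_Y-\mathcal{L}Y$, where $\mathcal{L}=-\tfrac14(-\Delta)^{1/2}$ arises from linearizing the kernel about $\tau=0$, I would write $g_Y$ as the principal-value integral of the difference between the full kernel and its leading local part, the latter reproducing $\mathcal{L}Y$ exactly. Because of this subtraction the integrand is integrable near $\tau=0$, and substituting the expansions $Y(s+\tau)-Y(s)=\tau Y'(s)+O(\tau^2 R)$, $A=\tau|Y'(s)|^2+O(\tau^2 QR)$, $B=O(\tau^2 QR)$, $D=|\tau|\,|Y'(s)|+O(\tau^2 R)$ from Lemma \ref{lemma: preliminary estimates as building blocks} shows it carries two extra powers of $\tau$ relative to the naive kernel.

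I would then reduce the claim to an $L^2_s$ bound on $\partial_s^2$ of this integrand. Writing $g_Y(s)=\int_{\mathbb{T}}e(s,\tau)\,d\tau$, the justification of differentiation under the integral in Section \ref{section: static H^1 error estimates} (cf.\ the argument around \eqref{eqn: integral of e_1' is the derivative of E_1}) reduces matters to estimating $\bigl\|\int_{\mathbb{T}}|\partial_s^2 e(s,\tau)|\,d\tau\bigr\|_{L^2(\mathbb{T})}$. Expanding $\partial_s^2 e$ by the product rule, I would use the formulas and bounds for $\partial_s A,\partial_s B,\partial_s D$ from Lemma \ref{lemma: preliminary estimates of derivatives as building blocks}, together with $D\ge\lambda|\tau|$ from the well-stretched condition \eqref{eqn: well-stretched condition}, to control the denominators. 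Every factor $D^{-1}$ beyond the single power needed for convergence is replaced by $\lambda^{-1}|\tau|^{-1}$; three such substitutions yield the factor $\lambda^{-3}$, while the surviving derivative factors (each of $Q=\|Y'\|_{L^\infty}$ and $R=|\mathcal{M}Y''|+|Y''|$ being controlled by $\|Y\|_{\dot H^{9/4}}$) combine into exactly four factors, matching the homogeneity of the claimed bound $\lambda^{-3}\|Y\|_{\dot H^{9/4}}^4$ (degree one on both sides under $Y\mapsto\mu Y$).

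The main obstacle is the most singular term, in which both $s$-derivatives fall on the kernel and a difference of $Y''$ at scale $\tau$ is forced to appear at top order; here the threshold $\theta=\tfrac14$ is exactly critical. As in the treatment of $E_3'$ culminating in \eqref{eqn: bound for e_3 part 2 derivative fractional theta}--\eqref{eqn: L^2 estimate of integral of e_3'}, I would invoke the refined estimate \eqref{eqn: estimate of B' involving H2.5 norm} with $\beta=\tfrac14$ to trade a power of $|\tau|$ for the Gagliardo-type quantity $\bigl(\int_0^{|\tau|}|\eta|^{-3/2}\,|Y''(s+\tau-\eta)-Y''(s+\tau)|^2\,d\eta\bigr)^{1/2}$, whose $L^2_s$-norm is bounded by $\|Y''\|_{\dot H^{1/4}}=\|Y\|_{\dot H^{9/4}}$. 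The cancellation built into the subtraction of $\mathcal{L}Y$ must be used precisely to leave enough positive powers of $\tau$ that $\int_{\mathbb{T}}(\cdot)\,d\tau$ converges near the diagonal; Minkowski's inequality then moves the $L^2_s$-norm inside the $\tau$-integral, and an elementary integration in $\tau$ closes the estimate. The subleading contributions are quadratic in $Y''$ and are handled by $H^{1/4}(\mathbb{T})\hookrightarrow L^4(\mathbb{T})$, i.e.\ $\|Y''\|_{L^4}\le C\|Y\|_{\dot H^{9/4}}$, together with $Q\le c\|Y''\|_{L^2}\le C\|Y\|_{\dot H^{9/4}}$. Collecting the top-order and lower-order terms yields $\|g_Y\|_{\dot H^2(\mathbb{T})}\le C\lambda^{-3}\|Y\|_{\dot H^{9/4}(\mathbb{T})}^4$.
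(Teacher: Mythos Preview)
Your outline captures the right homogeneity and the right scaling in $\lambda$, but there is a genuine gap in the differentiation step. When you differentiate $e(s,\tau)$ twice in $s$ with $\tau$ held fixed, the building blocks $A$, $B$ from \eqref{eqn: def of A(s,s')}--\eqref{eqn: def of B(s,s')} contain $Y'(s+\tau)$; one $\partial_s$ produces $Y''(s+\tau)$ (as in \eqref{eqn: formula for derivative of A}--\eqref{eqn: formula for derivative of B}), and a second $\partial_s$ would produce $Y'''(s+\tau)$, which is not available for $Y\in H^{2+\theta}$ with $\theta<1$. Lemma~\ref{lemma: preliminary estimates of derivatives as building blocks} only supplies first derivatives of $A$, $B$, $D$, and the refined bound \eqref{eqn: estimate of B' involving H2.5 norm} you invoke is likewise a first-derivative estimate; neither controls the second-order objects appearing in $\partial_s^2 e$. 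The analogy with the $E_3'$ computation breaks down precisely because only one $s$-derivative is taken there.

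The paper sidesteps this by working in the $(s,s')$ variables with $s'$ held fixed, using the quantities $L,M,N$ of \eqref{eqn: definition of L M N}. In that frame $Y'(s')$ is constant under $\partial_s$, and the derivatives $\partial_s L=N$, $\partial_s M=(M-Y''(s))/\tau$, $\partial_s N$ never call on $Y'''$. The proof starts from the ready-made formula \eqref{eqn: simplified Gamma order 1} for $g_Y'=\int\Gamma_1\,ds'$ (imported from \cite{lin2017solvability}), differentiates $\Gamma_1$ once more, and isolates the most singular contribution as $\int(|\partial_sM||M|+|\partial_sM||N|+|\partial_sN||M|)\,d\tau$. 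That term is then bounded not by a Gagliardo-seminorm argument but by a H\"older split $L^3_s\times L^6_s$ followed by the Besov embeddings $\dot B^{1/12}_{3,2}\times\dot B^{11/12}_{6,2}\hookrightarrow(\dot B^{9/4}_{2,2})^2$; see \eqref{eqn: bounding the most singular term in g_X H^2 estimate}. Your route could in principle be repaired by changing variables to $s'$ before the second differentiation (equivalently, integrating the spurious $Y'''(s+\tau)=\partial_\tau Y''(s+\tau)$ by parts in $\tau$), but as written the second derivative of the integrand is not justified and the invocation of \eqref{eqn: estimate of B' involving H2.5 norm} does not address the term that actually carries the top order.
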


\begin{lemma}
\label{lemma: improved L2 estimate for g_X1-g_X2}
Let $Y_1,Y_2\in H^{2+\theta}(\mathbb{T})$ both satisfy \eqref{eqn: well-stretched condition}, with $\theta\in[\frac{1}{4},1)$.
For $\beta$ satisfying \eqref{eqn: admissible range of beta},
\begin{equation}
\|g_{Y_1}-g_{Y_2}\|_{L^2(\mathbb{T})}\leq C\lambda^{-2}(\|Y_1\|_{\dot{H}^{2+\theta}}+\|Y_2\|_{\dot{H}^{2+\theta}})^2 \|\delta Y\|_{\dot{H}^{\frac{1}{2}-\beta}(\mathbb{T})}.
\label{eqn: improved L2 estimate for g_X1-g_X2}
\end{equation}
\end{lemma}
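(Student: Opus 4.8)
The plan is to linearize the difference in $\delta Y \triangleq Y_1-Y_2$ and exploit the smoothing encoded in $g$. Since $\mathcal{L}$ is linear, \eqref{eqn: def of g_Y} gives
\begin{equation}
g_{Y_1}-g_{Y_2} = (U_{Y_1}-U_{Y_2})-\mathcal{L}\,\delta Y,
\end{equation}
so the task is to show that the difference of the two singular integrals $U_{Y_1}-U_{Y_2}$ agrees with the principal operator $\mathcal{L}\,\delta Y$ up to an $L^2$-error costing only the weak norm $\|\delta Y\|_{\dot H^{1/2-\beta}}$. I would compare the contour representations \eqref{eqn: string motion in the singular case} (with $S\equiv 1$) \emph{directly} rather than along an affine path $Y_2+r\,\delta Y$: without a smallness hypothesis on $\delta Y$ the intermediate configurations need not satisfy \eqref{eqn: well-stretched condition}, whereas a direct comparison uses only that $Y_1$ and $Y_2$ are individually well-stretched.

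First I would telescope the integrand difference
\begin{equation}
\frac{A_1^2-B_1^2}{D_1^4}\,\Delta_1 Y-\frac{A_2^2-B_2^2}{D_2^4}\,\Delta_2 Y,
\end{equation}
where $\Delta_i Y\triangleq Y_i(s+\tau)-Y_i(s)$ and $A_i,B_i,D_i$ are the quantities \eqref{eqn: def of A(s,s')}--\eqref{eqn: def of D(s,s')} built from $Y_i$, writing it as a sum in which the difference falls on one factor at a time. Each resulting term carries exactly one ``$\delta$-factor'' ($A_1-A_2$, $B_1-B_2$, $D_1-D_2$, or $\Delta_1 Y-\Delta_2 Y$), linear in $\delta Y$ and its derivatives, while the remaining factors are evaluated at $Y_1$ or $Y_2$ and are controlled by Lemmas \ref{lemma: preliminary estimates as building blocks} and \ref{lemma: preliminary estimates of derivatives as building blocks} (with $Q,R$ built from $Y_1,Y_2$). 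Using $|B_i|\le C\tau^2QR$ and $D_i\ge \lambda|\tau|$ as in the treatment of $E_2,E_3$ in Section \ref{section: L^2 static error}, every term except the most singular one is already integrable near $\tau=0$ and contributes at worst $\int_{\mathbb{T}}|\delta Y(s+\tau)-\delta Y(s)|\,|\tau|^{-1}W(\tau)\,d\tau$ with a bounded weight $W$, possibly after an integration by parts in $\tau$ to trade a derivative off $\delta Y$.

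The single genuinely singular contribution arises from hitting the outer factor $\Delta_i Y$ against the leading $\tau^{-2}$ part of the kernel; this is exactly the term that must cancel against $\mathcal{L}\,\delta Y$. Recalling the second-difference singular-integral representation of $-\tfrac14(-\Delta)^{1/2}$, I would subtract it off and verify that the residual kernel is $O(|\tau|^{-1})$ rather than $O(|\tau|^{-2})$, placing it in the integrable regime. For every remaining piece the weak norm is extracted by Cauchy--Schwarz in $\tau$,
\begin{equation}
\int_{\mathbb{T}}\frac{|\delta Y(s+\tau)-\delta Y(s)|}{|\tau|}\,W(\tau)\,d\tau
\le\left(\int_{\mathbb{T}}\frac{|\delta Y(s+\tau)-\delta Y(s)|^2}{|\tau|^{2-2\beta}}\,d\tau\right)^{1/2}
\left(\int_{\mathbb{T}}\frac{W(\tau)^2}{|\tau|^{2\beta}}\,d\tau\right)^{1/2},
\end{equation}
after which Minkowski's inequality in $s$ turns the first factor into $\|\delta Y\|_{\dot H^{1/2-\beta}(\mathbb{T})}$ through the Gagliardo seminorm. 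The second factor is finite precisely when $\beta<\tfrac12$, while matching the $\tau$-powers in the $Y$-dependent weights $W$ against $|\tau|^{-2\beta}$ (using the $\theta$-regularity of $Y''$ via Lemma \ref{lemma: H^1/2 estimate of F} with $F_Y=Y_{ss}$ and the fractional bound \eqref{eqn: estimate of B' involving H2.5 norm}) forces $\beta<\theta$; together these yield the admissible range \eqref{eqn: admissible range of beta}. The two derivative factors sitting in the coefficients produce the prefactor $(\|Y_1\|_{\dot H^{2+\theta}}+\|Y_2\|_{\dot H^{2+\theta}})^2$, and the lower bounds $D_i\ge\lambda|\tau|$ combine to $\lambda^{-2}$.

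The main obstacle is the bookkeeping around the most singular term: isolating precisely the part of $U_{Y_1}-U_{Y_2}$ that reproduces $\mathcal{L}\,\delta Y$, and then verifying that after this subtraction one may integrate by parts in $\tau$ and redistribute derivatives so that only $\dot H^{1/2-\beta}$ of $\delta Y$ is spent while the surplus regularity is absorbed by $Y_1,Y_2$ in $\dot H^{2+\theta}$. This is the step where the constraint $\beta<\min\{\theta,\tfrac12\}$ is genuinely needed, and where the structural gain $|B|=O(\tau^2)$ together with the fractional estimate \eqref{eqn: estimate of B' involving H2.5 norm} is indispensable.
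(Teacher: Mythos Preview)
Your strategy is sound and would lead to a proof, but the paper takes a cleaner route that avoids the manual cancellation you identify as the main obstacle. Rather than working with $U_{Y_i}$ in the $A,B,D$ variables of \eqref{eqn: string motion in the singular case} and subtracting $\mathcal{L}\,\delta Y$ by hand, the paper invokes the ready-made formula \eqref{eqn: formula for g_Y} for $g_Y$ in the $L,M,N$ variables (Lemma~\ref{lemma: formula for g_Y'} from \cite{lin2017solvability}), in which the principal part has already been removed: the factor $-|Y'(s')|^2/|L|^2 + 2(L\cdot Y'(s'))^2/|L|^4 - 1$ vanishes when $L=Y'(s')$, so the built-in cancellation makes the kernel integrable from the outset. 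The difference $g_{Y_1}-g_{Y_2}$ is then telescoped and integration by parts in $s'$ (using $\delta Y'(s')=\partial_{s'}(\tau\,\delta L)$) trades the derivative off $\delta Y'$, yielding eight terms $J_1,\ldots,J_8$. The $|\tau|$ gain that makes the Cauchy--Schwarz step work comes not from the fractional estimate \eqref{eqn: estimate of B' involving H2.5 norm} you cite (that bound on $\partial_s B$ is used only in the $\dot H^1$ lemma), but from the H\"older continuity $|Y_i'(s')-L_i|\le C|\tau|^{1/2+\beta'}\|Y_i\|_{\dot H^{2+\theta}}$ with $\beta<\beta'<\min\{\theta,\tfrac12\}$, which explains the range \eqref{eqn: admissible range of beta}. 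Your approach would eventually reach the same Cauchy--Schwarz endpoint, but it forces you to reconstruct the cancellation with $\mathcal{L}\,\delta Y$ from scratch and to handle several individually singular telescoped pieces simultaneously (the $\delta$-factor landing on $A$, on $D$, and on $\Delta_i Y$ all produce $O(\tau^{-1})$ contributions before they combine), whereas the paper's choice of representation packages this into a single vanishing factor.
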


\begin{lemma}
\label{lemma: improved H1 estimate for g_X1-g_X2}
Let $Y_1,Y_2\in H^{2+\theta}(\mathbb{T})$ both satisfy \eqref{eqn: well-stretched condition}, with $\theta\in[\frac{1}{4},1)$. % and
For $\beta$ satisfying \eqref{eqn: admissible range of beta larger},
\begin{equation}
\|g_{Y_1}-g_{Y_2}\|_{\dot{H}^1(\mathbb{T})}\leq
C\lambda^{-3}(\|Y_1\|_{\dot{H}^{2+\theta}}+\|Y_2\|_{\dot{H}^{2+\theta}})^3\|\delta Y\|_{\dot{H}^{\frac{3}{2}-\beta}(\mathbb{T})}.
\label{eqn: improved H1 estimate for g_X1-g_X2}
\end{equation}
%In fact, \eqref{eqn: improved H1 estimate for g_X1-g_X2} is also true for $\beta = \min\{\frac{1}{2},\theta\}$ if $\theta \not =\frac{1}{2}$.
\end{lemma}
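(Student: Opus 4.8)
The plan is to control the $\dot H^1$-seminorm of $g_{Y_1}-g_{Y_2}$ by differentiating once in $s$ and reducing to an $L^2_s$-bound on a single integral in $\tau=s'-s$, following the template of the static $H^1$-estimate of Section~\ref{section: static H^1 error estimates}. First I would use \eqref{eqn: def of g_Y} together with the representation \eqref{eqn: string motion in the singular case} of $U_Y$ (with $S\equiv1$) to write, for $i=1,2$,
\[
g_{Y_i}(s)=\frac{1}{4\pi}\,\mathrm{p.v.}\int_{\mathbb{T}}\frac{A_i^2-B_i^2}{D_i^4}\,(Y_i(s+\tau)-Y_i(s))\,d\tau+\tfrac14(-\Delta)^{1/2}Y_i(s),
\]
where $A_i,B_i,D_i$ are the quantities \eqref{eqn: def of A(s,s')}--\eqref{eqn: def of D(s,s')} built from $Y_i$, and then subtract the two. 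The leading $O(\tau^{-2})$ singularity is exactly the one whose principal value produces $-\tfrac14(-\Delta)^{1/2}Y_i=\mathcal{L}Y_i$; I would recall from \cite{lin2017solvability} the splitting in which this principal part is removed, so that the remaining integrand for each $Y_i$ is integrable near $\tau=0$. Telescoping the difference of the two integrands through the factors $A_i,B_i,D_i$ and $Y_i(s+\tau)-Y_i(s)$ then expresses $g_{Y_1}-g_{Y_2}$ as a single $\tau$-integral whose integrand depends linearly on $\delta Y=Y_1-Y_2$ and its derivatives, with smooth coefficients built from $Y_1,Y_2$.

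Next I would differentiate in $s$ under the integral sign. As in \eqref{eqn: integral of e_1' is the derivative of E_1}, this is justified by Fubini and integration by parts against an arbitrary $\psi\in C^\infty(\mathbb{T})$, once one checks that both the integrand and its $s$-derivative lie in $L^1(\mathbb{T}\times\mathbb{T})$. Each resulting term is a product of a $\tau$-kernel, several coefficient factors built from $Y_1,Y_2$, and a single difference factor carrying $\delta Y$, $\delta Y'$, or a second-order difference thereof. The coefficient factors are bounded using the preliminary estimates Lemma~\ref{lemma: preliminary estimates as building blocks} and Lemma~\ref{lemma: preliminary estimates of derivatives as building blocks}, which via the well-stretched condition \eqref{eqn: well-stretched condition} give $D_i\ge c\lambda|\tau|$ and the size bounds for $A_i,B_i,\partial_s A_i,\partial_s B_i$. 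The powers of $\lambda^{-1}$ from the denominators and the three top-order coefficient slots account for the prefactor $\lambda^{-3}(\|Y_1\|_{\dot H^{2+\theta}}+\|Y_2\|_{\dot H^{2+\theta}})^3$, while the genuinely lower-order terms are dispatched by $L^\infty$/$L^4$ Sobolev products as in Steps~1 and~3 of the static $H^1$-proof.

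The crucial term is the one on which $\partial_s$ falls on the most singular factor while carrying the highest derivative of $\delta Y$: here a full derivative cannot be spared and must be traded for fractional regularity. On this term I would apply Cauchy--Schwarz in $\tau$ as in \eqref{eqn: estimate of B' involving H2.5 norm} and \eqref{eqn: bound for e_3 part 2 derivative fractional theta}, pairing an integrable $\tau$-kernel with the Gagliardo difference quotient $\int_0^{|\tau|}|\delta Y'(s+\tau-\eta)-\delta Y'(s+\tau)|^2|\eta|^{-(2-2\beta)}\,d\eta$ of $\delta Y'$ at order $\tfrac12-\beta$. After taking $L^2$ in $s$ (Minkowski, then the Gagliardo characterization), this reconstructs precisely $\|\delta Y\|_{\dot H^{3/2-\beta}(\mathbb{T})}$, while the coefficient factors contribute $\|Y_i\|_{\dot H^{2+\theta}}$, partly through the Gagliardo seminorm of $Y_i'$ at order $\theta$. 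The admissible range \eqref{eqn: admissible range of beta larger}, namely $0<\beta\le\min\{\theta,\tfrac12\}$ with $\beta\neq\tfrac12$ when $\theta=\tfrac12$, is exactly what keeps the $\tau$-kernel integrable at $\tau=0$ and keeps the extracted seminorms of $Y_i$ bounded by $\dot H^{2+\theta}$.

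The main obstacle I anticipate is the combinatorial bookkeeping of the cancellations: one must organize the telescoped difference so that the single piece requiring the fractional trade-off is isolated cleanly, and verify that every remaining piece is genuinely lower order, i.e. controllable by $\delta Y$ in $\dot H^{3/2-\beta}$ with room to spare rather than in $\dot H^2$. A secondary subtlety is the borderline integrability at the endpoint $\beta=\theta$ (and the exclusion $\beta=\tfrac12$ when $\theta=\tfrac12$), which forces the Cauchy--Schwarz split to be taken with the sharp kernel exponent rather than a crude one.
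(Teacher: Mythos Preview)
Your strategy is essentially the paper's: express $(g_{Y_1}-g_{Y_2})'$ as a $\tau$-integral, telescope, and bound each piece with one factor carrying $\delta Y$ in a fractional seminorm and the rest absorbed into powers of $\|Y_i\|_{\dot H^{2+\theta}}$. The paper works in the $L,M,N$ difference-quotient formalism of \cite{lin2017solvability} (Lemma~\ref{lemma: formula for g_Y'}), which already supplies $g_Y'=\int_\mathbb{T}\Gamma_1\,ds'$ with $\Gamma_1$ explicit, rather than your $A,B,D$ notation; this is only organizational. The critical fractional step is then run through Besov norms and embeddings, which is equivalent to your Gagliardo/Cauchy--Schwarz mechanism.

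There is, however, a genuine gap. Your Gagliardo-quotient device manufactures $\|\delta Y'\|_{\dot H^{1/2-\beta}}$, and it is only meaningful when $\tfrac12-\beta>0$. For $\theta\in(\tfrac12,1)$ the range \eqref{eqn: admissible range of beta larger} \emph{allows} the endpoint $\beta=\tfrac12$, and there the split degenerates: the most singular telescoped pieces---those carrying $\delta M$ or $\delta N$, i.e.\ a full finite difference of $\delta Y'$---reduce to integrals of the form $\int_\mathbb{T}\tau^{-1}(\delta Y'(s+\tau)-\delta Y'(s))\cdot(\text{bounded coefficient})\,d\tau$, which no kernel/quotient Cauchy--Schwarz bounds by $\|\delta Y'\|_{L^2}$. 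The paper treats $\theta>\tfrac12$ separately: it freezes the coefficient at $s'=s$, uses $Y_i''\in C^{0,\theta-1/2}$ to make the remainder integrable, and recognizes the frozen principal value as (a bounded multiple of) the Hilbert transform of $\delta Y'$, hence $L^2$-bounded. Your proposal only notices the exclusion $\beta\ne\tfrac12$ at $\theta=\tfrac12$ and would, as written, deliver \eqref{eqn: improved H1 estimate for g_X1-g_X2} only for $\beta<\tfrac12$.
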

We leave their lengthy proofs to Appendix \ref{section: a priori estimate for g_Y}.

\begin{proof}[Proof of Theorem \ref{thm: error estimates of the regularized problem}]
Recall that $\tilde{\varepsilon} = \varepsilon/\lambda$, and $X^\varepsilon$ and $X$ satisfy assumptions \ref{assumption: uniform boundedness} and \ref{assumption: uniform stretching}.
\setcounter{step}{0}
\begin{step}[Error estimate in $H^2$- or higher-order norms]\label{step: dynamic error estimate for H 2 and higher norm}
Thanks to Proposition \ref{prop: static regularization error estimate for Hookean case} and Lemma \ref{lemma: improved H1 estimate for g_X1-g_X2}, for all $t\in[0,T]$,
\begin{align}
\|U_{X^\varepsilon}^\varepsilon - U_{X^\varepsilon}\|_{\dot{H}^1(\mathbb{T})}\leq &\;C\tilde{\varepsilon}^\theta M+C\tilde{\varepsilon}\lambda^{-3}M^4,\label{eqn: estimate for static error in the proof of thm}\\
\|g_{X^\varepsilon}-g_X\|_{\dot{H}^1(\mathbb{T})}\leq &\;C\lambda^{-3}M^3\|X^\varepsilon-X\|_{\dot{H}^{3/2}(\mathbb{T})}.
\label{eqn: estiamte for difference of g in the proof of thm}
\end{align}
%Here we used the assumption \textcolor{red}{$\ln \left(\frac{\varepsilon}{\lambda}\right)\ll \ln\left(\frac{\lambda}{M}\right)$} to simplify the estimates.
With $T_0>0$ to be determined, we apply the energy estimate of \eqref{eqn: difference between contour dynamic equation between X and X_eps recap} to obtain that
\begin{equation}
\begin{split}
%&\;\|X^\varepsilon-X\|_{C_{[0,T_0]}\dot{H}^{\alpha+1/2}(\mathbb{T})}+\|X^\varepsilon-X\|_{L^2_{[0,T_0]}\dot{H}^{\alpha+1}(\mathbb{T})}\\
&\;
\|X^\varepsilon-X\|_{C_{[0,T_0]}\dot{H}^{3/2}\cap L^2_{T_0}\dot{H}^2(\mathbb{T})}\\%+\|X^\varepsilon-X\|_{L^2_{[0,T_0]}\dot{H}^{\alpha+1}(\mathbb{T})}\\
\leq &\;C_0T_0^{1/2}(\lambda^{-3}M^3\|X^\varepsilon-X\|_{L^\infty_{[0,T_0]}\dot{H}^{3/2}(\mathbb{T})}+\tilde{\varepsilon}^{\theta} M+\tilde{\varepsilon}\lambda^{-3}M^4).
\end{split}
\end{equation}
Taking $T_0$ be sufficiently small such that $C_0T_0^{1/2}\lambda^{-3}M^3\leq \frac{1}{2}$, we obtain that
\begin{equation}
\|X^\varepsilon-X\|_{C_{[0,T_0]}\dot{H}^{3/2}\cap L^2_{T_0}\dot{H}^2(\mathbb{T})}\leq C\tilde{\varepsilon}^\theta\lambda^3 M^{-2}+C\tilde{\varepsilon}M\leq C\tilde{\varepsilon}^\theta M.
\end{equation}
Here we used the assumption $\tilde{\varepsilon}\ll 1$ and the fact $\lambda \leq CM$.
If $T\geq T_0$, we may repeat this argument for $[T_0,2T_0]$, $[2T_0,3T_0]$, $\cdots$, until the time interval $[0,T]$ is fully covered.
For instance, for $[T_0, 2T_0]$, the energy estimate of \eqref{eqn: difference between contour dynamic equation between X and X_eps recap} writes
\begin{equation}
\begin{split}
&\;\|X^\varepsilon-X\|_{C_{[T_0,2T_0]}\dot{H}^{3/2}\cap L^2_{[T_0,2T_0]}\dot{H}^2(\mathbb{T})}\\
\leq &\;C\|(X^\varepsilon-X)(T_0)\|_{\dot{H}^{3/2}(\mathbb{T})}\\
&\;+ C_0T_0^{1/2}(\lambda^{-3}M^3\|X^\varepsilon-X\|_{L^\infty_{[T_0,2T_0]}\dot{H}^{3/2}(\mathbb{T})}+\tilde{\varepsilon}^\theta M+\tilde{\varepsilon}\lambda^{-3}M^4).
\end{split}
\end{equation}
With the same $T_0$ chosen earlier,
\begin{equation}
\|X^\varepsilon-X\|_{C_{[T_0,2T_0]}\dot{H}^{3/2}\cap L^2_{[T_0,2T_0]}\dot{H}^2(\mathbb{T})}\leq C\|X^\varepsilon-X\|_{C_{[0,T_0]}\dot{H}^{3/2}(\mathbb{T})}+C\tilde{\varepsilon}^\theta M\leq C\tilde{\varepsilon}^\theta M.
\end{equation}
We conclude that
\begin{equation}
\|X^\varepsilon-X\|_{C_{[0,T]}\dot{H}^{3/2}\cap L^2_T\dot{H}^2(\mathbb{T})}\leq C\tilde{\varepsilon}^\theta M.
\label{eqn: H 3/2 estiamte over 0 to T with assumption on the top norm}
\end{equation}
Here $C = C(\theta,\lambda^{-1}M,T)$ depends exponentially on $T$.

Now take $N\gg 1$ to be determined.
In what follows, we shall bound $\mathcal{P}_N(X^\varepsilon-X)$ and $\mathcal{Q}_N(X^\varepsilon-X)$ separately.
By assumption, for all $t\in[0,T]$,
\begin{equation}\label{eqn: H^2 estimate for high frequency component of error}
\|\mathcal{Q}_N(X^\varepsilon-X)\|_{\dot{H}^2(\mathbb{T})}\leq CN^{-\theta}\|X^\varepsilon-X\|_{\dot{H}^{2+\theta}(\mathbb{T})}\leq CN^{-\theta}M.
\end{equation}
%Similarly, by the interpolation inequality,
%\begin{equation}\label{eqn: W 1 inf estimate for higher frequency component of error}
%\|\mathcal{Q}_N(X^\varepsilon-X)\|_{\dot{W}^{1,\infty}(\mathbb{T})}\leq C\|\mathcal{Q}_N(X^\varepsilon-X)\|_{\dot{H}^1(\mathbb{T})}^{1/2}\|\mathcal{Q}_N(X^\varepsilon-X)\|_{\dot{H}^2(\mathbb{T})}^{1/2}\leq CN^{-\frac{1}{2}-\theta}M.
%\end{equation}
We apply $\mathcal{P}_N$ to both sides of \eqref{eqn: difference between contour dynamic equation between X and X_eps recap}.
By Lemma \ref{lemma: a priori estimate of nonlocal eqn}, \eqref{eqn: estimate for static error in the proof of thm}, \eqref{eqn: estiamte for difference of g in the proof of thm}, and \eqref{eqn: H 3/2 estiamte over 0 to T with assumption on the top norm},
\begin{equation}
\begin{split}
&\;\|\mathcal{P}_N(X^\varepsilon-X)\|_{C_{[0,T]}\dot{H}^2(\mathbb{T})}\\
\leq &\;C(\ln N)^{1/2}(\lambda^{-3}M^3\|X^\varepsilon-X\|_{C_{[0,T]}\dot{H}^{3/2}(\mathbb{T})}+\tilde{\varepsilon}^\theta M+\tilde{\varepsilon}\lambda^{-3}M^4)\\
\leq &\;C(\theta,\lambda^{-1} M, T)(\ln N)^{1/2}\tilde{\varepsilon}^\theta M.
\end{split}
\label{eqn: H^2 estimate for low frequency component of error}
\end{equation}
Combining \eqref{eqn: H^2 estimate for high frequency component of error} and \eqref{eqn: H^2 estimate for low frequency component of error}, and taking $N \sim \tilde{\varepsilon}^{-1}$, we finally obtain that
\begin{equation}
\|X^\varepsilon-X\|_{C_{[0,T]}\dot{H}^2(\mathbb{T})}\leq C(\theta,\lambda^{-1} M, T)|\ln \tilde{\varepsilon}|^{\frac{1}{2}}\tilde{\varepsilon}^\theta M.
\label{eqn: H^2 error estimate}
\end{equation}
By interpolation with $\|X^\varepsilon-X\|_{C_{[0,T]}\dot{H}^{2+\theta}}\leq CM$, we prove \eqref{eqn: error estimate H gamma greater than 2}.
\end{step}

\begin{step}[Error estimate in the $H^{1+\gamma}$-norms]
Error estimates in lower-order norms can be derived as in previous steps with minor adaptation.
By Proposition \ref{prop: static regularization error estimate for Hookean case} and Lemma \ref{lemma: improved L2 estimate for g_X1-g_X2}, for all $t\in[0,T]$, %and any $\beta$ satisfying \eqref{eqn: admissible range of beta},
%
%any $\beta\in (0,\frac{1}{2})$ such that $\beta \leq \theta$,
%we have
\begin{align}
\|U^\varepsilon_{X^\varepsilon}-U_{X^\varepsilon}\|_{L^2(\mathbb{T})}
\leq &\;C m_1 \tilde{\varepsilon} M+C\tilde{\varepsilon}^{1+\theta} |\ln\tilde{\varepsilon}|^\theta M+C\tilde{\varepsilon}^2|\ln \tilde{\varepsilon}|\lambda^{-3}M^4, \label{eqn: L2 estimate for static error in the proof of thm}\\
\|g_{X^\varepsilon}-g_{X}\|_{L^2(\mathbb{T})}\leq &\;C\lambda^{-2}M^2\|X^\varepsilon-X\|_{\dot{H}^{1/2}(\mathbb{T})}.\label{eqn: L2 estiamte for difference of g in the proof of thm}
\end{align}
If, in addition, $m_2 = 0$, the logarithmic factors in \eqref{eqn: L2 estimate for static error in the proof of thm} can be removed.
By a similar argument as in the previous step, we can show that %extend the time interval to $[0,T]$, i.e.,
\begin{equation}
\|X^\varepsilon-X\|_{C_{[0,T]}\dot{H}^{1/2}\cap L^2_{T}\dot{H}^1(\mathbb{T})}\leq C(\theta,\lambda^{-1}M,T) (m_1 \tilde{\varepsilon} +\tilde{\varepsilon}^{1+\theta} |\ln\tilde{\varepsilon}|^\theta)M.
\label{eqn: H 1/2 estiamte over 0 to T with assumption on the top norm}
\end{equation}

To this end, we take $N \sim \tilde{\varepsilon}^{-1}$, and derive that
\begin{equation}\label{eqn: H^1 estimate for high frequency component of error}
\|\mathcal{Q}_N(X^\varepsilon-X)\|_{\dot{H}^1(\mathbb{T})}\leq CN^{-1-\theta}\|X^\varepsilon-X\|_{\dot{H}^{2+\theta}(\mathbb{T})}\leq C\tilde{\varepsilon}^{1+\theta} M.
\end{equation}
On the other hand, by \eqref{eqn: L2 estimate for static error in the proof of thm}-%and \eqref{eqn: L2 estiamte for difference of g in the proof of thm}, by
\eqref{eqn: H 1/2 estiamte over 0 to T with assumption on the top norm} and Lemma \ref{lemma: a priori estimate of nonlocal eqn},
\begin{equation}
\begin{split}
&\;\|\mathcal{P}_N(X^\varepsilon-X)\|_{C_{[0,T]}\dot{H}^1(\mathbb{T})}\\
\leq &\;C(\ln N)^{1/2}\lambda^{-2}M^2\|X^\varepsilon-X\|_{C_{[0,T]}\dot{H}^{1/2}(\mathbb{T})}\\
%&\;+C(\ln N)^{1/2}( m_1 \tilde{\varepsilon}+\tilde{\varepsilon}^{1+\theta} |\ln\tilde{\varepsilon}|^\theta)M\\
%\leq &\;C(\ln N)^{1/2}\lambda^{-2}M^2\|\mathcal{P}_N(X^\varepsilon-X)\|_{C_{[0,T]}\dot{H}^{1-\beta}(\mathbb{T})}\\
&\;+C(\theta,\lambda^{-1}M)(\ln N)^{1/2}(m_1 \tilde{\varepsilon}+\tilde{\varepsilon}^{1+\theta} |\ln\tilde{\varepsilon}|^\theta) M\\
\leq &\;%C(\ln N)^{1/2}\lambda^{-2}M^2\|\mathcal{P}_N(X^\varepsilon-X)\|_{C_{[0,T]}\dot{H}^{1/2}(\mathbb{T})}\\
%^{2\beta}\|\mathcal{P}_N(X^\varepsilon-X)\|_{C_{[0,T]}\dot{H}^1(\mathbb{T})}^{1-2\beta}\\
%&\;+
C(\theta,\lambda^{-1}M,T)(\ln N)^{1/2}(m_1 \tilde{\varepsilon} +\tilde{\varepsilon}^{1+\theta} |\ln\tilde{\varepsilon}|^\theta)M.
%\leq &\;C(\lambda, M, T)(\ln N)^{1/2}\varepsilon^\theta.
\end{split}
\label{eqn: H^1 estimate for low frequency component of error}
\end{equation}
Combining \eqref{eqn: H^1 estimate for high frequency component of error} and \eqref{eqn: H^1 estimate for low frequency component of error}, we conclude that
\begin{equation}
\|X^\varepsilon-X\|_{C_{[0,T]}\dot{H}^1(\mathbb{T})}\leq C(\theta,\lambda^{-1} M,T)|\ln \tilde{\varepsilon}|^{\frac{1}{2}}(m_1 \tilde{\varepsilon} +\tilde{\varepsilon}^{1+\theta} |\ln\tilde{\varepsilon}|^\theta)M.
\label{eqn: H^1 error estimate}
\end{equation}
%Let us remark that by \eqref{eqn: improved L2 estimate for g_X1-g_X2 special case}, when $\theta\in (\frac{1}{2},1)$, we should be able to take $\beta = \frac{1}{2}$ in \eqref{eqn: H^1 estimate for low frequency component of error}, which slightly improves the bound.
Interpolation between \eqref{eqn: H^2 error estimate} and \eqref{eqn: H^1 error estimate} gives \eqref{eqn: error estimate H gamma between 1 and 2}.

\end{step}
\begin{step}[Error estimate in the $H^{\gamma}$-norms]
By \eqref{eqn: L2 estimate for static error in the proof of thm}-%, \eqref{eqn: L2 estiamte for difference of g in the proof of thm},
\eqref{eqn: H 1/2 estiamte over 0 to T with assumption on the top norm} and Lemma \ref{lemma: a priori estimate of nonlocal eqn},
\begin{equation}
\begin{split}
&\;\|X^\varepsilon-X\|_{C_{[0,T]}L^2(\mathbb{T})}\\
\leq &\;C\lambda^{-2}M^2\|X^\varepsilon-X\|_{L^1_T\dot{H}^{1}(\mathbb{T})}+C T m_1 \tilde{\varepsilon} M+CT\tilde{\varepsilon}^{1+\theta}|\ln\tilde{\varepsilon}|^\theta M+CT\tilde{\varepsilon}^2|\ln \tilde{\varepsilon}|\lambda^{-3}M^4\\
\leq &\;C(\theta,\lambda^{-1}M,T) (m_1 \tilde{\varepsilon}+\tilde{\varepsilon}^{1+\theta} |\ln\tilde{\varepsilon}|^\theta)M.
\end{split}
\label{eqn: L^2 error estimate}
\end{equation}
Then %\eqref{eqn: error estimate H gamma less than 1/2} and
\eqref{eqn: error estimate H gamma greater than 1/2} follow from interpolation between \eqref{eqn: H 1/2 estiamte over 0 to T with assumption on the top norm}, \eqref{eqn: H^1 error estimate}, and \eqref{eqn: L^2 error estimate}.
\end{step}
\begin{step}[Weak-* convergence in the top regularity]
In order to prove \eqref{eqn: weak star convergence with assumption on the uniform bound}, it suffices to show that the limiting point of the sequence $\{X^\varepsilon-X\}_{\varepsilon}$ in the weak-* topology of $C_{[0,T]}H^{2+\theta}(\mathbb{T})$ is unique.
By \eqref{eqn: difference between contour dynamic equation between X and X_eps recap}, \eqref{eqn: estimate for static error in the proof of thm}, \eqref{eqn: estiamte for difference of g in the proof of thm}, and \eqref{eqn: H 3/2 estiamte over 0 to T with assumption on the top norm}, it is not difficult to show that $\|\partial_t(X^\varepsilon-X)\|_{L^2_T\dot{H}^1(\mathbb{T})}$ is uniformly bounded.
By Aubin-Lions Lemma \cite{temam1984navier}, $\{X^\varepsilon-X\}_{\varepsilon}$ is compact in $L_T^2H^2(\mathbb{T})$.
This implies that any weak-* limiting point of $\{X^\varepsilon-X\}_\varepsilon$ in $C_{[0,T]}H^{2+\theta}(\mathbb{T})$ must be a limiting point of $\{X^\varepsilon-X\}_\varepsilon$ in the strong topology of $L_T^2H^2(\mathbb{T})$.
However, it has been proved that the latter can only be zero.
Hence, \eqref{eqn: weak star convergence with assumption on the uniform bound} is proved.
\end{step}
This completes the proof of Theorem \ref{thm: error estimates of the regularized problem}.
\end{proof}

\subsection{Proof of Theorem \ref{thm: convergence and error estimates of the low-frequency regularized IB method}}%{Singular limit of the $(\varepsilon,N)$-regularized problem}
\label{section: proof of singular limit of low-frequency regularized problem}

We first remark on a nice property of the $(\varepsilon,N)$-regularized problem, which is of independent interest. % is volume-preserving as the un-regularized problem.
\begin{remark}
The $(\varepsilon,N)$-regularized problem is volume-preserving, i.e., area of the domain enclosed by the string is invariant in time.
Note that in the un-regularized problem or the $\varepsilon$-regularized, the volume conservation is a direct consequence of the flow field (or the regularized flow field) being divergence-free.

%In the $(\varepsilon,N)$-regularized problem,
We derive as follows.
The area of the domain enclosed by the string is given by
\begin{equation}
V(t) = \frac{1}{2}\int_{\mathbb{T}}X^{\varepsilon,N}(s,t)\times (X^{\varepsilon,N})'(s,t)\,ds.
\end{equation}
Taking $t$-derivative and doing integration by parts,
\begin{equation}
V'(t) = \int_{\mathbb{T}}\partial_t X^{\varepsilon,N}(s,t)\times (X^{\varepsilon,N})'(s,t)\,ds.
\end{equation}
This can be justified rigorously since we will show $X^{\varepsilon,N}(s,t)$ is sufficiently smooth.
By \eqref{eqn: low-frequency regularized motion of the membrane},
\begin{equation}
\begin{split}
V'(t) = &\;\int_{\mathbb{T}}\mathcal{P}_N \left[\int_{\mathbb{R}^2} u^{\varepsilon,N}(x,t)\delta_\varepsilon(X^{\varepsilon,N}(s,t)-x)\,dx\right]\times (X^{\varepsilon,N})'(s,t)\,ds\\
= &\;\int_{\mathbb{T}} \left[\int_{\mathbb{R}^2} u^{\varepsilon,N}(x,t)\delta_\varepsilon(X^{\varepsilon,N}(s,t)-x)\,dx\right]\times (X^{\varepsilon,N})'(s,t)\,ds\\
=&\;0.
\end{split}
\end{equation}
In the second equation, we used the fact that $(X^{\varepsilon,N})'= \mathcal{P}_N(X^{\varepsilon,N})'$; in the last equation, we applied the divergence theorem and noticed that
\begin{equation}
\tilde{u}^{\varepsilon,N}(y,t) = \int_{\mathbb{R}^2} u^{\varepsilon,N}(x,t)\delta_\varepsilon(y-x)\,dx
\end{equation}
is divergence-free.

\end{remark}

In the proof of Theorem \ref{thm: convergence and error estimates of the low-frequency regularized IB method}, we will need estimates for $\mathcal{Q}_N X$, the high-frequency portion of $X$, where $\mathcal{Q}_N = Id-\mathcal{P}_N$ as defined in the proof of Proposition \ref{prop: L^2 error estimate of the string velocity}.
In fact, given the assumptions of Theorem \ref{thm: convergence and error estimates of the low-frequency regularized IB method}, a naive one would be $\|\mathcal{Q}_N X\|_{\dot{H}^\gamma(\mathbb{T})}\leq CN^{-2-\theta+\gamma}\|X\|_{\dot{H}^{2+\theta}(\mathbb{T})}$ for all $\gamma\leq 2+\theta$ and $N\geq 1$.
Yet, we shall derive an improved one as follows. %, which is independent from the well-posedness or convergence of the $(\varepsilon,N)$-regularized problem.

\begin{lemma}[An improved estimate for $\mathcal{Q}_N X$]\label{lemma: improved estimate for Q_N X}
Under the assumptions on $X(s,t)$ in Theorem \ref{thm: convergence and error estimates of the low-frequency regularized IB method},
for all $\gamma\leq 2+\theta$, and $t\in [0,T_*]$,
\begin{equation}
\|\mathcal{Q}_N X(t)\|_{\dot{H}^{\gamma}(\mathbb{T})}
\leq Ce^{-tN/4}N^{\gamma -2-\theta}\|\mathcal{Q}_NX_0\|_{\dot{H}^{2+\theta}(\mathbb{T})}+CN^{\gamma-3}\lambda^{-3}\|X_0\|_{\dot{H}^{2+\theta}(\mathbb{T})}^4.
\end{equation}
Here the constants $C$ depend on $\theta$ and $\gamma$, but not on $T_*$ or $N$.
\begin{proof}
Consider the equation of $\mathcal{Q}_N X$,
\begin{equation}
\partial_t \mathcal{Q}_N X = \mathcal{L} \mathcal{Q}_N X+ \mathcal{Q}_N g_X,\quad \mathcal{Q}_N X(0) = \mathcal{Q}_N X_0.
\end{equation}
By Lemma \ref{lemma: improved H2 estimate for g_X} and Lemma \ref{lemma: a priori estimate of nonlocal eqn high freq}, for all $\gamma\leq 2+\theta$,
\begin{equation}
\begin{split}
&\;\|\mathcal{Q}_N X(t)\|_{\dot{H}^{\gamma}(\mathbb{T})}\\
\leq &\;e^{-tN/4}N^{\gamma -2-\theta}\|\mathcal{Q}_NX_0\|_{\dot{H}^{2+\theta}(\mathbb{T})}+CN^{\gamma-3}\|\mathcal{Q}_N g_X\|_{L_{T_*}^\infty \dot{H}^2}\\
\leq &\;e^{-tN/4}N^{\gamma -2-\theta}\|\mathcal{Q}_NX_0\|_{\dot{H}^{2+\theta}(\mathbb{T})}+CN^{\gamma-3}\lambda^{-3}\|X_0\|_{\dot{H}^{2+\theta}(\mathbb{T})}^4.
\end{split}
\end{equation}

\end{proof}
\end{lemma}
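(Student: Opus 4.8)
The plan is to exploit the splitting of the contour dynamic equation into its dissipative principal part and a regular remainder, and then to project onto high frequencies, where the dissipation is strongest. Recall from \eqref{eqn: contour dynamic equation for the singular problem} and \eqref{eqn: def of g_Y} that the benchmark solution obeys $\partial_t X = U_X = \mathcal{L}X + g_X$ with $\mathcal{L} = -\tfrac14(-\Delta)^{1/2}$. Since $\mathcal{Q}_N$ is a Fourier multiplier, it commutes with both $\partial_t$ and $\mathcal{L}$; applying it to the equation gives
\begin{equation}
\partial_t \mathcal{Q}_N X = \mathcal{L}\mathcal{Q}_N X + \mathcal{Q}_N g_X, \qquad \mathcal{Q}_N X(0) = \mathcal{Q}_N X_0 .
\end{equation}
This is a linear dissipative equation whose unknown is supported on frequencies $|k|>N$ and which is driven by the forcing $\mathcal{Q}_N g_X$.

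Next I would invoke the high-frequency a priori estimate for such equations, Lemma \ref{lemma: a priori estimate of nonlocal eqn high freq}. The mechanism is that on $\operatorname{supp}\mathcal{Q}_N$ the symbol of $\mathcal{L}$ equals $-\tfrac14|k|\le -\tfrac{N}{4}$, so the semigroup $e^{t\mathcal{L}}$ contracts at rate at least $N/4$. For the homogeneous contribution $e^{t\mathcal{L}}\mathcal{Q}_N X_0$ this produces the factor $e^{-tN/4}$ together with the gain $N^{\gamma-2-\theta}$ coming from $|k|^{\gamma-2-\theta}\le N^{\gamma-2-\theta}$ on $|k|>N$, which is exactly where the hypothesis $\gamma\le 2+\theta$ enters. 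For the Duhamel contribution the same contraction makes $\int_0^t e^{-(t-s)N/4}\,ds\le 4/N$, and once the $\dot{H}^2$-weight of the forcing is extracted the residual high-frequency weight $|k|^{\gamma-3}\le N^{\gamma-3}$ (legitimate because $\gamma\le 2+\theta<3$) supplies the net gain $N^{\gamma-3}$. Altogether Lemma \ref{lemma: a priori estimate of nonlocal eqn high freq} yields
\begin{equation}
\|\mathcal{Q}_N X(t)\|_{\dot{H}^\gamma(\mathbb{T})} \le e^{-tN/4}N^{\gamma-2-\theta}\|\mathcal{Q}_N X_0\|_{\dot{H}^{2+\theta}(\mathbb{T})} + C N^{\gamma-3}\|\mathcal{Q}_N g_X\|_{L^\infty_{T_*}\dot{H}^2(\mathbb{T})} .
\end{equation}

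It then remains only to control the forcing in $\dot{H}^2$ uniformly in time. Since $\mathcal{Q}_N$ is a bounded projection, $\|\mathcal{Q}_N g_X\|_{\dot{H}^2}\le\|g_X\|_{\dot{H}^2}$, and Lemma \ref{lemma: improved H2 estimate for g_X} bounds the latter by $C\lambda^{-3}\|X\|_{\dot{H}^{9/4}}^4$. Because $\theta\ge\tfrac14$ we have $\tfrac94\le 2+\theta$, so $\|X\|_{\dot{H}^{9/4}}\le\|X\|_{\dot{H}^{2+\theta}}$; the hypothesis \eqref{eqn: bound for benchmark solution in the maximal time interval} then gives $\|X(\cdot,t)\|_{\dot{H}^{2+\theta}}\le C_* M_0 = C_*\|X_0\|_{\dot{H}^{2+\theta}}$ for all $t\in[0,T_*]$. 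Hence $\|\mathcal{Q}_N g_X\|_{L^\infty_{T_*}\dot{H}^2}\le C\lambda^{-3}\|X_0\|_{\dot{H}^{2+\theta}}^4$, and substituting this into the preceding display produces the asserted bound, with constants depending only on $\theta$ and $\gamma$ (not on $T_*$ or $N$, since the semigroup rate and seminorm comparisons are $N$-uniform).

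The only genuine work sits inside the cited high-frequency dissipation estimate, Lemma \ref{lemma: a priori estimate of nonlocal eqn high freq}: the delicate point is to verify that restricting to $|k|>N$ simultaneously delivers the exponential decay $e^{-tN/4}$, the homogeneous gain $N^{\gamma-2-\theta}$, and the $N^{\gamma-3}$ smoothing gain for the Duhamel term, the last of these using the $N^{-1}$ earned from integrating the exponential against the forcing. Once that estimate is available, the remaining steps—commuting $\mathcal{Q}_N$ through the equation and bounding $g_X$ in $\dot{H}^2$ via the nonlinear estimate together with the uniform $\dot{H}^{2+\theta}$ control—are routine, so I expect no further obstacle.
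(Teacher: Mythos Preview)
Your proposal is correct and follows exactly the paper's approach: project the contour dynamic equation onto high frequencies to get $\partial_t\mathcal{Q}_N X=\mathcal{L}\mathcal{Q}_N X+\mathcal{Q}_N g_X$, apply the high-frequency semigroup estimate (Lemma \ref{lemma: a priori estimate of nonlocal eqn high freq}), and then bound $\|g_X\|_{\dot{H}^2}$ via Lemma \ref{lemma: improved H2 estimate for g_X} together with the uniform $\dot{H}^{2+\theta}$ control \eqref{eqn: bound for benchmark solution in the maximal time interval}. Your write-up is in fact slightly more explicit than the paper's (spelling out the $N^{\gamma-2-\theta}$ gain from $|k|>N$ and the step $\|X\|_{\dot{H}^{9/4}}\le\|X\|_{\dot{H}^{2+\theta}}$), but the argument is identical.
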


\begin{proof}[Proof of Theorem \ref{thm: convergence and error estimates of the low-frequency regularized IB method}]
%For convenience, denote $M_0 = \|X_0\|_{\dot{H}^{2+\theta}(\mathbb{T})}$.
Recall that $\tilde{\varepsilon} = \varepsilon/\lambda$.
\setcounter{step}{0}

\begin{step}[Well-posedness]
The proof of the global well-posedness of the $(\varepsilon,N)$-regularized problem is exactly the same as that in Section \ref{section: well-posedness of regularized problem}, as $\mathcal{P}_N$ is a bounded linear operator in all $H^\gamma(\mathbb{T})$-spaces and the energy estimate remains unchanged in spite of the presence of the projection.
We omit the details, but only note that $X^{\varepsilon,N}$ is continuous from $[0,+\infty)$ to $H^{2+\theta}(\mathbb{T})$.
Indeed, thanks to \eqref{eqn: H beta bound of U_Y},
\begin{equation}
\|U^{\varepsilon,N}_{X^{\varepsilon,N}}\|_{H^{2+\theta}(\mathbb{T})}\leq C(\|X_0\|_{\dot{H}^1(\mathbb{T})},\varepsilon,\theta)\|X^{\varepsilon,N}\|_{\dot{H}^{2+\theta}(\mathbb{T})}.
\end{equation}
This implies the continuity.
\end{step}

\begin{step}[Uniform estimates for $X^{\varepsilon,N}$]

Since $\mathcal{P}_N X_0 \rightarrow X_0$ in $H^{2+\theta}(\mathbb{T})$ and $X_0$ satisfies the well-stretched condition with constant $\lambda$, whenever $N\gg 1$, $\mathcal{P}_N X_0$ satisfies the well-stretched condition with constant $\lambda/2$.
For given $\varepsilon$ and $N$, by the continuity of $X^{\varepsilon,N}$, there exists a maximal $T_{\varepsilon,N}>0$, such that for all $t\in [0,T_{\varepsilon,N}]$,
\begin{equation}
\|X^{\varepsilon,N}(\cdot,t)\|_{\dot{H}^{2+\theta}(\mathbb{T})}\leq 2C_*M_0,
\label{eqn: bound in the maximal time interval}
\end{equation}
with $C_*$ and $M_0$ defined in the statement of Theorem \ref{thm: convergence and error estimates of the low-frequency regularized IB method}, and
\begin{equation}\label{eqn: well-stretched condition in the maximal time interval}
X^{\varepsilon,N}(\cdot,t)\mbox{  satisfies the well-stretched condition with constant }\lambda/4.
\end{equation}
By the maximality, we mean that for any $T'>T_{\varepsilon,N}$, there exists $t\in [0, T']$, such that at least one of \eqref{eqn: bound in the maximal time interval} and \eqref{eqn: well-stretched condition in the maximal time interval} is false.

We shall prove that there exists $c_*>0$ and $N_*>0$, such that for any $\tilde{\varepsilon}\ll 1$ and $N_*\leq  N\leq c_*\tilde{\varepsilon}^{-1}$, we must have $T_{\varepsilon,N}\geq T_*$, with $T_*$ given in  Theorem \ref{thm: convergence and error estimates of the low-frequency regularized IB method}.

Assume otherwise.
Fix $c_*$ small, which will be chosen latter, and we have $T_{\varepsilon,N}< T_*$ when $1\ll  N\leq c_*\tilde{\varepsilon}^{-1}$.
We start with $H^1$- and $H^2$-estimates for $X^{\varepsilon,N}-\mathcal{P}_N X$ by following the proof of Theorem \ref{thm: error estimates of the regularized problem}.
By \eqref{eqn: Stokes equation in the low-frequency regularized IB problem}-\eqref{eqn: low-frequency regularized initial configuration},
\begin{equation}
\begin{split}
\partial_t (X^{\varepsilon,N}-\mathcal{P}_N X) =&\; \mathcal{P}_N (U^\varepsilon_{X^{\varepsilon,N}}-U_X)\\
=&\; \mathcal{P}_N (U_{X^{\varepsilon,N}}-U_X)+\mathcal{P}_N (U^\varepsilon_{X^{\varepsilon,N}}-U_{X^{\varepsilon,N}})\\
=&\;
\mathcal{L}(X^{\varepsilon,N}-\mathcal{P}_N X) + \mathcal{P}_N(g_{X^{\varepsilon,N}}-g_X)+\mathcal{P}_N (U^\varepsilon_{X^{\varepsilon,N}}-U_{X^{\varepsilon,N}}).
\end{split}
\label{eqn: equation for the difference}
\end{equation}
%Take arbitrary $1\leq n_1< n_2 \leq N$, and c
Consider arbitrary $t\in [0,T_{\varepsilon,N}]$.
By \eqref{eqn: bound for benchmark solution in the maximal time interval}, \eqref{eqn: well-stretched condition of benchmark solution in the maximal time interval}, \eqref{eqn: bound in the maximal time interval}, \eqref{eqn: well-stretched condition in the maximal time interval}, and Lemma \ref{lemma: improved H1 estimate for g_X1-g_X2}, %and Lemma \ref{lemma: improved estimate for Q_N X},% to obtain that
for any $\beta$ satisfying \eqref{eqn: admissible range of beta larger},
\begin{equation}
\begin{split}
&\;\|\mathcal{P}_{N}(g_{X^{\varepsilon,N}}-g_X) \|_{\dot{H}^1(\mathbb{T})}\\
\leq &\;C\lambda^{-3}M_0^3\|X^{\varepsilon,N}-X\|_{\dot{H}^{\frac{3}{2}-\beta}(\mathbb{T})}\\
\leq &\;C\lambda^{-3}M_0^3(\|X^{\varepsilon,N}-\mathcal{P}_N X\|_{\dot{H}^{\frac{3}{2}-\beta}(\mathbb{T})}+\|\mathcal{Q}_N X\|_{\dot{H}^{\frac{3}{2}-\beta}(\mathbb{T})})\\
\leq &\;C(\|X^{\varepsilon,N}-\mathcal{P}_N X\|_{\dot{H}^{\frac{3}{2}-\beta}(\mathbb{T})}+N^{-\frac{1}{2}-\theta-\beta}M_0),
\end{split}
\label{eqn: H 1 semi norm of low freq of g-g}
\end{equation}
where $C=C(\beta,\lambda^{-1}M_0)$.
Here we used the naive estimate $\|\mathcal{Q}_N X\|_{\dot{H}^{\frac{3}{2}-\beta}}\leq CN^{-\frac{1}{2}-\theta-\beta}\|X\|_{\dot{H}^{2+\theta}}$.
Similarly, for any $\beta'$ satisfying \eqref{eqn: admissible range of beta}, % by %taking $\beta = \frac{1}{4}$ in
%Lemma \ref{lemma: improved H1 estimate for g_X1-g_X2},
\begin{equation}
%\begin{split}
%&\;
\|\mathcal{P}_{N}(g_{X^{\varepsilon,N}}-g_X) \|_{L^2(\mathbb{T})}%\\
%\leq &\;C\lambda^{-2}M_0^2\|X^{\varepsilon,N}-X\|_{\dot{H}^{1-\beta}(\mathbb{T})}\\
%\leq &\;C\lambda^{-2}M_0^2(\|X^{\varepsilon,N}-\mathcal{P}_N X\|_{\dot{H}^{1-\beta}(\mathbb{T})}+\|\mathcal{Q}_N X\|_{\dot{H}^{1-\beta}(\mathbb{T})})\\
\leq %&\;
C(\|X^{\varepsilon,N}-\mathcal{P}_N X\|_{\dot{H}^{\frac{1}{2}-\beta'}(\mathbb{T})}+N^{-\frac{3}{2}-\theta-\beta'}M_0).
%\end{split}
\end{equation}
On the other hand, by Proposition \ref{prop: static regularization error estimate for Hookean case},
\begin{align}
\|\mathcal{P}_N(U_{X^{\varepsilon,N}}^\varepsilon-U_{X^{\varepsilon,N}})\|_{\dot{H}^1(\mathbb{T})}\leq &\;C\tilde{\varepsilon}^\theta M_0+C \tilde{\varepsilon}\lambda^{-3}M_0^4,
\label{eqn: H 1 semi norm of low freq of regularization error}\\
%\end{equation}
%and
%\begin{equation}
\|\mathcal{P}_N(U_{X^{\varepsilon,N}}^\varepsilon-U_{X^{\varepsilon,N}})\|_{L^2(\mathbb{T})}
\leq &\;C m_1 \tilde{\varepsilon}M_0+C\tilde{\varepsilon}^{1+\theta}|\ln\tilde{\varepsilon}|^\theta M_0 +C\tilde{\varepsilon}^2 |\ln \tilde{\varepsilon}|\lambda^{-3} M_0^4.
\label{eqn: L^2 bound of low freq regularization error}
%\end{equation}
\end{align}
We argue as in the proof of Theorem \ref{thm: error estimates of the regularized problem} to obtain that %for all $\beta$%.
\begin{align}
\|X^{\varepsilon,N}-\mathcal{P}_N X\|_{C_{[0,T_{\varepsilon,N}]}\dot{H}^2(\mathbb{T})}
\leq &\;C(\ln N)^{\frac{1}{2}}(N^{ -\frac{1}{2}-\theta-\beta}+\tilde{\varepsilon}^\theta)M_0,
\label{eqn: H 2 estimate in T eps interval}\\
\|X^{\varepsilon,N}-\mathcal{P}_N X\|_{C_{[0,T_{\varepsilon,N}]}\dot{H}^1(\mathbb{T})}%\\
\leq &\;
C(\ln N)^{\frac{1}{2}}(N^{-\frac{3}{2}-\theta-\beta'}+m_1\tilde{\varepsilon}+\tilde{\varepsilon}^{1+\theta}|\ln \tilde{\varepsilon}|^\theta)M_0,
\label{eqn: H 1 estimate in T eps interval}\\
\|X^{\varepsilon,N}-\mathcal{P}_N X\|_{C_{[0,T_{\varepsilon,N}]}H^{1/2}(\mathbb{T})}%\\
\leq &\;
C(N^{-\frac{3}{2}-\theta-\beta'}+m_1\tilde{\varepsilon}+\tilde{\varepsilon}^{1+\theta}|\ln \tilde{\varepsilon}|^\theta)M_0.
\label{eqn: H 1/2 estimate in T eps interval}
\end{align}
By interpolation between \eqref{eqn: H 2 estimate in T eps interval} and \eqref{eqn: H 1 estimate in T eps interval},
\begin{equation}
\|X^{\varepsilon,N}-\mathcal{P}_N X\|_{C_{[0,T_{\varepsilon,N}]}\dot{H}^{\frac{3}{2}-\beta}(\mathbb{T})} \leq
C(\ln N)^{\frac{1}{2}}(N^{-\frac{1}{2}-\theta-\beta}+\tilde{\varepsilon}^\theta)^{\frac{1}{2}-\beta}(N^{-\frac{3}{2}-\theta-\beta'}+\tilde{\varepsilon})^{\frac{1}{2}+\beta}M_0.
\label{eqn: H 3/2 estimate in T eps interval}
\end{equation}

To this end, consider the equation for
%Hence,
$E_{\varepsilon,N}\triangleq \partial_s(X^{\varepsilon,N}-\mathcal{P}_N X)$.
By \eqref{eqn: equation for the difference}, %satisfies
\begin{align}
\partial_t E_{\varepsilon,N}= &\; \mathcal{L}E_{\varepsilon,N}+ \mathcal{P}_N\partial_s(g_{X^{\varepsilon,N}}-g_X)+\mathcal{P}_N\partial_s(U_{X^{\varepsilon,N}}^\varepsilon-U_{X^{\varepsilon,N}}),\label{eqn: difference between contour dynamic equation between X and X_eps N}\\
E_{\varepsilon,N}(0) = &\;0.\label{eqn: initial data for difference between contour dynamic equation between X and X_eps N}
\end{align}
Let $E_{\varepsilon,N} = E_{\varepsilon,N}^{(1)}+E_{\varepsilon,N}^{(2)}$, where $E_{\varepsilon,N}^{(1)}$ and $E_{\varepsilon,N}^{(2)}$ solve
\begin{align}
\partial_t E_{\varepsilon,N}^{(1)}= &\; \mathcal{L}E_{\varepsilon,N}^{(1)}+ \mathcal{P}_N\partial_s(g_{X^{\varepsilon,N}}-g_X),\quad E_{\varepsilon,N}^{(1)}(0) = 0,\\
\partial_t E_{\varepsilon,N}^{(2)}= &\; \mathcal{L}E_{\varepsilon,N}^{(2)}+ \mathcal{P}_N\partial_s(U_{X^{\varepsilon,N}}^\varepsilon-U_{X^{\varepsilon,N}}),\quad E_{\varepsilon,N}^{(2)}(0) = 0,
\end{align}
respectively.

First we derive an estimate for $E_{\varepsilon,N}^{(2)}$.
By \eqref{eqn: H 1 semi norm of low freq of regularization error},
for all $1\leq n\leq N\leq c_*\tilde{\varepsilon}^{-1}$,
\begin{equation}
\|\mathcal{P}_n\partial_s(U_{X^{\varepsilon,N}}^\varepsilon-U_{X^{\varepsilon,N}})\|_{\dot{H}^{\theta}(\mathbb{T})}\leq C(\theta, \lambda^{-1}M_0) M_0 \cdot \tilde{\varepsilon}^\theta n^\theta.
\end{equation}
Hence, by %\eqref{eqn: H 1 semi norm of low freq of regularization error} and
Lemma \ref{lemma: a priori estimate of nonlocal eqn} and \eqref{eqn: H 1 semi norm of low freq of regularization error},
\begin{equation}
\begin{split}
\|E_{\varepsilon,N}^{(2)}\|_{C_{[0,T_{\varepsilon,N}]} L^2(\mathbb{T})}
\leq &\;\|\mathcal{P}_N\partial_s(U_{X^{\varepsilon,N}}^\varepsilon-U_{X^{\varepsilon,N}})\|_{L^1_{T_{\varepsilon,N}}L^2(\mathbb{T})}\\
\leq &\;C(\theta, \lambda^{-1}M_0,T_*)M_0 \cdot\tilde{\varepsilon}^\theta.
\end{split}
\end{equation}
and for all $n\in \mathbb{Z}_+$,
\begin{equation}
\begin{split}
\|\mathcal{P}_{n,2n}E_{\varepsilon,N}^{(2)}\|_{C_{[0,T_{\varepsilon,N}]} \dot{H}^{1+\theta}(\mathbb{T})}
\leq &\;\|\mathcal{P}_{2n}\mathcal{P}_N\partial_s(U_{X^{\varepsilon,N}}^\varepsilon-U_{X^{\varepsilon,N}})\|_{L^\infty_{T_{\varepsilon,N}}\dot{H}^{\theta}(\mathbb{T})}\\
\leq &\;C(\theta, \lambda^{-1}M_0) M_0 \cdot \tilde{\varepsilon}^\theta n^\theta.
\end{split}
\end{equation}
Suppose $N \in (2^{k_*},2^{k_*+1}]$ with some $k_*\geq 1$.
Then by Parsevel's identity,
\begin{equation}
\begin{split}
&\;\|E_{\varepsilon,N}^{(2)}\|_{C_{[0,T_{\varepsilon,N}]} \dot{H}^{1+\theta}(\mathbb{T})}^2\\
\leq &\;\|E_{\varepsilon,N}^{(2)}\|_{C_{[0,T_{\varepsilon,N}]} L^2(\mathbb{T})}^2+ \sum_{k = 0}^{k_*}\|\mathcal{P}_{2^k,2^{k+1}}E_{\varepsilon,N}^{(2)}\|_{C_{[0,T_{\varepsilon,N}]} \dot{H}^{1+\theta}(\mathbb{T})}^2\\
\leq &\;C(\theta, \lambda^{-1}M_0,T_*) M_0^2  \sum_{k = 0}^{k_*} \tilde{\varepsilon}^{2\theta} 2^{2k\theta}\\
\leq &\;C(\theta, \lambda^{-1}M_0,T_*) M_0^2  \cdot \tilde{\varepsilon}^{2\theta} 2^{2k_*\theta}\sum_{k = 0}^{k_*}2^{-2\theta(k_*-k)} \\
\leq &\;C(\theta, \lambda^{-1}M_0,T_*) M_0^2  \cdot (\tilde{\varepsilon} N)^{2\theta}.
\end{split}
\label{eqn: bound for E_2 in the top regularity}
\end{equation}
Next we consider $E_{\varepsilon,N}^{(1)}$.
%We bound the last two terms on the right hand side of \eqref{eqn: difference between contour dynamic equation between X and X_eps N} as follows.
Combining \eqref{eqn: H 1 semi norm of low freq of g-g} and \eqref{eqn: H 3/2 estimate in T eps interval}, for all $t\in [0,T_{\varepsilon,N}]$,
\begin{equation}
\begin{split}
&\;\|\mathcal{P}_{N}\partial_s(g_{X^{\varepsilon,N}}-g_X) \|_{L^2(\mathbb{T})}\\
\leq &\;C\left[(\ln N)^{\frac{1}{2}}(N^{-\frac{1}{2}-\theta-\beta}+\tilde{\varepsilon}^\theta)^{\frac{1}{2}-\beta}(N^{-\frac{3}{2}-\theta-\beta'}+\tilde{\varepsilon})^{\frac{1}{2}+\beta}+N^{-\frac{1}{2}-\theta-\beta}\right]M_0, %\\
%\leq &\;C(\ln N)^{\frac{1}{4}+\frac{1}{8\beta}}N^{-(1+\theta)/2}M_0,
\end{split}
\label{eqn: crude H_1 bound for g X eps - g X low freq}
\end{equation}
where $C=C(\beta,\theta, \lambda^{-1}M_0, T_*)$.
Moreover, for all $n\in\mathbb{Z}_+$,
\begin{equation*}
\|\mathcal{P}_n\partial_s(g_{X^{\varepsilon,N}}-g_X) \|_{\dot{H}^\theta(\mathbb{T})}
\leq Cn^\theta\|\mathcal{P}_{N}\partial_s(g_{X^{\varepsilon,N}}-g_X) \|_{L^2(\mathbb{T})}.
\end{equation*}
Then we argue as above to derive that
%
%Hence, by Lemma \ref{lemma: a priori estimate of nonlocal eqn},
\begin{equation}
\|E_{\varepsilon,N}^{(1)} \|_{C_{[0,T_{\varepsilon,N}]}\dot{H}^{1+\theta}(\mathbb{T})}
\leq CN^\theta \|\mathcal{P}_{N}\partial_s(g_{X^{\varepsilon,N}}-g_X) \|_{L^\infty _{T_{\varepsilon,N}}L^2(\mathbb{T})}.
\label{eqn: bound for E_1 in the top regularity}
\end{equation}
%For
Combining  \eqref{eqn: bound for E_2 in the top regularity}-%, \eqref{eqn: crude H_1 bound for g X eps - g X low freq}, and
\eqref{eqn: bound for E_1 in the top regularity}, %we obtain that
\begin{equation}
\begin{split}
&\;\|X^{\varepsilon,N}-\mathcal{P}_NX\|_{C_{[0,T_{\varepsilon,N}]} \dot{H}^{2+\theta}(\mathbb{T})}\\
\leq &\;C \left[(\ln N)^{\frac{1}{2}}(N^{-\frac{1}{2}-\beta}+(\tilde{\varepsilon}N)^\theta)^{\frac{1}{2}-\beta}(N^{-\frac{3}{2}-\beta'}+\tilde{\varepsilon}N^\theta)^{\frac{1}{2}+\beta}+N^{-\frac{1}{2}-\beta}+(\tilde{\varepsilon} N)^{\theta}\right]M_0\\
\leq &\;C (N^{-\frac{1}{2}-\beta}+(\tilde{\varepsilon} N)^{\theta})M_0,
\end{split}
\label{eqn: difference between the solution X eps N and P_N X}
\end{equation}
where $C = C(\beta, \theta, \lambda^{-1}M_0,T_*)$.
Here we simplify the estimate by fixing $\beta'$ to be any small number, and using the assumption that $\tilde{\varepsilon}\leq c_*N^{-1}$.

By Lemma \ref{lemma: improved estimate for Q_N X},
\begin{equation}
\|\mathcal{Q}_N X(t)\|_{\dot{H}^{\gamma}(\mathbb{T})}
\leq C(e^{-tN/4}\|\mathcal{Q}_N X_0\|_{\dot{H}^{2+\theta}(\mathbb{T})}+N^{-1+\theta}M_0),
\end{equation}
%Since $X(t)$ is continuous from $[0,T_*]$ to $H^{2+\theta}(\mathbb{T})$, the image $\{X(t)\}_{t\in [0,T_*]}$ is compact in $H^{2+\theta}(\mathbb{T})$.
%Hence,
which implies
\begin{equation}
\lim_{N\rightarrow +\infty}\|X-\mathcal{P}_NX\|_{C_{[0,T_*]}\dot{H}^{2+\theta}(\mathbb{T})}=0.
\label{eqn: difference between X and P_N X}
\end{equation}
Combining \eqref{eqn: bound for benchmark solution in the maximal time interval}, \eqref{eqn: well-stretched condition of benchmark solution in the maximal time interval}, \eqref{eqn: difference between the solution X eps N and P_N X}, \eqref{eqn: difference between X and P_N X},
we may take $N$ suitably large, which depends on $\lambda$, $M_0$, and $X_0$, and then assume $c_*$ to be suitably small, which also depends on $\lambda$ and $M_0$,
such that $\|(X^{\varepsilon,N}-X)(t)\|_{\dot{H}^{2+\theta}}$ is sufficiently small for all $t\in [0,T_{\varepsilon,N}]$ and thus
\begin{equation}
\|X^{\varepsilon,N}(\cdot,t)\|_{\dot{H}^{2+\theta}(\mathbb{T})}\leq \frac{3}{2}C_*M_0,
\end{equation}
and
\begin{equation}
X^{\varepsilon,N}(\cdot,t)\mbox{  satisfies the well-stretched condition with constant }\lambda/3.
\end{equation}
This contradicts with the maximality of $T_{\varepsilon,N}$, because by the well-posedness this implies that the solution $X^{\varepsilon,N}$ can be extended to a longer time interval without violating \eqref{eqn: bound in the maximal time interval} and \eqref{eqn: well-stretched condition in the maximal time interval}.
Therefore, we prove that there exists $c_*>0$ and $N_*>0$, such that for all $N\in [N_*, c_* \tilde{\varepsilon}^{-1}]$, \eqref{eqn: bound in the maximal time interval} and \eqref{eqn: well-stretched condition in the maximal time interval} hold for all $t\in [0,T_*]$.
As a result, \eqref{eqn: H 2 estimate in T eps interval}, \eqref{eqn: H 1 estimate in T eps interval}, and \eqref{eqn: difference between the solution X eps N and P_N X} become estimates on $[0,T_*]$.
\end{step}

\begin{step}[Convergence and error estimates]
By Lemma \ref{lemma: improved estimate for Q_N X}, for $\gamma = \frac{1}{2}, 1,2, 2+\theta$ and $t\in [0,T_*]$,
\begin{equation}
\|\mathcal{Q}_N X(t)\|_{\dot{H}^{\gamma}(\mathbb{T})}
\leq CN^{\gamma-2-\theta}(e^{-tN/4}+N^{-1+\theta})M_0,
\end{equation}
where $C = C(\gamma,\theta,\lambda^{-1}M_0)$.
Combining this with \eqref{eqn: H 2 estimate in T eps interval}-\eqref{eqn: H 1/2 estimate in T eps interval} and \eqref{eqn: difference between the solution X eps N and P_N X},
we can prove \eqref{eqn: H 1/2 difference between the solution X eps N and X}%{eqn: H 1 difference between the solution X eps N and X}%{eqn: convergence with assumption on the uniform bound eps N problem}%, and error estimates \eqref{eqn: error estimate H gamma greater than 1/2 eps N problem}
-\eqref{eqn: H 2 theta difference between the solution X eps N and X} and thus \eqref{eqn: convergence with assumption on the uniform bound eps N problem}. % by interpolation.
The weak-* convergence \eqref{eqn: weak star convergence with assumption on the uniform bound eps N problem} can be justified in the same way as in the proof of Theorem \ref{thm: error estimates of the regularized problem}.
\end{step}
\end{proof}

\section{Discussion}\label{section: discussion}

\subsection{Improved error estimates revisited}

Proposition \ref{prop: L^2 error estimate of the string velocity} proves improved $L^2$-static error estimate when $m_1 = 0$ or when the elastic force has zero tangential component,
which leads to improved error estimates in Theorem \ref{thm: error estimates of the regularized problem} and Theorem \ref{thm: convergence and error estimates of the low-frequency regularized IB method}.
Corollary \ref{coro: L^2 error estimate of the string normal velocity} implies that the $O(\varepsilon)$-leading term in the $L^2$-static error estimate occurs only in the tangential direction.
In the following, we shall explain that these results have a clear physical interpretation.

Consider a model problem, in which the string is represented by $Y(s)$.
Assume that $Y(0) = (0,0)$ and a local part of the elastic string %goes through the origin, and locally lies on the $x_1$-axis,
coincides with the segment that connects $(-1,0)$ and $(1,0)$.
The rest part of the string is assumed to be far away from the origin.
This is a simplification of general cases.
Indeed, if we zoom in to any local part of a string with sufficiently regular configuration, the local string segment is always close to being a straight line segment.
Suppose that in the Eulerian coordinate, there is a constant force along that local string segment;
in other words, we assume $F_Y(s)/|Y'(s)| = (f_1,f_2)^\intercal$ in that local segment, where the factor $|Y'(s)|^{-1}$ is the Jacobian between the Eulerian and Lagrangian coordinates.
We consider the velocity field with no regularization around the origin.
The elastic force from the other part of the string always contributes to a smooth velocity field $u_{\mathrm{far}}$ around the origin.
%This is because the force is away from the origin, while the fundamental solution $G$ is smooth away from the origin.
The local string segment, however, generates a flow field $u_{\mathrm{loc}}$ that is not smooth.
In a small neighborhood of the origin, the tangential component behaves like a shear flow, with opposite shear rates on two sides of the horizontal axis.
Indeed, %It can be justified by the following calculation:
\begin{equation}\label{eqn: local velocity field around the origin}
\begin{split}
u_{\mathrm{loc}}(x_1,x_2) = &\;\int_{-1}^1 G((x_1,x_2)-(x'_1,0))\cdot(f_1,f_2)^\intercal\,dx'_1 \\
=&\; \frac{1}{4\pi}\int_{-1}^1 -\frac{(f_1,f_2)^\intercal}{2}\ln((x_1-x'_1)^2+x_2^2)\,dx'\\
&\;+ \frac{f_1}{4\pi}\int_{-1}^1 \frac{((x_1-x_1')^2,(x_1-x_1')x_2)^\intercal}{(x_1-x'_1)^2+x_2^2}\,dx'\\
&\;+ \frac{f_2}{4\pi}\int_{-1}^1 \frac{((x_1-x_1')x_2,x_2^2)^\intercal}{(x_1-x'_1)^2+x_2^2}\,dx'\\
\sim &\; f_1\left(\frac{1}{\pi}-\frac{|x_2|}{2},0\right)^\intercal+ f_2\left(0,\frac{1}{2\pi}\right)^\intercal + O(|f||x|^2)
\end{split}
\end{equation}
for $|x|=|(x_1,x_2)|\ll 1$.
This has been characterized by the jump condition of the tangential component of $\nabla_\mathbf{n} u$ across the immersed boundary when the elastic force has non-zero tangential component there \cite{peskin1993improved}.
In fact, \cite{peskin1993improved} derives the jump condition for the immersed boundary problem with Navier-Stokes equation, but the same argument applies to the stationary Stokes case as well.

Compare
\begin{equation}
U_Y(0) = u(0,0) = u_{\mathrm{loc}}(0,0)+u_{\mathrm{far}}(0,0)
\end{equation}
with $U_Y^\varepsilon(0)$ in the regularized case. %with and without regularization; note that the velocity at the origin indicates how that material point moves.
It is known that (see \eqref{eqn: contour dynamic representation for string velocity the singular problem}-\eqref{eqn: def of varphi})
\begin{equation}
U_Y^\varepsilon(0) = [u*\varphi_\varepsilon](0,0) = [(u_{\mathrm{loc}}+u_{\mathrm{far}})*\varphi_\varepsilon](0,0).
\label{eqn: regularized string velocity around the origin}
\end{equation}
By \eqref{eqn: local velocity field around the origin}, the smoothness of $u_{\mathrm{far}}$, and the fact that $\varphi_\varepsilon$ is supported on a disc of radius $C\varepsilon$,
\begin{equation}
\begin{split}
(U_Y^\varepsilon-U_Y)(0) =&\;\left.\left [-\frac{|x_2|}{2}*\varphi_{\varepsilon}\right]\right|_{(0,0)}(f_1,0)^\intercal+O(|f|\varepsilon^2)\\
= &\;\left[-\frac{\varepsilon}{2}\int_{\mathbb{R}^2}|x_2|\varphi(x_1,x_2)\,dx_1dx_2\right](f_1,0)^\intercal +O(|f|\varepsilon^2)\\
= &\;-\frac{m_1 \varepsilon}{\pi}(f_1,0)^\intercal +O(|f|\varepsilon^2).
\end{split}
\label{eqn: velocity difference in model problem}
\end{equation}
In the last equation, we used \eqref{eqn: a symmetrization trick} which will be proved in the Appendix. 
We should highlight that the leading term in \eqref{eqn: velocity difference in model problem} agrees with that in \eqref{eqn: L^2 error estimate final version}; see also \eqref{eqn: L^2 error estimate with tangent force on the left hand side}.

The calculation \eqref{eqn: velocity difference in model problem} clearly shows where the $O(\varepsilon)$-error %only occurs in the tangential component.
%Also, it explains where such error
comes from.
Given non-zero tangential force at a point on the string, the local tangential flow in the un-regularized case has a velocity profile like an absolute value function in the transversal direction.
When mollifying such flow field and restricting that onto the string, an $O(\varepsilon)$-error is produced pointwise in the tangential component,
unless the mollifier $\varphi = \phi*\phi$ is $L^2(\mathbb{R}^2)$-orthogonal to that absolute value function in the transversal direction, which is $|x_2|$ in our case.
By the radial symmetry of $\varphi$, this orthogonality condition is equivalent to $m_1=0$.
Since the normal velocity field is smoother, $O(\varepsilon)$-error only occurs in the tangential component.

With this insight, it is natural to believe that when $\phi$ and $\varphi$ are not necessarily radially symmetric, the right condition for the improved accuracy should be
$\tilde{m}_1(\mathbf{v}) = 0$ for any unit vector $\mathbf{v}$ in $\mathbb{R}^2$, where
\begin{equation}
\tilde{m}_1(\mathbf{v})\triangleq \int_{\mathbb{R}^2}|\mathbf{v}\times (x_1,x_2)|\cdot\phi*\phi(x_1,x_2)\,dx_1dx_2.
\label{eqn: generalized m_1}
\end{equation}
Here $\mathbf{v}$ should be understood as the tangential direction of the string.
If $\phi$ is radially symmetric, this condition reduces to $m_1 = 0$.
However, it is not clear if this condition can be fulfilled by some $\phi$ that is not radially symmetric.

It is noteworthy that the condition $\tilde{m}_1(\mathbf{v}) = 0$ can be treated as a generalization of the one-sided first moment condition in 1-D \cite{beyer1992analysis}, which in the continuous setting requires
\begin{equation}
\int_{\mathbb{R}}\max\{0,x\}\phi(x)\,dx = 0.
\end{equation}
Here with abuse of notations, we use $\phi$ to denote profile of a 1-D regularized $\delta$-function.
Note that this is equivalent to
\begin{equation}
\int_{\mathbb{R}}|x|\phi(x)\,dx = 0
\end{equation}
if $\phi$ is orthogonal with $x$.
However, in our work, we propose the orthogonality condition for $\phi*\phi$ instead of $\phi$.

\subsection{Improved regularized $\delta$-functions}

To this end, we show that the condition $m_1 = 0$ for the improved accuracy is indeed achievable.

Take an arbitrary $\rho\in C_0^\infty(\mathbb{R}^2)$ such that $\rho\geq 0$; $\rho$ is radially symmetric; and $\rho$ is normalized.
Define $\rho_r(x) = r^{-2}\rho(x/r)$.
We shall look for $\phi$ in the form of
\begin{equation}
\phi(x) = \frac{1}{1-c}(\rho_{r}(x)-c\rho(x))
\end{equation}
with some $r\in(0,1)$ and $c\in (0,1)$ to be determined.
Obviously, such $\phi$ satisfies all the assumptions in Theorem \ref{thm: error estimates of the regularized problem}.

Given the ansatz, the condition $m_1=0$ for $\phi$ becomes
\begin{equation}
\int_{\mathbb{R}^2}|x|\cdot(\rho_{r}-c\rho)*(\rho_{r}-c\rho)\,dx = 0.
\label{eqn: condition m_1 vanishes}
\end{equation}
We simplify the left hand side as follows.
\begin{equation}
\begin{split}
&\;\int_{\mathbb{R}^2}|x|\cdot(\rho_{r}-c\rho)*(\rho_{r}-c\rho)\,dx\\
=&\;\int_{\mathbb{R}^2}|x|\cdot\rho_{r}*\rho_{r}\,dx-2c\int_{\mathbb{R}^2}|x|\cdot\rho_{r}*\rho\,dx+c^2\int_{\mathbb{R}^2}|x|\cdot\rho*\rho\,dx\\
=&\;r\int_{\mathbb{R}^2}|x|\cdot\rho*\rho\,dx-2c\int_{\mathbb{R}^2}|x|\cdot\rho_{r}*\rho\,dx+c^2\int_{\mathbb{R}^2}|x|\cdot\rho*\rho\,dx\\
\end{split}
\end{equation}
All the three integrals in the last line are positive due to the assumption $\rho\geq 0$.
Hence, \eqref{eqn: condition m_1 vanishes} admits a root $c\in (0,1)$ as long as $r\in(0,1)$ and
\begin{equation}
\left(\int_{\mathbb{R}^2}|x|\cdot\rho_{r}*\rho\,dx\right)^2>r\left(\int_{\mathbb{R}^2}|x|\cdot\rho*\rho\,dx\right)^2.
\label{eqn: condition for c exists}
\end{equation}
Observe that as $r\rightarrow 0$, the right hand side converges to $0$, while the left hand side
\begin{equation}
\lim_{r\rightarrow 0}\left(\int_{\mathbb{R}^2}|x|\cdot\rho_{r}*\rho\,dx\right)^2 = \left(\int_{\mathbb{R}^2}|x|\rho\,dx\right)^2>0.
\end{equation}
This implies that there exists $r\in(0,1)$ such that \eqref{eqn: condition for c exists} holds; in practice, such $r$ does not have to be extremely small.
Then we can solve for desired $c\in(0,1)$ so that \eqref{eqn: condition m_1 vanishes} holds, which completes the construction of $\phi$.

In the numerical immersed boundary method, choice of the regularized $\delta$-function %, as a formal approximation of the singular Dirac $\delta$-function,
plays a crucial role in many aspects.
Lots of efforts have been made to design a good regularized $\delta$-function,
or to better understand its effect in the accuracy of the immersed boundary method or some other problems with singular source terms \cite{peskin2002immersed, beyer1992analysis, liu2012properties, liu2014p, tornberg2004numerical, stockie1997analysis, brady1988dynamic,  roma1999adaptive,  bringley2008analysis, yang2009smoothing, bao2016gaussian}. % that better numerical results can be achieved.
In the continuous case, our analysis suggests that one can indeed attain improved accuracy by suitably choosing the regularized $\delta$-function.
It is then worthwhile to investigate if the such improvement is possible in the discrete case. 

\subsection{Future problems}

In the paper, we introduce the regularized immersed boundary problem to mimic its discrete counterpart  in the numerical immersed boundary method.
%Formally, the numerical immersed boundary method is discretizing and computing our regularized problem.
As the convergence and error estimates have been obtained in the continuous case,
it is natural to ask if it is possible to derive an error bound in the discrete and dynamic setting for the immersed boundary method, at least for problems with stationary Stokes equations.
Some static error estimates for the velocity field and pressure have been obtained in the discrete case by assuming regularity of the string and the force along it \cite{liu2012properties,liu2014p,mori2008convergence}. %, which shows different behavior away from the immersed boundary and near it.
\cite{mori2008convergence} also studies a simplified dynamic model problem and obtains its time-dependent error bound. %; see also \cite{liu2012properties,liu2014p}.
An a posteriori analysis of the time-dependent error is performed in a recent work \cite{stotsky2018posteriori}.
However, a complete theory of dynamic error estimate for the numerical immersed boundary method is far from being established.

Besides the numerical issues, %there are also closely related analysis problems
one question in analysis that has not been answered in this work is whether or not there is convergence from $X^\varepsilon$ to $X$ without extra assumptions \ref{assumption: uniform boundedness} and \ref{assumption: uniform stretching}.
According to Remark \ref{rmk: artificial bound on H2.5 norm}, we may have to look more closely at high frequencies in the string configuration.
Justification of the convergence or a counterexample would be both very interesting.

In this work, we only focus on comparing the string dynamics in the regularized and the original Stokes immersed boundary problems.
It is also important to formulate convergence and error estimates for other quantities such as velocity field and pressure.
%In the continuous case, convergence in the fully dynamic problem
Such convergence should be expected in the $(\varepsilon,N)$-regularized problem as the convergence in the string motion has been established.
The subtlety lies in the choice of function spaces in which convergence is established, because the velocity field and pressure in the un-regularized problem is not smooth around the immersed string even though the string configuration and the string velocity are smooth.
It is also noteworthy that the condition $m_1 = 0$ may not necessarily lead to improved accuracy in the velocity field and pressure,
as it only aims at representing the velocity accurately along the string, but not in its neighborhood.
Yet, if the string motion is tracked with higher accuracy, it is possible to come up with a separate scheme to find out the velocity field and pressure more accurately,
which may involve a different regularization or other technicality.

Generalizing this work to the immersed boundary problem with Navier-Stokes equation might be challenging.
To the best of our knowledge, the well-posedness of the un-regularized problem in the Navier-Stokes case is still open,
although there have been many related results on the interface dynamics of two-fluid system \cite{solonnikov1986solvability,tanaka1993global,solonnikov2014theory}.

%\newpage
\appendix

\section{Proofs of Auxiliary Results}\label{section: proofs of auxiliary lemmas}

\subsection{Proof of Lemma \ref{lemma: H^1/2 estimate of F}}\label{section: proof of lemma on regularity of F}
\begin{proof}[Proof of Lemma \ref{lemma: H^1/2 estimate of F}]
%The proof is a straightforward calculation.
In what follows, we use $\partial_s S(|Y'|,\cdot )$ to denote the partial derivative with respect to the second variable only, but we use $S'$ to denote total derivative in $s$.

We first show \eqref{eqn: pointwise bound of F}.
By the definition of $F_Y$ and the Assumption \ref{assumption: C 1,1 regularity assumption},
\begin{equation}
\begin{split}
&\;|(SY')'(s)| \\
\leq &\;|SY''(s)|+|\partial_s S(|Y'(s)|,s)||Y'(s)|+|\partial_pS(|Y'(s)|,s)||Y'(s)|\cdot \frac{|Y'(s)||Y''(s)|}{|Y'(s)|}\\
%\leq &\;\mu |Y''(s)|\left(1+\frac{|Y'(s)|}{\|Y'\|_{L^\infty(\mathbb{T})}}\right)+\mu|Y'(s)|\\
\leq &\;C\mu(|Y'(s)|+|Y''(s)|).
\end{split}
\label{eqn: proof of pointwise bound of F}
\end{equation}
\eqref{eqn: H^theta estimate of F} with $\theta = 0$ follows immediately.

To show \eqref{eqn: H^theta estimate of F} with $\theta\in(0,1)$, we calculate
\begin{equation}
\begin{split}
&\;F_Y(s+\tau)-F_Y(s)\\
= &\;S(|Y'(s+\tau)|,s+\tau)Y''(s+\tau)-S(|Y'(s)|,s)Y''(s)\\
&\;+\partial_s S(|Y'(s+\tau)|,s+\tau)Y'(s+\tau)-\partial_s S(|Y'(s)|,s)Y'(s)\\
&\;+\partial_p S(|Y'(s+\tau)|,s+\tau)\frac{Y'(s+\tau)\cdot Y''(s+\tau)}{|Y'(s+\tau)|}Y'(s+\tau)\\
&\;-\partial_p S(|Y'(s)|,s)\frac{Y'(s)\cdot Y''(s)}{|Y'(s)|}Y'(s).
\end{split}
\end{equation}
By the Assumption \ref{assumption: C 1,1 regularity assumption} and the choice of $\mu$ in \eqref{eqn: choice of mu in the static estimate}, it is not difficult to derive that
\begin{equation}
\begin{split}
&\;|F_Y(s+\tau)-F_Y(s)|\\
\leq &\;C\mu \left(|\tau|+\frac{|Y'(s+\tau)-Y'(s)|}{c\|Y''\|_{L^2}} \right)|Y''(s+\tau)|+C\mu|Y''(s+\tau)-Y''(s)|\\
&\;+C\mu \left(|\tau|+\frac{|Y'(s+\tau)-Y'(s)|}{c\|Y''\|_{L^2}} \right)|Y'(s+\tau)|+C\mu|Y'(s+\tau)-Y'(s)|\\
\leq &\;C\mu \left(|\tau|+\frac{\|Y'(\cdot+\tau)-Y'(\cdot)\|_{L^\infty}}{c\|Y''\|_{L^2}} \right)|Y''(s+\tau)|+C\mu|Y''(s+\tau)-Y''(s)|\\
&\;+C\mu |\tau||Y'(s+\tau)|+C\mu|Y'(s+\tau)-Y'(s)|.
\end{split}
\end{equation}
Hence, by Sobolev embedding,
\begin{equation}
\|F_Y(\cdot+\tau)-F_Y(\cdot)\|_{L^2_s(\mathbb{T})}\leq C\mu |\tau|\|Y''\|_{L^2}+C\mu\|Y''(\cdot+\tau)-Y''(\cdot)\|_{L^2}.
\end{equation}
By the equivalent definitions of the $\dot{H}^{\theta}(\mathbb{T})$-semi-norm, % to deduce that
\begin{equation}
\begin{split}
\|F_Y\|_{\dot{H}^{\theta}(\mathbb{T})}^2 \leq &\;C_\theta\int_{-1}^1 \frac{\|F_Y(\cdot+\tau)-F_Y(\cdot)\|^2_{L^2_s}}{|\tau|^{1+2\theta}}\,d\tau%\\
%\leq &\;C_\theta \mu^2 \int_{-1}^1 d\tau\,\frac{\|Y''(s+\tau)-Y''(s)\|_{L^2_s(\mathbb{T})}^2}{|\tau|^{1+2\theta}}\\
\leq C_\theta\mu^2 \|Y\|_{\dot{H}^{2+\theta}(\mathbb{T})}^2.
\end{split}
\end{equation}
This proves \eqref{eqn: H^theta estimate of F}.
\end{proof}

\subsection{Properties of $f_2$ and $f_3$}\label{section: study of the auxiliary functions f_i}
In this section, we shall prove estimates for the auxiliary functions $f_2$ and $f_3$. % stated in Lemma \ref{lemma: decay estimate of f_1 f_2 f_3}.
For convenience, we recall their definitions in Section \ref{section: contour dynamic formulation}.
\begin{align}
f_1(x) &= \frac{x}{\pi}\int_{\mathbb{R}^2} e^{ix\eta_1} \frac{-i\eta_1\eta_2^2}{(\eta_1^2+\eta_2^2)^2}\hat{\varphi}(\eta)\,d\eta,\label{eqn: def of f_1 appendix}\\
f_2(x) &= \frac{x}{\pi}\int_{\mathbb{R}^2} e^{ix\eta_1} \frac{-i\eta_1(\eta_1^2-\eta_2^2)}{(\eta_1^2+\eta_2^2)^2}\hat{\varphi}(\eta)\,d\eta,\label{eqn: def of f_2 appendix}\\
f_3(x) &= f_1(x)-1.\label{eqn: def of f_3 appendix}
\end{align}

\begin{proof}[Proof of Lemma \ref{lemma: decay estimate of f_1 f_2 f_3}]
The proof involves repeated integration by parts in the formulas of $f_1$, $f_2$, and $f_3$.

Since $\varphi$ is smooth and compactly supported, $\hat{\varphi}$ is smooth and decays sufficiently fast at $\infty$.
Hence, the integrals in \eqref{eqn: def of f_1 appendix} and \eqref{eqn: def of f_2 appendix} are absolutely integrable;
in addition, %and in particular, %the integrand of \eqref{eqn: def of f_1 appendix} and \eqref{eqn: def of f_2 appendix},
$$
\int_{\mathbb{R}} \frac{-i\eta_1\eta_2^2}{(\eta_1^2+\eta_2^2)^2}\hat{\varphi}(\eta)\,d\eta_2\quad \mbox{and}\quad \int_{\mathbb{R}} \frac{-i\eta_1(\eta_1^2-\eta_2^2)}{(\eta_1^2+\eta_2^2)^2}\hat{\varphi}(\eta)\,d\eta_2
$$
decays fast as $\eta_1 \rightarrow \pm\infty$.
This implies that $f_1(0)= f_2(0)= 0$, and $f_1$ and $f_2$ are smooth functions in $x$. %as their Fourier transforms decay fast at $\pm\infty$.
Therefore, $f_3(0) = -1$ and $f_3$ is also smooth on $\mathbb{R}$.

%Consider %$g_1(x) = f_1(x)/x$, where
We start by considering
\begin{equation}
g_1(x) = \frac{f_1(x)}{x}=\frac{1}{\pi}\int_{\mathbb{R}^2} e^{ix\eta_1} \frac{-i\eta_1\eta_2^2}{(\eta_1^2+\eta_2^2)^2}\hat{\varphi}(\eta)\,d\eta.
\end{equation}
%It is absolutely integrable thanks to the fast decay of $\hat{\varphi}$ at $\infty$.
%We would like to study its decay as $x\rightarrow \pm \infty$.
Since
\begin{equation}
\frac{\eta_1\eta_2^2}{(\eta_1^2+\eta_2^2)^2} =\frac{\partial}{\partial \eta_2}\left(\frac{1}{2}\arctan \frac{\eta_2}{\eta_1}-\frac{\eta_1\eta_2}{2(\eta_1^2+\eta_2^2)}\right)
\label{eqn: antiderivative wrt eta_2 of the integrand of g_1}
\end{equation}
whenever $\eta_1\not = 0$, by integration by parts first in $\eta_2$ and then in $\eta_1$, we derive that
\begin{equation}
\begin{split}
g_1(x) = &\;\frac{i}{2\pi}\int_{\mathbb{R}^2\backslash\{\eta_1 = 0\}} e^{ix\eta_1}\left(\arctan \frac{\eta_2}{\eta_1}-\frac{\eta_1\eta_2}{\eta_1^2+\eta_2^2}\right)\partial_{\eta_2}\hat{\varphi}(\eta)\,d\eta\\
= &\;\lim_{M\rightarrow +\infty}\frac{1}{2\pi x}\int_{(B_M\backslash B_{1/M})\cap\{\eta_1 > 0\}} \left(\arctan \frac{\eta_2}{\eta_1}-\frac{\eta_1\eta_2}{\eta_1^2+\eta_2^2}\right)\partial_{\eta_2}\hat{\varphi}(\eta)\,de^{ix\eta_1}d\eta_2\\
&\;+\lim_{M\rightarrow +\infty}\frac{1}{2\pi x}\int_{(B_M\backslash B_{1/M})\cap\{\eta_1 < 0\}} \left(\arctan \frac{\eta_2}{\eta_1}-\frac{\eta_1\eta_2}{\eta_1^2+\eta_2^2}\right)\partial_{\eta_2}\hat{\varphi}(\eta)\,de^{ix\eta_1}d\eta_2\\
= &\;\frac{1}{2\pi x}\lim_{M\rightarrow +\infty}\int_{\partial B_M\backslash\{\eta_1 = 0\}}
\left(\arctan \frac{\eta_2}{\eta_1}-\frac{\eta_1\eta_2}{\eta_1^2+\eta_2^2}\right)\partial_{\eta_2}\hat{\varphi}(\eta)\cdot \frac{\eta_1}{M}\,d\sigma \\
&\;-\frac{1}{2\pi x}\lim_{M\rightarrow +\infty}\int_{\partial B_{1/M}\backslash\{\eta_1 = 0\}}
\left(\arctan \frac{\eta_2}{\eta_1}-\frac{\eta_1\eta_2}{\eta_1^2+\eta_2^2}\right)\partial_{\eta_2}\hat{\varphi}(\eta)\cdot \frac{\eta_1}{M^{-1}}\,d\sigma \\
&\;-\frac{1}{2\pi x}\lim_{M\rightarrow +\infty}\int_{[-M,-1/M]\cup [1/M,M]}
\frac{\pi}{2}\mathrm{sgn}(\eta_2)\cdot \partial_{\eta_2}\hat{\varphi}(0,\eta_2)\,d\eta_2\\
&\;+\frac{1}{2\pi x}\lim_{M\rightarrow +\infty}\int_{[-M,-1/M]\cup [1/M,M]}
\frac{\pi}{2}\mathrm{sgn}(-\eta_2)\cdot \partial_{\eta_2}\hat{\varphi}(0,\eta_2)\,d\eta_2\\
&\;-\lim_{M\rightarrow +\infty}\frac{1}{2\pi x}\int_{(B_M\backslash B_{1/M})\backslash\{\eta_1 = 0\}} e^{ix\eta_1}\cdot \frac{\partial}{\partial \eta_1}\left[\left(\arctan \frac{\eta_2}{\eta_1}-\frac{\eta_1\eta_2}{\eta_1^2+\eta_2^2}\right)\partial_{\eta_2}\hat{\varphi}(\eta)\right]\,d\eta_1 d\eta_2.
\end{split}
\label{eqn: processing g_1 integration by parts separately on two regions}
\end{equation}
Here we did integration by parts separately in the regions divided by $\{\eta_1= 0\}$ because $\arctan \frac{\eta_2}{\eta_1}$ is discontinuous across that line.
Indeed,
$$
\lim_{\eta_1\rightarrow 0^\pm}\left(\arctan \frac{\eta_2}{\eta_1}-\frac{\eta_1\eta_2}{\eta_1^2+\eta_2^2}\right) = \frac{\pi}{2}\mathrm{sgn}(\pm \eta_2),\quad \forall\,\eta_2\not = 0.
$$
By the fast decay of $\partial_{\eta_2}\hat{\varphi}$ at $\infty$, the first term is $0$.
The second term is also zero as the perimeter of the circle shrinks to zero while the integrand stays bounded.
Hence,
\begin{equation}
\begin{split}
g_1(x) = &\;-\frac{1}{2 x}\int_{\mathbb{R}}\mathrm{sgn}(\eta_2)\cdot \partial_{\eta_2}\hat{\varphi}(0,\eta_2)\,d\eta_2\\
&\;-\frac{1}{2\pi x}\int_{\mathbb{R}^2\backslash\{\eta_1 = 0\}} e^{ix\eta_1} \frac{\partial}{\partial \eta_1}\left[\left(\arctan \frac{\eta_2}{\eta_1}-\frac{\eta_1\eta_2}{\eta_1^2+\eta_2^2}\right)\partial_{\eta_2}\hat{\varphi}(\eta)\right]\,d\eta_1 d\eta_2\\
= &\;\frac{1}{x}-\frac{1}{2\pi x}\int_{\mathbb{R}^2\backslash\{\eta_1 = 0\}} e^{ix\eta_1} \left(\arctan \frac{\eta_2}{\eta_1}-\frac{\eta_1\eta_2}{\eta_1^2+\eta_2^2}\right)\partial_{\eta_1\eta_2}\hat{\varphi}(\eta)\,d\eta_1 d\eta_2\\
&\;+\frac{1}{\pi x}\int_{\mathbb{R}^2\backslash\{\eta_1 = 0\}} e^{ix\eta_1} \cdot\frac{\eta_2^3}{(\eta_1^2+\eta_2^2)^2}\cdot\partial_{\eta_2}\hat{\varphi}(\eta)\,d\eta_1 d\eta_2.
\end{split}
\label{eqn: g_1 is of order 1/x}
\end{equation}
Here we used the fact that
\begin{equation}
\int_{\mathbb{R}}\mathrm{sgn}(\eta_2)\cdot \partial_{\eta_2}\hat{\varphi}(0,\eta_2)\,d\eta_2 = \int_{\mathbb{R}_+}\partial_{\eta_2}\hat{\varphi}(0,\eta_2)\,d\eta_2  - \int_{\mathbb{R}_-}\partial_{\eta_2}\hat{\varphi}(0,\eta_2)\,d\eta_2  = -2\hat{\varphi}(0) = -2.
\end{equation}
Observing that
\begin{equation}
\frac{\eta_2^3}{(\eta_1^2+\eta_2^2)^2} = \frac{\partial}{\partial \eta_2}\left(\frac{1}{2}\ln(\eta_1^2+\eta_2^2)+\frac{1}{2}\frac{\eta_1^2}{\eta_1^2+\eta_2^2}\right),
\label{eqn: antiderivative wrt to y second round}
\end{equation}
whenever $(\eta_1,\eta_2)\not = 0$, we perform integration by parts in $\eta_2$ to obtain that
\begin{equation}
\begin{split}
f_3(x) = &\;xg_1(x) -1\\
= &\;-\frac{1}{2\pi }\int_{\mathbb{R}^2\backslash\{\eta_1 = 0\}} e^{ix\eta_1} \left(\arctan \frac{\eta_2}{\eta_1}-\frac{\eta_1\eta_2}{\eta_1^2+\eta_2^2}\right)\partial_{\eta_1\eta_2}\hat{\varphi}(\eta)\,d\eta_1 d\eta_2\\
&\;-\frac{1}{2\pi}\int_{\mathbb{R}^2\backslash\{\eta_1 = 0\}} e^{ix\eta_1} \left(\ln(\eta_1^2+\eta_2^2)+\frac{\eta_1^2}{\eta_1^2+\eta_2^2}\right)\partial_{\eta_2\eta_2}\hat{\varphi}(\eta)\,d\eta_1 d\eta_2.
\end{split}
\label{eqn: simplification of f_3}
\end{equation}
Since $\hat{\varphi}$ is symmetric with respect to the axis $\{\eta_1 = 0\}$, $\partial_{\eta_1\eta_2}\hat{\varphi}(\eta) = 0$ when $\eta_1 = 0$.
This implies that the integrand in \eqref{eqn: simplification of f_3} is continuous away from $(0,0)$.
Hence, by integration by parts in $\eta_1$,
\begin{equation}
\begin{split}
f_3(x)
= &\;-\frac{i}{2\pi x}\int_{\mathbb{R}^2\backslash\{\eta_1 = 0\}}e^{ix\eta_1} \frac{\partial}{\partial \eta_1}\left[ \left(\arctan \frac{\eta_2}{\eta_1}-\frac{\eta_1\eta_2}{\eta_1^2+\eta_2^2}\right)\partial_{\eta_1\eta_2}\hat{\varphi}(\eta)\right]\,d\eta_1 d\eta_2\\
&\;-\frac{i}{2\pi x}\int_{\mathbb{R}^2\backslash\{\eta_1 = 0\}} e^{ix\eta_1} \frac{\partial}{\partial \eta_1}\left[\left(\ln(\eta_1^2+\eta_2^2)+\frac{\eta_1^2}{\eta_1^2+\eta_2^2}\right)\partial_{\eta_2\eta_2}\hat{\varphi}(\eta)\right]
\,d\eta_1 d\eta_2\\
= &\;-\frac{i}{2\pi x}\int_{\mathbb{R}^2\backslash\{\eta_1 = 0\}}e^{ix\eta_1} \left(\arctan \frac{\eta_2}{\eta_1}-\frac{\eta_1\eta_2}{\eta_1^2+\eta_2^2}\right)\partial_{\eta_1\eta_1\eta_2}\hat{\varphi}(\eta)\,d\eta_1 d\eta_2\\
&\;-\frac{i}{2\pi x}\int_{\mathbb{R}^2\backslash\{\eta_1 = 0\}}e^{ix\eta_1}\cdot \frac{-2\eta_2^3}{(\eta_1^2+\eta_2^2)^2}\cdot \partial_{\eta_1\eta_2}\hat{\varphi}(\eta)\,d\eta_1 d\eta_2\\
&\;-\frac{i}{2\pi x}\int_{\mathbb{R}^2\backslash\{\eta_1 = 0\}} e^{ix\eta_1} \left(\ln(\eta_1^2+\eta_2^2)+\frac{\eta_1^2}{\eta_1^2+\eta_2^2}\right)\partial_{\eta_1\eta_2\eta_2}\hat{\varphi}(\eta)
\,d\eta_1 d\eta_2\\
&\;-\frac{i}{2\pi x}\int_{\mathbb{R}^2\backslash\{\eta_1 = 0\}} e^{ix\eta_1} \cdot\frac{2\eta_1^3+4\eta_1\eta_2^2}{(\eta_1^2+\eta_2^2)^2}\cdot\partial_{\eta_2\eta_2}\hat{\varphi}(\eta)
\,d\eta_1 d\eta_2.
\end{split}
\label{eqn: further simplification of f_3}
\end{equation}
Thanks to \eqref{eqn: antiderivative wrt to y second round}, the second term coincides with the third term.
Moreover,
$$
\frac{2\eta_1^3+4\eta_1\eta_2^2}{(\eta_1^2+\eta_2^2)^2} = \frac{\partial}{\partial \eta_2}\left(3\arctan \frac{\eta_2}{\eta_1}-\frac{\eta_1\eta_2}{\eta_1^2+\eta_2^2}\right).
$$
Hence,
\begin{equation}
\begin{split}
f_3(x)=&\;-\frac{i}{2\pi x}\int_{\mathbb{R}^2\backslash\{\eta_1 = 0\}}e^{ix\eta_1} \left(\arctan \frac{\eta_2}{\eta_1}-\frac{\eta_1\eta_2}{\eta_1^2+\eta_2^2}\right)\partial_{\eta_1\eta_1\eta_2}\hat{\varphi}(\eta)\,d\eta_1 d\eta_2\\
&\;-\frac{i}{\pi x}\int_{\mathbb{R}^2\backslash\{\eta_1 = 0\}} e^{ix\eta_1} \left(\ln(\eta_1^2+\eta_2^2)+\frac{\eta_1^2}{\eta_1^2+\eta_2^2}\right)\partial_{\eta_1\eta_2\eta_2}\hat{\varphi}(\eta)
\,d\eta_1 d\eta_2\\
&\;+\frac{i}{2\pi x}\int_{\mathbb{R}^2\backslash\{\eta_1 = 0\}} e^{ix\eta_1} \left(3\arctan \frac{\eta_2}{\eta_1}-\frac{\eta_1\eta_2}{\eta_1^2+\eta_2^2}\right)\partial_{\eta_2\eta_2\eta_2}\hat{\varphi}(\eta)
\,d\eta_1 d\eta_2.
\end{split}
\label{eqn: decay of f_3 intermediate step}
\end{equation}
Recall that $m_2$ is defined in \eqref{eqn: def of M_2}.
Since $\hat{\varphi}$ is radially symmetric,
\begin{equation}
\partial_{\eta_1\eta_1}\hat{\varphi}(0) = \partial_{\eta_2\eta_2}\hat{\varphi}(0) = -\int_{\mathbb{R}^2}x_1^2\cdot\varphi(x_1,x_2)\,dx_1dx_2 = -\frac{1}{2}m_2.
\label{eqn: relation between second derivative of hat phi with M_2}
\end{equation}
To this end, we perform further integration by parts to \eqref{eqn: decay of f_3 intermediate step} and proceed as in \eqref{eqn: processing g_1 integration by parts separately on two regions} and \eqref{eqn: g_1 is of order 1/x} to find that,
\begin{equation}
\begin{split}
f_3(x)
= &\;-\frac{1}{x^2}\partial_{\eta_1\eta_1}\hat{\varphi}(0)+\frac{3}{ x^2}\partial_{\eta_2\eta_2}\hat{\varphi}(0)\\
&\;+\frac{1}{2\pi x^2}\int_{\mathbb{R}^2\backslash\{\eta_1 = 0\}}  e^{ix\eta_1} \frac{\partial}{\partial\eta_1}\left[\left(\arctan \frac{\eta_2}{\eta_1}-\frac{\eta_1\eta_2}{\eta_1^2+\eta_2^2}\right)\partial_{\eta_1\eta_1\eta_2}\hat{\varphi}(\eta)\right]\,d\eta_1 d\eta_2\\
&\;+\frac{1}{\pi x^2}\int_{\mathbb{R}^2\backslash\{\eta_1 = 0\}}e^{ix\eta_1} \frac{\partial}{\partial\eta_1}\left[\left(\ln(\eta_1^2+\eta_2^2)+\frac{\eta_1^2}{\eta_1^2+\eta_2^2}\right)\partial_{\eta_1\eta_2\eta_2}\hat{\varphi}(\eta)
\right]\,d\eta_1 d\eta_2\\
&\;-\frac{1}{2\pi x^2}\int_{\mathbb{R}^2\backslash\{\eta_1 = 0\}}e^{ix\eta_1} \frac{\partial}{\partial\eta_1}\left[\left(3\arctan \frac{\eta_2}{\eta_1}-\frac{\eta_1\eta_2}{\eta_1^2+\eta_2^2}\right)\partial_{\eta_2\eta_2\eta_2}\hat{\varphi}(\eta)\right]
\,d\eta_1 d\eta_2\\
\triangleq &\; -m_2x^{-2}+(f_{3,1}(x)+f_{3,2}(x)+f_{3,3}(x))x^{-2}.
\end{split}
\label{eqn: final formula for f_3}
\end{equation}
Note that the term $-m_2 x^{-2}$ arises from the discontinuity of $\arctan \frac{\eta_2}{\eta_1}$ across $\{\eta_1 = 0\}$.
Since the integrands in $f_{3,i}$ are all absolutely integrable, by the Riemann-Lebesgue Lemma, %as $x\rightarrow \pm \infty$,
\begin{equation}
f_{3,1}(x)+f_{3,2}(x)+f_{3,3}(x)\rightarrow 0,\quad \mbox{ as } x\rightarrow\pm \infty.
\end{equation}
This together with the smoothness of $f_3$ implies that there exists a universal $C>0$, such that
\begin{equation}
|f_3(x)|\leq \frac{C}{1+x^2}.
\label{eqn: basic decay estimate for f_3}
\end{equation}

Next we prove estimates for $f'_3$ and $f''_3$.
It suffices to consider $f'_{3,i}$ and $f''_{3,i}$.
We start with $f_{3,1}$ and derive as in \eqref{eqn: g_1 is of order 1/x}, \eqref{eqn: antiderivative wrt to y second round}, and \eqref{eqn: simplification of f_3},
\begin{equation}\label{eqn: f_31 simplification}
\begin{split}
f_{3,1}(x)
=&\;\frac{1}{2\pi}\int_{\mathbb{R}^2\backslash\{\eta_1 = 0\}}  e^{ix\eta_1} \left(\arctan \frac{\eta_2}{\eta_1}-\frac{\eta_1\eta_2}{\eta_1^2+\eta_2^2}\right)\partial_{\eta_1\eta_1\eta_1\eta_2}\hat{\varphi}(\eta)\,d\eta_1 d\eta_2\\
&\;+\frac{1}{2\pi}\int_{\mathbb{R}^2\backslash\{\eta_1 = 0\}}  e^{ix\eta_1} \left(\ln(\eta_1^2+\eta_2^2)+\frac{\eta_1^2}{\eta_1^2+\eta_2^2}\right)\partial_{\eta_1\eta_1\eta_2\eta_2}\hat{\varphi}(\eta)\,d\eta_1 d\eta_2.
\end{split}
\end{equation}
Notice that this is in a form similar to \eqref{eqn: simplification of f_3}.
We proceed as in \eqref{eqn: further simplification of f_3}, \eqref{eqn: decay of f_3 intermediate step}, and \eqref{eqn: final formula for f_3} to obtain that
\begin{equation}
\begin{split}
f_{3,1}(x)
= &\;\frac{1}{x^2}\partial_{\eta_1}^4\hat{\varphi}(0)-\frac{3}{ x^2}\partial_{\eta_1}^2\partial_{\eta_2}^2\hat{\varphi}(0)\\
&\;-\frac{1}{2\pi x^2}\int_{\mathbb{R}^2\backslash\{\eta_1 = 0\}}  e^{ix\eta_1} \frac{\partial}{\partial\eta_1}\left[\left(\arctan \frac{\eta_2}{\eta_1}-\frac{\eta_1\eta_2}{\eta_1^2+\eta_2^2}\right)\partial_{\eta_1}^4\partial_{\eta_2}\hat{\varphi}(\eta)\right]\,d\eta_1 d\eta_2\\
&\;-\frac{1}{\pi x^2}\int_{\mathbb{R}^2\backslash\{\eta_1 = 0\}}e^{ix\eta_1} \frac{\partial}{\partial\eta_1}\left[\left(\ln(\eta_1^2+\eta_2^2)+\frac{\eta_1^2}{\eta_1^2+\eta_2^2}\right)\partial_{\eta_1}^3\partial_{\eta_2}^2\hat{\varphi}(\eta)
\right]\,d\eta_1 d\eta_2\\
&\;+\frac{1}{2\pi x^2}\int_{\mathbb{R}^2\backslash\{\eta_1 = 0\}}e^{ix\eta_1} \frac{\partial}{\partial\eta_1}\left[\left(3\arctan \frac{\eta_2}{\eta_1}-\frac{\eta_1\eta_2}{\eta_1^2+\eta_2^2}\right)\partial_{\eta_1}^2\partial_{\eta_2}^3\hat{\varphi}(\eta)\right]
\,d\eta_1 d\eta_2.
\end{split}
\label{eqn: final formula for f_31}
\end{equation}
Hence, for $k = 1,2$,
\begin{equation}\label{eqn: f_31 derivative}
\begin{split}
&\;(x^2f_{3,1})^{(k)}(x)\\
= &\;-\frac{1}{2\pi}\int_{\mathbb{R}^2\backslash\{\eta_1 = 0\}}  e^{ix\eta_1} (i\eta_1)^k \frac{\partial}{\partial\eta_1}\left[\left(\arctan \frac{\eta_2}{\eta_1}-\frac{\eta_1\eta_2}{\eta_1^2+\eta_2^2}\right)\partial_{\eta_1}^4\partial_{\eta_2}\hat{\varphi}(\eta)\right]\,d\eta_1 d\eta_2\\
&\;-\frac{1}{\pi}\int_{\mathbb{R}^2\backslash\{\eta_1 = 0\}}e^{ix\eta_1} (i\eta_1)^k\frac{\partial}{\partial\eta_1}\left[\left(\ln(\eta_1^2+\eta_2^2)+\frac{\eta_1^2}{\eta_1^2+\eta_2^2}\right)\partial_{\eta_1}^3\partial_{\eta_2}^2\hat{\varphi}(\eta)
\right]\,d\eta_1 d\eta_2\\
&\;+\frac{1}{2\pi}\int_{\mathbb{R}^2\backslash\{\eta_1 = 0\}}e^{ix\eta_1} (i\eta_1)^k \frac{\partial}{\partial\eta_1}\left[\left(3\arctan \frac{\eta_2}{\eta_1}-\frac{\eta_1\eta_2}{\eta_1^2+\eta_2^2}\right)\partial_{\eta_1}^2\partial_{\eta_2}^3\hat{\varphi}(\eta)\right]
\,d\eta_1 d\eta_2.
\end{split}
\end{equation}
We apply the Riemann-Lebesgue Lemma as before to claim that $(x^2f_{3,1})^{(k)}$ stays bounded as $x\rightarrow \pm\infty$ for $k = 0,1,2$.
Similarly, we can rewrite $f_{3,2}$ and $f_{3,3}$ as
\begin{equation}
\begin{split}
f_{3,2}(x)
= &\;-\frac{12}{ x^2}\partial_{\eta_1}^2\partial_{\eta_2}^2\hat{\varphi}(0)\\
&\;-\frac{1}{\pi x^2}\int_{\mathbb{R}^2\backslash\{\eta_1 = 0\}}e^{ix\eta_1} \frac{\partial}{\partial\eta_1}\left[\left(\ln(\eta_1^2+\eta_2^2)+\frac{\eta_1^2}{\eta_1^2+\eta_2^2}\right)\partial_{\eta_1}^3\partial_{\eta_2}^2\hat{\varphi}(\eta)\right]\,d\eta_1 d\eta_2\\
&\;+\frac{2}{\pi x^2}\int_{\mathbb{R}^2\backslash\{\eta_1 = 0\}}e^{ix\eta_1} \frac{\partial}{\partial\eta_1}\left[\left(3\arctan \frac{\eta_2}{\eta_1}-\frac{\eta_1\eta_2}{\eta_1^2+\eta_2^2}\right)\partial_{\eta_1}^2\partial_{\eta_2}^3\hat{\varphi}(\eta)
\right]\,d\eta_1 d\eta_2\\
&\;+\frac{1}{\pi x^2}\int_{\mathbb{R}^2\backslash\{\eta_1 = 0\}}e^{ix\eta_1} \frac{\partial}{\partial\eta_1}\left[\left(2\ln(\eta_1^2+\eta_2^2)+\frac{\eta_1^2}{\eta_1^2+\eta_2^2}\right)\partial_{\eta_1}\partial_{\eta_2}^4\hat{\varphi}(\eta)\right]
\,d\eta_1 d\eta_2,
\end{split}
\label{eqn: final formula for f_32}
\end{equation}
and
\begin{equation}
\begin{split}
f_{3,3}(x)
= &\;-\frac{3}{ x^2}\partial_{\eta_1}^2\partial_{\eta_2}^2\hat{\varphi}(0)+\frac{5}{ x^2}\partial_{\eta_2}^4\hat{\varphi}(0)\\
&\;+\frac{1}{2\pi x^2}\int_{\mathbb{R}^2\backslash\{\eta_1 = 0\}}e^{ix\eta_1} \frac{\partial}{\partial\eta_1}\left[\left(3\arctan \frac{\eta_2}{\eta_1}-\frac{\eta_1\eta_2}{\eta_1^2+\eta_2^2}\right)\partial_{\eta_1}^2\partial_{\eta_2}^3\hat{\varphi}(\eta)
\right]\,d\eta_1 d\eta_2\\
&\;+\frac{1}{\pi x^2}\int_{\mathbb{R}^2\backslash\{\eta_1 = 0\}}e^{ix\eta_1} \frac{\partial}{\partial\eta_1}\left[\left(2\ln(\eta_1^2+\eta_2^2)+\frac{\eta_1^2}{\eta_1^2+\eta_2^2}\right)\partial_{\eta_1}\partial_{\eta_2}^4\hat{\varphi}(\eta)\right]
\,d\eta_1 d\eta_2\\
&\;-\frac{1}{2\pi x^2}\int_{\mathbb{R}^2\backslash\{\eta_1 = 0\}}e^{ix\eta_1} \frac{\partial}{\partial\eta_1}\left[\left(5\arctan \frac{\eta_2}{\eta_1}-\frac{\eta_1\eta_2}{\eta_1^2+\eta_2^2}\right)\partial_{\eta_2}^5\hat{\varphi}(\eta)
\right]\,d\eta_1 d\eta_2.
\end{split}
\label{eqn: final formula for f_33}
\end{equation}
They give similar estimates as those for $f_{3,1}$ as $x\rightarrow\pm\infty$.
Combining them with \eqref{eqn: final formula for f_3} and by the smoothness of $f_3$ on $\mathbb{R}$, we prove that for $k = 0,1,2$,
\begin{equation*}
|f_3^{(k)}(x)| \leq \frac{C}{1+|x|^{k+2}}.
\end{equation*}
If in addition, $m_2=0$, $f_3$ enjoys the following improved estimate for $k = 0,1,2$,
\begin{equation*}
|f_3^{(k)}(x)| \leq \frac{C}{1+x^4}.
\end{equation*}

We analyze $f_2(x)$ using the same approach.
Noticing that
\begin{equation}
\frac{\eta_1(\eta_1^2-\eta_2^2)}{(\eta_1^2+\eta_2^2)^2} = \frac{\partial}{\partial \eta_2}\left(\frac{\eta_1\eta_2}{\eta_1^2+\eta_2^2}\right),
\end{equation}
we calculate in a similar manner,
\begin{equation}
\begin{split}
f_2(x)=&\;\frac{ix}{\pi}\int_{\mathbb{R}^2\backslash\{\eta_1 =0\}} e^{ix\eta_1} \frac{\eta_1\eta_2}{\eta_1^2+\eta_2^2}\partial_{\eta_2}\hat{\varphi}(\eta)\,d\eta\\
%=&\;\frac{1}{\pi}\int_{\mathbb{R}^2\backslash\{\eta_1 =0\}}  \frac{\eta_1\eta_2}{\eta_1^2+\eta_2^2}\partial_{\eta_2}\hat{\varphi}(\eta)\,de^{ix\eta_1}d\eta_2\\
=&\;-\frac{1}{\pi}\int_{\mathbb{R}^2\backslash\{\eta_1 =0\}} e^{ix\eta_1} \frac{\partial}{\partial\eta_1}\left[ \frac{\eta_1\eta_2}{\eta_1^2+\eta_2^2}\partial_{\eta_2}\hat{\varphi}(\eta)\right]\,d\eta_1d\eta_2\\
=&\;-\frac{1}{\pi}\int_{\mathbb{R}^2\backslash\{\eta_1 =0\}} e^{ix\eta_1} \left(\frac{\eta_1\eta_2}{\eta_1^2+\eta_2^2}\partial_{\eta_1\eta_2}\hat{\varphi}(\eta)+\frac{-\eta_1^2\eta_2+\eta_2^3}{(\eta_1^2+\eta_2^2)^2}\partial_{\eta_2}\hat{\varphi}(\eta)\right)\,d\eta_1d\eta_2.
\end{split}
\label{eqn: f_2 first integration by parts}
\end{equation}
Since
\begin{equation}
\frac{-\eta_1^2\eta_2+\eta_2^3}{(\eta_1^2+\eta_2^2)^2} = \frac{\partial}{\partial\eta_2}\left(\frac{1}{2}\ln(\eta_1^2+\eta_2^2)+\frac{\eta_1^2}{\eta_1^2+\eta_2^2}\right),
\end{equation}
by integration by parts,
\begin{equation}
\begin{split}
&\;f_2(x)\\
%=&\;\frac{1}{\pi}\int_{\mathbb{R}^2\backslash\{\eta_1 =0\}} e^{ix\eta_1} \left[-\frac{\eta_1\eta_2}{\eta_1^2+\eta_2^2}\partial_{\eta_1\eta_2}\hat{\varphi}(\eta)+\left(\frac{1}{2}\ln(\eta_1^2+\eta_2^2)+\frac{\eta_1^2}{\eta_1^2+\eta_2^2}\right)\partial_{\eta_2\eta_2}\hat{\varphi}(\eta)\right]\,d\eta_1d\eta_2\\
=&\;\frac{i}{\pi x}\int_{\mathbb{R}^2\backslash\{\eta_1 =0\}} e^{ix\eta_1} \frac{\partial}{\partial \eta_1}\left[-\frac{\eta_1\eta_2}{\eta_1^2+\eta_2^2}\partial_{\eta_1\eta_2}\hat{\varphi}(\eta)+\left(\frac{1}{2}\ln(\eta_1^2+\eta_2^2)+\frac{\eta_1^2}{\eta_1^2+\eta_2^2}\right)\partial_{\eta_2\eta_2}\hat{\varphi}(\eta)\right]\,d\eta_1d\eta_2\\
=&\;-\frac{i}{\pi x}\int_{\mathbb{R}^2\backslash\{\eta_1 =0\}} e^{ix\eta_1}\cdot \frac{\eta_1\eta_2}{\eta_1^2+\eta_2^2}\partial_{\eta_1\eta_1\eta_2}\hat{\varphi}(\eta)\,d\eta_1d\eta_2\\
&\;+\frac{2i}{\pi x}\int_{\mathbb{R}^2\backslash\{\eta_1 =0\}} e^{ix\eta_1} \left(\frac{1}{2}\ln(\eta_1^2+\eta_2^2)+\frac{\eta_1^2}{\eta_1^2+\eta_2^2}\right)\partial_{\eta_1\eta_2\eta_2}\hat{\varphi}(\eta)\,d\eta_1 d\eta_2\\
&\;+\frac{i}{\pi x}\int_{\mathbb{R}^2\backslash\{\eta_1 =0\}} e^{ix\eta_1}\cdot \frac{\eta_1^3+3\eta_1\eta_2^2}{(\eta_1^2+\eta_2^2)^2}\cdot\partial_{\eta_2\eta_2}\hat{\varphi}(\eta)\,d\eta_1 d\eta_2.
\end{split}
\end{equation}
Since
\begin{equation}
\frac{\eta_1^3+3\eta_1\eta_2^2}{(\eta_1^2+\eta_2^2)^2} = \frac{\partial}{\partial \eta_2}\left(2\arctan\frac{\eta_2}{\eta_1}-\frac{\eta_1\eta_2}{\eta_1^2+\eta_2^2}\right),
\end{equation}
we proceed as in \eqref{eqn: decay of f_3 intermediate step} and \eqref{eqn: final formula for f_3} to obtain that
\begin{equation}
\begin{split}
f_2(x)=&\; 2m_2 x^{-2}+\frac{1}{\pi x^2}\int_{\mathbb{R}^2\backslash\{\eta_1 =0\}}
e^{ix\eta_1}\frac{\partial}{\partial\eta_1}\left[\frac{\eta_1\eta_2}{\eta_1^2+\eta_2^2} \cdot \partial_{\eta_1\eta_1\eta_2}\hat{\varphi}(\eta)\right]\,d\eta_1d\eta_2\\
&\;-\frac{2}{\pi x^2}\int_{\mathbb{R}^2\backslash\{\eta_1 =0\}} e^{ix\eta_1} \frac{\partial}{\partial\eta_1}\left[\left(\frac{1}{2}\ln(\eta_1^2+\eta_2^2)+\frac{\eta_1^2}{\eta_1^2+\eta_2^2}\right)\partial_{\eta_1\eta_2\eta_2}\hat{\varphi}(\eta)\right]\,d\eta_1 d\eta_2\\
&\;+\frac{1}{\pi x^2}\int_{\mathbb{R}^2\backslash\{\eta_1 =0\}} e^{ix\eta_1} \frac{\partial}{\partial\eta_1}\left[\left(2\arctan\frac{\eta_2}{\eta_1}-\frac{\eta_1\eta_2}{\eta_1^2+\eta_2^2}\right)\partial_{\eta_2\eta_2\eta_2}\hat{\varphi}(\eta)\right]\,d\eta_1 d\eta_2.
\end{split}
\label{eqn: final formula for f_2}
\end{equation}
Again by the Riemann-Lebesgue Lemma, the last three terms are $o(x^{-2})$ as $x\rightarrow \pm \infty$.
Combining this with the smoothness of $f_2$, we conclude that there exists a universal $C>0$, such that
\begin{equation}
|f_2(x)|\leq \frac{C}{1+x^2}.
\end{equation}
Bounds for the derivatives of $f_2$ and the improved estimates when $m_2 = 0$ can be justified in the same way as that for $f_3$.
We omit the details.
\end{proof}

\begin{proof}[Proof of Lemma \ref{lemma: integrals of f_2 and f_3}]
We start from $f_2$.
By \eqref{eqn: f_2 first integration by parts},
\begin{equation}
\begin{split}
\int_{\mathbb{R}}f_2(x)\,dx =\hat{f}_2(0)
%=&\;\lim_{\eta_1\rightarrow 0}-2 \int_{\mathbb{R}}\left(\frac{\eta_1\eta_2}{\eta_1^2+\eta_2^2}\partial_{\eta_1\eta_2}\hat{\varphi}(\eta)+\frac{-\eta_1^2\eta_2+\eta_2^3}{(\eta_1^2+\eta_2^2)^2}\partial_{\eta_2}\hat{\varphi}(\eta)\right)\,d\eta_2\\
=&\;\lim_{\eta_1\rightarrow 0}-2 \int_{\mathbb{R}}\frac{\eta_1\eta_2}{\eta_1^2+\eta_2^2}\partial_{\eta_1\eta_2}\hat{\varphi}(\eta)\,d\eta_2\\
&\;+\lim_{\eta_1\rightarrow 0}-2 \int_{|\eta_2|>\delta'}\left(\frac{-\eta_2}{\eta_1^2+\eta_2^2}+\frac{2\eta_2^3}{(\eta_1^2+\eta_2^2)^2}\right)\partial_{\eta_2}\hat{\varphi}(\eta)\,d\eta_2\\
&\;+\lim_{\eta_1\rightarrow 0}-2 \int_{|\eta_2|\leq \delta'}\left(\frac{-\eta_2}{\eta_1^2+\eta_2^2}+\frac{2\eta_2^3}{(\eta_1^2+\eta_2^2)^2}\right)(\partial_{\eta_2}\hat{\varphi}(\eta)-\partial_{\eta_2}\hat{\varphi}(\eta_1,0))\,d\eta_2\\
\triangleq &\;I_1+I_2+I_3,
\end{split}
\label{eqn: splitting the integral of f_2}
\end{equation}
where $\delta'>0$ is arbitrary.
Here in the last term, we used the fact that the first factor in the integrand is odd in $\eta_2$.
By the dominated convergence theorem,
\begin{equation}
I_1+I_2 = -2 \int_{|\eta_2|>\delta'}\frac{1}{\eta_2}\cdot\partial_{\eta_2}\hat{\varphi}(0,\eta_2)\,d\eta_2.
\end{equation}
For $I_3$,
\begin{equation}
%\begin{split}
|I_3|%\leq &\;\left|-2 \int_{|\eta_2|\leq \delta'}\left(\frac{-\eta_2}{\eta_1^2+\eta_2^2}+\frac{2\eta_2^3}{(\eta_1^2+\eta_2^2)^2}\right)(\partial_{\eta_2}\hat{\varphi}(\eta)-\partial_{\eta_2}\hat{\varphi}(\eta_1,0))\,d\eta_2\right|\\
\leq C\|\partial_{\eta_2\eta_2}\hat{\varphi}\|_{L^\infty(\mathbb{R}^2)} \int_{|\eta_2|\leq \delta'} \frac{|\eta_2|}{|\eta_2|}\,d\eta_2 \leq  C\delta',
%\end{split}
\end{equation}
where $C>0$ is a universal constant only depending on $\varphi$.
Since $\delta'>0$ is arbitrary, %we proved that
\begin{equation}
\int_{\mathbb{R}}f_2(x)\,dx =-2 \cdot \mathrm{p.v.}\int_{\mathbb{R}}\frac{1}{\eta_2}\partial_{\eta_2}\hat{\varphi}(0,\eta_2)\,d\eta_2.
\end{equation}
In a similar fashion, thanks to \eqref{eqn: g_1 is of order 1/x}, one can justify that
\begin{equation}
\begin{split}
\int_{\mathbb{R}}f_3(x)\,dx=  &\;\lim_{\eta_1\rightarrow 0}\int_{\mathbb{R}}- \left(\arctan \frac{\eta_2}{\eta_1}-\frac{\eta_1\eta_2}{\eta_1^2+\eta_2^2}\right)\partial_{\eta_1\eta_2}\hat{\varphi}(\eta)+2 \frac{\eta_2^3}{(\eta_1^2+\eta_2^2)^2}\cdot\partial_{\eta_2}\hat{\varphi}(\eta)\,d\eta_2\\
=&\;2 \cdot \mathrm{p.v.}\int_{\mathbb{R}}\frac{1}{\eta_2}\partial_{\eta_2}\hat{\varphi}(0,\eta_2)\,d\eta_2.
\end{split}
\end{equation}

We then calculate that
\begin{equation}
\begin{split}
\mathrm{p.v.}\int_{\mathbb{R}}\frac{1}{\eta_2}\partial_{\eta_2}\hat{\varphi}(0,\eta_2)\,d\eta_2
=&\; \lim_{\delta'\rightarrow 0}\int_{|\eta_2|\geq \delta'}d\eta_2\,\frac{1}{\eta_2}\cdot \frac{\partial}{\partial \eta_2}\int_{\mathbb{R}^2}e^{-ix_2\eta_2}\varphi(x_1,x_2)\,dx_1dx_2\\
=&\; \lim_{\delta'\rightarrow 0}\int_{\mathbb{R}^2}\left[\int_{|\eta_2|\geq \delta'}\frac{1}{\eta_2} e^{-ix_2\eta_2}\,d\eta_2\right](-ix_2)\varphi(x_1,x_2)\,dx_1dx_2.
\end{split}
\label{eqn: Hilbert transform of hat phi in y_2}
\end{equation}
It is known that
\begin{equation}
\int_{|\eta_2|\geq \delta'}\frac{1}{\eta_2} e^{-ix_2\eta_2}\,d\eta_2 = \int_{|\eta_2|\geq \delta'}\frac{-i\sin(x_2\eta_2)}{\eta_2}\,d\eta_2 = -i\cdot\mathrm{sgn}(x_2)\int_{|\eta_2|\geq |x_2|\delta'}\frac{\sin(\eta_2)}{\eta_2}\,d\eta_2
\end{equation}
is uniformly bounded in $x_2$ and $\delta'$, which converges to $-i\pi\cdot\mathrm{sgn}(x_2)$ as $\delta' \rightarrow 0^+$.
Applying the dominated convergence theorem to \eqref{eqn: Hilbert transform of hat phi in y_2}, we obtain that
\begin{equation}
\begin{split}
\mathrm{p.v.}\int_{\mathbb{R}}\frac{1}{\eta_2}\partial_{\eta_2}\hat{\varphi}(0,\eta_2)\,d\eta_2 =&\; \int_{\mathbb{R}^2}(-i\pi\cdot \mathrm{sgn}(x_2))(-ix_2)\varphi(x_1,x_2)\,dx_1dx_2\\
=&\; -\pi\int_{\mathbb{R}^2}|x_2|\varphi(x_1,x_2)\,dx_1dx_2= -2 m_1.
\end{split}
\end{equation}
Indeed, by the radial symmetry of $\varphi$,
\begin{equation}
\int_{\mathbb{R}^2}|x_2|\varphi(x_1,x_2)\,dx_1dx_2 = \int_0^\infty \int_0^{2\pi}r^2|\sin\theta|\cdot \varphi(r)\,d\theta dr = 4\int_0^\infty r^2\varphi(r)\,dr = \frac{2m_1}{\pi}.
\label{eqn: a symmetrization trick}
\end{equation}
This completes the proof.
\end{proof}

\section{A Priori Estimates Involving $\mathcal{L}=-\frac{1}{4}(-\Delta)^{1/2}$}\label{section: a priori estimate for L}

\begin{lemma}\label{lemma: a priori estimate of nonlocal eqn}
Let $\mathcal{P}_N$ be defined by \eqref{eqn: Fourier transform on T} and \eqref{eqn: projection operator} with $N>1$.
Assume $Z_0\in H^{l+1}(\mathbb{T})$ with $l\geq 0$, and $f\in L^\infty_T H^l(\mathbb{T})$ for $T>0$. %such that $h = \mathcal{P}_N h$.
The model equation
\begin{equation}
\partial_t Z(s,t) = \mathcal{L}Z(s,t) +\mathcal{P}_N f(s,t),\quad Z(s,0) = Z_0(s),\quad s\in \mathbb{T},\; t\geq 0
\label{eqn: model nonlocal equation}
\end{equation}
has a unique solution $Z\in C_{[0,T]} H^{l+1}(\mathbb{T})$, satisfying that %the following a priori estimates: for $\forall\,t\in[0,T]$,
\begin{align}
\|Z\|_{C_{[0,T]} L^2(\mathbb{T})}\leq &\;\|Z_0\|_{L^2(\mathbb{T})}+\|\mathcal{P}_Nf\|_{L^1_TL^2(\mathbb{T})}.\label{eqn: estimate of L inf L^2 norm of solution of model equation}\\
\|Z\|_{C_{[0,T]} \dot{H}^{l+1}(\mathbb{T})}\leq &\;\|Z_0\|_{\dot{H}^{l+1}(\mathbb{T})}+C(\ln N)^{1/2} \|\mathcal{P}_Nf\|_{L_T^\infty\dot{H}^l(\mathbb{T})}.\label{eqn: estimate of L inf H l+1 semi norm of solution of model equation}
\end{align}
With abuse of notations, $\dot{H}^0(\mathbb{T})$ is understood as $L^2(\mathbb{T})$.

More generally, for arbitrary $1 \leq n_1< n_2\leq N $, define $\mathcal{P}_{n_1,n_2} = \mathcal{P}_{n_2} - \mathcal{P}_{n_1}$.
Then
\begin{equation}
\|\mathcal{P}_{n_1,n_2}Z\|_{C_{[0,T]} \dot{H}^{l+1}(\mathbb{T})}\leq \|\mathcal{P}_{n_1,n_2} Z_0\|_{\dot{H}^{l+1}(\mathbb{T})}+C\left(1+\ln \frac{n_2}{n_1}\right)^{1/2} \|\mathcal{P}_{n_1,n_2}f\|_{L_T^\infty\dot{H}^l(\mathbb{T})}.
\label{eqn: estimate of L inf H l+1 semi norm of solution of model equation in a part of spectrum}
\end{equation}
\begin{proof}
Since $\{e^{t\mathcal{L}}\}_{t\geq 0}$ is a strongly continuous contraction semigroup on $H^{l+1}(\mathbb{T})$ and $L^2(\mathbb{T})$, while $Z_0$ and $\mathcal{P}_N f(\cdot ,t)\in H^{l+1}(\mathbb{T})$, we obtain
\begin{equation}\label{eqn: semi group solution of the model equation}
Z(s,t) = e^{t\mathcal{L}}Z_0(s)+\int_0^t e^{(t- t' )\mathcal{L}}\mathcal{P}_N f(s, t' )\,d t' ,
\end{equation}
as a solution in $C_{[0,T]} H^{l+1}(\mathbb{T})$.
This also implies \eqref{eqn: estimate of L inf L^2 norm of solution of model equation}.
The uniqueness in $C_{[0,T]} H^{l+1}(\mathbb{T})$ follows from that in $L^2_T H^{l+1}(\mathbb{T})$; the latter can be proved by a classic energy estimate.

Now it suffices to show \eqref{eqn: estimate of L inf H l+1 semi norm of solution of model equation in a part of spectrum}, since \eqref{eqn: estimate of L inf H l+1 semi norm of solution of model equation} follows from \eqref{eqn: estimate of L inf L^2 norm of solution of model equation} and \eqref{eqn: estimate of L inf H l+1 semi norm of solution of model equation in a part of spectrum}. % by Parsevel's identity.
Applying $\mathcal{P}_{n_1,n_2}$ to \eqref{eqn: semi group solution of the model equation}, we first find that
\begin{equation}\label{eqn: H l+1 semi norm of the homogeneous part of the semigroup solution}
\|e^{t\mathcal{L}}\mathcal{P}_{n_1,n_2} Z_0\|_{\dot{H}^{l+1}(\mathbb{T})}  \leq e^{-n_1 t/4}\|\mathcal{P}_{n_1,n_2}Z_0\|_{\dot{H}^{l+1}(\mathbb{T})},\quad \forall\,t\geq 0.
\end{equation}
Then consider the integral in \eqref{eqn: semi group solution of the model equation}.
By Parseval's identity,
\begin{equation}
\left\|\int_0^t e^{(t- t' )\mathcal{L}}\mathcal{P}_{n_1,n_2} f(s, t' )\,d t' \right\|_{\dot{H}^{l+1}(\mathbb{T})}^2  = C\sum_{|k|\in (n_1,n_2]}|k|^{2(l+1)}\left|\int_0^t e^{-\frac{1}{4}|k|(t- t' )}\hat{f}_k( t' )\,d t' \right|^2.
\label{eqn: H^l semi norm of the integral term}
\end{equation}
By Cauchy-Schwarz inequality,
\begin{equation}
\begin{split}
&\;\sum_{|k|\in (n_1,n_2]}|k|^{2(l+1)}\left|\int_0^t e^{-\frac{1}{4}|k|(t- t' )}\hat{f}_k( t' )\,d t' \right|^2\\
\leq &\;\sum_{|k|\in (n_1,n_2]}|k|^{2(l+1)}\int_0^t e^{-\frac{1}{4}|k|(t- t' )}\,d t'  \int_0^t e^{-\frac{1}{4}|k|(t- t' )}|\hat{f}_k( t' )|^2\,d t' \\
\leq &\;\sum_{|k|\in (n_1,n_2]}\int_0^t |k|e^{-\frac{1}{4}|k|(t- t' )}\cdot |k|^{2l}|\hat{f}_k( t' )|^2\,d t' .
\end{split}
\end{equation}

Suppose $t\geq n_1^{-1}$. Then
\begin{equation}
\begin{split}
&\;\sum_{|k|\in (n_1,n_2]}\int_0^t |k|e^{-\frac{1}{4}|k|(t- t' )}\cdot |k|^{2l}|\hat{f}_k( t' )|^2\,d t' \\
=&\;\sum_{|k|\in (n_1,n_2]}\int_0^{t-n_1^{-1}}+\int_{t-n_1^{-1}}^t |k|e^{-\frac{1}{4}|k|(t- t' )}\cdot |k|^{2l}|\hat{f}_k( t' )|^2\,d t' .
\end{split}
\end{equation}
For $ t' \leq t-n_1^{-1}$,
\begin{equation}
|k|e^{-\frac{1}{4}|k|(t- t' )}\leq Cn_1e^{-\frac{1}{4}n_1(t- t' )},\quad \forall\,|k|\geq n_1.
\end{equation}
For $ t' \in [t-n_1^{-1},t]$,
\begin{equation}
|k|e^{-\frac{1}{4}|k|(t- t' )}\leq \min\left\{\frac{C}{t- t' }, n_2\right\},\quad \forall\,|k|\leq n_2.
\end{equation}
Hence, by Parseval's identity,
\begin{equation}
\begin{split}
&\;\sum_{|k|\in (n_1,n_2]}\int_0^t |k|e^{-\frac{1}{4}|k|(t- t' )}\cdot |k|^{2l}|\hat{f}_k( t' )|^2\,d t' \\
\leq &\;C\int_0^{t-n_1^{-1}} n_1e^{-\frac{1}{4}n_1(t- t' )} \sum_{|k|\in (n_1,n_2]}|k|^{2l}|\hat{f}_k( t' )|^2\,d t' \\
&\;\quad+\int_{t-n_1^{-1}}^t \min\left\{\frac{C}{t- t' }, n_2\right\}\sum_{|k|\in (n_1,n_2]} |k|^{2l}|\hat{f}_k( t' )|^2\,d t' \\
= &\;C\left(C\int_0^{t-n_1^{-1}} n_1e^{-\frac{1}{4}n_1(t- t' )} \,d t'  +\int_{t-n_1^{-1}}^t \min\left\{\frac{C}{t- t' }, n_2\right\} \,d t' \right)\|\mathcal{P}_{n_1,n_2}f\|^2_{L_T^\infty\dot{H}^l(\mathbb{T})}\\
\leq &\;C\left(1+\ln \frac{n_2}{n_1}\right) \|\mathcal{P}_{n_1,n_2}f\|^2_{L_T^\infty\dot{H}^l(\mathbb{T})}.
\end{split}
\label{eqn: log N bound for the integral term in the model problem}
\end{equation}
The case of $t\leq n_1^{-1}$ can be justified in the same way.

Combining \eqref{eqn: semi group solution of the model equation}, \eqref{eqn: H l+1 semi norm of the homogeneous part of the semigroup solution}, \eqref{eqn: H^l semi norm of the integral term}, and \eqref{eqn: log N bound for the integral term in the model problem}, we prove \eqref{eqn: estimate of L inf H l+1 semi norm of solution of model equation in a part of spectrum}.
\end{proof}
\end{lemma}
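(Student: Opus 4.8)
The plan is to solve \eqref{eqn: model nonlocal equation} explicitly by the Duhamel (variation-of-constants) formula and then read off all three estimates from the resulting representation. Since $\mathcal{L}=-\frac14(-\Delta)^{1/2}$ is a Fourier multiplier with symbol $-\frac14|k|$, it generates a strongly continuous contraction semigroup $\{e^{t\mathcal{L}}\}_{t\geq0}$ on every $H^m(\mathbb{T})$, acting diagonally as $\widehat{(e^{t\mathcal{L}}g)}_k=e^{-|k|t/4}\hat g_k$. First I would write
\begin{equation*}
Z(s,t)=e^{t\mathcal{L}}Z_0(s)+\int_0^t e^{(t-t')\mathcal{L}}\mathcal{P}_N f(s,t')\,dt',
\end{equation*}
observe that this lies in $C_{[0,T]}H^{l+1}(\mathbb{T})$ because $Z_0$ and each $\mathcal{P}_Nf(\cdot,t')$ do, and obtain uniqueness either from semigroup theory or from a standard $L^2_TH^{l+1}$ energy estimate for the difference of two solutions. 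The $L^2$ bound \eqref{eqn: estimate of L inf L^2 norm of solution of model equation} follows immediately: the contraction property gives $\|e^{t\mathcal{L}}Z_0\|_{L^2}\leq\|Z_0\|_{L^2}$, and Minkowski's inequality bounds the Duhamel integral by $\int_0^t\|\mathcal{P}_Nf(\cdot,t')\|_{L^2}\,dt'\leq\|\mathcal{P}_Nf\|_{L^1_TL^2}$.

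The heart of the matter is the dyadic-band estimate \eqref{eqn: estimate of L inf H l+1 semi norm of solution of model equation in a part of spectrum}, from which \eqref{eqn: estimate of L inf H l+1 semi norm of solution of model equation} follows by taking $n_1=1$, $n_2=N$ and handling the purely decaying high-frequency tail $\mathcal{Q}_NZ=e^{t\mathcal{L}}\mathcal{Q}_NZ_0$ separately. I would work entirely on the Fourier side. The homogeneous contribution is immediate from $|k|^{l+1}e^{-|k|t/4}\leq e^{-n_1t/4}|k|^{l+1}$ for $|k|>n_1$. For the forced part I would apply Cauchy--Schwarz in the time variable,
\begin{equation*}
\left|\int_0^t e^{-|k|(t-t')/4}\hat f_k(t')\,dt'\right|^2\leq\left(\int_0^t e^{-|k|(t-t')/4}\,dt'\right)\left(\int_0^t e^{-|k|(t-t')/4}|\hat f_k(t')|^2\,dt'\right),
\end{equation*}
so that the first factor contributes a harmless $|k|^{-1}$ and the weight $|k|^{2(l+1)}$ is reduced to $|k|^{2l+1}$; summing over the band leaves $\sum_{|k|\in(n_1,n_2]}\int_0^t|k|e^{-|k|(t-t')/4}\,|k|^{2l}|\hat f_k(t')|^2\,dt'$.

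The logarithm --- and the step I expect to be the main obstacle --- comes from the singular kernel $|k|e^{-|k|(t-t')/4}$ near $t'=t$. I would split the time integral at $t'=t-n_1^{-1}$. On the far piece $[0,t-n_1^{-1}]$ the uniform bound $|k|e^{-|k|(t-t')/4}\leq Cn_1e^{-n_1(t-t')/4}$, valid for all $|k|\geq n_1$, makes the time kernel integrate to $O(1)$. On the near piece $[t-n_1^{-1},t]$ I would instead use $|k|e^{-|k|(t-t')/4}\leq\min\{C/(t-t'),n_2\}$ for $|k|\leq n_2$; substituting $\sigma=t-t'$ gives $\int_0^{n_1^{-1}}\min\{C/\sigma,n_2\}\,d\sigma\leq C(1+\ln(n_2/n_1))$, where the bandwidth ratio $n_2/n_1$ produces exactly the claimed $\ln(n_2/n_1)$. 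Pulling the remaining $|k|^{2l}|\hat f_k(t')|^2$ factors out through their supremum in time and invoking Parseval converts these sums into $\|\mathcal{P}_{n_1,n_2}f\|_{L^\infty_T\dot{H}^l}^2$, which yields \eqref{eqn: estimate of L inf H l+1 semi norm of solution of model equation in a part of spectrum}; the case $t\leq n_1^{-1}$ is handled by the same splitting with the far piece empty. The only delicate accounting is ensuring that the two frequency cutoffs --- the lower one $n_1$ governing the decay rate on the far piece and the upper one $n_2$ capping the kernel height on the near piece --- enter with the right balance, so that no extra power of $N$ sneaks in beyond the single logarithmic factor.
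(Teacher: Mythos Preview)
Your proposal is correct and follows essentially the same route as the paper's proof: Duhamel representation, contraction for the $L^2$ bound, reduction to the band estimate \eqref{eqn: estimate of L inf H l+1 semi norm of solution of model equation in a part of spectrum}, Cauchy--Schwarz in time on each Fourier mode to reduce the weight from $|k|^{2(l+1)}$ to $|k|^{2l+1}$, and the same splitting of the time integral at $t-n_1^{-1}$ with the same two kernel bounds $|k|e^{-|k|(t-t')/4}\leq Cn_1e^{-n_1(t-t')/4}$ and $|k|e^{-|k|(t-t')/4}\leq\min\{C/(t-t'),n_2\}$ to extract the $\ln(n_2/n_1)$ factor. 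The only cosmetic difference is that the paper deduces \eqref{eqn: estimate of L inf H l+1 semi norm of solution of model equation} by combining the $L^2$ bound with the band estimate, whereas you take $n_1=1$, $n_2=N$ and treat $\mathcal{Q}_NZ=e^{t\mathcal{L}}\mathcal{Q}_NZ_0$ directly; both are equivalent.
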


\begin{lemma}\label{lemma: a priori estimate of nonlocal eqn high freq}
Let $\mathcal{Q}_N = Id - \mathcal{P}_N$ with $N>1$.
Assume $Z_0\in H^{l+\gamma}(\mathbb{T})$ with $l\geq 0$ and $\gamma\in [0,1)$, and $f\in L^\infty_T H^l(\mathbb{T})$ for $T>0$. %such that $h = \mathcal{P}_N h$.
The model equation
\begin{equation}
\partial_t Z(s,t) = \mathcal{L}Z(s,t) +\mathcal{Q}_N f(s,t),\quad Z(s,0) = \mathcal{Q}_N Z_0(s),\quad s\in \mathbb{T},\; t\geq 0
\label{eqn: model nonlocal equation high freq}
\end{equation}
has a unique solution $Z\in C_{[0,T]} H^{l+\gamma}(\mathbb{T})$, satisfying that $Z = \mathcal{Q}_N Z$, and for all $t\in[0,T]$, %the following a priori estimates: for $\forall\,t\in[0,T]$,
\begin{equation}
\|Z(t)\|_{\dot{H}^{l+\gamma}(\mathbb{T})}\leq e^{-tN/4}\|\mathcal{Q}_NZ_0\|_{\dot{H}^{l+\gamma}(\mathbb{T})}+CN^{\gamma-1}\|\mathcal{Q}_N f\|_{L_T^\infty \dot{H}^l}.
\label{eqn: estimate of L inf H l+1 semi norm of solution of model equation high freq}
\end{equation}
With abuse of notations, $\dot{H}^0(\mathbb{T})$ is understood as $L^2(\mathbb{T})$.

\begin{proof}
Once again,
\begin{equation}\label{eqn: semi group solution of the model equation high freq}
Z(s,t) = e^{t\mathcal{L}}\mathcal{Q}_N Z_0(s)+\int_0^t e^{(t- t' )\mathcal{L}}\mathcal{Q}_N f(s, t' )\,d t' ,
\end{equation}
gives a unique solution in $C_{[0,T]} H^{l+\gamma}(\mathbb{T})$.
Obviously, $Z = \mathcal{Q}_NZ$.

It is known that
\begin{equation}
\|e^{t\mathcal{L}}\mathcal{Q}_N Z_0\|_{\dot{H}^{l+\gamma}(\mathbb{T})}\leq e^{-tN/4}\|\mathcal{Q}_N Z_0\|_{\dot{H}^{l+\gamma}(\mathbb{T})}.
\end{equation}
For the second term in \eqref{eqn: semi group solution of the model equation high freq}, by Parseval's identity,
\begin{equation}
\begin{split}
&\;\left\|\int_0^t e^{(t- t')\mathcal{L}}\mathcal{Q}_{N} f(s, t' )\,d t' \right\|_{\dot{H}^{l+\gamma}(\mathbb{T})}^2\\
=&\; C\sum_{|k|>N}|k|^{2(l+\gamma)}\left|\int_0^t e^{-\frac{1}{4}|k|(t- t' )}\hat{f}_k( t' )\,d t' \right|^2\\
\leq &\;C\sum_{|k|>N}|k|^{2(l+\gamma)}\int_0^t e^{-\frac{1}{4}|k|(t- t' )}\,d t'  \int_0^t e^{-\frac{1}{4}|k|(t- t' )}|\hat{f}_k( t' )|^2\,d t' \\
\leq &\;C\sum_{|k|>N}\int_0^t |k|^{2\gamma-1}e^{-\frac{1}{4}|k|(t- t' )}\cdot |k|^{2l}|\hat{f}_k( t' )|^2\,d t'\\
\leq &\;C\int_0^t |t-t'|^{-(2\gamma-1)}e^{-\frac{1}{8}|N|(t- t' )}\sum_{|k|>N}|k|^{2l}|\hat{f}_k( t' )|^2\,d t'\\
\leq &\;CN^{2\gamma-2}\|\mathcal{Q}_N f\|_{L_T^\infty \dot{H}^l}^2.
\end{split}
\end{equation}
This completes the proof.
\end{proof}
\end{lemma}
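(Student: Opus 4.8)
The plan is to construct the solution by Duhamel's formula for the fractional-heat semigroup generated by $\mathcal{L}$, exactly as in the proof of Lemma \ref{lemma: a priori estimate of nonlocal eqn}, and then to exploit that every active Fourier mode has wave number $|k|>N$ to turn the borderline low-frequency behavior (which there forced the $(\ln N)^{1/2}$ loss) into genuine algebraic decay in $N$. Since $\{e^{t\mathcal{L}}\}_{t\geq 0}$ is a strongly continuous contraction semigroup on $H^{l+\gamma}(\mathbb{T})$ and $\mathcal{Q}_N$ commutes with $\mathcal{L}$, I would write the mild solution
\[
Z(s,t) = e^{t\mathcal{L}}\mathcal{Q}_N Z_0(s)+\int_0^t e^{(t-t')\mathcal{L}}\mathcal{Q}_N f(s,t')\,dt'.
\]
Both terms contain only modes $|k|>N$, so $Z=\mathcal{Q}_N Z$; the smoothing of $e^{t\mathcal{L}}$ and the estimate below place $Z$ in $C_{[0,T]}H^{l+\gamma}(\mathbb{T})$ (time-continuity of the Duhamel term by dominated convergence as in Lemma \ref{lemma: a priori estimate of nonlocal eqn}), and uniqueness follows from a standard energy estimate for the homogeneous equation.

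For the homogeneous part, recalling that $\widehat{\mathcal{L}g}_k=-\tfrac14|k|\hat{g}_k$ and that only frequencies $|k|>N$ occur, Parseval's identity and $e^{-|k|t/2}\leq e^{-Nt/2}$ give at once $\|e^{t\mathcal{L}}\mathcal{Q}_N Z_0\|_{\dot{H}^{l+\gamma}}\leq e^{-tN/4}\|\mathcal{Q}_N Z_0\|_{\dot{H}^{l+\gamma}}$, which is the first term on the right-hand side. For the Duhamel term I would again pass to Fourier coefficients via Parseval and apply Cauchy-Schwarz in $t'$ to the kernel $e^{-\frac14|k|(t-t')}$; since $\int_0^t e^{-\frac14|k|(t-t')}\,dt'\leq 4/|k|$, this leaves
\[
\left\|\int_0^t e^{(t-t')\mathcal{L}}\mathcal{Q}_N f\,dt'\right\|_{\dot{H}^{l+\gamma}}^2\leq C\sum_{|k|>N}\int_0^t |k|^{2\gamma-1}e^{-\frac{1}{4}|k|(t-t')}\,|k|^{2l}|\hat{f}_k(t')|^2\,dt'.
\]

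The heart of the argument — and the step I expect to require the most care — is extracting a uniform factor $N^{2\gamma-2}$ from $\int_0^t|k|^{2\gamma-1}e^{-\frac14|k|(t-t')}\,dt'$ across the entire range $\gamma\in[0,1)$, because the sign of the exponent $2\gamma-1$ changes at $\gamma=\tfrac12$. I would split $e^{-\frac14|k|\tau}=e^{-\frac18|k|\tau}e^{-\frac18 N\tau}$ (using $|k|>N$ on the second factor) and treat the two regimes separately: when $2\gamma-1\geq 0$ the elementary bound $x^{2\gamma-1}e^{-x}\leq C$ yields $|k|^{2\gamma-1}e^{-\frac18|k|\tau}\leq C\tau^{1-2\gamma}$, so that $\int_0^t \tau^{1-2\gamma}e^{-\frac18 N\tau}\,d\tau\leq CN^{2\gamma-2}$ after the substitution $u=\tfrac18 N\tau$ and $\Gamma(2-2\gamma)<\infty$; when $2\gamma-1<0$ the naive monotonicity $|k|^{2\gamma-1}\leq N^{2\gamma-1}$ together with $\int_0^t e^{-\frac14 N\tau}\,d\tau\leq 4/N$ gives the same $CN^{2\gamma-2}$ even more directly. (It is precisely this second case, genuinely used through Lemma \ref{lemma: improved estimate for Q_N X}, that cannot be absorbed into the single-line kernel bound valid only for $\gamma\geq\tfrac12$, and so deserves explicit mention.) Summing in $k$ and bounding $\sum_{|k|>N}|k|^{2l}|\hat{f}_k(t')|^2\leq\|\mathcal{Q}_N f\|_{L_T^\infty\dot{H}^l}^2$ then produces the forcing bound $CN^{\gamma-1}\|\mathcal{Q}_N f\|_{L_T^\infty\dot{H}^l}$, which combined with the homogeneous estimate completes the proof. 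The essential structural difference from Lemma \ref{lemma: a priori estimate of nonlocal eqn} is that the uniform lower bound $|k|>N$ replaces the near-critical modes $|k|\sim 1$ responsible there for the logarithm, converting the logarithmic loss into the honest decay $e^{-tN/4}$ and $N^{\gamma-1}$.
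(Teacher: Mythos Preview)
Your proof is correct and follows essentially the same Duhamel--Parseval--Cauchy--Schwarz route as the paper. In fact your case split on the sign of $2\gamma-1$ is an improvement: the paper's penultimate line asserts the pointwise bound $|k|^{2\gamma-1}e^{-\frac14|k|\tau}\leq C\tau^{-(2\gamma-1)}e^{-\frac18 N\tau}$ uniformly for $|k|>N$, but for $\gamma<\tfrac12$ the right-hand side vanishes as $\tau\to 0^+$ while the left does not, so that inequality fails as written. Your monotonicity argument $|k|^{2\gamma-1}\leq N^{2\gamma-1}$ for $|k|>N$ and $2\gamma-1<0$ cleanly closes this gap, and since the lemma is indeed invoked (via Lemma \ref{lemma: improved estimate for Q_N X}) with $\gamma=\theta$ allowed to lie in $[\tfrac14,\tfrac12)$, the repair is genuinely needed.
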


\section{A Priori Estimates involving $g_Y$}
\label{section: a priori estimate for g_Y}
This section aims at proving estimates concerning $g_Y$ in Section \ref{section: proof of convergence and error estimates}, which improves the results in \cite{lin2017solvability}.
We first recall some previous results.

Let $Y\in H^2(\mathbb{T})$.
Denote $\tau = s'-s\in[-\pi,\pi)$.
For $s'\not = s$, with abuse of notations, define
\begin{equation}
L(s,s') = \frac{Y(s')-Y(s)}{\tau},\quad M(s,s') = \frac{Y'(s')-Y'(s)}{\tau},\quad N(s,s') = \frac{L(s,s')-Y'(s)}{\tau}.
\label{eqn: definition of L M N}
\end{equation}
and
\begin{equation}
L(s,s) = Y'(s),\quad M(s,s) = Y''(s),\quad N(s,s) =\frac{1}{2}Y''(s).
\label{eqn: definition of L M N at s}
\end{equation}
Then $L$, $M$, and $N$ enjoy the follows estimates.
\begin{lemma}[\cite{lin2017solvability}, Lemma 3.1]
\label{lemma: estimates for L M N}
With the notations above,
%Consider $L$, $M$ and $N$ defined in \eqref{eqn: definition of L M N} and \eqref{eqn: definition of L M N at s}.
\begin{enumerate}
\item
For $\forall\, 1< p\leq q \leq \infty$ and any interval $I\subset\mathbb{T}$ satisfying $0\in I$
\begin{align}
\|L(s,s')\|_{L^q_{s}(\mathbb{T})L^p_{s'}(s+I)} \leq &\;C|I|^{1/q}\|Y'\|_{L^p(\mathbb{T})},\label{eqn: double Lp estimate for L}\\
\|M(s,s')\|_{L^q_{s}(\mathbb{T})L^p_{s'}(s+I)} \leq &\;C|I|^{1/q}\|Y''\|_{L^p(\mathbb{T})},\label{eqn: double Lp estimate for M}\\
\|N(s,s')\|_{L^q_{s}(\mathbb{T})L^p_{s'}(s+I)} \leq &\;C|I|^{1/q}\|Y''\|_{L^p(\mathbb{T})},\label{eqn: double Lp estimate for N}
\end{align}
where the constants $C>0$ only depend on $p$ and $q$.
Here
\begin{equation*}
\|f(s,s')\|_{L^q_{s}(\mathbb{T})L^p_{s'}(s+I)} \triangleq \left\|\|f(s,s')\|_{L^p_{s'}(s+I)}\right\|_{L^q_{s}(\mathbb{T})}.
\end{equation*}
\item %Let $\mathcal{M}$ be the centered Hardy-Littlewood maximal operator on $\mathbb{T}$.
%Then f
For $\forall\, s,s'\in\mathbb{T}$,
\begin{equation}
%|L(s,s')|\leq 2\mathcal{M} X'(s),\quad
|L(s,s')|\leq 2\mathcal{M} Y'(s),\quad |M(s,s')|\leq 2\mathcal{M} Y''(s),\quad |N(s,s')|\leq 2\mathcal{M} Y''(s).\label{eqn: bound for L M N by maximal function}
\end{equation}
In particular,
\begin{equation}
|L(s,s')|\leq C\|Y'\|_{L^\infty(\mathbb{T})}.
\end{equation}
\end{enumerate}
\end{lemma}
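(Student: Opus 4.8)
The plan is to reduce both parts to the elementary averaging representations obtained by writing the finite differences as integrals of $Y'$ or $Y''$. With $\tau = s'-s$ and the substitution $\sigma = s+t\tau$, one has for all $s,s'$ (the case $\tau=0$ being recovered by continuity)
\begin{equation}
\begin{aligned}
L(s,s') &= \int_0^1 Y'(s+t\tau)\,dt,\quad M(s,s') = \int_0^1 Y''(s+t\tau)\,dt,\\
N(s,s') &= \int_0^1 (1-t)\,Y''(s+t\tau)\,dt,
\end{aligned}
\end{equation}
where the formula for $N$ follows from Taylor's theorem with integral remainder, $Y(s')-Y(s)-\tau Y'(s)=\int_s^{s'}(s'-\sigma)Y''(\sigma)\,d\sigma$, together with $N=(Y(s')-Y(s)-\tau Y'(s))/\tau^2$. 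Evaluating these at $\tau=0$ reproduces the diagonal values in \eqref{eqn: definition of L M N at s}, so the definitions are consistent.

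The pointwise bounds \eqref{eqn: bound for L M N by maximal function} are then immediate. For $L$, the triangle inequality and the substitution $\rho=t\tau$ give $|L(s,s')|\le\int_0^1|Y'(s+t\tau)|\,dt=|\tau|^{-1}\int_0^{\tau}|Y'(s+\rho)|\,d\rho$; since $[s,s']\subset[s-|\tau|,s+|\tau|]$, the latter is at most $2\mathcal{M}Y'(s)$. The same argument applied to $M$, and to $N$ using $0\le 1-t\le 1$, yields the remaining two inequalities, and $|L|\le C\|Y'\|_{L^\infty(\mathbb{T})}$ follows from $\mathcal{M}Y'(s)\le\|Y'\|_{L^\infty(\mathbb{T})}$.

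For the mixed-norm estimates \eqref{eqn: double Lp estimate for L}--\eqref{eqn: double Lp estimate for N} I would apply Minkowski's integral inequality to move the $L^q_s L^p_{s'}$-norm inside the $t$-integral, and then estimate each slice. For fixed $t\in(0,1]$ the change of variables $\sigma=s+t\tau$ in the inner $L^p_{s'}(s+I)$-norm turns $Y'(s+t\tau)$ into a translate of $Y'$ over $s+tI$ and produces a Jacobian factor $t^{-1/p}$; writing the ensuing $L^q_s$-norm as the $L^{q/p}$-norm of the convolution $|Y'|^p*\mathbf{1}_{-tI}$ and invoking Young's inequality gives
\begin{equation}
\|Y'(s+t\tau)\|_{L^q_s(\mathbb{T})L^p_{s'}(s+I)}\le t^{-1/p}\bigl(\|Y'\|_{L^p}^p\,(t|I|)^{p/q}\bigr)^{1/p}=t^{\frac{1}{q}-\frac{1}{p}}\,|I|^{1/q}\,\|Y'\|_{L^p(\mathbb{T})}.
\end{equation}
Integrating in $t$ yields \eqref{eqn: double Lp estimate for L}, and the identical computation with $Y''$ (carrying the harmless extra weight $1-t\le 1$ for $N$) gives \eqref{eqn: double Lp estimate for M} and \eqref{eqn: double Lp estimate for N}. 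The one point that must be watched, and the only place the hypotheses enter, is the integrability of $t^{1/q-1/p}$ near $t=0$: this requires $1/p-1/q<1$, which is guaranteed by $p>1$, while $p\le q$ ensures $q/p\ge 1$ so that Young's inequality is applicable. The main obstacle is therefore not any single estimate but organizing the two changes of variables so that the powers of $t$ and $|I|$ combine correctly and the residual $t$-integral stays convergent; once the averaging representation above is in hand, each remaining step is routine.
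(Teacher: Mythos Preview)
The paper does not give its own proof of this lemma; it is quoted verbatim as Lemma~3.1 of \cite{lin2017solvability} and used as a black box. So there is nothing in the present paper to compare your argument against.

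That said, your proof is correct and self-contained. The averaging representations $L=\int_0^1 Y'(s+t\tau)\,dt$, $M=\int_0^1 Y''(s+t\tau)\,dt$, $N=\int_0^1(1-t)Y''(s+t\tau)\,dt$ are the right starting point, and the pointwise maximal-function bounds in part~2 follow immediately from them exactly as you say. For part~1, your two applications of Minkowski's integral inequality followed by the change of variables $\sigma=s+t\tau$ in the inner norm, the identification of the resulting quantity as $t^{-1/p}\||Y'|^p*\mathbf{1}_{-tI}\|_{L^{q/p}}^{1/p}$, and Young's inequality all check out; the exponent bookkeeping $t^{1/q-1/p}$ is correct, and the integrability condition $1/p-1/q<1$ is indeed guaranteed by $p>1$. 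The endpoint $q=\infty$ also goes through since then $(t|I|)^{p/q}=1$ and $\int_0^1 t^{-1/p}\,dt<\infty$ for $p>1$. Nothing is missing.
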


\begin{lemma}[\cite{lin2017solvability}, Remark 2.1, Lemma 3.5, Lemma 3.6, and Lemma C.1]
\label{lemma: formula for g_Y'}
We have
\begin{equation}
g_{Y}(s) = \frac{1}{4\pi}\mathrm{p.v.}\int_\mathbb{T} \left(-\frac{|Y'(s')|^2}{|L|^2}+\frac{2(L\cdot Y'(s'))^2}{|L|^4}-1\right)\frac{L}{\tau} + \left(\frac{1}{\tau}-\frac{\tau}{4\sin^2 (\frac{\tau}{2})}\right)L\,ds',
\label{eqn: formula for g_Y}
\end{equation}
and
\begin{equation}
%g_{Y}(s) = \int_\mathbb{T} \Gamma_{0,i}(s,s')\,ds'+\frac{1}{4}(-\Delta)^{1/2}Y,\quad
g'_{Y}(s) = \int_\mathbb{T} \Gamma_{1}(s,s')\,ds',
\label{eqn: introduce the notation Gamma_1}
\end{equation}
where for $s\not = s'$,
\begin{equation}
\begin{split}
&\;4\pi\Gamma_{1}(s,s')\\
= &\;\frac{(Y'(s)-L)\cdot N}{|L|^2}M - \frac{2(N\cdot L)(Y'(s)\cdot L)}{|L|^4}M - \left(\frac{\tau^2 - 4\sin^2(\frac{\tau}{2})}{4\tau\sin^2(\frac{\tau}{2})}\right)M\\
&\;+\frac{(M-2N)\cdot M}{|L|^2}Y'(s)+\frac{2(N\cdot L)( L\cdot M)}{|L|^4}Y'(s)\\
&\; +\frac{2 (L\cdot M) (L\cdot (M-N)) (L\cdot Y'(s))}{|L|^6}L+\frac{2 ((N-M)\cdot M)(L\cdot Y'(s))}{|L|^4}L\\
&\;+\frac{2 (L\cdot M) (L\cdot Y'(s'))}{|L|^4} N+\frac{2 (N\cdot M) (L\cdot Y'(s'))}{|L|^4}L\\
&\;+\frac{2 (L\cdot M) (N\cdot Y'(s'))}{|L|^4}L-\frac{6 (L\cdot M) (L\cdot Y'(s')) (L\cdot N)}{|L|^6}L.
\end{split}
\label{eqn: simplified Gamma order 1}
\end{equation}
Moreover,
\begin{equation}
g''_{Y}(s) = \int_\mathbb{T} \partial_s\Gamma_{1}(s,s')\,ds'.
\label{eqn: formula for g_X''}
\end{equation}
\end{lemma}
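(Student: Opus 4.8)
The three identities are recorded in \cite{lin2017solvability} (Remark 2.1, Lemmas 3.5--3.6, and Lemma C.1), so the shortest route is to invoke those results; to keep the argument self-contained, however, I would rederive them from the contour-dynamic representation \eqref{eqn: string motion in the singular case} with $S\equiv 1$ and the definition \eqref{eqn: def of g_Y}. The starting point is to rewrite $U_Y$ using the difference quotient $L(s,s')=(Y(s')-Y(s))/\tau$ from \eqref{eqn: definition of L M N}: since $Y(s')-Y(s)=\tau L$, the first alternative form of \eqref{eqn: string motion in the singular case} becomes $U_Y(s)=\frac{1}{4\pi}\,\mathrm{p.v.}\int_{\mathbb{T}}\big(-|Y'(s')|^2/|L|^2+2(L\cdot Y'(s'))^2/|L|^4\big)\,L/\tau\,ds'$.

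Next I would establish the singular-integral representation of the fractional Laplacian on the torus, namely $(-\Delta)^{1/2}Y(s)=\frac{1}{4\pi}\,\mathrm{p.v.}\int_{\mathbb{T}}(Y(s)-Y(s'))/\sin^2(\tau/2)\,ds'$. This is verified on Fourier modes $Y=e^{iks}$ via the classical identity $\int_{-\pi}^{\pi}(1-\cos k\tau)/(1-\cos\tau)\,d\tau=2\pi|k|$ (the Fej\'er-kernel computation), which reproduces the symbol $|k|$. Substituting this into $g_Y=U_Y+\tfrac14(-\Delta)^{1/2}Y$ and then inserting $L/\tau-L/\tau$ under the integral, with the subtracted copy absorbed into the $U_Y$ integrand, regroups the integrand exactly into \eqref{eqn: formula for g_Y}: the bracket $-|Y'(s')|^2/|L|^2+2(L\cdot Y'(s'))^2/|L|^4-1$ vanishes as $s'\to s$ (there $L\to Y'(s)$, so the bracket tends to $-1+2-1=0$), while $1/\tau-\tau/(4\sin^2(\tau/2))$ is smooth because the two singularities at $\tau=0$ cancel. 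Thus both terms of \eqref{eqn: formula for g_Y} are genuinely integrable, which is what makes the subsequent differentiation legitimate.

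For the first derivative I would differentiate \eqref{eqn: formula for g_Y} in $s$ under the integral sign, using the elementary relations $\partial_s\tau=-1$, $\partial_s L=N$, $\partial_s M=(M-Y''(s))/\tau$ and $\partial_s Y'(s)=Y''(s)$ together with the quotient rule on each scalar factor. Collecting the resulting terms and repeatedly using $Y(s')-Y(s)=\tau L$, $Y'(s')-Y'(s)=\tau M$, and $L-Y'(s)=\tau N$ to eliminate explicit factors of $\tau$ reduces the differentiated expression to the compact kernel $\Gamma_1(s,s')$ of \eqref{eqn: simplified Gamma order 1}. The estimates of Lemma \ref{lemma: estimates for L M N}, in particular the pointwise bounds \eqref{eqn: bound for L M N by maximal function}, then show that after these cancellations every term of $\Gamma_1$ is dominated uniformly in $s'$ by a product of bounded factors and one occurrence of $M$ or $N$, hence by a constant multiple of $\mathcal{M}Y''(s)$; in particular the seemingly singular coefficient $(\tau^2-4\sin^2(\tau/2))/(4\tau\sin^2(\tau/2))$ is in fact $O(\tau)$ near $\tau=0$. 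Consequently $\Gamma_1(s,\cdot)$ is absolutely integrable and no principal value is needed, giving $g'_Y=\int_{\mathbb{T}}\Gamma_1\,ds'$. Differentiating once more is then comparatively routine, since $\partial_s\Gamma_1$ retains at worst an integrable $1/\tau$ singularity, and this yields $g''_Y=\int_{\mathbb{T}}\partial_s\Gamma_1\,ds'$, i.e.\ \eqref{eqn: formula for g_X''}.

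The main obstacle is the rigorous justification of differentiating the principal-value integral \eqref{eqn: formula for g_Y} the first time: one must verify that $\partial_s$ may be exchanged with the $\mathrm{p.v.}$ limit, that the symmetric boundary contributions from the excised interval $|\tau|<\eta$ vanish as $\eta\to 0$, and that the differentiated integrand is absolutely integrable so that $\Gamma_1$ is well defined pointwise. This hinges entirely on the cancellations encoded in the $L,M,N$ bookkeeping — the purely algebraic but lengthy reduction from the raw $s$-derivative to the explicit form \eqref{eqn: simplified Gamma order 1} — and on the maximal-function bounds of Lemma \ref{lemma: estimates for L M N}, which control the near-diagonal behavior using the well-stretched lower bound on $|L|$.
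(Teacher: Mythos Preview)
Your approach is correct and is essentially the derivation carried out in \cite{lin2017solvability}, which the present paper simply cites without reproducing a proof. The only imprecisions are in your informal bounds: several terms of $\Gamma_1$ carry \emph{two} factors from $\{M,N\}$ rather than one (e.g.\ $(M-2N)\cdot M\,|L|^{-2}Y'(s)$), so the pointwise domination is by $C\lambda^{-2}\|Y'\|_{L^\infty}(\mathcal{M}Y''(s))^2$ rather than a single maximal function; and for $\partial_s\Gamma_1$ the integrability is not a bare $1/\tau$ bound but relies on the cancellations $\partial_s M=(M-Y''(s))/\tau$ and $\partial_s N=\tau^{-3}\int_0^\tau\int_0^\eta(Y''(s+\zeta)-Y''(s))\,d\zeta\,d\eta$, which the paper exploits in the proof of Lemma~\ref{lemma: improved H2 estimate for g_X} via Besov-type estimates requiring $Y\in H^{9/4}$. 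These are refinements of your sketch rather than gaps; the structure of the argument is as you describe.
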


We first prove Lemma \ref{lemma: improved H2 estimate for g_X}.

\begin{proof}[Proof of Lemma \ref{lemma: improved H2 estimate for g_X}]

With $\tau = s'-s$, we calculate
\begin{align}
\partial_s L(s,s') = &\;N(s,s'),\\
\partial_s M(s,s') = &\;\frac{Y'(s')-Y'(s)-\tau Y''(s)}{\tau^2}=\frac{M(s,s')- Y''(s)}{\tau},\label{eqn: s derivative of M}\\
\partial_s N(s,s') = &\;\frac{Y(s')-Y(s)-\tau Y'(s)-\frac{\tau^2}{2}Y''(s)}{\tau^3}.
\end{align}
%Let $\Gamma_1$ be defined as in \eqref{eqn: simplified Gamma order 1} corresponding to $Y$.
We claim that, by taking $s$-derivative in \eqref{eqn: simplified Gamma order 1}, %and \eqref{eqn: formula for g_X''},
\begin{equation}
\begin{split}
&\;|\partial_s\Gamma_1(s,s')|\\
\leq &\; C\lambda^{-2}\|Y'\|_{L^\infty(\mathbb{T})}(|\partial_s M||M|+|\partial_s M||N|+|\partial_s N||M|)\\
&\;+C\lambda^{-3}\|Y'\|_{L^\infty(\mathbb{T})}|M||N|(|M|+|N|)\\
&\;+C\lambda^{-2}|Y''(s)||M|(|M|+|N|)+C|M|+C|\tau||\partial_s M|.
\end{split}
\label{eqn: derivative of gamma 1}
\end{equation}
In fact, on the right hand side, the first term bounds all the terms in $\partial_s\Gamma_{1}(s,s')$ whenever the $s$-derivative falls on $M$ or $N$ in  \eqref{eqn: simplified Gamma order 1};
the second term comes from the terms when the derivative falls on $L$, including those in the denominators;
the third term shows up because the derivative may hit $Y'(s)$; % in \eqref{eqn: simplified Gamma order 1};
and the last two terms come from the $s$-derivative of the third term in \eqref{eqn: simplified Gamma order 1}.

By Lemma \ref{lemma: formula for g_Y'}, it suffices to bound
\begin{equation}
\left\|\int_{\mathbb{T}}|\partial_s \Gamma_1(s,s+\tau)|\,d\tau\right\|_{L^2_s(\mathbb{T})}.
\end{equation}
It is not difficult to show that by Lemma \ref{lemma: estimates for L M N} and \eqref{eqn: s derivative of M},
\begin{equation}
\begin{split}
&\;\left\|\int_{\mathbb{T}}\lambda^{-3}\|Y'\|_{L^\infty(\mathbb{T})}|M||N|(|M|+|N|)\right.\\
&\;\quad +\lambda^{-2}|Y''(s)||M|(|M|+|N|)+|M|+|\tau||\partial_s M|\,d\tau \bigg\|_{L^2_s(\mathbb{T})}\\
\leq &\; C\lambda^{-3}\|Y'\|_{L^\infty(\mathbb{T})}\|M\|_{L^\infty_s L^2_{s'}(\mathbb{T})}\|N\|_{L^\infty_s L^2_{s'}(\mathbb{T})}\|\mathcal{M}Y''\|_{L^2(\mathbb{T})}\\
&\; +C\lambda^{-2}\|Y''\|_{L^2(\mathbb{T})}\|M\|_{L^\infty_s L^2_{s'}(\mathbb{T})}(\|M\|_{L^\infty_s L^2_{s'}(\mathbb{T})}+\|N\|_{L^\infty_s L^2_{s'}(\mathbb{T})})\\
&\; +C\|Y''\|_{L^2(\mathbb{T})}\\
\leq &\; C\lambda^{-3}\|Y''\|_{L^2(\mathbb{T})}^4.
\end{split}
\label{eqn: easy terms in H2 estimate of g_Y}
\end{equation}

We still need to show
\begin{equation}
\left\|\int_{\mathbb{T}}|\partial_s M||M|+|\partial_s M||N|+|\partial_s N||M|\,d\tau \right\|_{L^2_s(\mathbb{T})}\leq C\|Y\|_{\dot{H}^{9/4}(\mathbb{T})}^2.
\label{eqn: most singular term in H2 estimate of g_Y}
\end{equation}
Notice that
\begin{align}
N(s,s') = &\;\frac{1}{\tau^2}\int_0^\tau Y'(s+\eta)-Y'(s)\,d\eta,\label{eqn: representation of N}\\
\partial_s M(s,s') = &\;\frac{1}{\tau^2}\int_0^\tau Y''(s+\eta)-Y''(s)\,d\eta,\label{eqn: representation of M derivative}\\
\partial_s N(s,s') = &\;\frac{1}{\tau^3}\int_0^\tau\int_0^\eta  Y''(s+\zeta)-Y''(s)\,d\zeta d\eta.\label{eqn: representation of N derivative}
\end{align}
%by Minkowski inequality,
For all $p\in[1,\infty]$,
\begin{equation}
\begin{split}
\|N(s,s+\tau)\|_{L^p_s(\mathbb{T})} \leq &\;\frac{1}{\tau^2}\left|\int_0^\tau \|Y'(s+\eta)-Y'(s)\|_{L^p_s(\mathbb{T})}\right|\,d\eta\\
\leq &\;\frac{C}{|\tau|}\sup_{|\eta|\leq |\tau|}\|Y'(s+\eta)-Y'(s)\|_{L^p_s(\mathbb{T})},
\end{split}
\label{eqn: L^p_s estimate for N}
\end{equation}
and similarly,
\begin{align}
\|\partial_s M(s,s+\tau)\|_{L^p_s(\mathbb{T})}\leq  &\;\frac{C}{|\tau|}\sup_{|\eta|\leq |\tau|}\|Y''(s+\eta)-Y''(s)\|_{L^p_s(\mathbb{T})},\\
\|\partial_s N(s,s+\tau)\|_{L^p_s(\mathbb{T})}\leq  &\;\frac{C}{|\tau|}\sup_{|\eta|\leq |\tau|}\|Y''(s+\eta)-Y''(s)\|_{L^p_s(\mathbb{T})}.\label{eqn: L^p_s estimate for N s derivative}
\end{align}
Hence,
\begin{equation}
\begin{split}
&\;\left\|\int_{\mathbb{T}}|\partial_s M||M|+|\partial_s M||N|+|\partial_s N||M|\,d\tau \right\|_{L^2_s(\mathbb{T})}\\
\leq &\;\int_{\mathbb{T}}(\|\partial_s M\|_{L^3_s(\mathbb{T})}+\|\partial_s N\|_{L^3_s(\mathbb{T})})(\|M\|_{L^6_s(\mathbb{T})}+\|N\|_{L^6_s(\mathbb{T})})\,d\tau\\
\leq &\;C\int_{\mathbb{T}}\frac{1}{|\tau|^{7/12}}\sup_{|\eta|\leq |\tau|}\|Y''(s+\eta)-Y''(s)\|_{L^3_s(\mathbb{T})}\\
&\;\qquad\cdot \frac{1}{|\tau|^{17/12}}\sup_{|\eta|\leq |\tau|}\|Y'(s+\eta)-Y'(s)\|_{L^6_s(\mathbb{T})}\,d\tau\\
\leq &\;C\left(\int_{\mathbb{T}}\frac{1}{|\tau|^{7/6}}\sup_{|\eta|\leq |\tau|}\|Y''(s+\eta)-Y''(s)\|_{L^3_s(\mathbb{T})}^2\,d\tau\right)^{1/2}\\
&\;\quad\cdot \left(\int_{\mathbb{T}}\frac{1}{|\tau|^{17/6}}\sup_{|\eta|\leq |\tau|}\|Y'(s+\eta)-Y'(s)\|_{L^6_s(\mathbb{T})}^2\,d\tau\right)^{1/2}\\
\leq &\;C\|Y''\|_{\dot{B}^{1/12}_{3,2}(\mathbb{T})}\|Y'\|_{\dot{B}^{11/12}_{6,2}(\mathbb{T})}\\
\leq &\;C\|Y\|_{\dot{B}^{2+1/4}_{2,2}(\mathbb{T})}^2.
\end{split}
\label{eqn: bounding the most singular term in g_X H^2 estimate}
\end{equation}
In the last two inequalities, we used equivalent norms of Besov spaces and  embedding theorems between them \cite[\S\,2.5.12 and \S\,2.7.1]{triebel2010theory}.
%Note that $\|Y\|_{\dot{B}^{2+1/4}_{2,2}(\mathbb{T})} = \|Y\|_{\dot{H}^{2+1/4}(\mathbb{T})}$, which
This proves \eqref{eqn: most singular term in H2 estimate of g_Y}.

Combining \eqref{eqn: derivative of gamma 1}, \eqref{eqn: easy terms in H2 estimate of g_Y}, and \eqref{eqn: most singular term in H2 estimate of g_Y}, we complete the proof of Lemma \ref{lemma: improved H2 estimate for g_X}.
\end{proof}

Next, we prove Lemma \ref{lemma: improved L2 estimate for g_X1-g_X2} and Lemma \ref{lemma: improved H1 estimate for g_X1-g_X2}.

\begin{proof}[Proof of Lemma \ref{lemma: improved L2 estimate for g_X1-g_X2}]

Let $L_i$, $M_i$, and $N_i$ be defined as in \eqref{eqn: definition of L M N} and \eqref{eqn: definition of L M N at s} with $Y$ replaced by $Y_i$ $(i=1,2)$.
Denote $\delta Y = Y_1-Y_2$, and let $\delta L$, $\delta M$ and $\delta N$ be defined in a similar manner.
Define $V = \|Y_1'\|_{L^\infty(\mathbb{T})}+ \|Y_2'\|_{L^\infty(\mathbb{T})}$.
%
%We first show \eqref{eqn: improved L2 estimate for g_X1-g_X2}.
%%To bound $\|g_{Y_1}-g_{Y_2}\|_{L^2(\mathbb{T})}$, b
By Lemma \ref{lemma: formula for g_Y'}, %it suffices to bound the $L^2$-norm of
\begin{equation}
\begin{split}
&\;4\pi(g_{Y_1}-g_{Y_2})\\
= &\;\mathrm{p.v.}\int_\mathbb{T} \left(-\frac{\delta Y'(s')\cdot (Y_1'(s')+Y_2'(s'))}{|L_1|^2}+\frac{|Y_2'(s')|^2(L_1+L_2)\cdot \delta L}{|L_1|^2|L_2|^2}\right)\frac{L_1}{\tau}\,ds'\\
&\;-\mathrm{p.v.}\int_\mathbb{T} \frac{2(L_1\cdot Y'_1(s'))^2(L_1+L_2)\cdot \delta L}{|L_1|^4|L_2|^2}\cdot \frac{L_1}{\tau}\,ds'\\
&\;+\mathrm{p.v.}\int_\mathbb{T} \frac{2(L_1\cdot Y'_1(s')+L_2\cdot Y'_2(s'))\cdot (\delta L\cdot Y_1'(s')+L_2 \cdot \delta Y'(s'))}{|L_1|^2|L_2|^2}\cdot\frac{L_1}{\tau}\,ds'\\
&\;-\mathrm{p.v.}\int_\mathbb{T} \frac{2(L_2\cdot Y'_2(s'))^2(L_1+L_2)\cdot \delta L}{|L_1|^2|L_2|^4}\cdot\frac{L_1}{\tau}\,ds'\\
&\;+\mathrm{p.v.}\int_\mathbb{T} \left(-\frac{|Y'_2(s')|^2}{|L_2|^2}+\frac{2(L_2\cdot Y'_2(s'))^2}{|L_2|^4}-1\right)\frac{\delta L}{\tau} + \left(\frac{1}{\tau}-\frac{\tau}{4\sin^2 (\frac{\tau}{2})}\right)\delta L\,ds'.
\end{split}
\end{equation}
We shall do integration by parts to remove the derivative from the $\delta Y'(s')$ terms in the integrand.
Notice that
\begin{equation}
\delta Y'(s') = \partial_{s'}(\delta Y(s')-\delta Y(s)) = \partial_{s'}(\tau \delta L).
\end{equation}
Thanks to the regularity of $Y_i$, it is not difficult to justify the integration by parts in spite of the singularity in the integrand.
Indeed, we obtain
\begin{equation}
\begin{split}
&\;4\pi(g_{Y_1}-g_{Y_2})\\
= &\;\mathrm{p.v.}\int_\mathbb{T} \frac{\delta L}{\tau}\cdot\left(-\frac{(Y_1'(s')+Y_2'(s'))\otimes L_1}{|L_1|^2}+\frac{|Y_2'(s')|^2(L_1+L_2)\otimes L_1}{|L_1|^2|L_2|^2}\right)\,ds'\\
&\;+\mathrm{p.v.}\int_\mathbb{T} \delta L \cdot \partial_{s'}\left(\frac{Y_1'(s')+Y_2'(s')}{|L_1|^2} \otimes L_1\right)\,ds'\\
&\;-\mathrm{p.v.}\int_\mathbb{T} \frac{\delta L}{\tau}\cdot\frac{2(L_1\cdot Y'_1(s'))^2(L_1+L_2)\otimes L_1}{|L_1|^4|L_2|^2}\,ds'\\
&\;+\mathrm{p.v.}\int_\mathbb{T} \frac{\delta L}{\tau}\cdot\frac{2(L_1\cdot Y'_1(s')+L_2\cdot Y'_2(s'))(Y_1'(s')+L_2)\otimes L_1}{|L_1|^2|L_2|^2}\,ds'\\
&\;-\mathrm{p.v.}\int_\mathbb{T} \delta L \cdot \partial_{s'}\left(\frac{2(L_1\cdot Y'_1(s')+L_2\cdot Y'_2(s'))\cdot L_2 \otimes L_1}{|L_1|^2|L_2|^2}\right)\,ds'\\
&\;-\mathrm{p.v.}\int_\mathbb{T} \frac{\delta L}{\tau}\cdot\frac{2(L_2\cdot Y'_2(s'))^2(L_1+L_2)\otimes L_1}{|L_1|^2|L_2|^4}\,ds'\\
&\;+\mathrm{p.v.}\int_\mathbb{T} \frac{\delta L}{\tau}\cdot\left(-\frac{|Y'_2(s')|^2}{|L_2|^2}+\frac{2(L_2\cdot Y'_2(s'))^2}{|L_2|^4}-1\right)\,ds'\\
&\;+\mathrm{p.v.}\int_\mathbb{T} \delta L \left(\frac{1}{\tau}-\frac{\tau}{4\sin^2 (\frac{\tau}{2})}\right)\,ds'\\
\triangleq&\;\sum_{i = 1}^8 J_i. %\mathrm{p.v.}\int_\mathbb{T} \delta L \cdot  J_{i}(s,s')\,ds'.
\end{split}
\label{eqn: rewrite g_Y1-g_Y2}
\end{equation}

Take $\beta'>\beta$ such that it also  satisfies \eqref{eqn: admissible range of beta}.
Since $Y_i\in H^{2+\theta}(\mathbb{T})$, $|Y_i'(s')-L_i|\leq C|\tau|^{\frac{1}{2}+\beta'} \|Y_i\|_{\dot{H}^{2+\theta}}$.
%(In fact, if $\theta>\frac{1}{2}$, $\beta$ can range in $(0,\frac{1}{2}]$.
In $J_1$, we have
\begin{equation*}
\begin{split}
&\;\left|-\frac{(Y_1'(s')+Y_2'(s'))\otimes L_1}{|L_1|^2}+\frac{|Y_2'(s')|^2(L_1+L_2)\otimes L_1}{|L_1|^2|L_2|^2}\right|\\
= &\;\left|\frac{(Y_2'(s')+L_2)\cdot (Y_2'(s')-L_2)(L_1+L_2)\otimes L_1}{|L_1|^2|L_2|^2}+\frac{(L_1+L_2-Y_1'(s')-Y_2'(s'))\otimes L_1}{|L_1|^2}\right|\\
\leq &\;C\lambda^{-2}V|\tau|^{\frac{1}{2}+\beta'} (\|Y_1\|_{\dot{H}^{2+\theta}}+\|Y_2\|_{\dot{H}^{2+\theta}}).
\end{split}
\end{equation*}
Hence, by Cauchy-Schwarz inequality and $\beta'>\beta$,
\begin{equation}
\begin{split}
|J_1|\leq &\; C\lambda^{-2}V(\|Y_1\|_{\dot{H}^{2+\theta}}+\|Y_2\|_{\dot{H}^{2+\theta}}) \int_{\mathbb{T}} \frac{|\delta Y(s+\tau)-\delta Y(s)|}{|\tau|^{\frac{3}{2}-\beta'}}\,d\tau\\
\leq &\; C\lambda^{-2}V(\|Y_1\|_{\dot{H}^{2+\theta}}+\|Y_2\|_{\dot{H}^{2+\theta}})
\left(\int_{\mathbb{T}} \frac{|\delta Y(s+\tau)-\delta Y(s)|^2}{|\tau|^{2-2\beta}}\,d\tau\right)^{1/2}.
%\left(\int_{\mathbb{T}} \frac{1}{|\tau|^{1-2\beta'+2\beta}}\,d\tau\right)^{1/2}.
\end{split}
\label{eqn: bound for J_1}
\end{equation}
Similarly,
\begin{equation}
|J_3+J_4+J_6|+|J_7|\leq C\lambda^{-2}V(\|Y_1\|_{\dot{H}^{2+\theta}}+\|Y_2\|_{\dot{H}^{2+\theta}})
\left(\int_{\mathbb{T}} \frac{|\delta Y(s+\tau)-\delta Y(s)|^2}{|\tau|^{2-2\beta}}\,d\tau\right)^{1/2}.
\label{eqn: bound for J_3467}
\end{equation}
Since $\partial_{s'}L_i = M_i-N_i$, by Lemma \ref{lemma: estimates for L M N} and Sobolev embedding,
\begin{equation}
\begin{split}
&\;|J_2|+|J_5|\\
\leq &\;C\lambda^{-2}V\int_{\mathbb{T}} |\delta L|\cdot (|Y_1''(s')|+|Y_2''(s')|+|M_1|+|M_2|+|N_1|+|N_2|)\,d\tau\\
\leq &\;C\lambda^{-2}V\left(\int_{\mathbb{T}} \frac{|\delta Y(s+\tau)-\delta Y(s)|^2}{|\tau|^{2-2\beta}}\,d\tau\right)^{1/2}
\left\||\tau|^{-\beta}\right\|_{L^{1/\beta'}}\\
&\;\qquad \cdot
\||Y_1''(s')|+|Y_2''(s')|+|M_1|+|M_2|+|N_1|+|N_2|\|_{L^{1/(\frac{1}{2}-\beta')}_{s'}}\\
\leq &\;C\lambda^{-2}V(\|Y_1\|_{\dot{H}^{2+\theta}}+\|Y_2\|_{\dot{H}^{2+\theta}})
\left(\int_{\mathbb{T}} \frac{|\delta Y(s+\tau)-\delta Y(s)|^2}{|\tau|^{2-2\beta}}\,d\tau\right)^{1/2}.
\end{split}
\label{eqn: bound for J_25}
\end{equation}
Finally,
\begin{equation}
|J_8|\leq C\int_{\mathbb{T}} |\delta Y(s+\tau)-\delta Y(s)|\,d\tau.
\label{eqn: bound for J_8}
\end{equation}
Combining \eqref{eqn: rewrite g_Y1-g_Y2}-%\eqref{eqn: bound for J_1}, \eqref{eqn: bound for J_3467}, and
\eqref{eqn: bound for J_8} and taking $L^2$-norm in $s$, we prove the desired estimate \eqref{eqn: improved L2 estimate for g_X1-g_X2}.

\end{proof}

\begin{proof}[Proof of Lemma \ref{lemma: improved H1 estimate for g_X1-g_X2}]
To prove \eqref{eqn: improved H1 estimate for g_X1-g_X2}, let $\Gamma_{1,i}$ $(i =1,2)$ be defined by \eqref{eqn: introduce the notation Gamma_1} and \eqref{eqn: simplified Gamma order 1} with $Y$ replaced by $Y_i$.
By Lemma \ref{lemma: formula for g_Y'}, it suffices to bound
\begin{equation}
\left\|\int_{\mathbb{T}} \Gamma_{1,1} (s,s')-\Gamma_{1,2} (s,s')\,ds'\right\|_{L_s^2(\mathbb{T})}.
\label{eqn: goal in bounding H1 norm of g-g}
\end{equation}
For conciseness, we only show how to bound the part of the difference arising from the last term of \eqref{eqn: simplified Gamma order 1}.
%Similarly, w
We write
\begin{equation}
\begin{split}
&\;\frac{ (L_1\cdot M_1) (L_1\cdot Y_1'(s')) (L_1\cdot N_1)}{|L_1|^6}L_1-\frac{ (L_2\cdot M_2) (L_2\cdot Y_2'(s')) (L_2\cdot N_2)}{|L_2|^6}L_2\\
=&\;\frac{ (L_1\cdot \delta M) (L_1\cdot Y_1'(s')) (L_1\cdot N_1)}{|L_1|^6}L_1+\frac{ (L_1\cdot M_2) (L_1\cdot Y_1'(s')) (L_1\cdot \delta N)}{|L_1|^6}L_1\\
&\;+\frac{ (\delta L\cdot M_2) (L_1\cdot Y_1'(s')) (L_1\cdot N_2)}{|L_1|^6}L_1\\
&\;+\frac{ (L_2\cdot M_2) (L_1\cdot Y_1'(s')) (L_1\cdot N_2)}{|L_1|^4}L_1\cdot \frac{|L_2|^2-|L_1|^2}{|L_1|^2|L_2|^2}\\
&\;+\frac{ (L_2\cdot M_2) (\delta L\cdot Y_1'(s')) (L_1\cdot N_2)}{|L_1|^4|L_2|^2}L_1+\frac{ (L_2\cdot M_2) (L_2\cdot \delta Y'(s')) (L_1\cdot N_2)}{|L_1|^4|L_2|^2}L_1\\
&\;+\frac{ (L_2\cdot M_2) (L_2\cdot Y_2'(s')) (L_1\cdot N_2)}{|L_1|^2|L_2|^2}L_1\cdot \frac{|L_2|^2-|L_1|^2}{|L_1|^2|L_2|^2}\\
&\;+\frac{ (L_2\cdot M_2) (L_2\cdot Y_2'(s')) (\delta L\cdot N_2)}{|L_1|^2|L_2|^4}L_1\\
&\;+\frac{ (L_2\cdot M_2) (L_2\cdot Y_2'(s')) (L_2\cdot N_2)}{|L_2|^4}L_1\cdot \frac{|L_2|^2-|L_1|^2}{|L_1|^2|L_2|^2}\\
&\;+\frac{ (L_2\cdot M_2) (L_2\cdot Y_2'(s')) (L_2\cdot N_2)}{|L_2|^6}\delta L.
\end{split}
\label{eqn: split a term in the difference of H 1 estimate of g-g}
\end{equation}
If $\theta \in [\frac{1}{4},\frac{1}{2}]$, we bound it as follows.
\begin{equation}
\begin{split}
&\;\left|\frac{ (L_1\cdot M_1) (L_1\cdot Y_1'(s')) (L_1\cdot N_1)}{|L_1|^6}L_1-\frac{ (L_2\cdot M_2) (L_2\cdot Y_2'(s')) (L_2\cdot N_2)}{|L_2|^6}L_2\right|\\
\leq &\;C\lambda^{-3}[ V^2|\delta M||N_1|+V^2|M_2||\delta N|+V(|\delta L|+|\delta Y'(s')|)|M_2||N_2|].
\label{eqn: crude bounding one term in H1 estimate of g-g small theta case}
\end{split}
\end{equation}
Then we proceed as in \eqref{eqn: bounding the most singular term in g_X H^2 estimate}.
By Lemma \ref{lemma: estimates for L M N} and \eqref{eqn: L^p_s estimate for N}, for $\beta$ satisfying \eqref{eqn: admissible range of beta larger},
\begin{equation}
\begin{split}
&\;\left\|\int_{\mathbb{T}}\left|\frac{ (L_1\cdot M_1) (L_1\cdot Y_1'(s')) (L_1\cdot N_1)}{|L_1|^6}L_1-\frac{ (L_2\cdot M_2) (L_2\cdot Y_2'(s')) (L_2\cdot N_2)}{|L_2|^6}L_2\right|\,ds'\right\|_{L^2(\mathbb{T})}\\
\leq &\;C\lambda^{-3}V^2\left\|\int_{\mathbb{T}}\frac{|\delta Y'(s+\tau)-\delta Y'(s)|}{|\tau|}|N_1|+\frac{|Y'_2(s+\tau)- Y'_2(s)|}{|\tau|}||\delta N|\,d\tau\right\|_{L^2(\mathbb{T})}\\
&\;+C\lambda^{-3} V(\|\delta L\|_{L^\infty_sL^2_{s'}}+\|\delta Y'\|_{L^2})\|M_2\|_{L^2_sL^\infty_{s'}}\|N_2\|_{L^\infty_sL^2_{s'}}\\
\leq &\;C\lambda^{-3}V^2\left\|\frac{\|\delta Y'(s+\tau)-\delta Y'(s)\|_{L^2_s}}{|\tau|^{1-\beta}}\right\|_{L^2_{\tau}}\left\|\frac{\|N_1\|_{L^\infty_s} }{|\tau|^{\beta}}\right\|_{L^2_\tau}\\
&\;+C\lambda^{-3}V^2\left\|\frac{\| Y'_2(s+\tau)-Y'_2(s)\|_{L^\infty_s}}{|\tau|^{1+\beta}}\right\|_{L^2_{\tau}}\left\|\|\delta N\|_{L^2_s} |\tau|^{\beta}\right\|_{L^2_\tau}\\
&\;+C\lambda^{-3}V\|\delta Y\|_{\dot{H}^1(\mathbb{T})}\|\mathcal{M}Y_2''\|_{L^2}\|Y_2''\|_{L^2}\\
\leq &\;C\lambda^{-3}V^2\|\delta Y'\|_{\dot{B}_{2,2}^{\frac{1}{2}-\beta}}\left\|\frac{1}{|\tau|^{1+\beta}}\sup_{|\eta|\leq |\tau|}\|Y_1'(s+\eta)-Y_1'(s)\|_{L^\infty_s(\mathbb{T})}\right\|_{L^2_\tau}\\
&\;+C\lambda^{-3}V^2\|Y'_2\|_{\dot{B}_{\infty,2}^{\frac{1}{2}+\beta}}\left\|\frac{1}{|\tau|^{1-\beta}}\sup_{|\eta|\leq |\tau|}\|\delta Y'(s+\eta)-\delta Y'(s)\|_{L^2_s(\mathbb{T})}\right\|_{L^2_\tau}\\
&\;+C\lambda^{-3}\|Y_2''\|_{L^2}^3\|\delta Y\|_{\dot{H}^1}\\
\leq &\;C\lambda^{-3}V^2(\|\delta Y'\|_{\dot{H}^{\frac{1}{2}-\beta}}\|Y_1'\|_{B_{\infty,2}^{\frac{1}{2}+\beta}}+\|Y'_2\|_{\dot{B}_{\infty,2}^{\frac{1}{2}+\beta}}\|\delta Y'\|_{\dot{B}_{2,2}^{\frac{1}{2}-\beta}})+C\lambda^{-3}\|Y_2''\|_{L^2}^3\|\delta Y\|_{\dot{H}^1}\\
\leq &\;C\lambda^{-3}(\|Y_1''\|_{\dot{H}^\beta(\mathbb{T})}+\|Y_2''\|_{\dot{H}^\beta(\mathbb{T})})^3\|\delta Y\|_{\dot{H}^{\frac{3}{2}-\beta}(\mathbb{T})}. %\\
\end{split}
\label{eqn: bounding one term in H1 estimate of g-g small theta case}
\end{equation}
Here we used equivalent norms and embedding theorems of Besov spaces again \cite[\S\,2.5.12 and \S\,2.7.1]{triebel2010theory}.
The other terms in \eqref{eqn: goal in bounding H1 norm of g-g} can be handled in a similar manner.
This proves \eqref{eqn: improved H1 estimate for g_X1-g_X2} when $\theta \in [\frac{1}{4},\frac{1}{2}]$.

In the case of $\theta \in(\frac{1}{2},1)$, we need to take special care of the first two terms in \eqref{eqn: split a term in the difference of H 1 estimate of g-g}.
Denote them as
\begin{equation}
\begin{split}
&\;\frac{ (L_1\cdot \delta M) (L_1\cdot Y_1'(s')) (L_1\cdot N_1)}{|L_1|^6}L_1+\frac{ (L_1\cdot M_2) (L_1\cdot Y_1'(s')) (L_1\cdot \delta N)}{|L_1|^6}L_1\\
= &\;A_1(s,s')\delta M(s,s') + A_2(s,s')\delta N(s,s'),
\end{split}
\end{equation}
where
\begin{align}
A_1(s,s') = &\;\frac{ (L_1(s,s')\otimes L_1(s,s')) (L_1(s,s')\cdot Y_1'(s')) (L_1(s,s')\cdot N_1(s,s'))}{|L_1(s,s')|^6},\\
A_2(s,s') = &\;\frac{ (L_1(s,s')\otimes L_1(s,s')) (L_1(s,s')\cdot Y_1'(s')) (L_1(s,s')\cdot M_2(s,s'))}{|L_1(s,s')|^6}.
\end{align}
In particular, by definition,
\begin{align}
A_1(s,s) = &\;\frac{ (Y_1'(s)\otimes Y_1'(s)) (Y_1'(s)\cdot Y_1'(s)) (Y_1'(s)\cdot \frac{1}{2}Y''_1(s))}{|Y_1'(s)|^6},\\
A_2(s,s) = &\;\frac{ (Y_1'(s)\otimes Y_1'(s)) (Y_1'(s)\cdot Y_1'(s)) (Y_1'(s)\cdot Y''_2(s))}{|Y_1'(s)|^6}.
\end{align}
It is then not difficult to derive that
\begin{equation}
\begin{split}
&\;|A_1(s,s')-A_1(s,s)|\\
\leq  &\;C\lambda^{-2}|Y_1'(s')-Y'_1(s)||N_1|+C\lambda^{-2}|L_1-Y'_1(s)||N_1|+C\lambda^{-1}\left|N_1-\frac{1}{2}Y_1''(s)\right|\\
\leq  &\;C|\tau|(\lambda^{-2}|M_1||N_1|+\lambda^{-2}|N_1|^2+\lambda^{-1}|\partial_s N_1|).
\end{split}
\end{equation}
By \eqref{eqn: representation of N}, \eqref{eqn: representation of N derivative}, and the fact that $H^{2+\theta}(\mathbb{T})\hookrightarrow C^{2,\theta-\frac{1}{2}}(\mathbb{T})$,
\begin{equation}
|A_1(s,s')-A_1(s,s)|\leq C|\tau|(\lambda^{-2}\|Y_1''\|_{\dot{H}^{\theta}}^2+\lambda^{-1}|\tau|^{\theta -\frac{3}{2}}\|Y_1''\|_{\dot{H}^{\theta}})
\leq C|\tau|^{\theta -\frac{1}{2}}\lambda^{-2}\|Y_1''\|_{\dot{H}^{\theta}}^2.
\end{equation}
Similarly,
\begin{equation}
|A_2(s,s')-A_2(s,s)|\leq C|\tau|^{\theta -\frac{1}{2}}\lambda^{-2}\|Y_1''\|_{\dot{H}^{\theta}}\|Y_2''\|_{\dot{H}^{\theta}}.
\end{equation}
Hence,
\begin{equation}
\begin{split}
&\;\left\|\int_{\mathbb{T}}A_1(s,s')\delta M(s,s') + A_2(s,s')\delta N(s,s')\,ds'\right\|_{L_s^2}\\
\leq &\; \left\|\int_{\mathbb{T}}(A_1(s,s')-A_1(s,s))\cdot \tau^{-1}(\delta Y'(s')-\delta Y'(s))\,ds'\right\|_{L_s^2}\\
&\;+\left\|\int_{\mathbb{T}}(A_2(s,s')-A_2(s,s))\cdot \tau^{-1}(\delta L(s,s')-\delta Y'(s))\,ds'\right\|_{L_s^2}\\
&\;+\|A_1(s,s)\|_{L^\infty_s}\left\|\mathrm{p.v.}\int_{\mathbb{T}}\frac{\delta Y'(s+\tau)}{\tau}\,d\tau\right\|_{L_s^2}\\
&\;+\|A_2(s,s)\|_{L^\infty_s}\left\|\mathrm{p.v.}\int_{\mathbb{T}}\frac{\delta L(s,s+\tau)}{\tau}\,d\tau\right\|_{L_s^2}\\
\leq &\;C\lambda^{-2}\|Y_1''\|_{\dot{H}^{\theta}}^2\int_{\mathbb{T}} |\tau|^{\theta-\frac{3}{2}}(\|\delta Y'(s+\tau)\|_{L^2_s}+\|\delta Y'\|_{L^2})\,d\tau\\
&\;+C\lambda^{-2}\|Y_1''\|_{\dot{H}^{\theta}}\|Y_2''\|_{\dot{H}^{\theta}}\int_{\mathbb{T}}|\tau|^{\theta-\frac{3}{2}}(\|\delta L(s,s+\tau)\|_{L^2_s}+\|\delta Y'\|_{L^2})\,d\tau\\
&\;+C\lambda^{-1}\|Y_1''\|_{\dot{H}^\theta}\\
&\;\qquad \cdot \left\|\mathrm{p.v.}\int_{\mathbb{T}}\frac{\delta Y'(s+\tau)}{2\tan\frac{\tau}{2}}+\delta Y'(s+\tau)\left(\frac{1}{\tau}-\frac{1}{2\tan\frac{\tau}{2}}\right)\,d\tau\right\|_{L_s^2}\\
&\;+C\lambda^{-1}\|Y_1''\|_{\dot{H}^\theta}\\
&\;\qquad \cdot \left\|\mathrm{p.v.}\int_{\mathbb{T}}\frac{\delta Y(s+\tau)-\delta Y(s)}{4\sin^2\frac{\tau}{2}}+\tau \delta L\cdot \left(\frac{1}{\tau^2}-\frac{1}{4\sin^2\frac{\tau}{2}}\right)\,d\tau\right\|_{L_s^2}\\
\leq &\;C\lambda^{-2}(\|Y_1''\|_{\dot{H}^{\theta}}+\|Y_2''\|_{\dot{H}^{\theta}})^2\|\delta Y'\|_{L^2}\\
&\;+C\lambda^{-1}\|Y_1''\|_{\dot{H}^\theta} \left\||\mathcal{H}\delta Y'|+\int_{\mathbb{T}}|\delta Y'(s+\tau)|\,d\tau\right\|_{L_s^2}\\
&\;+C\lambda^{-1}\|Y_1''\|_{\dot{H}^\theta} \left\||\mathcal{H}\delta Y'|+\int_{\mathbb{T}}|\delta L(s,s+\tau)|\,d\tau\right\|_{L_s^2}\\
\leq &\;C\lambda^{-2}(\|Y_1''\|_{\dot{H}^{\theta}}+\|Y_2''\|_{\dot{H}^{\theta}})^2\|\delta Y'\|_{L^2}.
\end{split}
\end{equation}
This bounds the $L^2(\mathbb{T})$-norm of the first two terms in \eqref{eqn: split a term in the difference of H 1 estimate of g-g}.
For the other terms, we argue as in \eqref{eqn: crude bounding one term in H1 estimate of g-g small theta case} and \eqref{eqn: bounding one term in H1 estimate of g-g small theta case} to bound them by $C\lambda^{-3}(\|Y_1''\|_{\dot{H}^{\theta}}+\|Y_2''\|_{\dot{H}^{\theta}})^3\|\delta Y'\|_{L^2}$.
The other terms in \eqref{eqn: goal in bounding H1 norm of g-g} can be handled in a similar manner.
This completes the proof of \eqref{eqn: improved H1 estimate for g_X1-g_X2} when $\theta \in (\frac{1}{2},1)$.
\end{proof}

\begin{remark}
A weaker estimate than \eqref{eqn: improved H1 estimate for g_X1-g_X2} can be proved more easily, which reads
\begin{equation}
\|g_{Y_1}-g_{Y_2}\|_{\dot{H}^1(\mathbb{T})}\leq
C\lambda^{-3}(\|Y_1\|_{\dot{H}^{2+\theta}}+\|Y_2\|_{\dot{H}^{2+\theta}})^3\|\delta Y\|_{\dot{H}^{3/2}(\mathbb{T})}.
\end{equation}
In fact, this is sufficient for proving Theorem \ref{thm: error estimates of the regularized problem} and Theorem \ref{thm: convergence and error estimates of the low-frequency regularized IB method}.
However, we pursue a more refined estimate in order to obtain better error estimates in Theorem \ref{thm: convergence and error estimates of the low-frequency regularized IB method}.
\end{remark}

\section*{Acknowledgement}
The author would like to thank Prof.\;Fang-Hua Lin, Prof.\;Charles Peskin, Dr.\;Yuanxun Bao, Dr.\;Xiaochuan Tian, Prof.\;Yuning Liu, and Prof.\;Inwon Kim for many inspiring discussions.
A major part of this work was done when the author was a Ph.D.\;student at Courant Institute at New York University, where he was partially supported by the Graduate School of Arts and Science under Dean's Dissertation Fellowship and by National Science Foundation under Award DMS-1501000.

%\bibliographystyle{unsrt}
%\bibliography{Regularized_IB}

\bigskip
\noindent Jiajun Tong\\
Department of Mathematics, UCLA\\
Box 951555\\
Los Angeles, CA 90095-1555\\
USA\\
E-mail: jiajun@math.ucla.edu

\end{document}